%% file: main_final.tex
\documentclass[3p]{elsarticle}
\usepackage[utf8]{inputenc}
\usepackage{amsmath,amssymb,amsfonts,amsthm}

\usepackage{pgfplots}
\pgfplotsset{compat=newest}
\pgfplotsset{plot coordinates/math parser=false}

\usepackage{hyperref}
\usepackage[capitalize,nameinlink]{cleveref}
\usepackage{dsfont}
\usepackage{todonotes}
\usepackage{multicol}
\usepackage{enumitem}
\usepackage{graphicx} 
\usepackage{mathtools}
\usepackage{tikz}
\usepackage{comment}
\graphicspath{ {images/} }
\allowdisplaybreaks
\input{definitions.tex}

\newtheorem{theo}{Theorem}[section]
\newtheorem{defi}[theo]{Definition}

\newtheorem{cor}[theo]{Corollary}
\newtheorem{lem}[theo]{Lemma}

\newtheorem{ass}[theo]{Assumption}
\newtheorem{rem}[theo]{Remark}
\newcommand{\keta}{\kappa}

\title{Systems of nonlocal conservation laws: well-posedness and the singular limit for a nonlocal generalized Aw-Rascle-Zhang model}

\author[<1>]{Debora Amadori}
\ead{<debora.amadori@univaq.it>}
\author[<2>]{Felisia Angela Chiarello}

\ead{<chiarello@lum.it>}

\author[<1>]{Gianmarco Cipollone}

\ead{<gianmarco.cipollone@graduate.univaq.it>}

\affiliation[<1>]{organization={University of L'Aquila, Department of Engineering and Information Science and Mathematics
(DISIM)},
addressline={Via Vetoio}, 
            city={L'Aquila},
            postcode={67100}, 
            country={Italy}  
            }

            \affiliation[<2>]{organization={Department of Engineering, University LUM},
addressline={S.S. 100 km 18},  
            city={Casamassima (BA)},
            postcode={70010}, 
            country={Italy}  
            }

 \author[<3>]{Xiaoqian Gong}

 \ead{<xgong@amherst.edu>}  
 \affiliation[<3>]{organization={Amherst College, Department of Mathematics},
            city={Amherst},
            postcode={MA 01002}, 
            country={USA}}

\author[<4>]{Alexander Keimer}

\affiliation[<4>]{organization={University of Rostock, Institute of Mathematics},
            addressline={Ulmenstraße 69}, 
            city={Rostock},postcode={18051}, 
            country={Germany}}
 \ead{<alexander.keimer@fau.de>}

\begin{document}

\begin{abstract}

In this paper, we study a system of nonlocal conservation laws motivated by traffic flow: a nonlocal version of the generalized Aw-Rascle-Zhang (GARZ) model. The nonlocality arises from downstream spatial averaging of the velocity by a one-sided kernel. We prove the existence and uniqueness of weak solutions for initial data of bounded variation via a fixed-point argument in the nonlocal velocity. We also establish stability with respect to the initial datum and an approximation of weak solutions by strong solutions in \(L^1\). Under additional, physically meaningful assumptions on the velocity and the initial datum, we obtain either a maximum principle for the density or invariant-region estimates. Finally, we study the singular limit as the nonlocal kernel converges to a Dirac distribution. 
Indeed, under additional assumptions, convergence to the unique local entropy solution can be proved.
Some numerical simulations are provided, and the paper concludes with a discussion of open problems.
\end{abstract}

\begin{keyword}
    systems of nonlocal conservation laws \sep   
    traffic flow \sep  
    generalized Aw--Rascle--Zhang model \sep entropy solutions  \sep nonlocal-to-local convergence 
    \MSC[2020] 35L65 \sep 35B25 \sep 35R09 \sep 76A30 
\end{keyword}

\maketitle

\section{Introduction}
Models of traffic flow are fundamental to understanding, planning, and managing modern transportation networks, and their importance continues to grow as urbanization and population growth progress.
Nonlocal traffic models have attracted considerable attention in recent years because they can provide a realistic description of scenarios in which driving speeds change as a result of events downstream \cite{blandin2016well,chiarello2019non,chiarello,scialanga,friedrich2018godunov}.
Moreover, classical first-order ``local'' models, like the well-known Lighthill-Whitham-Richards model \cite{lwr_1,lwr_2} and its extensions, have no notion of deceleration or acceleration and typically stabilize over time owing to entropy decrease.
This makes them unsuitable for reproducing traffic phenomena like \textit{phantom jams} \cite{Ramadan2020}.

To overcome these limitations, second-order models have been introduced, such as the Aw-Rascle-Zhang model \cite{aw,zhang2002non}, which is characterized by two equations, representing the conservation of mass and momentum (i.e., the product of mass and velocity). This model allows for a variety of so-called fundamental diagrams as the flux is a function of the unknown velocity and density. 
This results in a varying maximum density, which is unrealistic because a road's capacity is an intrinsic property. For this reason, \citet{FanHertySeibold2014} introduced the so-called generalized Aw-Rascle-Zhang (GARZ) model, in which the velocity is a function of two variables, the density $\rho$ and the Lagrangian marker $\omega$.
In a related model studied in \cite{Hamori-Tan_2026}, the velocity depends on both the density and the nonlocal density according to a specific expression proposed in \cite{Sopasakis2006}. \citet{Hamori-Tan_2026} studied smooth solutions for that model and provided threshold criteria for the formation of shocks.

As discussed earlier, nonlocal versions of such models are often more suitable; this motivates the model considered here. This model can be written as
\begin{equation}
\begin{aligned}
\partial_{t}\rho+
 \partial_{x}\big(\cV[\rho,\omega](t,x)\rho\big)&=0,
  \\ \partial_{t}\omega+\cV[\rho,\omega](t,x)\partial_{x}\omega&=0,\\
 \cV[\rho,\omega](t,x)&
 \coloneqq \int_{\R}\keta(x-y)V(\rho(t,y),\omega(t,y))\dd y, 
 \end{aligned}
\label{eq:nonlocal_GARZ}
\end{equation}
supplemented with the initial condition
\begin{equation}\label{eq:init_data}
   (\rho(0,\cdot),\omega(0,\cdot))= (\rho_{0},\omega_{0})\in \big(TV(\R)\big)^{2},
\end{equation}
where \(TV\) denotes the space of \(L^{1}_{\textnormal{loc}}(\R)\) functions with bounded variation:
\begin{equation}
TV(\R)\coloneqq \big\{f\in L^{1}_{\textnormal{loc}}(\R): |f|_{TV(\R)}<\infty\big\} \text{ and } |f|_{TV(\R)}\coloneqq \sup_{\substack{\phi\in C^{1}_{\textnormal{c}}(\R)\\ \|\phi\|_{L^{\infty}(\R)}\leq 1}}\int_{\R}\phi'(x)f(x)\dd x.\label{defi:TV}
\end{equation}
The function $\rho(t,x)$ describes the density at the location \(x\in\R\) at time \(t\in\R_{\geq0}\), and $\omega(t,x),$ the \textit{Lagrangian marker}, represents the maximally allowed velocity for a single driver. It can be interpreted as an intrinsic property of either the initial density distribution \cite{chiarello2020micro, FanHertySeibold2014} or, on the microscopic level, the individual behavior of each traffic participant.

The function $V:\R^{2}\rightarrow\R$ represents the velocity dependent on the density \(\rho\) and the Lagrangian marker \(\omega\). The velocity is averaged downstream against the nonlocal kernel \(\kappa\), similar to the idea for a first-order nonlocal traffic model in \cite{friedrich2018godunov} in which the velocity is also spatially averaged, and models the look-ahead distance of drivers, determined by the chosen nonlocal kernel. 

In our analysis, particularly regarding the singular limit in \cref{sec:singular_limit}, an important property of this system is that the velocity in the transport equation is the same as in the conservation law, which enables us to consider a fixed-point problem solely in the nonlocal velocity. Indeed, if this nonlocal velocity is regular enough in space, we can use it to construct the solutions of the transport equation and the conservation law. 

The study of singular limits of nonlocal traffic models has become a central topic in the mathematical analysis of conservation laws with nonlocal interactions. The main concern is the convergence of solutions of nonlocal conservation laws to the entropy solution of corresponding local conservation laws when the nonlocal kernel approaches a Dirac distribution. This problem, commonly referred to as the singular limit problem, has attracted considerable attention in recent years owing to both its theoretical relevance and its importance in validating nonlocal traffic models. For scalar conservation laws in which the density is averaged, substantial progress has been made in \cite{bressan-shen_2019traffic,bressan-shen2021entropy,coclite2022general,ColomboCrippaMarconiSpinolo2021,Marconi2023,keimer42,pflug4,coclite2025singular,keimer2026nonlocal,coclite2023oleinik,Chiarello2024,chiarello2025pnorm} with several approaches, such as taking advantage of monotonicity-preserving solutions to \(TV\) compactness of the nonlocal operator, as well as recent advances using compensated compactness.

The singular limit has been studied for nonlocal conservation laws in which the nonlocality acts on the velocity in \cite{friedrich2022conservation,pflug4}. It was shown for nonlocal kernels of exponential type that the unique solution converges in a weak or strong sense, depending on the regularity of the velocity, to the entropy solution of the local conservation law when the nonlocal weight approaches a Dirac distribution.
Only \citet{MS2025-1,MS2025-2} appear to have addressed the singular limits of systems of nonlocal conservation laws in traffic modeling. 
They established existence, uniqueness, and Ole\u{i}nik estimates, together with the singular limit, for another nonlocal version of the GARZ model \cite{FanHertySeibold2014}, in which the nonlocal dependence is on the density only (and not in the velocity, as in our case), making the analysis more involved, particularly for the transport equation in \(\omega\).
Concerning the singular limit for systems outside the context of traffic modeling, in \cite{Coclite2026nonlocal} the authors proved that a \(2\times 2\) system of triangular structure converges to the corresponding local solution. However, the velocity they considered is linear (so that nonlocality in the velocity and the density coincide) and depends only on one of the quantities, whereas the other quantity satisfies a conservation law with the same nonlocal velocity. Thus, the system can first be studied solely for the convergence of the first component, simplifying the analysis. Furthermore, as the limit solution is not necessarily essentially bounded, convergence of solutions to the second equation is only shown in measure.

The paper is organized as follows.
We state in \cref{sec:GARZ} the assumptions required for the system in \cref{eq:nonlocal_GARZ}  to be well-posed, as well as the main results obtained. \Cref{sec:well-posedness} then shows, under rather weak assumptions, the existence and uniqueness of weak solutions to \cref{eq:nonlocal_GARZ} using a fixed-point argument in the nonlocal velocity and relying on characteristics. We also state stability estimates of the solution when perturbing the initial datum in \(L^{1}\) and show that smooth solutions remain smooth and can approximate weak solutions in \(L^{1}\). We then show a maximum principle on the density, given that the velocity satisfies some additional constraints, as well as an invariant region and lower and upper bounds on the density. These properties are needed in \Cref{sec:singular_limit}, which addresses the singular limit problem, that is, whether the solution of the system in \cref{eq:nonlocal_GARZ} converges to the local semigroup solution when the nonlocal weight converges to a Dirac distribution. We tackle this using compactness and derive uniform \(TV\) bounds in the case of an exponential kernel of the nonlocal velocity, which enables us to prove that in the limit, the solution will converge to a weak solution of the local system.
\Cref{sec:entropy_uniqueness} first characterizes the unique entropy solution of the local equation and then demonstrates that our limiting solution indeed coincides with that solution, showing that even in the case of a system with an admittedly simplified nonlocal term, we can expect convergence in the singular limit to the suitable local solution.
In \cref{sec:numerical_simulations}, we present numerical examples illustrating the well-posedness result and convergence in the singular limit with examples.
\Cref{sec:conclusion} concludes the paper by discussing a few open problems worth investigating.

\section{Assumptions and main results}
\label{sec:GARZ}

We first state our assumptions on the problem and preview the main results. 
For the purpose of obtaining existence and uniqueness of solutions on short time horizons, rather weak assumptions are required, which we state in the following.
\begin{ass}[assumptions for existence and uniqueness on a short time horizon]\label{ass:small_time_existence_uniqueness}
We assume that 
\begin{multicols}{3}
\begin{itemize}
\item \(\keta\in BV(\R)\coloneqq L^{1}\cap TV(\R)\)
\item \(V\in C^{1}(\R^{2})\)
\item \((\rho_{0},\omega_{0})\in \big(TV(\R)\big)^{2}\) 
\end{itemize}
\end{multicols}
\noindent with \(TV(\R)\) as in \cref{defi:TV}.
\end{ass}

Given this, we provide the definition of a weak solution to \crefrange{eq:nonlocal_GARZ}{eq:init_data}.

\begin{defi}[weak solution of the nonlocal hyperbolic initial value problem with a finite time horizon]
\label{defi:weak_solution} 
Let \cref{ass:small_time_existence_uniqueness} and \(T\in\R_{>0}\) hold. 
Then, we call \((\rho,\omega)\in \big(C\big([0,T];L^{1}_{\text{\loc}}(\R)\big)\cap L^{\infty}((0,T);L^{\infty}(\R))\big)^{2}\)
a weak solution to the Cauchy problem \crefrange{eq:nonlocal_GARZ}{eq:init_data} iff 
it holds for all \(\phi_{1},\phi_{2}\in W^{1,\infty}_{\textnormal{c}}((-42,T)\times\R)\) and \(\OT\coloneqq (0,T)\times\R\) that
     \begin{align}
         \iint_{\OT} \rho(t,x)\big(\partial_{t}\phi_{1}(t,x)+\partial_{x}\phi_{1}(t,x)\cV[\rho,\omega](t,x)\big)\dd x\dd t+\int_{\R}\rho_{0}(x)\phi_{1}(0,x)\dd x&=0\label{eq:weak_1}\\
        \iint_{\OT} \omega(t,x)\Big(\partial_{t}\phi_{2}(t,x)+\partial_{x}\big(\phi_{2}(t,x)\cV[\rho,\omega](t,x)\big)\Big)\dd x\dd t+\int_{\R}\omega_{0}(x)\phi_{2}(0,x)\dd x&=0,\label{eq:weak_2}
\intertext{supplemented by the nonlocal term for \((t,x)\in\OT\) (with a slight abuse of notation),}
\cV[\rho,\omega](t,x)\coloneqq \cV[\rho(t,\cdot),\omega(t,\cdot)](x)\coloneqq \kappa\ast V(\rho,\omega)(t,x) =  \int_{\R}\kappa(x-y)V(\rho(t,y),\omega(t,y))\dd y.&\label{eq:weak_3}
\end{align}
\end{defi}
\begin{rem}[on the definition of weak solutions and the reasonability of the assumptions] 
The term \((t,x)\mapsto \partial_{x}\big(\phi_2(t,x)\cV[\rho,\omega](t,x)\big)\big)\) in \cref{eq:weak_2} 
is well-defined as a function  in \(L^{\infty}((0,T);L^{\infty}(\R))\), as \(\phi_{2}\) is sufficiently smooth, and the remaining spatial derivative of the nonlocal velocity \(\cV\) in \cref{eq:weak_3} is Lipschitz continuous in space thanks to the required regularity of solutions belonging to \(L^{\infty}((0,T);L^{\infty}(\R))\).
The assumptions in \cref{ass:small_time_existence_uniqueness} are rather weak but the only reasonable choice when it comes to modeling traffic flow (for instance, \(\rho_{0}\geqq0\leqq\omega_{0}\) are not necessary). However, because we later obtain the existence and uniqueness of solutions on \textbf{small} time horizons without further assumptions, we choose only what is necessary and supplement the assumptions with those reasonable for traffic flow when necessary.
\end{rem}

In \cref{theo:existence_uniqueness_small_time}, we will obtain, as in \cref{ass:small_time_existence_uniqueness}, the existence and uniqueness of a weak solution on a small time horizon. As can be seen, the required degree of regularity is quite low. This can be understood in comparison with corresponding systems of local conservation laws. For those systems, if we assume that the initial data satisfy a Lipschitz condition, we can prove existence and uniqueness of solutions on small time horizons (until the Lipschitz constant blows up). The same is true for nonlocal conservation laws, except that because of the integral operator over the velocity function, the velocity remains Lipschitz continuous as long as solutions' \(L^{\infty}\) bounds do not explode. Thus, no additional Lipschitz regularity is required; only \(L^{\infty}\) bounds are needed. However, if no further assumptions are made, the \(L^{\infty}\) norm of the solution can blow up \cite[Examples 6.1, 1]{pflug}, which is why we later require the following further assumptions on the velocity, the kernel, and the initial datum, restricting the considered class of PDEs to reasonable traffic flow models.

\begin{ass}[reasonable assumptions in traffic flow modeling on the initial datum that guarantee existence and uniqueness on any finite time horizon]\label{ass:long_time_horizon}
In addition to \cref{ass:small_time_existence_uniqueness}, we assume that \(\rho_{0}\geqq0\leqq \omega_{0}\) on \(\R\); \(\supp(\kappa)\subset\R_{\leq 0}\), where \(\kappa\geq 0\) is monotonically increasing on \(\R_{\leq 0}\) and \(\|\kappa\|_{L^{1}(\R_{\leq0})}=1\);
and either of the following:
\begin{itemize}
    \item[a)] 
there exists \(\rho_{\max}\in\R_{>0}\) such that \(\|\rho_{0}\|_{L^{\infty}(\R)}\leqq\rho_{\max}\), \(0\leqq V \text{ on } [0,\rho_{\max}]\times [\underline{\omega},\overline{\omega}]\), and \(V(\rho_{\max},\cdot)\equiv 0 \text{ on } [\underline{\omega},\overline{\omega}]\),
with \(\underline{\omega}\coloneqq \essinf_{y}\omega_{0}(y)\) and \(\overline{\omega}\coloneqq \|\omega_{0}\|_{L^{\infty}(\R)}\);
\item[b)] the initial data satisfy 
\begin{equation}
V\big(\rho_{0}(x),\omega_{0}(x)\big)\geq 0\qquad x\in\R \text{ a.e.}\label{eq:admissibility_initial_data-bis}
\end{equation}
and one of the following two conditions holds:
\begin{itemize} 
\item[b1)] \(\partial_{1}V\leqq0\) and
\begin{equation}
\exists\ c,\underline{\rho}\in\R_{>0}:\ \sup_{b\in[\underline{\omega},\overline{\omega}]}\partial_{1}V(a,b)\leq -c\text{ for } \forall a\in\R_{>\underline{\rho}};\label{eq:invariant_domain_1-bis}
\end{equation}
\item[b2)] 
\(\partial_{1}V<0,\ \partial_{2}V\geqq0\) 
and 
\begin{equation}\label{condition-b2-on-V}
\essinf_{y\in\R}V(\rho_{0}(y),\omega_{0}(y))>\lim_{\rho\rightarrow\infty} V\big(\rho,\|\omega_{0}\|_{L^{\infty}(\R)}\big).    
\end{equation}
\end{itemize}
\end{itemize}
\end{ass}

\begin{rem}[reasonableness of \cref{ass:long_time_horizon}]
    The first case in \cref{ass:long_time_horizon} can easily be satisfied when choosing
    \[
        V(a,b)=v_{1}(a)v_{2}(b),\ a\in[0,\rho_{\max}],\ b\in \big[\underline{\omega},\overline{\omega}\big]
    \]
    with \(v_{1}(\rho_{\max})=0,\ v_{1}(0)=v_{\textnormal{free}}\in\R_{>0}\), \(v_{1}\) monotonically decreasing, and \(v_{2}\geqq0 \text{ on } [\underline{\omega},\overline{\omega}]\). However, typically for traffic, one would assume that \(v_{2}\) is monotonically increasing, meaning the higher the free-flow velocity, the higher the actual velocity. Concerning the assumptions on the kernel, any reasonable monotonically increasing function with \(L^{1}\) mass \(1\) will work, and for instance, one can pick \(\kappa(x)=\exp(x),\ x\in\R_{<0}\), as we will later use in \cref{eq:nonlocal_velocity_definition}.

    Concerning the second case, we need to choose the corresponding initial data to fit the velocity function.
    For instance, \(V(\rho,\omega)=\omega-\rho\) is acceptable, as long as we have
    \[
        \omega_{0}(x)\geq \rho_{0}(x),\ x\in\R \text{ a.e.};
    \]
    because \(\partial_{1}V\equiv -1\), \cref{eq:invariant_domain_1-bis} is also satisfied, and so is b2 (compare \cref{rem:invariant}).
    Altogether, we can choose arbitrary initial data provided that the condition \(V(\rho_{\max},\cdot)=0\) holds and the velocity function is positive. For more general velocity functions that are not zero at a given maximal density uniformly in \(\omega\), we can use the second assumption with the caveat that the initial data \(\rho_{0},\ \omega_{0}\) will need to be chosen so that the velocity is nonnegative and has a strictly monotonic dependence on \(\rho\). The assumptions b1 and b2 guarantee the uniform bound of the density that is needed to prove the existence and uniqueness of weak solutions on any finite time horizon. These assumptions will also be important in the analysis of the singular limit in \cref{sec:singular_limit}.
\end{rem}

Under these conditions, we will later obtain existence and uniqueness on every finite time horizon, among other things, in \cref{theo:maximum_principle} and \cref{theo:invariant_region}.

\section{Well-posedness of the nonlocal system}\label{sec:well-posedness}
This section concerns the well-posedness of the Cauchy problem 
\cref{eq:nonlocal_GARZ} given the assumptions in \cref{ass:small_time_existence_uniqueness} on a small time horizon. 
For the existence and uniqueness proof,  we require several estimates, which we will provide in the following and which are related to either characteristics of the dynamics or estimates of the nonlocal part of the velocity function.

We start with some \(L^{\infty}\) and \(TV\) bounds on the spatial derivative of the nonlocal operator assuming that \(\rho\) and \(\omega\) are given. These estimates demonstrate that one can work with characteristics to write down the solution, as they imply that the nonlocal velocity is Lipschitz in space if the solution is essentially bounded or if the spatial derivative is \(TV\) bounded. This is the key result for our later formulation in terms of a fixed-point problem, given in \cref{defi:fixed_point_mapping}.
\begin{lem}[\(L^{\infty}\) and \(TV\) estimates for \(\partial_2 \cV\)] \label{lem:TV_partial_2_cV}
Let \cref{ass:small_time_existence_uniqueness} hold and assume that there exist
\begin{equation}
(\rho,\omega)\in \Big(C\big([0,T];L^{1}_{\text{\loc}}(\R)\big)\cap    L^{\infty}\big((0,T);L^{\infty}(\R)\cap TV(\R)\big)\Big)^{2}.
\label{eq:ass_TV_estimate_partial_2_cV}
\end{equation}
Then, the nonlocal operator in \cref{eq:weak_3} satisfies the following for \(t\in[0,T]\) a.e.:
\begin{align}
\|\partial_{2}\cV[\rho,\omega](t,\cdot)\|_{L^{\infty}(\R)}&\leq |\kappa|_{TV(\R)}\Big(|V(0,0)|+\|\partial_1 V \|_{L^{\infty}(Q[\rho,\omega](t))} \|\rho(t, \cdot)\|_{L^{\infty}(\R)}\notag\\
&\qquad +  \|\partial_2 V \|_{L^{\infty}(Q[\rho,\omega](t))}\|\omega(t,\cdot)\|_{L^{\infty}(\R)} \Big)\label{eq:partial_{2}_infty_bound}\\
|\partial_2 \cV[\rho, \omega](t, \cdot)|_{TV(\R)} & \leq  |\kappa|_{TV(\R)}\Big(\|\partial_1 V \|_{L^{\infty}(Q[\rho,\omega](t))} |\rho(t, \cdot)|_{TV(\R)}  +  \|\partial_2 V \|_{L^{\infty}(Q[\rho,\omega](t))}|\omega(t,\cdot)|_{TV(\R)} \Big) \label{eq:partial_2_cV_TVbound}
\end{align}   
with   
\begin{equation}
    Q[\rho,\omega](t)\coloneqq \big[-\|\rho(t,\cdot)\|_{L^{\infty}(\R)},\|\rho(t,\cdot)\|_{L^{\infty}(\R)}\big]\times \big[-\|\omega(t,\cdot)\|_{L^{\infty}(\R)},\|\omega(t,\cdot)\|_{L^{\infty}(\R)}\big]\subset\R^{2}. \label{eq:ass_TV_estimate_definition_Q}
\end{equation}
\end{lem}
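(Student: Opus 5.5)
The plan is to view \(\partial_{2}\cV[\rho,\omega](t,\cdot)\) as the distributional derivative of a convolution and to move the derivative onto the kernel. Fix \(t\in[0,T]\) such that \(\rho(t,\cdot),\omega(t,\cdot)\in L^{\infty}(\R)\cap TV(\R)\), and set \(g(y)\coloneqq V(\rho(t,y),\omega(t,y))\). Then \(\cV[\rho,\omega](t,\cdot)=\kappa\ast g\), and \(\partial_{2}\cV=D\kappa\ast g\), where \(D\kappa\) is the finite Radon measure derivative of \(\kappa\in BV(\R)\) with total mass \(|\kappa|_{TV(\R)}\). Equivalently, \(\partial_{2}\cV=\kappa\ast Dg\) whenever \(g\in TV(\R)\). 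I would justify both identities by testing against \(\phi\in C^{1}_{\textnormal{c}}(\R)\) and using Fubini, or by mollifying \(\kappa\) and passing to the limit. The second identity uses that convolution commutes with distributional differentiation for \(\kappa\in L^{1}\) and \(g\in L^{\infty}\cap TV\).

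\textbf{The \(L^{\infty}\) bound.} From \(\partial_{2}\cV=D\kappa\ast g\) we get
\(\|\partial_{2}\cV(t,\cdot)\|_{L^{\infty}}\leq |D\kappa|(\R)\,\|g\|_{L^{\infty}}=|\kappa|_{TV}\|g\|_{L^{\infty}}\). To bound \(\|g\|_{L^{\infty}}\), apply the mean value theorem along the segment from \((0,0)\) to \((\rho(t,y),\omega(t,y))\), which lies in the rectangle \(Q[\rho,\omega](t)\) because \(Q\) is symmetric and convex:
\[
|V(\rho,\omega)|\leq |V(0,0)|+\|\partial_{1}V\|_{L^{\infty}(Q)}|\rho|+\|\partial_{2}V\|_{L^{\infty}(Q)}|\omega|.
\]
To separate the two partial derivatives cleanly, I would go first in \(\rho\) and then in \(\omega\). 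The intermediate points \((s\rho,0)\) and \((\rho,s\omega)\) also lie in \(Q\). This gives \cref{eq:partial_{2}_infty_bound} for a.e.\ \(y\).

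\textbf{The \(TV\) bound.} Use Young's inequality for measures. Writing \(\partial_{2}\cV=D\kappa\ast g\), we have \(|D\kappa\ast g|_{TV}=|D\kappa\ast Dg|(\R)\leq |D\kappa|(\R)\,|Dg|(\R)=|\kappa|_{TV}|g|_{TV}\). It then remains to show the chain-rule type estimate
\(|g|_{TV}\leq \|\partial_{1}V\|_{L^{\infty}(Q)}|\rho(t,\cdot)|_{TV}+\|\partial_{2}V\|_{L^{\infty}(Q)}|\omega(t,\cdot)|_{TV}\).
I would prove this via the pointwise-variation characterization of \(TV\), working with good representatives of \(\rho(t,\cdot)\) and \(\omega(t,\cdot)\) whose values lie in the corresponding \(L^{\infty}\) intervals. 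For any partition \(x_{0}<\dots<x_{n}\), split
\[
|V(\rho_{i},\omega_{i})-V(\rho_{i-1},\omega_{i-1})|\leq |V(\rho_{i},\omega_{i})-V(\rho_{i-1},\omega_{i})|+|V(\rho_{i-1},\omega_{i})-V(\rho_{i-1},\omega_{i-1})|,
\]
bound each term by the mean value theorem inside \(Q\), and take the supremum over partitions.

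The main obstacle I expect is rigor rather than ideas. First, one must check that the distributional derivative of \(\kappa\ast g\) equals \(D\kappa\ast g\), and that its \(TV\) is controlled by the product of the masses. Second, one must handle representatives so that the pointwise mean-value arguments stay inside \(Q\). The first I would settle by mollification, \(\kappa_{\varepsilon}=\kappa\ast\eta_{\varepsilon}\). This gives \(|\kappa_{\varepsilon}|_{TV}\leq|\kappa|_{TV}\), the estimates hold for smooth \(\kappa_{\varepsilon}\) by direct computation, and lower semicontinuity of \(TV\) together with weak-\(*\) convergence passes the estimates to the limit.
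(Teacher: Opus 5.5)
Your proposal is correct and follows essentially the paper's route. You write \(\partial_2\cV\) as the kernel's derivative convolved with \(V(\rho,\omega)\); the paper does this with a smooth \(\kappa_\eps\) satisfying \(\|\kappa_\eps'\|_{L^1}\to|\kappa|_{TV}\) rather than with the measure \(D\kappa\). You then bound the \(L^\infty\) norm by a mean-value estimate at \((0,0)\), and the \(TV\) by Young's inequality plus a chain-rule bound on \(|V(\rho,\omega)|_{TV}\), before passing \(\eps\to0\). Your partition argument with good representatives additionally proves the chain-rule step, which the paper only asserts.
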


\begin{proof}

Because the kernel \(\kappa\) satisfies \(\kappa\in BV(\R)\), according to \cite[Theorem 14.9]{leoni}, we can approximate it by a smooth kernel \(\kappa_{\eps}\in C^{\infty}(\R)\cap W^{1,1}(\R)\) so that
\begin{equation}
\lim_{\eps\rightarrow0} \|\kappa-\kappa_{\eps}\|_{L^{1}(\R)}=0\ \wedge\ \lim_{\eps\rightarrow 0} \|\kappa_{\eps}'\|_{L^{1}(\R)}=|\kappa|_{TV(\R)}.\label{eq:BV_approximation}
\end{equation}
Let \(\cV_{\eps}[\rho,\omega]\) be the nonlocal velocity with smooth kernel, that is,
\[
\cV_{\eps}[\rho,\omega](t,x)\coloneqq \int_{\R}\kappa_{\eps}(x-y)V(\rho(t,y),\omega(t,y))\dd y,\ (t,x)\in (0,T)\times\R
\]
with \((\rho,\omega)\) as in \cref{eq:ass_TV_estimate_partial_2_cV}. Then, we obtain by differentiating, for \((t,x)\in\OT\),
\begin{align}
\partial_2 \cV_{\eps}[\rho,\omega](t,x)&= \int_{\R} \kappa_{\eps}'(x-y) V(\rho(t,y),\omega(t,y))\dd y\notag
\intertext{and thus, in the \(L^{\infty}\)-norm,}
\|\partial_2 \cV_{\eps}[\rho,\omega](t,\cdot)\|_{L^{\infty}(\R)}
&\leq |\kappa_{\eps}'|_{L^{1}(\R)} \|V(\rho(t,\cdot),\omega(t,\cdot))\|_{L^{\infty}(\R)}.\label{eq:some_one_time_reference}
\end{align}
Performing a Taylor approximation at \((0,0)\) yields
\begin{align*}
\eqref{eq:some_one_time_reference}&\leq|\kappa_{\eps}'|_{L^{1}(\R)}\Big(\|\partial_{1}V\|_{L^{\infty}(Q[\rho,\omega](t))}|\rho(t,\cdot)|_{L^{\infty}(\R)}+ \|\partial_{2}V\|_{L^{\infty}(Q[\rho,\omega](t))}|\omega(t,\cdot)|_{L^{\infty}(\R)}+ |V(0,0)|\Big).
\intertext{Taking the \(TV\) seminorm of \(\partial_{2}\cV_{\eps}\) gives}
|\partial_{2}\cV_{\eps}(t,\cdot)|_{TV(\R)}&\leq |\kappa_{\eps}'|_{L^{1}(\R)}|V(\rho(t,\cdot),\omega(t,\cdot))|_{TV(\R)}\\
&\leq |\kappa_{\eps}'|_{L^{1}(\R)}\Big(\|\partial_{1}V\|_{L^{\infty}(Q[\rho,\omega](t))}|\rho(t,\cdot)|_{TV(\R)}+ \|\partial_{2}V\|_{L^{\infty}(Q[\rho,\omega](t))}|\omega(t,\cdot)|_{TV(\R)}\Big)
\end{align*}
with \(Q[\rho,\omega](t)\) as in \cref{eq:ass_TV_estimate_definition_Q}. Letting \(\eps\rightarrow 0\) according to \cref{eq:BV_approximation} yields the claim.
\end{proof}

In the following, we will set up a fixed-point equation involving the characteristics; this is the reason for defining characteristics when the solution (or the nonlocal velocity) is given.

\begin{defi}[characteristics for the nonlocal equation]\label{defi:characteristics} 
Let \(T\in\R_{>0}\) and \(\cV\in L^{\infty}((0,T);W^{1,\infty}(\R))\) be given. The characteristics \(\xi_{\cV}(t_0, x_0; \cdot) : [0, T] \to \R\) starting at \((t_0, x_0) \in \Omega_T\) are defined as the unique solution of the following initial value problem:
\begin{align*}
    \partial_3{\xi}_{\cV}(t_0, x_0;t) &=\cV(t, \xi_{\cV}(t_0, x_0;t))\quad \quad \forall\, t \in [0, T],\\
    \xi_{\cV}(t_0, x_0;t_0) &= x_0,
\end{align*}
which, in integral form, reads
\begin{equation}
\xi_{\cV}(t_0, x_0;t)=x_0+\int_{t_0}^{t} \cV(s,\xi_{\cV}(t_0, x_0;s))\dd s \quad \quad \forall t \in [0, T].
\label{eq:char_xi}
\end{equation}
\end{defi}
These characteristics have specific properties that we require in the subsequent analysis. 

\begin{lem}[properties of the characteristics]\label{lem:characteristics_properties}\label{lem:properties_characteristics}
Let \(\cV\in L^{\infty}((0,T);W^{1,\infty}(\R))\) be given. Let \(\xi_{\cV}\) be the characteristics associated with \(\cV\), as defined in \cref{defi:characteristics}. 
Then, for almost all \((t, x, \tau) \in \Omega_T \times [0, T]\), it holds that
\begin{align*}
\xi_{\cV}(t, \xi_{\cV}(\tau, x; t); \tau) &= x, \\
\partial_1 \xi_{\cV}(t, x; \tau) + \cV(t, x)\partial_2 \xi_{\cV}(t, x; \tau) &=0, \\
\partial_2 \xi_{\cV} (t, \xi_{\cV}(\tau, x; t); \tau) &= \tfrac{1}{\partial_2 \xi_{\cV}(\tau, x; t)}.
\end{align*}
Furthermore, \(\xi_{\cV}\) is monotonically increasing in the second variable, and 
\[
\|\partial_{2}\xi_{\cV}(t,\cdot;\tau)\|_{L^{\infty}(\R)}\leq \exp\Big(T\|\partial_{2}\cV\|_{L^{\infty}((0,T);L^{\infty}(\R))}\Big).
\]
Moreover, \(\xi_{\cV}\) is Lipschitz continuous with respect to the first variable, and it indeed holds \(\forall t_{1}, t_{2} \in [0, T]\) that 
\begin{equation}
\begin{aligned}
\|\xi_{\cV}(t_{1},\cdot;\ast) - \xi_{\cV}(t_{2},\cdot;\ast)\|_{L^{\infty}((0,T);L^{\infty}(\R))} &\leq \|\cV\|_{L^{\infty}((0, T); L^{\infty}(\R))} \exp\big(T\|\partial_{2}\cV\|_{L^{\infty}((0, T); L^{\infty}(\R))}\big) |t_{1} - t_{2}|\\
\|\xi_{\cV}(\ast,\cdot;t_{1}) - \xi_{\cV}(\ast,\cdot;t_{2})\|_{L^{\infty}((0,T);L^{\infty}(\R))} &\leq \|\cV\|_{L^{\infty}((0, T); L^{\infty}(\R))} |t_{1} - t_{2}|.
\end{aligned}
\label{eq:characteristics_Lipschitz_time}
\end{equation}
\end{lem}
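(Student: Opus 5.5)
We prove everything from the ODE \cref{eq:char_xi}, using only the Carathéodory theory for the nonautonomous ODE with right-hand side \(\cV\in L^{\infty}((0,T);W^{1,\infty}(\R))\). Such a \(\cV\) is measurable in \(t\), uniformly Lipschitz in \(x\) with constant \(L\coloneqq\|\partial_{2}\cV\|_{L^{\infty}((0,T);L^{\infty}(\R))}\), and bounded by \(M\coloneqq\|\cV\|_{L^{\infty}((0,T);L^{\infty}(\R))}\). Hence \(\xi_{\cV}(t_0,x_0;\cdot)\) exists globally on \([0,T]\), is unique, and is absolutely continuous. The first identity \(\xi_{\cV}(t,\xi_{\cV}(\tau,x;t);\tau)=x\) is the flow (group) property. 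Both sides, viewed as functions of \(\tau\), solve the same ODE and agree at \(\tau=t\), where both equal \(\xi_{\cV}(\tau,x;t)\) after relabelling. Uniqueness then gives the identity for all \((t,x,\tau)\). Strict monotonicity in \(x_0\) also follows from uniqueness: two characteristics cannot cross.

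For the Lipschitz bound in \(x\), take two starting points \(x_0,y_0\) and set \(d(t)=\xi_{\cV}(t_0,x_0;t)-\xi_{\cV}(t_0,y_0;t)\). The ODE gives \(|d(t)|\le|x_0-y_0|+L|\int_{t_0}^{t}|d(s)|\dd s|\), so Grönwall yields \(|d(t)|\le|x_0-y_0|e^{L|t-t_0|}\le|x_0-y_0|e^{LT}\). This is the claimed bound on \(\partial_{2}\xi_{\cV}\), understood a.e. by Rademacher. For the same reason \(\partial_{2}\xi_{\cV}\ge e^{-LT}>0\), which gives monotonicity and invertibility.

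The Lipschitz continuity in the third variable is immediate: \(|\xi_{\cV}(t_0,x_0;t_1)-\xi_{\cV}(t_0,x_0;t_2)|\le\int_{t_1}^{t_2}|\cV|\le M|t_1-t_2|\). For the first variable, we use the flow property. Write \(\xi_{\cV}(t_1,x;\tau)=\xi_{\cV}\big(t_2,\xi_{\cV}(t_1,x;t_2);\tau\big)\). The inner point differs from \(x\) by at most \(M|t_1-t_2|\), and the map \(\xi_{\cV}(t_2,\cdot;\tau)\) is Lipschitz with constant \(e^{LT}\). Together these give the first estimate in \cref{eq:characteristics_Lipschitz_time}.

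For the derivative identities, we differentiate the flow identity. Differentiating \(\xi_{\cV}(t,\xi_{\cV}(\tau,x;t);\tau)=x\) in \(x\) gives \(\partial_{2}\xi_{\cV}(t,\xi_{\cV}(\tau,x;t);\tau)\,\partial_{2}\xi_{\cV}(\tau,x;t)=1\), which is the third identity. For the transport identity, we use \(\xi_{\cV}(t+h,\xi_{\cV}(t,x;t+h);\tau)=\xi_{\cV}(t,x;\tau)\). Dividing by \(h\) and letting \(h\to0\), with \(\partial_{h}\xi_{\cV}(t,x;t+h)|_{h=0}=\cV(t,x)\) at Lebesgue points, yields \(\partial_{1}\xi_{\cV}+\cV\,\partial_{2}\xi_{\cV}=0\).

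The main obstacle is justifying these chain rules under low regularity. The function \(\cV\) is only \(L^{\infty}\) in time, and \(\xi_{\cV}\) is merely Lipschitz, so neither classical differentiability nor the chain rule can be taken for granted. Our first attempt is to mollify \(\cV\) in \((t,x)\). For the mollified field \(\cV_{\eps}\) all identities hold classically, with the same bounds \(M\) and \(L\). We then pass to the limit in distributional form. By Grönwall, \(\xi_{\cV_{\eps}}\to\xi_{\cV}\) uniformly, because \(\|\cV_{\eps}-\cV\|_{L^{1}((0,T);L^{\infty}_{\textnormal{loc}})}\to0\). Moreover, \(\partial_{i}\xi_{\cV_{\eps}}\) converges weak-\(*\) by the uniform Lipschitz bounds. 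The product \(\cV_{\eps}\partial_{2}\xi_{\cV_{\eps}}\) converges distributionally because \(\cV_{\eps}\to\cV\) strongly in \(L^{1}_{\textnormal{loc}}\). This gives the transport identity a.e. If instead we argue pointwise, the third identity follows from the chain rule for the composition of Lipschitz maps with a bi-Lipschitz inner map, which is valid a.e.
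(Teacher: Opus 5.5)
Your argument is correct, but it does not follow the paper's route. The paper gives no proof at this point and simply cites \cite[Section~2]{Keimer2023}, where these properties are proved for a much more general class of velocity fields, including spatially discontinuous ones. You instead derive everything directly from Carath\'eodory theory, using that $\cV$ is uniformly Lipschitz in space with constant $L$ and bounded by $M$:
\begin{itemize}
\item uniqueness gives the flow identity and the non-crossing of characteristics;
\item Gr\"onwall gives $e^{-L|t-\tau|}\le\partial_2\xi_{\cV}\le e^{L|t-\tau|}$, which even sharpens the stated constant $e^{LT}$;
\item the flow identity, together with $|\cV|\le M$ and the spatial Lipschitz bound, gives the estimate in the first variable;
\item mollification, or Rademacher plus the a.e.\ chain rule for a bi-Lipschitz inner map, handles the two derivative identities.
\end{itemize}
Your route is self-contained and tailored exactly to the hypothesis $\cV\in L^{\infty}((0,T);W^{1,\infty}(\R))$. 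The cited reference buys generality, namely well-posed characteristics without spatial Lipschitz continuity, which this lemma does not need.

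Three points should be tidied.
\begin{itemize}
\item The flow-property sentence is misstated. Taken literally, the right-hand side $x$ does not solve the ODE in $\tau$. You should compare $s\mapsto\xi_{\cV}(\tau,x;s)$ and $s\mapsto\xi_{\cV}(t,\xi_{\cV}(\tau,x;t);s)$: both solve the ODE and coincide at $s=t$, so evaluating at $s=\tau$ gives the identity.
\item In the mollification step, $\xi_{\cV_\eps}\to\xi_{\cV}$ only locally uniformly in the starting point. The convergence $\|\cV_\eps-\cV\|_{L^{1}((0,T);L^{\infty}(K))}\to0$ holds only for compact $K$, not for $K=\R$. Trajectories starting in a compact set stay in a compact set because $|\cV|\le M$, and local uniform convergence is all the distributional limit requires.
\item If you use the pointwise version of the transport identity, you need joint Fr\'echet differentiability of $(t,x)\mapsto\xi_{\cV}(t,x;\tau)$. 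This follows from Rademacher, since your two Lipschitz estimates make this map jointly Lipschitz. You also need $\partial_s\xi_{\cV}(t,x;s)|_{s=t}=\cV(t,x)$ for a.e.\ $(t,x)$. This follows from Lebesgue points of $s\mapsto\cV(s,x)$ combined with $|\cV(s,\xi_{\cV}(t,x;s))-\cV(s,x)|\le LM|s-t|$.
\end{itemize}
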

\begin{proof}
The proof can be found for a much more general case in \cite[Section 2]{Keimer2023}.
\end{proof}
Because we will follow a fixed-point approach, we need to bound the spatial derivatives of the characteristics in terms of the velocity \(\cV\), as follows.
\begin{lem}[\(TV\) estimation of  \(\partial_2 \xi_{\cV}\)] \label{lem:TV_estimate_partial_2_xi}
For a time horizon \(T\in\R_{>0}\), let \(\cV\) be in \(L^{\infty}((0,T);W^{1,\infty}(\R))\) so that \(\partial_{2}\cV\in L^{\infty}((0,T);TV(\R))\).
Then, the following estimate holds for the characteristics \(\xi_{\cV}\) associated with \(\cV\), as defined in \cref{defi:characteristics} for \((t, \tau)\in [0,T]^{2}\):
\[\big|\partial_{2}\xi_{\cV}(t,\cdot;\tau)\big|_{TV(\R)}\leq \|\partial_{2}\xi_{\cV}(t,\cdot;\tau)\|_{L^{\infty}(\R)}|\tau-t|\|\partial_{2}\cV\|_{L^{\infty}((0, T);TV(\R))}. \]
Furthermore, assuming that the nonlocal velocity is indeed an operator dependent on \((\rho,\omega)\) as suggested in \cref{eq:weak_3}, we let
\cref{ass:small_time_existence_uniqueness} hold and assume that
\[(\rho,\omega)\in \Big(C\big([0,T];L^{1}_{\text{\loc}}(\R)\big)^2\Big)\cap    \Big(L^{\infty}\big((0,T);TV(\R)\big)^{2}\Big).\] 
Then, for \(t \in [0, T]\) a.e., 
\begin{align*}
\big|\partial_{2}\xi_{\cV}(t,\cdot;0)\big|_{TV(\R)} & \leq   \|\partial_2 \xi_{\cV}(t, \cdot; 0)\|_{L^{\infty}(\R)} |\kappa|_{TV(\R)} \bigg(\|\partial_1 V \|_{L^{\infty}( Q[\rho,\omega](t))}\int_{0}^{t}  |\rho(s, \cdot)|_{TV(\R)}\dd s \nonumber \\
& \quad +  \|\partial_2 V \|_{L^{\infty}(Q[\rho,\omega](t))} \int_0^t |\omega(s,\cdot)|_{TV(\R)}  \dd s \bigg)
\end{align*}
with  \(Q[\rho, \omega](t)\) as in \cref{eq:ass_TV_estimate_definition_Q}.
\end{lem}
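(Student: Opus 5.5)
We prove the first estimate from the linearized characteristic equation and obtain the second by plugging in \cref{lem:TV_partial_2_cV}. We first assume in addition that \(\cV\) is smooth in space, so that \(\partial_2\xi_{\cV}\) is differentiable in \(x\). The general case follows by mollifying \(\cV\) in space: this keeps \(\|\partial_2\cV\|_{L^\infty}\) and \(|\partial_2\cV|_{TV}\) under control, and both sides of the estimate pass to the limit by lower semicontinuity of the \(TV\) seminorm together with the uniform convergence of the characteristics.

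\textbf{Linearized equation and its solution.} Differentiating \cref{eq:char_xi} with respect to \(x_0\) gives
\[
\partial_3\partial_2\xi_{\cV}(t,x;s)=\partial_2\cV\big(s,\xi_{\cV}(t,x;s)\big)\,\partial_2\xi_{\cV}(t,x;s),\qquad \partial_2\xi_{\cV}(t,x;t)=1 .
\]
Hence
\[
\partial_2\xi_{\cV}(t,x;\tau)=\exp\Big(\int_t^\tau \partial_2\cV\big(s,\xi_{\cV}(t,x;s)\big)\,ds\Big).
\]

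\textbf{Bounding the total variation in \(x\).} We use \(|e^{g}|_{TV}\le \|e^{g}\|_{L^\infty}|g|_{TV}\), which follows from the mean value theorem applied to differences \(e^{g(x_{i+1})}-e^{g(x_i)}\) over partitions. This gives
\[
|\partial_2\xi_{\cV}(t,\cdot;\tau)|_{TV(\R)}\le \|\partial_2\xi_{\cV}(t,\cdot;\tau)\|_{L^\infty(\R)}\int_{\min(t,\tau)}^{\max(t,\tau)}\big|\partial_2\cV\big(s,\xi_{\cV}(t,\cdot;s)\big)\big|_{TV(\R)}\,ds .
\]
By \cref{lem:characteristics_properties}, \(x\mapsto\xi_{\cV}(t,x;s)\) is monotone increasing, so composing with it does not increase total variation. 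Therefore
\[
\big|\partial_2\cV\big(s,\xi_{\cV}(t,\cdot;s)\big)\big|_{TV(\R)}\le|\partial_2\cV(s,\cdot)|_{TV(\R)}\le\|\partial_2\cV\|_{L^\infty((0,T);TV(\R))},
\]
and the first estimate follows.

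\textbf{Second estimate.} We set \(\tau=0\) and keep the time integral rather than bounding by \(t\) times the supremum. This gives
\[
|\partial_2\xi_{\cV}(t,\cdot;0)|_{TV(\R)}\le\|\partial_2\xi_{\cV}(t,\cdot;0)\|_{L^\infty(\R)}\int_0^t|\partial_2\cV[\rho,\omega](s,\cdot)|_{TV(\R)}\,ds .
\]
We then insert \cref{eq:partial_2_cV_TVbound} for each \(s\). The only care needed is that \cref{eq:partial_2_cV_TVbound} involves \(Q[\rho,\omega](s)\) rather than \(Q[\rho,\omega](t)\). We handle this by bounding \(\|\partial_iV\|_{L^\infty(Q[\rho,\omega](s))}\) by the supremum over \(s\in[0,t]\), or equivalently by reading \(Q[\rho,\omega](t)\) as the box built from the \(L^\infty\) norms on \([0,t]\). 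Pulling these constants out of the integral yields the stated estimate.

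\textbf{Main obstacle.} The step needing the most care is the rigorous justification of the total-variation inequality for the time integral, since \(\partial_2\cV\) is only \(TV\) in \(x\) and merely measurable in time. I would handle it with the mollification described at the start, using uniform convergence of the characteristics together with lower semicontinuity of the \(TV\) seminorm. The monotonicity of \(\xi_{\cV}\) is what makes the composition step work.
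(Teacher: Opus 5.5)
Your core computation is what the paper's two-line proof has in mind: a direct computation for smooth \(\cV\), followed by \cref{eq:partial_2_cV_TVbound}. For smooth \(\cV\) it is correct. The exponential representation, the inequality \(|e^{g}|_{TV}\le\|e^{g}\|_{L^{\infty}}|g|_{TV}\), and the invariance of the variation under the increasing map \(x\mapsto\xi_{\cV}(t,x;s)\) give the first bound.

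\textbf{The gap is in the density step as you justify it.} After mollifying, uniform convergence \(\xi_{\cV_\eps}\to\xi_{\cV}\) only gives \(\partial_2\xi_{\cV_\eps}(t,\cdot;\tau)\overset{*}{\rightharpoonup}\partial_2\xi_{\cV}(t,\cdot;\tau)\) in \(L^{\infty}(\R)\). That handles the left-hand side. The right-hand side, however, carries the solution-dependent factor \(\|\partial_2\xi_{\cV_\eps}(t,\cdot;\tau)\|_{L^{\infty}(\R)}\), and the \(L^{\infty}\) norm is only lower semicontinuous under this convergence. You would need \(\limsup_{\eps\to0}\|\partial_2\xi_{\cV_\eps}(t,\cdot;\tau)\|_{L^{\infty}}\le\|\partial_2\xi_{\cV}(t,\cdot;\tau)\|_{L^{\infty}}\), and nothing in your argument provides this. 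Bounding the factor by \(\exp(|t-\tau|\|\partial_2\cV\|_{L^{\infty}((0,T);L^{\infty}(\R))})\) does pass to the limit, but it proves a weaker statement.

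\textbf{Fix: pass to the limit before exponentiating.} Your smooth computation actually proves a factor-free bound for \(g_\eps\coloneqq\log\partial_2\xi_{\cV_\eps}(t,\cdot;\tau)\), namely \(|g_\eps|_{TV(\R)}\le|\tau-t|\,\|\partial_2\cV\|_{L^{\infty}((0,T);TV(\R))}\). The \(g_\eps\) are also uniformly bounded in \(L^{\infty}\). By Helly's theorem and the weak-\(*\) identification of the limit, \(g_\eps\to g\coloneqq\log\partial_2\xi_{\cV}(t,\cdot;\tau)\) in \(L^{1}_{\textnormal{loc}}(\R)\). Lower semicontinuity then gives the same bound for \(g\). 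Only after that do you apply \(|e^{g}|_{TV}\le\|e^{g}\|_{L^{\infty}}|g|_{TV}\), using a good representative of \(g\) whose values lie in \([\essinf g,\esssup g]\).

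\textbf{On \(Q[\rho,\omega]\).} Your handling of the box in the second estimate is right, but it is not an ``equivalent reading.'' With \(Q[\rho,\omega](t)\) built from the norms at the single time \(t\), as in \cref{eq:ass_TV_estimate_definition_Q}, the estimate is false in general. A counterexample:
\begin{itemize}
\item take \(\omega\equiv1\) and \(\kappa\) the indicator function of \([-1,0]\);
\item take \(V(\rho,\omega)=\varphi(\rho)\) with \(\varphi\in C^{1}\) nondecreasing, \(\varphi\equiv0\) on \((-\infty,1]\), and \(\varphi(3)>0\);
\item take \(\rho(s,\cdot)=a(s)\) times the indicator of \([0,1]\), with \(a\) continuous, \(a=3\) on \([0,t_0]\) and \(a=\tfrac12\) on \([2t_0,T]\).
\end{itemize}
For \(t\in[2t_0,3t_0]\) the right-hand side vanishes, since \(\partial_1V=\partial_2V=0\) on \(Q[\rho,\omega](t)\). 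On the other hand \(\cV(s,x)=\varphi(a(s))(1-|x|)_+\). So for \(t_0\) small, \(\partial_2\xi_{\cV}(t,\cdot;0)\le\exp(-t_0\varphi(3))<1\) near \(x=-\tfrac12\), while it equals \(1\) on \((-\infty,-1)\). The box over all of \([0,t]\) is therefore genuinely needed. This is harmless for the paper, whose later arguments only invoke the first estimate.
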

\begin{proof}
The first estimate follows from a straightforward computation, assuming that the velocity \(\cV\) is arbitrarily smooth.  The second follows from the first by applying the estimates in \cref{lem:TV_partial_2_cV} and \cref{eq:partial_2_cV_TVbound}.
\end{proof}
We also require a technical lemma about \(BV\) functions in composition with diffeomorphisms.
\begin{lem}[composition of a \(BV\) 
function with a diffeomorphism]\label{lem:TV_estimate_composition}
Let \(u \in BV_{\textnormal{loc}}(\R)\) so that \(u\in TV(\R).\) Furthermore, let \(s, \tilde{s} \in W^{1, \infty}(\R)\) and \(c \in \R_{>0}\) be given such that \(s'(x) \geq c \leq \tilde{s}'(x)\ \forall x \in \R\). Then, we have
\[\|u \circ s - u \circ \tilde{s}\|_{L^{1}(\R)} \leq |u|_{TV(\R)}\big\|s^{-1} - \tilde{s}^{-1}\big\|_{L^{\infty}(\R)}.\]
\end{lem}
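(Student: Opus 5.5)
The strategy is to reduce to smooth \(u\) by density, and then estimate the difference pointwise by integrating \(u'\) over the segment between \(s(x)\) and \(\tilde s(x)\). Fubini then turns this into an integral of \(|u'(y)|\) weighted by the length of the set of \(x\) whose segment covers \(y\), and that length is controlled by \(\|s^{-1}-\tilde s^{-1}\|_{L^\infty}\). Since \(s'\geq c>0\) and \(s\in W^{1,\infty}(\R)\), both \(s\) and \(\tilde s\) are strictly increasing bi-Lipschitz bijections of \(\R\). Hence \(s^{-1},\tilde s^{-1}\) exist and are Lipschitz with constant \(1/c\), and the compositions \(u\circ s,u\circ\tilde s\) are well defined almost everywhere.

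\textbf{Smooth case.} Assume first \(u\in C^{1}(\R)\) with \(u'\in L^{1}(\R)\). For each \(x\),
\[
|u(s(x))-u(\tilde s(x))|\leq \int_{\R}|u'(y)|\,\mathds{1}_{I(x)}(y)\dd y,
\]
where \(I(x)\) is the closed interval between \(s(x)\) and \(\tilde s(x)\). Integrating in \(x\) and applying Tonelli gives
\[
\|u\circ s-u\circ\tilde s\|_{L^{1}(\R)}\leq\int_{\R}|u'(y)|\,\big|\{x: y\in I(x)\}\big|\dd y .
\]
The key observation is that \(y\in I(x)\) means \(\min(s(x),\tilde s(x))\leq y\leq\max(s(x),\tilde s(x))\). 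By monotonicity, \(s(x)\leq y\iff x\leq s^{-1}(y)\), and likewise for \(\tilde s\). So \(y\in I(x)\) holds exactly when \(x\) lies between \(s^{-1}(y)\) and \(\tilde s^{-1}(y)\). The measure of this set is therefore \(|s^{-1}(y)-\tilde s^{-1}(y)|\leq\|s^{-1}-\tilde s^{-1}\|_{L^\infty(\R)}\), and since \(\|u'\|_{L^1(\R)}=|u|_{TV(\R)}\) the claim follows in the smooth case.

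\textbf{General case.} For general \(u\in TV(\R)\), I would take the mollification \(u_\eps=u\ast\varphi_\eps\). This satisfies \(|u_\eps|_{TV(\R)}\leq|u|_{TV(\R)}\) and \(u_\eps\to u\) in \(L^1_{\textnormal{loc}}\) and almost everywhere. Because \(s,\tilde s\) are bi-Lipschitz, composition maps null sets to null sets. It follows that \(u_\eps\circ s\to u\circ s\) almost everywhere, and similarly for \(\tilde s\). Fatou's lemma then passes the estimate to the limit:
\[
\|u\circ s-u\circ\tilde s\|_{L^1}\leq\liminf_{\eps\to0}\|u_\eps\circ s-u_\eps\circ\tilde s\|_{L^1}\leq|u|_{TV(\R)}\|s^{-1}-\tilde s^{-1}\|_{L^\infty}.
\]

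The main point needing care is the set identity \(\{x:y\in I(x)\}=[\min(s^{-1}(y),\tilde s^{-1}(y)),\max(s^{-1}(y),\tilde s^{-1}(y))]\). This is where surjectivity and strict monotonicity (guaranteed by \(s'\geq c\)) are used. The approximation step is routine because only Fatou and a.e. convergence are needed, not uniform convergence.
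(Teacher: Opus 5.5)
Your proof is correct. The paper gives no argument of its own for this lemma and only refers to \cite[Lemma 2.9]{KeimerMultilane2022}, so your proof is a complete, self-contained replacement for that citation. The argument runs as follows: write \(u(s(x))-u(\tilde s(x))\) as an integral of \(u'\) over the segment between \(s(x)\) and \(\tilde s(x)\), apply Tonelli, and identify \(\{x:\ y\in I(x)\}\) as the interval between \(s^{-1}(y)\) and \(\tilde s^{-1}(y)\). Mollification and Fatou's lemma then pass the estimate to general \(u\). The set identity is exactly right, and so is the observation that preimages of null sets under \(s,\tilde s\) are null. That observation is what makes \(u\circ s\) well defined and lets the a.e.\ convergence survive composition.

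Two minor remarks. First, read literally, \(s\in W^{1,\infty}(\R)\) together with \(s'\geq c>0\) is impossible, since \(s\) would be unbounded. The hypothesis must therefore mean that \(s\) is Lipschitz, as you tacitly assume. Second, your argument uses only the lower bound \(s'\geq c\): it gives bijectivity and the Lipschitz continuity of \(s^{-1}\). The upper Lipschitz bound plays no role, and \(c\) does not enter the final constant. In the smooth case your computation even gives the slightly sharper weighted bound \(\int_{\R}|u'(y)|\,|s^{-1}(y)-\tilde s^{-1}(y)|\dd y\).
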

\begin{proof}
    The proof can be found in \cite[Lemma 2.9]{KeimerMultilane2022}.
\end{proof}

Next, we show that the characteristics are Lipschitz continuous with respect to the velocity field \(\cV\) in a specific topology, as required for the later fixed-point argument starting in \cref{defi:fixed_point_mapping}.
\begin{lem}[Lipschitz continuity of characteristics with respect to the velocity field]\label{lem:stability_characteristics}
Let the two velocity fields \(\cV,\tilde{\cV}\in L^{\infty}((0,T);W^{1,\infty}(\R))\) be given, assume in addition that \(\partial_{2}\cV,\partial_{2}\tilde{\cV}  \in L^{\infty}((0,T);TV(\R))\), and denote by \(\xi_{\cV}\) and \(\xi_{\tilde{\cV}}\) the corresponding characteristics, as in \cref{defi:characteristics}.
Then, the following stability estimates hold for all \((t,\tau)\in[0,T]^{2}\):
\begin{align}
    \|\xi_{\cV}(t,\cdot;\tau)-\xi_{\tilde{\cV}}(t,\cdot;\tau)\|_{L^{\infty}(\R)}&\leq |t-\tau|\|\cV-\tilde{\cV}\|_{L^{\infty}((0,T);L^{\infty}(\R))}\exp\Big(T\|\partial_{2}\cV\|_{L^{\infty}((0,T);L^{\infty}(\R))}\Big) \label{xi_cV_L_infty}\\
    \|\partial_{2}\xi_{\cV}(t,\cdot;\tau)-\partial_{2}\xi_{\tilde{\cV}}(t,\cdot;\tau)\|_{L^{1}(\R)}&\leq \exp\big(|t - \tau|\|\partial_2 \cV\|_{L^{\infty}((0, T); L^{\infty}(\R))}\big) |t -\tau| \nonumber \\ & \cdot \Big(\|\partial_2 \cV\|_{L^{\infty}((0, T); TV(\R))} \|\xi_{\cV} - \xi_{\tilde{\cV}}\|_{L^{\infty}(\Omega_T \times [0, T])} \nonumber \\
    & \quad + \exp\big(|t - \tau|\|\partial_2 \cV\|_{L^{\infty}((0, T); L^{\infty}(\R))}\big) \|\cV - \tilde{\cV}\|_{L^{\infty}((0, T); TV(\R))} \Big).  \label{partial_2_xi_cV_L_1}
\end{align}
We also have the following time regularity:
\begin{equation}
\lim_{t\rightarrow \tau}\|\partial_{2}\xi_{\cV}(t,\cdot;0) - \partial_{2}\xi_{\cV}(\tau,\cdot;0)\|_{L^{1}_{\textnormal{loc}}(\R)}=0.\label{eq:time_regularity_partial_2_xi}
\end{equation}
\end{lem}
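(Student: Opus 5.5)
We work directly with the integral form \cref{eq:char_xi} and with its spatial derivative. For the first estimate \cref{xi_cV_L_infty} we subtract the integral equations for \(\xi_{\cV}\) and \(\xi_{\tilde{\cV}}\) started at the same point \((t,x)\):
\[
\xi_{\cV}(t,x;\tau)-\xi_{\tilde{\cV}}(t,x;\tau)=\int_{t}^{\tau}\Big(\cV(s,\xi_{\cV}(t,x;s))-\cV(s,\xi_{\tilde{\cV}}(t,x;s))\Big)+\Big(\cV-\tilde{\cV}\Big)(s,\xi_{\tilde{\cV}}(t,x;s))\dd s .
\]
We bound the first bracket by \(\|\partial_{2}\cV\|_{L^{\infty}}|\xi_{\cV}-\xi_{\tilde{\cV}}|\) and the second by \(\|\cV-\tilde{\cV}\|_{L^{\infty}}\). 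Gronwall's inequality, applied on the interval between \(t\) and \(\tau\) (either orientation), then gives \(|t-\tau|\,\|\cV-\tilde{\cV}\|_{L^{\infty}}\exp(|t-\tau|\|\partial_{2}\cV\|_{L^{\infty}})\), which is bounded by the stated right-hand side. As elsewhere in this section, we may first take \(\cV,\tilde{\cV}\) smooth in space and pass to the limit by approximation (mollifying in \(x\), which preserves the \(L^{\infty}\), \(W^{1,\infty}\) and \(TV\) bounds). This keeps all the manipulations below rigorous.

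For \cref{partial_2_xi_cV_L_1} we differentiate \cref{eq:char_xi} in \(x\). This shows that \(\partial_{2}\xi_{\cV}(t,x;\cdot)\) solves the linear ODE \(\partial_{3}\partial_{2}\xi_{\cV}=\partial_{2}\cV(s,\xi_{\cV})\partial_{2}\xi_{\cV}\) with value \(1\) at \(s=t\), so that
\[
\partial_{2}\xi_{\cV}(t,x;\tau)=\exp\Big(\int_{t}^{\tau}\partial_{2}\cV(s,\xi_{\cV}(t,x;s))\dd s\Big),
\]
and analogously for \(\tilde{\cV}\). We use \(|e^{a}-e^{b}|\le e^{\max(a,b)}|a-b|\), or equivalently a Gronwall argument on the difference \(D=\partial_{2}\xi_{\cV}-\partial_{2}\xi_{\tilde{\cV}}\), which satisfies
\[
\partial_{3}D=\partial_{2}\cV(s,\xi_{\cV})D+\Big(\partial_{2}\cV(s,\xi_{\cV})-\partial_{2}\tilde{\cV}(s,\xi_{\tilde{\cV}})\Big)\partial_{2}\xi_{\tilde{\cV}} .
\]
Integrating in \(x\) over \(\R\) gives \(L^{1}\) bounds. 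We split the source term into two pieces. The first is \(\partial_{2}\cV(s,\xi_{\cV})-\partial_{2}\cV(s,\xi_{\tilde{\cV}})\). Its \(L^{1}\) norm in \(x\) is controlled by a \(TV\)-translation estimate: for a \(TV\) function \(g\) and two increasing maps \(a,b\), \(\int|g(a(x))-g(b(x))|\,\partial_{2}\xi\dd x\lesssim|g|_{TV}\|a-b\|_{L^{\infty}}\). This is exactly \cref{lem:TV_estimate_composition}, applied after a change of variables with the inverse maps, using \cref{lem:properties_characteristics} to identify inverses and their Lipschitz bounds. This produces \(\|\partial_{2}\cV\|_{L^{\infty}TV}\|\xi_{\cV}-\xi_{\tilde{\cV}}\|_{L^{\infty}}\). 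The second piece is \(\partial_{2}(\cV-\tilde{\cV})(s,\xi_{\tilde{\cV}})\partial_{2}\xi_{\tilde{\cV}}\). A change of variables \(y=\xi_{\tilde{\cV}}(t,x;s)\) turns its \(L^{1}\) norm into \(\|\partial_{2}(\cV-\tilde{\cV})(s,\cdot)\|_{L^{1}}\le|\cV-\tilde{\cV}|_{TV}\), which is where the \(L^{\infty}TV\) norm of \(\cV-\tilde{\cV}\) enters. The exponential factor \(\exp(|t-\tau|\|\partial_{2}\cV\|_{L^{\infty}})\) comes from the Gronwall step and from the bound on \(\partial_{2}\xi\) in \cref{lem:properties_characteristics}, which together yield the claimed product form.

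The step we expect to be the main obstacle is the first piece: the \(L^{1}\) control of \(\partial_{2}\cV\) evaluated along two different characteristic flows. Here \(\partial_{2}\cV\) is only \(TV\) in space, not Lipschitz, so a pointwise difference estimate fails and the composition lemma has to be used carefully. Its hypothesis of derivatives bounded below by \(c>0\) is guaranteed by the exponential formula above, since \(\partial_{2}\xi\ge\exp(-T\|\partial_{2}\cV\|_{L^{\infty}})\). We also have to keep track of which flow's Jacobian appears after the substitution, so that the constants match the stated ones.

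Finally, for \cref{eq:time_regularity_partial_2_xi} we again use the exponential representation of \(\partial_{2}\xi_{\cV}(t,x;0)\). For fixed \(x\), the map \(t\mapsto\int_{t}^{0}\partial_{2}\cV(s,\xi_{\cV}(t,x;s))\dd s\) changes in two ways. The change in the integration interval is bounded by \(|t-\tau|\|\partial_{2}\cV\|_{L^{\infty}}\). The change of the starting point is handled by the Lipschitz bound \cref{eq:characteristics_Lipschitz_time} together with the same \(TV\)-translation argument in \(L^{1}_{\textnormal{loc}}\). Multiplying by the uniform \(L^{\infty}\) bound on the exponentials then gives the convergence to zero.
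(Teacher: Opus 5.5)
Your proof is correct. It also supplies what the paper omits: for \cref{xi_cV_L_infty} and \cref{partial_2_xi_cV_L_1} the paper only says they ``follow by straightforward computations''. Your Gronwall bound on the characteristics, together with the linear equation for \(D=\partial_2\xi_{\cV}-\partial_2\xi_{\tilde{\cV}}\), is presumably the intended computation, now carried out in full.

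The splitting you chose (coefficient \(\partial_2\cV(s,\xi_{\cV})\), source weighted by \(\partial_2\xi_{\tilde{\cV}}\)) is the right one, because the substitution \(y=\xi_{\tilde{\cV}}(t,x;s)\) absorbs the weight exactly. The first source term becomes \(\int_{\R}|g(\psi(y))-g(y)|\dd y\) with \(g=\partial_2\cV(s,\cdot)\) and \(\psi=\xi_{\cV}(t,\xi_{\tilde{\cV}}(s,\cdot;t);s)\). Then \cref{lem:TV_estimate_composition} gives \(|g|_{TV(\R)}\|\psi^{-1}-\mathrm{id}\|_{L^\infty(\R)}=|g|_{TV(\R)}\|\xi_{\cV}(t,\cdot;s)-\xi_{\tilde{\cV}}(t,\cdot;s)\|_{L^\infty(\R)}\). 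The second term becomes \(|\cV(s,\cdot)-\tilde{\cV}(s,\cdot)|_{TV(\R)}\) with no Jacobian left over. You therefore get \(|t-\tau|\exp(|t-\tau|\|\partial_2\cV\|_{L^\infty})\) times the sum of the two terms. This is slightly sharper than the stated bound, with one exponential instead of two on the \(TV\) term.

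Two small corrections. First, the alternative via \(|e^a-e^b|\le e^{\max(a,b)}|a-b|\) is not equivalent to the Gronwall route: it brings in \(\|\partial_2\tilde{\cV}\|_{L^\infty}\), which does not appear in \cref{partial_2_xi_cV_L_1}. Second, apply Gronwall pointwise in \(x\) before integrating in \(x\), i.e.\ use variation of constants, \(|D(t,x;\tau)|\le e^{L|t-\tau|}\big|\int_t^\tau|f(t,x;r)|\dd r\big|\). The reason is that \(\|D(t,\cdot;s)\|_{L^1(\R)}\) is not known to be finite a priori.

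Where you genuinely depart from the paper is \cref{eq:time_regularity_partial_2_xi}. The paper does not prove it but cites \cite[Proposition 32]{Keimer2023}, which belongs to a framework built for much less regular, even discontinuous, velocity fields. Your direct argument has three parts:
\begin{enumerate}
\item the exponential representation of \(\partial_2\xi_{\cV}(t,x;0)\);
\item the change of integration interval, which costs \(\|\partial_2\cV\|_{L^\infty}|t-\tau|\);
\item the shift of the starting point, handled by \cref{lem:TV_estimate_composition} applied with the inverse maps \(\xi_{\cV}(s,\cdot;t)\), \(\xi_{\cV}(s,\cdot;\tau)\) and the second line of \cref{eq:characteristics_Lipschitz_time}.
\end{enumerate}
This is self-contained. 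Because it exploits the assumed \(TV\) bound on \(\partial_2\cV\), it gives more than the stated limit, namely a Lipschitz modulus in time:
\[
\|\partial_2\xi_{\cV}(t,\cdot;0)-\partial_2\xi_{\cV}(\tau,\cdot;0)\|_{L^1(K)}\le \exp\big(T\|\partial_2\cV\|_{L^\infty}\big)\big(|K|\,\|\partial_2\cV\|_{L^\infty}+T\|\partial_2\cV\|_{L^\infty(TV)}\|\cV\|_{L^\infty}\big)|t-\tau|
\]
for compact \(K\). The citation buys generality; your route buys self-containedness and a rate.
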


\begin{proof}
The claimed estimates follow by straightforward computations.  The time regularity estimate  \cref{eq:time_regularity_partial_2_xi} of the spatial derivative of \(\xi_{\cV}\) can be found in \cite[Proposition 32]{Keimer2023}.
\end{proof}

Being prepared with the previous estimates and results on characteristics and stability, we can move forward to defining a fixed-point mapping, which will be used to construct a weak solution and show its uniqueness.
First, we will briefly sketch the idea behind the fixed-point mapping and motivate its structure.

Suppose we are given a velocity function \(V(\rho,\omega)\), that is, we know the solution \((\rho,\omega)\) over the entire time horizon. Then, we can compute \(\cV[\rho,\omega]\) as defined in \cref{eq:weak_3}, and we know that it is Lipschitz continuous in the spatial variable because of the integral operator. With that in mind, we can define characteristics as in \cref{defi:characteristics} with \(\cV\coloneqq  \cV[\rho,\omega]\), and we can take advantage of the method of characteristics to write the solution in terms of these: 
\begin{align*}
    \rho(t,x)&=\rho_{0}(\xi_{\cV}(t,x;0))\partial_{2}\xi_{\cV}(t,x;0),&&(t,x)\in\OT  \text{ a.e.} \\
    \omega(t,x)&=\omega_{0}(\xi_{\cV}(t,x;0)), && (t,x)\in\OT \text{ a.e.}   \\
    \xi_{\cV}(t,x;\tau)&=x+\int_{t}^{\tau}\cV[\rho,\omega](s,\xi(t,x;s))\dd s.
\end{align*}
This yields the solution when the velocity field is given, leading to a fixed point in the nonlocal velocity, namely for \(k\in\N_{\geq0}\) given some initial guess for \((\rho^{(0)},\omega^{(0)})\) in a suitable space:
\begin{align*}
    \cV^{(k+1)}&\coloneqq \cV[\rho^{(k)},\omega^{(k)}]\\
    \rho^{(k+1)}(t,x)&=\rho_{0}(\xi_{\cV^{(k+1)}}(t,x;0))\partial_{2}\xi_{\cV^{(k+1)}}(t,x;0),&&(t,x)\in\OT  \text{ a.e.} \\
    \omega^{(k+1)}(t,x)&=\omega_{0}(\xi_{\cV^{(k+1)}}(t,x;0)), && (t,x)\in\OT \text{ a.e.}  
\end{align*}
Altogether, existence and uniqueness of solutions reduce to a fixed-point problem in the nonlocal velocity. This is made precise in the following definitions and lemmata.

\begin{defi}[the set of the fixed-point mapping \(\Omega(T)\)] \label{defi:set_Omega}
Let \cref{ass:small_time_existence_uniqueness} hold. Then, we define the following constants:
\begin{align*}
   \cV^{\infty}&\:\|\keta\|_{L^{1}(\R)}\|V\|_{L^{\infty}(Q_{0}(\omega_{0},\rho_{0}))},\\
   \cV^{1,\infty}&\:|\keta|_{TV(\R)}\|V\|_{L^{\infty}(Q_{0}(\omega_{0},\rho_{0}))}, \\
   \cV^{1,TV}&\: 42\|\keta\|_{BV(\R)}\|\partial_{2}V\|_{L^{\infty}(Q_{0}(\omega_{0},\rho_{0}))}|\omega_{0}|_{TV(\R)} \\
   &\quad +42\|\keta\|_{BV(\R)}\|\partial_{1}V\|_{L^{\infty}(Q_{0}(\omega_{0},\rho_{0}))}\big(|\rho_{0}|_{TV(\R)}+\|\rho_{0}\|_{L^{\infty}(\R)}\big),
\end{align*}
with \[Q_{0}(\omega_{0},\rho_{0})\coloneqq \big(-42\|\rho_{0}\|_{L^{\infty}(\R)},42\|\rho_{0}\|_{L^{\infty}(\R)}\big)\times \big(-\|\omega_{0}\|_{L^{\infty}(\R)},\|\omega_{0}\|_{L^{\infty}(\R)}\big)\subset\R^{2},\] 
and for \(T\in\R_{>0}\),
\begin{align*}
   \Omega(T)&\: \Big\{\cV\in L^{\infty}\big((0,T);W^{1,\infty}(\R)\big):\ \|\cV\|_{L^{\infty}((0,T);L^{\infty}(\R))}\leq \cV^{\infty}\ \wedge \ \|\partial_{2}\cV\|_{L^{\infty}((0,T);L^{\infty}(\R))}\leq \cV^{1,\infty}\\
    &\qquad\qquad \wedge \|\partial_{2}\cV\|_{L^{\infty}((0,T);TV(\R))}\leq \cV^{1,TV}\Big\}.
\end{align*}
\end{defi}
\begin{lem}[closedness of \(\Omega(T)\) in the respective topology]\label{lem:closed_set}
For each \(T\in\R_{>0},\)
the set \(\Omega(T)\) as in \cref{defi:set_Omega} is closed in the topology induced by the norm of \(L^{\infty}\big((0,T);L^{\infty}(\R)\cap TV(\R)\big)\).
\end{lem}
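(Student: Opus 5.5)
We take a sequence \((\cV_n)_{n\in\N}\subset\Omega(T)\) and a limit \(\cV\in L^{\infty}((0,T);L^{\infty}(\R)\cap TV(\R))\) with \(\cV_n\to\cV\) in that norm. We must show \(\cV\in\Omega(T)\). Concretely, this means \(\cV\in L^{\infty}((0,T);W^{1,\infty}(\R))\) and the three bounds \(\cV^{\infty}\), \(\cV^{1,\infty}\), \(\cV^{1,TV}\) hold. The strategy throughout is lower semicontinuity of each constraint with respect to the weaker convergence we have. We work at a.e. fixed time: after passing to a subsequence, and using that \(L^{\infty}\) convergence in time gives \(\|\cV_n(t,\cdot)-\cV(t,\cdot)\|_{L^{\infty}\cap TV}\to0\) for a.e. \(t\) (outside a common null set), it suffices to prove the bounds for each such \(t\).

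\textbf{The \(L^{\infty}\) bound.} This is immediate: \(\|\cV(t,\cdot)\|_{L^{\infty}}\le \|\cV_n(t,\cdot)\|_{L^{\infty}}+\|\cV_n(t,\cdot)-\cV(t,\cdot)\|_{L^{\infty}}\le \cV^{\infty}+o(1)\).

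\textbf{The Lipschitz bound.} Each \(\cV_n(t,\cdot)\) is Lipschitz with constant at most \(\cV^{1,\infty}\), and \(\cV_n(t,\cdot)\to\cV(t,\cdot)\) uniformly (after choosing continuous representatives). Passing to the limit in \(|\cV_n(t,x)-\cV_n(t,y)|\le \cV^{1,\infty}|x-y|\) shows that \(\cV(t,\cdot)\) is Lipschitz with the same constant. Hence \(\partial_2\cV(t,\cdot)\in L^{\infty}\) with \(\|\partial_2\cV(t,\cdot)\|_{L^\infty}\le\cV^{1,\infty}\), and \(\partial_2\cV_n(t,\cdot)\rightharpoonup^{*}\partial_2\cV(t,\cdot)\) weakly-\(*\) in \(L^{\infty}(\R)\), since distributional derivatives converge. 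Measurability in time of \(t\mapsto\cV(t,\cdot)\) into \(W^{1,\infty}\) (in the same weak sense used for the other elements) is inherited from the limit. With this, \(\cV\in L^{\infty}((0,T);W^{1,\infty}(\R))\).

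\textbf{The \(TV\) bound (main obstacle).} Here the convergence gives no direct control on \(\partial_2\cV_n\), so the key is that the \(TV\) seminorm in \cref{defi:TV} is a supremum of linear functionals. For \(\phi\in C^1_{\textnormal{c}}(\R)\) with \(\|\phi\|_{L^\infty}\le1\), integrating by parts gives
\[
\int_\R\phi'\,\partial_2\cV_n(t,\cdot)\dd x=-\int_\R\phi''\,\cV_n(t,\cdot)\dd x .
\]
If \(\phi\in C^2_{\textnormal{c}}\), this converges to the corresponding expression for \(\cV\), by uniform convergence of \(\cV_n(t,\cdot)\) on the compact support of \(\phi\). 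For general \(\phi\in C^1_{\textnormal{c}}\), the weak-\(*\) convergence of \(\partial_2\cV_n(t,\cdot)\) suffices, since \(\phi'\in L^1\). Therefore \(\int\phi'\partial_2\cV(t,\cdot)\dd x=\lim_n\int\phi'\partial_2\cV_n(t,\cdot)\dd x\le\cV^{1,TV}\). Taking the supremum over \(\phi\) yields \(|\partial_2\cV(t,\cdot)|_{TV(\R)}\le\cV^{1,TV}\) for a.e. \(t\), which completes the proof.
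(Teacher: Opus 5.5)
Your proof is correct and is the argument the paper compresses into its one-line justification: each of the three defining bounds of \(\Omega(T)\) passes to the limit by lower semicontinuity. One small inaccuracy: the \(TV\) part of the convergence does give direct control, since for Lipschitz functions \(|\cV_n(t,\cdot)-\cV(t,\cdot)|_{TV(\R)}=\|\partial_2\cV_n(t,\cdot)-\partial_2\cV(t,\cdot)\|_{L^{1}(\R)}\); your weak-\(*\) route is still valid and in fact uses only the \(L^{\infty}\) part of the convergence.
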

\begin{proof}
    This is a direct consequence of the uniform bounds in the definition of \(\Omega(T)\).
\end{proof}
Now, we define the key ingredient, the fixed-point mapping in the nonlocal velocity.
\begin{defi}[the fixed-point mapping for the nonlocal velocity on \(\Omega(T)\)] \label{defi:fixed_point_mapping}
Let \cref{ass:small_time_existence_uniqueness} hold, and for \(\Omega(T)\) as in \cref{defi:set_Omega}, we define the following mapping:
\begin{equation}
\cF:\begin{cases}
            \Omega(T)&\rightarrow L^{\infty}((0,T);L^{\infty}(\R))\\
            \cV&\mapsto \Big((t,x)\mapsto \int_{\R} \keta(x-\xi_{\cV}(0,y;t))V\big(\tfrac{\rho_{0}(y)}{\partial_{2}\xi_{\cV}(0,y;t)},\omega_{0}(y)\big)\partial_{2}\xi_{\cV}(0,y;t)\dd y\Big).
            \end{cases}
\label{eq:fixed_point_mapping}
\end{equation}
with the characteristics \(\xi_{\cV}\) as in \cref{defi:characteristics}.
\end{defi}
With \(\cF\) (later used as fixed-point mapping) defined, we obtain some properties, beginning with the self-mapping for a sufficiently small time horizon.
\begin{lem}[\(\cF\) self-mapping on \(\Omega(T)\)]\label{lem:self_mapping_small_time}
For a sufficiently small time horizon \(T\in\R_{>0},\) the map \(\cF\) defined in \cref{defi:fixed_point_mapping} is a self-mapping on \(\Omega(T)\), that is,
\[
\cF(\Omega(T))\subset\Omega(T).
\]
 \end{lem}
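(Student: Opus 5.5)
Given \(\cV\in\Omega(T)\), I first make \(\cF(\cV)\) explicit. By the substitution \(y\mapsto \xi_{\cV}(t,y;0)\) (valid by \cref{lem:properties_characteristics}), it equals \(\kappa\ast V(\rho,\omega)(t,\cdot)\) with
\[
\rho(t,x)\coloneqq\rho_{0}(\xi_{\cV}(t,x;0))\,\partial_{2}\xi_{\cV}(t,x;0),\qquad
\omega(t,x)\coloneqq\omega_{0}(\xi_{\cV}(t,x;0)).
\]
I check the three bounds of \cref{defi:set_Omega} for this function. The key inequality comes from \cref{lem:properties_characteristics} and the bound \(\|\partial_{2}\cV\|\le\cV^{1,\infty}\):
\[
\|\partial_{2}\xi_{\cV}(t,\cdot;0)\|_{L^{\infty}}\le\exp\big(T\cV^{1,\infty}\big),
\]
and the same bound holds for the inverse \(\partial_2\xi_{\cV}(0,\cdot;t)^{-1}\). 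So for \(T\) small enough that \(\exp(T\cV^{1,\infty})\le 42\), we get \(\|\rho(t,\cdot)\|_{L^{\infty}}\le 42\|\rho_{0}\|_{L^{\infty}}\) and \(\|\omega(t,\cdot)\|_{L^{\infty}}\le\|\omega_{0}\|_{L^{\infty}}\). Hence \((\rho,\omega)(t,x)\in \overline{Q_{0}(\omega_{0},\rho_{0})}\) for a.e.\ \(x\), and \(V\) and its partial derivatives may be bounded by their suprema over \(Q_0\). (The closure is harmless since \(V\in C^1\).)

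The first two bounds then follow directly from Young's inequality. We have \(\|\cF(\cV)(t,\cdot)\|_{L^{\infty}}\le\|\kappa\|_{L^{1}}\|V\|_{L^{\infty}(Q_{0})}=\cV^{\infty}\). For the spatial derivative, arguing as in the proof of \cref{lem:TV_partial_2_cV} (approximating \(\kappa\) by \(\kappa_{\eps}\)) gives \(\|\partial_{2}\cF(\cV)(t,\cdot)\|_{L^{\infty}}\le|\kappa|_{TV}\|V\|_{L^{\infty}(Q_{0})}=\cV^{1,\infty}\). Lipschitz regularity in space is thereby obtained, so \(\cF(\cV)\in L^{\infty}((0,T);W^{1,\infty}(\R))\). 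Measurability in time follows from the time continuity of \(\xi_{\cV}\) in \cref{eq:characteristics_Lipschitz_time} and \cref{eq:time_regularity_partial_2_xi}.

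The main obstacle is the \(TV\) bound on \(\partial_{2}\cF(\cV)\). By \cref{eq:partial_2_cV_TVbound} it suffices to control \(|\rho(t,\cdot)|_{TV}\) and \(|\omega(t,\cdot)|_{TV}\) with \(\|\partial_{1}V\|,\|\partial_{2}V\|\) taken over \(Q_{0}\).

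1. Since \(\xi_{\cV}(t,\cdot;0)\) is an increasing homeomorphism, composition does not increase total variation, so \(|\omega(t,\cdot)|_{TV}\le|\omega_{0}|_{TV}\).
2. For \(\rho\), I use the product rule for \(TV\) together with monotone reparametrisation:
\[
|\rho(t,\cdot)|_{TV}\le|\rho_{0}|_{TV}\,\|\partial_{2}\xi_{\cV}(t,\cdot;0)\|_{L^{\infty}}+\|\rho_{0}\|_{L^{\infty}}\,|\partial_{2}\xi_{\cV}(t,\cdot;0)|_{TV}.
\]
3. The first estimate of \cref{lem:TV_estimate_partial_2_xi} bounds the last factor by \(e^{T\cV^{1,\infty}}\,T\,\cV^{1,TV}\). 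For \(T\) small enough that \(T\,e^{T\cV^{1,\infty}}\cV^{1,TV}\le 1\) and \(e^{T\cV^{1,\infty}}\le 42\), this yields
\[
|\rho(t,\cdot)|_{TV}\le 42\big(|\rho_{0}|_{TV}+\|\rho_{0}\|_{L^{\infty}}\big).
\]

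Inserting steps 1–3 into \cref{eq:partial_2_cV_TVbound}, and using \(|\kappa|_{TV}\le\|\kappa\|_{BV}\), gives \(\|\partial_{2}\cF(\cV)\|_{L^{\infty}((0,T);TV)}\le\cV^{1,TV}\). The constant \(42\) in the definition of \(\cV^{1,TV}\) is exactly what absorbs these factors.

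The delicate point is that \(\cV^{1,TV}\) appears on both sides. This is harmless: \(\cV^{1,TV}\) is fixed by the data in \cref{defi:set_Omega}, and we merely choose \(T\) small relative to it. A rigorous \(TV\) product rule for a \(BV\) function composed with a Lipschitz homeomorphism can be obtained by smoothing \(\rho_{0}\) and passing to the limit through lower semicontinuity of \(TV\).
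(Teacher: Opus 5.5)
Your proposal is correct and follows essentially the paper's own argument. The two $L^\infty$ bounds come from $\|\partial_2\xi_{\cV}\|_{L^\infty}\le e^{T\cV^{1,\infty}}\le 42$, which keeps the arguments of $V$ inside $Q_0(\omega_0,\rho_0)$. The $TV$ bound comes from the chain and product rules for $V(\rho,\omega)$ together with \cref{lem:TV_estimate_partial_2_xi}, after which $T$ is chosen small relative to the fixed constant $\cV^{1,TV}$.

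The only difference is cosmetic. You work in Eulerian variables, with $\rho_0(\xi_{\cV}(t,\cdot;0))\,\partial_2\xi_{\cV}(t,\cdot;0)$, and invoke \cref{eq:partial_2_cV_TVbound} directly. The paper instead stays in the Lagrangian variable, with $\rho_0/\partial_2\xi_{\cV}(0,\cdot;t)$. The two agree because total variation is invariant under increasing reparametrization.
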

\begin{proof}
To show the self-mapping property, pick \(\cV\in \Omega(T)\) with \(T\in\R_{>0}\) arbitrary. Then, we need to derive \(L^{\infty}(L^{\infty})\) bounds on the mapping \(\cF\) and its spatial derivative, as well as bounds on \(\partial_{2}\cF\) in \(L^{\infty}(TV)\). Choosing a small enough time horizon will then give us the claimed self-mapping. 
We start with the \(L^{\infty}(L^{\infty})\) estimate on the mapping \(\cF\), with \(T^{*}\in\R_{>0}\) yet to be determined:
\begin{description}
    \item[\(L^{\infty}((0,T^{*});L^{\infty}(\R))\) estimate on the mapping \(\cF\):] Letting \(\cV\in \Omega(T^{*})\) be given and recalling the definition of the fixed-point mapping in \cref{defi:fixed_point_mapping}, we claim the following estimate given fixed \((t,x)\in\OT\): 
    \begin{align*}
        |\cF[\cV](t,x)|&=\bigg|\int_{\R} \keta(x-\xi_{\cV}(0,y;t))V\Big(\tfrac{\rho_{0}(y)}{\partial_{2}\xi_{\cV}(0,y;t)},\omega_{0}(y)\Big)\partial_{2}\xi_{\cV}(0,y;t)\dd y\bigg|\\
        &\leq \|V\|_{L^{\infty}(Q_{0}(\omega_{0},\rho_{0}))}\int_{\R} |\keta(x-\xi_{\cV}(0,y;t))\partial_{2}\xi_{\cV}(0,y;t)|\dd y\\
        &\leq \|V\|_{L^{\infty}(Q_{0}(\omega_{0},\rho_{0}))}\|\keta\|_{L^{1}(\R)}.
    \end{align*}
    where, in the last line, we used the fact that \(\partial_{2}\xi_{\cV}>0\), according to \cref{lem:properties_characteristics}, to make the substitution \(z(y)=\xi_{\cV}(0,y;t)\).
    This claim, which is similar to the estimate in \cref{defi:set_Omega}, can be justified as follows. It holds for \((t,y)\in (0,T^{*})\times\R\) and \(\cV\in \Omega(T^{*})\), by \cref{lem:properties_characteristics}, that
    \begin{equation}
    0\leq \tfrac{\rho_{0}(y)}{\partial_{2}\xi_{\cV}(0,y;t)}\leq \|\rho_{0}\|_{L^{\infty}(\R)} \exp\big(t\|\partial_{2}\cV\|_{L^{\infty}((0,T);L^{\infty}(\R))}\big)\leq \|\rho_{0}\|_{L^{\infty}(\R)} \e^{\cV^{1,\infty}t}\leq 42\|\rho_{0}\|_{L^{\infty}(\R)},\label{eq:one_time_reference_self_mapping_Q_0}
    \end{equation}
    for all \(t\in (0,T^{*})\) with \(T^{*}\) chosen small enough. Thus, we can estimate \(V\) as previously done.
    Making the estimate above uniform in \((t,x)\in (0,T^{*})\times\R\), we end up with
    \[
          \|\cF[\cV]\|_{L^{\infty}((0,T^{*});L^{\infty}(\R))}\leq \|V\|_{L^{\infty}(Q_{0}(\omega_{0},\rho_{0}))}\|\keta\|_{L^{1}(\R)}=\cV^{\infty}
    \]
    as required.
    \item[{\(L^{\infty}((0,T^{**});L^{\infty}(\R))\) estimate on \(\partial_2\cF[\cV]\):}] The time horizon \(T^{**}\in\R_{>0}\) on which this part of the self-mapping property holds will be determined in the following; however, as we also want the previously established self-mapping property in \(L^{\infty}(L^{\infty})\), we need  \(T^{**}\in(0,T^{*}]\). Picking \((t,x)\in (0,T^{**})\times\R\), we estimate the spatial derivative of \(\cF\) as follows. Because of the missing higher regularity of \(\kappa\), an approximation argument as in \cref{eq:BV_approximation} for \(\eps\in\R_{>0}\) yields
    \begin{align*}
        |\partial_{2}\cF_{\eps}[\cV](t,x)|        & =\bigg|\int_{\R} \keta_{\eps}'(x-\xi_{\cV}(0,y;t))V\Big(\tfrac{\rho_{0}(y)}{\partial_{2}\xi_{\cV}(0,y;t)},\omega_{0}(y)\Big)\partial_{2}\xi_{\cV}(0,y;t)\dd y\bigg|.\\
        \intertext{Estimating \(V\) uniformly, we have}
        |\partial_{2}\cF_{\eps}[\cV](t,x)|&\leq \|V\|_{L^{\infty}(Q_{0}(\omega_{0},\rho_{0}))}\int_{\R} |\keta_{\eps}'(x-\xi_{\cV}(0,y;t))\partial_{2}\xi_{\cV}(0,y;t)|\dd y\\
        &\leq |\kappa_{\eps}|_{TV(\R)} \|V\|_{L^{\infty}(Q_{0}(\omega_{0},\rho_{0}))}.
    \end{align*}
    However, for \(\eps\rightarrow0\), the last term converges to
    \[
\lim_{\eps\rightarrow0}|\kappa_{\eps}|_{TV(\R)} \|V\|_{L^{\infty}(Q_{0}(\omega_{0},\rho_{0}))}=|\keta|_{TV(\R)} \|V\|_{L^{\infty}(Q_{0}(\omega_{0},\rho_{0}))},
    \]
   where we have used the same argument as in \cref{eq:one_time_reference_self_mapping_Q_0}. This is indeed the definition of \(\cV^{1,\infty}\).
As there are no further restrictions on the time horizon required, we can choose \(T^{**}=T^{*}\). Making the estimate uniform in \((t,x)\in (0,T^{**})\times\R\), we obtain
\[
\|\partial_{2}\cF[\cV]\|_{L^{\infty}((0,T^{**});L^{\infty}(\R))}\leq \cV^{1,\infty}
\]
as required for the second part of the self-mapping property.
\item[{\(L^{\infty}((0,T^{***});TV(\R))\) estimate on  \(\partial_2\cF[\cV]\):}] Obtaining the total variation estimate on \(\partial_{2}\cF\) is more involved and will use the previously derived self-mapping properties. Once more, we have some freedom in choosing the time horizon \(T^{***}\in\R_{>0}\) small enough, and at least as small as \(T^{*}\). As we do not assume high regularity on \(\keta\) and other functions, it is, however, useful to first reformulate the expression \(\partial_{2}\cF\). Assuming for now enough regularity on \(y\mapsto V\Big(\tfrac{\rho_{0}(y)}{\partial_{2}\xi_{\cV}(0,y;t)},\omega_{0}(y)\Big)\) and using the same approximation result for the kernel as before and in \cref{eq:BV_approximation}, it holds for \((t,x)\in(0,T^{**})\times\R \text{ a.e.}\) that
\begin{align*}
    \partial_{2}\cF[\cV](t,x)& =\int_{\R} \keta_{\eps}'(x-\xi_{\cV}(0,y;t))V\Big(\tfrac{\rho_{0}(y)}{\partial_{2}\xi_{\cV}(0,y;t)},\omega_{0}(y)\Big)\partial_{2}\xi_{\cV}(0,y;t)\dd y.\\
        \intertext{An integration by parts to move the derivative to the velocity yields}
      \partial_{2}\cF[\cV](t,x)& =\int_{\R} \keta_{\eps}(x - \xi_{\cV}(0, y; t))\tfrac{\dd}{\dd y}V\Big(\tfrac{\rho_{0}(y)}{\partial_{2}\xi_{\cV}(0,y;t)},\omega_{0}(y)\Big)\dd y.
      \end{align*}
Now, we derive a \(TV\) estimate on \(\partial_2 \cF[\cV]\) for \(t\in(0,T^{***})\), with \(T^{***}\in (0,T^{**}]\) yet to be determined, and apply \cref{lem:TV_partial_2_cV}:
\begin{align*}
    |\partial_{2}\cF[\cV](t,\cdot)|_{TV(\R)} &\leq \int_{\R} \int_{\R} \bigg|\keta'_{\eps}(x-\xi_{\cV}(0,y;t))\tfrac{\dd}{\dd y}V\Big(\tfrac{\rho_{0}(y)}{\partial_{2}\xi_{\cV}(0,y;t)},\omega_{0}(y)\Big)\bigg|\dd y\dd x\\
    &=\int_{\R}\Big|\tfrac{\dd}{\dd y}V\Big(\tfrac{\rho_{0}(y)}{\partial_{2}\xi_{\cV}(0,y;t)},\omega_{0}(y)\Big)\Big|\dd y\int_{\R}|\kappa_{\eps}'(x-\xi_{\cV}(0,y;t))\big|\dd x \dd y\\
    &\leq |\kappa_{\eps}|_{TV(\R)}\Big|V\Big(\tfrac{\rho_{0}(\cdot)}{\partial_{2}\xi_{\cV}(0,\cdot;t)},\omega_{0}(\cdot)\Big)\Big|_{TV(\R)} \overset{\eps\rightarrow 0}{=}|\kappa|_{TV(\R)}\Big|V\Big(\tfrac{\rho_{0}(\cdot)}{\partial_{2}\xi_{\cV}(0,\cdot;t)},\omega_{0}(\cdot)\Big)\Big|_{TV(\R)}.
    \end{align*}
Using the chain rule and the fact that \(V\) is regular enough from \cref{ass:small_time_existence_uniqueness}, we obtain the following, recalling the definition of the constants in \cref{defi:set_Omega} as well as \cref{lem:properties_characteristics}:
\begin{align*}
&\Big|V\Big(\tfrac{\rho_{0}(\cdot)}{\partial_{2}\xi_{\cV}(0,\cdot;t)},\omega_{0}(\cdot)\Big)\Big|_{TV(\R)}\\
&\leq \|\partial_{1}V\|_{L^{\infty}(Q_{0}(\omega_{0},\rho_{0}))}\Big|\tfrac{\rho_{0}}{\partial_{2}\xi_{\cV}(0,\cdot;t)}\Big|_{TV(\R)}\!\!\!\!\!\!+\|\partial_{2}V\|_{L^{\infty}(Q_{0}(\omega_{0},\rho_{0}))}|\omega_{0}|_{TV(\R)}\\
&\leq \|\partial_{1}V\|_{L^{\infty}(Q_{0}(\omega_{0},\rho_{0}))}\Big(|\rho_{0}|_{TV(\R)}\big\|\tfrac{1}{\partial_{2}\xi_{\cV}(0,\cdot;t)}\big\|_{L^{\infty}(\R)} +\|\rho_{0}\|_{L^{\infty}(\R)}\big|\tfrac{1}{\partial_{2}\xi_{\cV}(0,\cdot;t)}\big|_{TV(\R)}\Big)\\
&\quad +\|\partial_{2}V\|_{L^{\infty}(Q_{0}(\omega_{0},\rho_{0}))}\big|\omega_{0}\big|_{TV(\R)}\\
&\leq \|\partial_{1}V\|_{L^{\infty}(Q_{0}(\omega_{0},\rho_{0}))}\exp(2\cV^{1,\infty}t)\Big(|\rho_{0}|_{TV(\R)} +\|\rho_{0}\|_{L^{\infty}(\R)}t\cV^{1,TV}\Big)\\
&\quad +\|\partial_{2}V\|_{L^{\infty}(Q_{0}(\omega_{0},\rho_{0}))}\big|\omega_{0}\big|_{TV(\R)},
\end{align*}
where the last line used \cref{lem:TV_estimate_partial_2_xi}.
Recalling the definition of $\cV^{1,TV}$ given in \cref{defi:set_Omega}, that is,
\begin{equation*}
\begin{split}
\cV^{1,TV}&= 42\|\keta\|_{BV(\R)}\|\partial_{2}V\|_{L^{\infty}(Q_{0}(\omega_{0},\rho_{0}))}\|\omega_{0}\|_{TV(\R)} \\
   &\quad +42\|\keta\|_{BV(\R)}\|\partial_{1}V\|_{L^{\infty}(Q_{0}(\omega_{0},\rho_{0}))}\big(|\rho_{0}|_{TV(\R)}+\|\rho_{0}\|_{L^{\infty}(\R)}\big),
   \end{split}
\end{equation*}
it becomes apparent that by choosing \(t\in (0,T^{***})\) small enough, we can obtain
\[
  |\partial_{2}\cF[\cV](t,\cdot)|_{TV(\R)}\leq \cV^{1,TV}
\]
and altogether, we have thus shown that \(\cF\) is a self-mapping on \(\Omega(T^{***}),\) that is,
\begin{equation}
    \cF[\Omega(T^{***})]\subset \Omega(T^{***}).
\end{equation}

\end{description}
\end{proof}
In the next lemma, we establish a type of Lipschitz continuity of \(\cF\) when changing its inputs.
\begin{lem}[estimating \(\cF\) for different velocities]\label{lem:cF_Lipschitz_velocities}
Consider the map \(\cF\) defined in \cref{defi:fixed_point_mapping} 
with the assumptions therein. 
Then, for \(T\in\R_{>0}\) sufficiently small that \(\cV\) is a self-mapping (i.e., using the time horizon established in \cref{lem:self_mapping_small_time}) and \(\cV,\tilde{\cV}\in\Omega(T)\), the following estimates hold:
 \begin{align*}
  &\|\mathcal{F}[\cV]-\mathcal{F}[\tilde{\cV}]\|_{L^{\infty}((0,T);L^{\infty}(\R))}\\
  &\leq \|\kappa\|_{BV(\R)}\Big(\|V\|_{L^{\infty}(Q(\rho_{0},\omega_{0}))}+\cV^{1,TV}T\big(\|V\|_{L^{\infty}(Q(\rho_{0},\omega_{0}))}+\|\partial_{1}V\|_{L^{\infty}(Q(\rho_{0},\omega_{0}))}\|\rho_{0}\|_{L^{\infty}(\R)}\big)\Big)\\
    &\quad \cdot T\e^{5\cV^{1,\infty}T}\|\cV-\tilde{\cV}\|_{L^{\infty}((0,T);L^{\infty}(\R))}\\
    &\quad +\|\kappa\|_{BV(\R)}\big(\|\partial_{1}V\|_{L^{\infty}(Q(\rho_{0},\omega_{0}))}\|\rho_{0}\|_{L^{\infty}(\R)}+ \|V\|_{L^{\infty}(Q(\rho_{0},\omega_{0}))}\big)T\e^{5\cV^{1,\infty}T}|\cV-\tilde{\cV}|_{L^{\infty}((0,T);TV(\R))},\\
  &\|\mathcal{F}[\cV]-\mathcal{F}[\tilde{\cV}]\|_{L^{\infty}((0,T);TV(\R))}\\
  &\leq |\kappa|_{TV(\R)}\bigg(\|\partial_{1}V\|_{L^{\infty}(Q(\rho_{0},\omega_{0}))}\Big(|\rho_{0}|_{TV(\R)}+\|\rho_{0}\|_{L^{\infty}(\R)}\cV^{1,TV}\Big)\exp\big(\cV^{1,\infty}T\big)+ \|\partial_{2}V\|_{L^{\infty}(Q(\rho_{0},\omega_{0}))}|\omega_{0}|_{TV(\R)}\bigg)\\
    &\qquad\qquad\cdot T\exp\big(\cV^{1,\infty}T\big)\|\cV-\tilde{\cV}\|_{L^{\infty}((0,T);L^{\infty}(\R))}\\
    &\quad +|\kappa|_{TV(\R)}\|\partial_{1}V\|_{L^{\infty}(Q(\rho_{0},\omega_{0}))}\|\rho_{0}\|_{L^{\infty}(\R)}\exp\big(\cV^{1,\infty}T\big)T|\cV-\tilde{\cV}|_{L^{\infty}((0,T);TV(\R))}
  \end{align*}
  with the involved constants as in \cref{defi:set_Omega}.
\end{lem}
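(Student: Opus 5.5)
The plan is to write $\cF[\cV]-\cF[\tilde\cV]$ in a form where the two inputs differ only through the characteristics and then telescope. Fix $t\in(0,T)$ and $x\in\R$, and abbreviate $\xi\coloneqq\xi_{\cV}(0,\cdot;t)$ and $\tilde\xi\coloneqq\xi_{\tilde\cV}(0,\cdot;t)$. Set
\[
W(y)\coloneqq V\big(\tfrac{\rho_{0}(y)}{\partial_{2}\xi(y)},\omega_{0}(y)\big),
\]
and define $\tilde W$ analogously with $\tilde\xi$. Then
\[
\cF[\cV](t,x)-\cF[\tilde\cV](t,x)=\int_{\R}\big(\kappa(x-\xi(y))-\kappa(x-\tilde\xi(y))\big)W(y)\partial_{2}\xi(y)\dd y+\int_{\R}\kappa(x-\tilde\xi(y))\big(W(y)\partial_{2}\xi(y)-\tilde W(y)\partial_{2}\tilde\xi(y)\big)\dd y .
\]
The second integrand splits once more:
\[
\big(W-\tilde W\big)\partial_{2}\xi+\tilde W\big(\partial_{2}\xi-\partial_{2}\tilde\xi\big).
\]
For $W-\tilde W$ we use the mean value theorem in the first argument of $V$. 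This gives
\[
|W-\tilde W|\le\|\partial_{1}V\|_{L^{\infty}(Q_{0})}\|\rho_{0}\|_{L^{\infty}(\R)}\Big|\tfrac{1}{\partial_{2}\xi}-\tfrac{1}{\partial_{2}\tilde\xi}\Big|,
\]
which is valid because both arguments lie in $Q_{0}$ by \cref{eq:one_time_reference_self_mapping_Q_0}. We then use
\[
\Big|\tfrac{1}{\partial_{2}\xi}-\tfrac{1}{\partial_{2}\tilde\xi}\Big|\le \e^{2\cV^{1,\infty}T}\,|\partial_{2}\xi-\partial_{2}\tilde\xi|,
\]
which follows from \cref{lem:properties_characteristics}. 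The net result is that every term of the second integral is bounded by $\|\kappa\|_{L^{\infty}(\R)}\le\|\kappa\|_{BV(\R)}$ (this holds for $BV$ functions in one dimension) times a constant times $\|\partial_{2}\xi-\partial_{2}\tilde\xi\|_{L^{1}(\R)}$. That $L^{1}$ quantity is controlled by \cref{partial_2_xi_cV_L_1}, with $\|\xi_{\cV}-\xi_{\tilde\cV}\|_{L^{\infty}}$ in turn controlled by \cref{xi_cV_L_infty}. This produces exactly the structure of the claim. There is a $T\e^{c\cV^{1,\infty}T}$ times $\|\cV-\tilde\cV\|_{L^{\infty}(L^{\infty})}$, which carries the extra factor $\cV^{1,TV}T$ coming from $\|\partial_{2}\cV\|_{L^{\infty}(TV)}$. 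There is also a term $T\e^{c\cV^{1,\infty}T}|\cV-\tilde\cV|_{L^{\infty}(TV)}$. The prefactors are $\|V\|_{L^{\infty}(Q_{0})}$ and $\|\partial_{1}V\|_{L^{\infty}(Q_{0})}\|\rho_{0}\|_{L^{\infty}(\R)}$.

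For the first integral, with the kernel difference, we substitute $z=\xi(y)$. The integral becomes
\[
\int_{\R}\big(\kappa(x-z)-\kappa(x-\tilde\xi(\xi^{-1}(z)))\big)\,W(\xi^{-1}(z))\dd z .
\]
We then apply \cref{lem:TV_estimate_composition} with $u=\kappa(x-\cdot)$, $s=\mathrm{id}$ and $\tilde s=\tilde\xi\circ\xi^{-1}$. Both maps have derivatives bounded below by $\e^{-2\cV^{1,\infty}T}$. This gives the bound
\[
|\kappa|_{TV(\R)}\,\|V\|_{L^{\infty}(Q_{0})}\,\|\xi^{-1}-\tilde\xi^{-1}\|_{L^{\infty}(\R)} .
\]
By the inversion identity in \cref{lem:properties_characteristics}, $\xi^{-1}=\xi_{\cV}(t,\cdot;0)$, so \cref{xi_cV_L_infty} again bounds this by $T\e^{\cV^{1,\infty}T}\|\cV-\tilde\cV\|_{L^{\infty}(L^{\infty})}$. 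This yields the $\|V\|$-term without the factor $\cV^{1,TV}T$. Collecting the exponential factors gives at most $\e^{5\cV^{1,\infty}T}$.

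For the $L^{\infty}((0,T);TV(\R))$ estimate we do not work with $\cF$ itself. Instead we use the integrated-by-parts representation from the proof of \cref{lem:self_mapping_small_time}:
\[
\partial_{2}\cF[\cV](t,x)=\int_{\R}\kappa(x-\xi(y))\,\tfrac{\dd}{\dd y}W(y)\dd y .
\]
This is justified via the smoothed kernels $\kappa_{\eps}$ of \cref{eq:BV_approximation} followed by the limit $\eps\to0$. In the difference $\partial_{2}\cF[\cV]-\partial_{2}\cF[\tilde\cV]$ we again split into a kernel-shift term and a term with $\tfrac{\dd}{\dd y}(W-\tilde W)$. Taking the $TV$ norm in $x$ and using Fubini with $|\kappa_{\eps}|_{TV}$ yields two bounds. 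The shift term is controlled by $|\kappa|_{TV(\R)}\,|W|_{TV(\R)}\,\|\xi-\tilde\xi\|_{L^{\infty}}$. This uses the analogous $L^{1}$-shift estimate $\int|\kappa'(x-a)-\kappa'(x-b)|$, or equivalently \cref{lem:TV_estimate_composition} applied after the substitution. The quantity $|W|_{TV(\R)}$ is bounded exactly as in the self-mapping proof, which produces the prefactor $\|\partial_{1}V\|(|\rho_{0}|_{TV}+\|\rho_{0}\|_{L^{\infty}}\cV^{1,TV})\e^{\cV^{1,\infty}T}+\|\partial_{2}V\||\omega_{0}|_{TV}$. The other term is controlled by $|\kappa|_{TV(\R)}\,|W-\tilde W|_{TV(\R)}$.

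The main obstacle is this last quantity $|W-\tilde W|_{TV(\R)}$. It contains $\partial_{2}\xi-\partial_{2}\tilde\xi$ in $TV$, that is, second derivatives of the characteristics. This second-derivative difference has to be estimated by $t\,|\cV-\tilde\cV|_{L^{\infty}(TV)}$ plus lower-order terms. I would first obtain it for smooth $\cV,\tilde\cV$ by differentiating the linear ODE for $\partial_{2}\xi$ twice and applying Gronwall, and then pass to the general case by density. I expect the pure $|\cV-\tilde\cV|_{TV}$ contribution to come with the prefactor $\|\partial_{1}V\|\|\rho_{0}\|_{L^{\infty}}$. Terms where $\partial_{1}V$ is evaluated at different points would be absorbed into the $L^{\infty}$-difference part, at the price of constants of the same type.
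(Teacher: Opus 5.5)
Your $L^{\infty}((0,T);L^{\infty}(\R))$ part is the paper's argument: the same three-term split, \cref{lem:TV_estimate_composition} for the kernel shift, Lipschitz continuity of $V$ in its first argument, and then \cref{partial_2_xi_cV_L_1} and \cref{xi_cV_L_infty}.

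\textbf{The gap is in the $TV$ part.} The statement, and the contraction in \cref{theo:fixed_point_mapping_uniqueness}, concern $|\cF[\cV](t,\cdot)-\cF[\tilde\cV](t,\cdot)|_{TV(\R)}=\|\partial_{2}\cF[\cV](t,\cdot)-\partial_{2}\cF[\tilde\cV](t,\cdot)\|_{L^{1}(\R)}$. This involves only first derivatives. The integrated-by-parts formula from \cref{lem:self_mapping_small_time} was built to control $|\partial_{2}\cF|_{TV(\R)}$, which is one derivative more. Putting the derivative on $W$ is exactly what creates your ``main obstacle'', and that obstacle cannot be overcome. No bound of the form $|W-\tilde W|_{TV(\R)}\lesssim t\,|\cV-\tilde\cV|_{L^{\infty}(TV)}$ plus terms in $\|\cV-\tilde\cV\|_{L^{\infty}(L^{\infty})}$ can hold. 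The reason is that $\partial_{2}\xi_{\cV}(0,y;t)=\exp\big(\int_{0}^{t}\partial_{2}\cV(s,\xi_{\cV}(0,y;s))\dd s\big)$ while $\partial_{2}\cV$ is only $BV$. A jump of $\partial_{2}\cV$ at a place where the characteristics barely move produces $O(t)$ variation that does not shrink under small perturbations of $\cV$.

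Concretely, take $\cV(t,x)=J\min\{x_{+},1\}$ and $\tilde\cV(t,x)=\cV(t,x-h)$; both lie in $\Omega(T)$ for $J$ small. Then $\|\cV-\tilde\cV\|_{L^{\infty}}+|\cV-\tilde\cV|_{TV}=O(Jh)$. But near $y=0$ one has $\partial_{2}\xi_{\cV}(0,y;t)-\partial_{2}\xi_{\tilde\cV}(0,y;t)=(\e^{Jt}-1)\mathbf{1}_{(0,h)}(y)$, whose variation is $2(\e^{Jt}-1)$ for every $h$. With constant $\rho_{0}\equiv r>0$, the difference $W-\tilde W$ inherits this bump. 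Smoothing plus Gr\"onwall cannot rescue this, because the resulting constants depend on $|\partial_{2}^{2}\cV|_{TV}$, which is not uniform along the approximation. Relatedly, the shift estimate $\int_{\R}|\kappa'(x-a)-\kappa'(x-b)|\dd x\lesssim|a-b|$ you invoke is false for $BV$ kernels. For the exponential kernel, $\kappa'$ contains a Dirac mass at $0$.

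\textbf{The fix is to keep the derivative on the kernel in the $W-\tilde W$ term.} Within your own split, integrate back by parts:
$\int_{\R}\kappa(x-\tilde\xi(y))\tfrac{\dd}{\dd y}(W-\tilde W)(y)\dd y=\int_{\R}\kappa'(x-\tilde\xi(y))\,\partial_{2}\tilde\xi(y)\,(W-\tilde W)(y)\dd y$.
Its $L^{1}$ norm in $x$ is at most $|\kappa|_{TV(\R)}\e^{\cV^{1,\infty}T}\|W-\tilde W\|_{L^{1}(\R)}$. You have already bounded $\|W-\tilde W\|_{L^{1}}$ through $\|\partial_{2}\xi-\partial_{2}\tilde\xi\|_{L^{1}}$ and \cref{partial_2_xi_cV_L_1}, and this gives the $\|\rho_{0}\|_{L^{\infty}}$-weighted $|\cV-\tilde\cV|_{L^{\infty}(TV)}$ term of the statement. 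Your shift-term bound $|\kappa|_{TV}|W|_{TV}\|\xi-\tilde\xi\|_{L^{\infty}}$ is correct for this $L^{1}$ norm, since it uses $\int|\kappa(x-a)-\kappa(x-b)|\dd x\le|\kappa|_{TV}|a-b|$ with $\kappa$ rather than $\kappa'$. It supplies the remaining term.

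The paper does the equivalent more directly. It substitutes $z=\xi_{\cV}(0,y;t)$ in $\int_{\R}\kappa_{\eps}'(x-\xi(y))W(y)\partial_{2}\xi(y)\dd y$, so that $\partial_{2}\cF_{\eps}[\cV](t,\cdot)$ becomes $\kappa_{\eps}'$ convolved with $V\big(\rho_{0}(\xi_{\cV}(t,\cdot;0))\partial_{2}\xi_{\cV}(t,\cdot;0),\omega_{0}(\xi_{\cV}(t,\cdot;0))\big)$. After Fubini, only $L^{1}$ distances of these compositions from their tilde versions are needed. Those are handled by \cref{lem:TV_estimate_composition} and \cref{partial_2_xi_cV_L_1}.
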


\begin{proof}
    Letting \((t,x)\in\OT\), we estimate for \(\cV,\tilde{\cV}\in \Omega(T)\) and \(T\in\R_{>0}\) as in \cref{lem:self_mapping_small_time} (named \(T^{***}\) therein) the corresponding mapping in \cref{defi:fixed_point_mapping}:
        \begin{align*}
        |\cF[\cV](t,x)-\cF[\tilde{\cV}](t,x)|&\leq
        \bigg| \int_{\R}\keta(x-\xi_{\cV}(0,y;t))V\Big(\tfrac{\rho_{0}(y)}{\partial_{2}\xi_{\cV}(0,y;t)},\omega_{0}(y)\Big)\partial_{2}\xi_{\cV}(0,y;t)\dd y\\
        &\qquad\qquad -\int_{\R} \keta(x-\xi_{\tilde{\cV}}(0,y;t))V\Big(\tfrac{\rho_{0}(y)}{\partial_{2}\xi_{\tilde{\cV}}(0,y;t)},\omega_{0}(y)\Big)\partial_{2}\xi_{\tilde{\cV}}(0,y;t)\dd y  \bigg|.\\
        \intertext{Adding several zeros and using triangle inequalities yields}
        |\cF[\cV](t,x)-\cF[\tilde{\cV}](t,x)|&\leq \int_{\R} \Big|\big(\keta(x-\xi_{\cV}(0,y;t))-\keta(x-\xi_{\tilde{\cV}}(0,y;t))\big)V\Big(\tfrac{\rho_{0}(y)}{\partial_{2}\xi_{\cV}(0,y;t)},\omega_{0}(y)\Big)\Big|\partial_{2}\xi_{\cV}(0,y;t)\dd y\\
        &\quad+\!\!\int_{\R}\!\!\keta(x-\xi_{\tilde{\cV}}(0,y;t))\Big|V\Big(\tfrac{\rho_{0}(y)}{\partial_{2}\xi_{\cV}(0, y;t)},\omega_{0}(y)\Big)-V\Big(\tfrac{\rho_{0}(y)}{\partial_{2}\xi_{\tilde{\cV}}(0, y;t)},\omega_{0}(y)\Big)\Big|\partial_{2}\xi_{\cV}(0,y;t)\dd y\\
        &\quad+ \!\!\int_{\R}\!\!\keta(x-\xi_{\tilde{\cV}}(0,y;t))V\Big(\tfrac{\rho_{0}(y)}{\partial_{2}\xi_{\tilde{\cV}}(0, y;t)},\omega_{0}(y)\Big) \big|\partial_{2}\xi_{\cV}(0,y;t)-\partial_{2}\xi_{\tilde{\cV}}(0,y;t)\big|\dd y.
        \intertext{Because \(\cV,\tilde{\cV}\in \Omega(T)\) and as such \(\mathcal{F}: \Omega(T) \to \Omega(T)\) is a self-mapping on \(\Omega(T)\), we can deduce the following estimate, recalling the constants introduced in \cref{defi:set_Omega}:}
        |\cF[\cV](t,x)-\cF[\tilde{\cV}](t,x)|&\leq\|V\|_{L^{\infty}(Q(\rho_{0},\omega_{0}))}\exp(t\cV^{1,\infty})\int_{\R} \big|(\keta(x-\xi_{\cV}(0, y; t))-\keta(x-\xi_{\tilde{\cV}}(0,y;t)))|\dd y\\
        &\quad +\|\keta\|_{L^{\infty}(\R)}\exp(t\cV^{1,\infty})\int_{\R}\Big|V\Big(\tfrac{\rho_{0}(y)}{\partial_{2}\xi_{\cV}(0, y;t)},\omega_{0}(y)\Big)-V\Big(\tfrac{\rho_{0}(y)}{\partial_{2}\xi_{\tilde{\cV}}(0, y;t)},\omega_{0}(y)\Big)\Big|\dd y\\
        &\quad +\|\keta\|_{L^{\infty}(\R)}\|V\|_{L^{\infty}(Q(\rho_{0},\omega_{0}))}\|\partial_{2}\xi_{\cV}(0,\cdot;t)-\partial_{2}\xi_{\tilde{\cV}}(0,\cdot;t)\|_{L^{1}(\R)}.
        \intertext{Using \cref{lem:TV_estimate_composition} and \cref{lem:properties_characteristics} for the first term and the regularity of \(V\) in the second term, we have}
        |\cF[\cV](t,x)-\cF[\tilde{\cV}](t,x)|&\leq|\keta|_{TV(\R)}\|V\|_{L^{\infty}(Q(\rho_{0},\omega_{0}))}\exp(t\cV^{1,\infty})\|\xi_{\cV}(t,\cdot;0)-\xi_{\tilde{\cV}}(t,\cdot;0)\|_{L^{\infty}(\R)}\\
        &\quad +\|\keta\|_{L^{\infty}(\R)}\exp(t\cV^{1,\infty})\|\partial_{1}V\|_{L^{\infty}(Q(\rho_{0},\omega_{0}))}\|\rho_{0}\|_{L^{\infty}(\R)}\!\!\int_{\R}\Big|\tfrac{1}{\partial_{2}\xi_{\cV}(0,y;t)}-\tfrac{1}{\partial_{2}\xi_{\tilde{\cV}}(0,y;t)}\Big|\dd y\\
        &\quad +\|\keta\|_{L^{\infty}(\R)}\|V\|_{L^{\infty}(Q(\rho_{0},\omega_{0}))}\|\partial_{2}\xi_{\cV}(0,\cdot;t)-\partial_{2}\xi_{\tilde{\cV}}(0,\cdot;t)\|_{L^{1}(\R)}.
        \intertext{Exchanging the order of integration in the second term and using some estimates for the third term leads to}
        |\cF[\cV](t,x)-\cF[\tilde{\cV}](t,x)|&\leq|\keta|_{TV(\R)}\|V\|_{L^{\infty}(Q(\rho_{0},\omega_{0}))}\e^{t\cV^{1,\infty}}\|\xi_{\cV}(t,\cdot;0)-\xi_{\tilde{\cV}}(t,\cdot;0)\|_{L^{\infty}(\R)}\\
        &\quad +\|\keta\|_{L^{\infty}(\R)}\e^{3t\cV^{1,\infty}}\|\partial_{1}V\|_{L^{\infty}(Q(\rho_{0},\omega_{0}))}\|\rho_{0}\|_{L^{\infty}(\R)}\|\partial_{2}\xi_{\cV}(0,\cdot;t)-\partial_{2}\xi_{\tilde{\cV}}(0,\cdot;t)\|_{L^{1}(\R)}\\
        &\quad +\|\keta\|_{L^{\infty}(\R)}\|V\|_{L^{\infty}(Q(\rho_{0},\omega_{0}))}\|\partial_{2}\xi_{\cV}(0,\cdot;t)-\partial_{2}\xi_{\tilde{\cV}}(0,\cdot;t)\|_{L^{1}(\R)}\\
        &\leq \|\xi_{\cV}(t,\cdot;0)-\xi_{\tilde{\cV}}(t,\cdot;0)\|_{L^{\infty}(\R)}|\keta|_{TV(\R)}\|V\|_{L^{\infty}(Q(\rho_{0},\omega_{0}))}\e^{t\cV^{1,\infty}}\\
        &\quad +\|\partial_{2}\xi_{\cV}(0,\cdot;t)-\partial_{2}\xi_{\tilde{\cV}}(0,\cdot;t)\|_{L^{1}(\R)}\e^{3t\cV^{1,\infty}}\\
        &\qquad\qquad\cdot\big(\|\partial_{1}V\|_{L^{\infty}(Q(\rho_{0},\omega_{0}))}\|\rho_{0}\|_{L^{\infty}(\R)}+ \|V\|_{L^{\infty}(Q(\rho_{0},\omega_{0}))}\big)\|\keta\|_{L^{\infty}(\R)}.
        \intertext{Using the stability properties of the characteristics in \cref{lem:stability_characteristics}, we obtain}
       |\cF[\cV](t,x)-\cF[\tilde{\cV}](t,x)|&\leq t\|\cV-\tilde{\cV}\|_{L^{\infty}((0,T);L^{\infty}(\R))}\e^{2t\cV^{1,\infty}}|\keta|_{TV(\R)}\|V\|_{L^{\infty}(Q(\rho_{0},\omega_{0}))}\\
        &\quad +t\e^{4t\cV^{1,\infty}}\|\keta\|_{L^{\infty}(\R)}\big(\|\partial_{1}V\|_{L^{\infty}(Q(\rho_{0},\omega_{0}))}\|\rho_{0}\|_{L^{\infty}(\R)}+ \|V\|_{L^{\infty}(Q(\rho_{0},\omega_{0}))}\big)\\
        &\qquad\cdot\Big(\cV^{1,TV}\|\xi_{\cV}-\xi_{\tilde{\cV}}\|_{L^{\infty}((0,T)\times\R\times(0,T))}+|\cV-\tilde{\cV}|_{L^{\infty}((0,T);TV(\R))} \Big)\\
        &\leq t\|\cV-\tilde{\cV}\|_{L^{\infty}((0,T);L^{\infty}(\R))}\e^{2t\cV^{1,\infty}}|\keta|_{TV(\R)}\|V\|_{L^{\infty}(Q(\rho_{0},\omega_{0}))}\\
        &\quad +t\e^{5t\cV^{1,\infty}}\|\keta\|_{L^{\infty}(\R)}\big(\|\partial_{1}V\|_{L^{\infty}(Q(\rho_{0},\omega_{0}))}\|\rho_{0}\|_{L^{\infty}(\R)}+ \|V\|_{L^{\infty}(Q(\rho_{0},\omega_{0}))}\big)\\
        &\qquad\cdot\Big(\cV^{1,TV}t\|\cV-\tilde{\cV}\|_{L^{\infty}((0,T);L^{\infty}(\R))}+|\cV-\tilde{\cV}|_{L^{\infty}((0,T);TV(\R))} \Big).
        \end{align*}
        Making this uniform in \((t,x)\) for \(t\in[0,T]\), with \(T\in\R_{>0}\) sufficiently small, yields the first part of the claim.
\smallskip
For the second part, we require the \(TV\) estimate on the fixed-point mapping. Assuming once more that the kernel \(\kappa\) is approximated smoothly by \(\kappa_{\eps}\), as introduced in \cref{eq:BV_approximation}, we differentiate first with respect to the spatial component to obtain, for \((t,x)\in(0,T)\times\R\),
\begin{align*}
    \big|\partial_{2}\cF_{\eps}[\cV](t,x)-\partial_{2}\cF_{\eps}[\tilde{\cV}](t,x)\big|
& = \Big|\int_{\R} \keta_{\eps}'(x-\xi_{\cV}(0,y;t))V\Big(\tfrac{\rho_{0}(y)}{\partial_{2}\xi_{\cV}(0,y;t)},\omega_{0}(y)\Big)\partial_{2}\xi_{\cV}(0,y;t)\dd y\\
        &\qquad - \int_{\R} \keta_{\eps}'(x-\xi_{\tilde{\cV}}(0,y;t))V\Big(\tfrac{\rho_{0}(y)}{\partial_{2}\xi_{\tilde{\cV}}(0,y;t)},\omega_{0}(y)\Big)\partial_{2}\xi_{\tilde{\cV}}(0,y;t)\dd y\Big|.
    \intertext{Substituting \(z(y)=\xi_{\cV}(0,y;t)\) and \(z(y)=\xi_{\tilde{\cV}}(0,y;t)\) yields}
    \big|\partial_{2}\cF_{\eps}[\cV](t,x)-\partial_{2}\cF_{\eps}[\tilde{\cV}](t,x)\big|&\leq \bigg|\int_{\R}\!\!\! \keta_{\eps}'(x-z)\Big(V\big(\rho_{0}(\xi_{\cV}(t,z;0))\partial_{2}\xi_{\cV}(t,z;0),\omega_{0}(\xi_{\cV}(t,z;0))\big)\\
    &\qquad -V\big(\rho_{0}(\xi_{\tilde{\cV}}(t,z;0))\partial_{2}\xi_{\tilde{\cV}}(t,z;0),\omega_{0}(\xi_{\tilde{\cV}}(t,z;0))\big)\Big)\dd z\bigg|,
    \intertext{and using the fact that \(V\in W^{1,\infty}_{\textnormal{loc}}(\R^{2})\), we have}
    \big|\partial_{2}\cF_{\eps}[\cV](t,x)-\partial_{2}\cF_{\eps}[\tilde{\cV}](t,x)\big|&\leq \|\partial_{1}V\|_{L^{\infty}(Q(\rho_{0},\omega_{0}))}\int_{\R}\big|\keta_{\eps}'(x-y)||\rho_{0}(\xi_{\cV}(t,y;0))\partial_{2}\xi_{\tilde{\cV}}(t,y;0)\\
    &\qquad \qquad \qquad\qquad\qquad\qquad\qquad\qquad-\rho_{0}(\xi_{\tilde{\cV}}(t,y;0))\partial_{2}\xi_{\cV}(t,y;0)\big|\dd y\\
    &\quad + \|\partial_{2}V\|_{L^{\infty}(Q(\rho_{0},\omega_{0}))}\int_{\R}|\keta_{\eps}'(x-y)|\big|\omega_{0}(\xi_{\cV}(t,y;0))-\omega_{0}(\xi_{\tilde{\cV}}(t,y;0))\big|\dd y.
\end{align*}
Because we require the \(TV\) estimate, we obtain
\begin{align*}
    &\big|\cF[\cV](t,\cdot)-\cF[\tilde{\cV}](t,\cdot)\big|_{TV(\R)}\\
    & = \int_{\R}\big|\partial_{2}\cF[\cV](t,x)-\partial_{2}\cF[\tilde{\cV}](t,x)\big| \dd x\\
   &\leq \|\partial_{1}V\|_{L^{\infty}(Q(\rho_{0},\omega_{0}))}\int_{\R}\int_{\R}|\keta_{\eps}'(x-y)||\rho_{0}(\xi_{\cV}(t,y;0))\partial_{2}\xi_{\cV}(t,y;0)-\rho_{0}(\xi_{\tilde{\cV}}(t,y;0))\partial_{2}\xi_{\tilde{\cV}}(t,y;0)|\dd y\dd x\\
    &\quad + \|\partial_{2}V\|_{L^{\infty}(Q(\rho_{0},\omega_{0}))}\int_{\R} \int_{\R}|\keta_{\eps}'(x-y)||\omega_{0}(\xi_{\cV}(t,y;0))-\omega_{0}(\xi_{\tilde{\cV}}(t,y;0))|\dd y\dd x.
\end{align*}
Applying \cref{lem:TV_estimate_composition}, adding zeros, and exchanging the order of integration in the last two terms leads to
\begin{align*}
  & \big|\cF[\cV](t,\cdot)-\cF[\tilde{\cV}](t,\cdot)\big|_{TV(\R)}\\
  &\leq  |\kappa_{\eps}'|_{TV(\R)}\|\partial_{1}V\|_{L^{\infty}(Q(\rho_{0},\omega_{0}))}|\rho_{0}|_{TV(\R)}\|\xi_{\cV}(0,\cdot;t)-\xi_{\tilde{\cV}}(0,\cdot;t)\|_{L^{\infty}(\R)}\|\partial_{2}\xi_{\cV}(t,\cdot;0)\|_{L^{\infty}(\R)}\\
    &\quad +|\kappa_{\eps}'|_{TV(\R)}\|\partial_{1}V\|_{L^{\infty}(Q(\rho_{0},\omega_{0}))}\|\rho_{0}\|_{L^{\infty}(\R)}\|\partial_{2}\xi_{\cV}(t,\cdot;0)-\partial_{2}\xi_{\tilde{\cV}}(t,\cdot;0)\|_{L^{1}(\R)}\\
    &\quad +|\kappa_{\eps}'|_{TV(\R)}\|\partial_{2}V\|_{L^{\infty}(Q(\rho_{0},\omega_{0}))}|\omega_{0}|_{TV(\R)}\|\xi_{\cV}(0,\cdot;t)-\xi_{\tilde{\cV}}(0,\cdot;t)\|_{L^{\infty}(\R)}.
\end{align*}
Letting \(\eps\rightarrow 0\) and estimating \(\partial_{2}\xi_{\cV}\) by means of \cref{lem:properties_characteristics}, recalling the bounds in \cref{defi:set_Omega} guaranteed by \cref{lem:self_mapping_small_time}, we find that
\begin{align*}
    &\big|\cF[\cV](t,\cdot)-\cF[\tilde{\cV}](t,\cdot)\big|_{TV(\R)}\\
    &\leq |\kappa|_{TV(\R)}\Big(\|\partial_{1}V\|_{L^{\infty}(Q(\rho_{0},\omega_{0}))}|\rho_{0}|_{TV(\R)}\exp\big(t\cV^{1,\infty}\big)+ \|\partial_{2}V\|_{L^{\infty}(Q(\rho_{0},\omega_{0}))}|\omega_{0}|_{TV(\R)}\Big)\\
    &\qquad\qquad\cdot\|\xi_{\cV}(0,\cdot;t)-\xi_{\tilde{\cV}}(0,\cdot;t)\|_{L^{\infty}(\R)}\\
    &\quad +|\kappa|_{TV(\R)}\|\partial_{1}V\|_{L^{\infty}(Q(\rho_{0},\omega_{0}))}\|\rho_{0}\|_{L^{\infty}(\R)}\|\partial_{2}\xi_{\cV}(t,\cdot;0)-\partial_{2}\xi_{\tilde{\cV}}(t,\cdot;0)\|_{L^{1}(\R)}.
    \intertext{Applying the stability of the characteristics in \cref{lem:stability_characteristics} leads to}
    &\big|\cF[\cV](t,\cdot)-\cF[\tilde{\cV}](t,\cdot)\big|_{TV(\R)}\\
    &\leq |\kappa|_{TV(\R)}\Big(\|\partial_{1}V\|_{L^{\infty}(Q(\rho_{0},\omega_{0}))}\big(|\rho_{0}|_{TV(\R)}+\|\rho_{0}\|_{L^{\infty}(\R)}\cV^{1,TV}\big)\exp\big(t\cV^{1,\infty}\big)+ \|\partial_{2}V\|_{L^{\infty}(Q(\rho_{0},\omega_{0}))}|\omega_{0}|_{TV(\R)}\Big)\\
    &\qquad\qquad\cdot t\exp\big(t\cV^{1,\infty}\big)\|\cV-\tilde{\cV}\|_{L^{\infty}((0,T);L^{\infty}(\R))}\\
    &\quad +|\kappa|_{TV(\R)}\|\partial_{1}V\|_{L^{\infty}(Q(\rho_{0},\omega_{0}))}\|\rho_{0}\|_{L^{\infty}(\R)}\exp\big(t\cV^{1,\infty}\big)t|\cV-\tilde{\cV}|_{L^{\infty}((0,T);TV(\R))}.
\end{align*}
Making this uniform in \(t\in(0,T)\) yields the second estimate.
\end{proof}

Having established the self-mapping property in \cref{lem:self_mapping_small_time} and the Lipschitz continuity of \(\cF\) with respect to different velocities in \cref{lem:cF_Lipschitz_velocities}, the following fixed-point result can now be derived. 
\begin{theo}[unique fixed point of \(\cF\)]\label{theo:fixed_point_mapping_uniqueness}
There exists \(T\in\R_{>0}\) 
such that the mapping \(\cF\) in \cref{defi:fixed_point_mapping} has a unique fixed point in \(\Omega(T)\) (as in \cref{defi:set_Omega}), 
that is, 
\[
\exists !\ \cV^{*}\in\Omega(T):\quad \cF[\cV^{*}]\equiv \cV^{*} \text{ on } (0,T)\times\R.
\]
\end{theo}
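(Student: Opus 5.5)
We apply Banach's fixed-point theorem to \(\cF\) on \(\Omega(T)\), equipped with the metric induced by the norm
\[
\|\cV\|_{X_T}\coloneqq \|\cV\|_{L^{\infty}((0,T);L^{\infty}(\R))}+\|\cV\|_{L^{\infty}((0,T);TV(\R))}.
\]
The first step is completeness. By \cref{lem:closed_set}, \(\Omega(T)\) is closed in this topology. The space \(L^{\infty}((0,T);L^{\infty}(\R)\cap TV(\R))\) is a Banach space, since \(TV\) is taken modulo constants and is paired here with the \(L^{\infty}\) norm. Hence \(\Omega(T)\) is a complete metric space, and it is nonempty because \(\cV\equiv 0\in\Omega(T)\).

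The second step is the self-mapping property. By \cref{lem:self_mapping_small_time}, there is \(T_{0}>0\) such that \(\cF(\Omega(T))\subset\Omega(T)\) for all \(T\in(0,T_{0}]\). We should check that restricting the time horizon preserves this property. It does, because every estimate in that proof is pointwise in \(t\) and monotone in \(T\).

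The third step is the contraction estimate, which is the core of the argument. Fix \(T\le T_{0}\) and \(\cV,\tilde{\cV}\in\Omega(T)\). \Cref{lem:cF_Lipschitz_velocities} gives bounds on both \(\|\cF[\cV]-\cF[\tilde{\cV}]\|_{L^{\infty}(L^{\infty})}\) and \(\|\cF[\cV]-\cF[\tilde{\cV}]\|_{L^{\infty}(TV)}\). Each is of the form
\[
C\, T\, \e^{5\cV^{1,\infty}T}\,\|\cV-\tilde{\cV}\|_{X_T}.
\]
The constant \(C\) depends only on \(\|\kappa\|_{BV(\R)}\), on the norms of \(V,\partial_{1}V,\partial_{2}V\) over the fixed compact box \(Q_{0}(\omega_{0},\rho_{0})\), on \(\|\rho_{0}\|_{L^{\infty}}\), \(|\rho_{0}|_{TV}\), \(|\omega_{0}|_{TV}\), and on \(\cV^{1,TV}\). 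Crucially, \(C\) stays bounded as \(T\to0\). We sum the two bounds and call the result \(L(T)\). Then \(L(T)\to 0\) as \(T\to 0\), so we choose \(T\in(0,T_{0}]\) with \(L(T)\le\tfrac12\). Banach's theorem then gives existence and uniqueness of \(\cV^{*}\in\Omega(T)\) with \(\cF[\cV^{*}]=\cV^{*}\).

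The main obstacle is the prefactor \(T\) in \cref{lem:cF_Lipschitz_velocities}. It comes from the stability of the characteristics in \cref{lem:stability_characteristics}, whose bounds carry a factor \(|t-\tau|\le T\), and it must control both the \(L^{\infty}\) difference and the \(TV\) difference. I would check carefully that the \(TV\) part of the difference, \(|\cV-\tilde{\cV}|_{L^{\infty}(TV)}\), also appears only with such a factor. This holds because \cref{partial_2_xi_cV_L_1} carries \(|t-\tau|\) in front of \(\|\cV-\tilde{\cV}\|_{L^{\infty}(TV)}\).

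A second point to check is that \cref{lem:stability_characteristics} requires \(\partial_{2}\cV\) to have bounded \(TV\). This holds automatically on \(\Omega(T)\) by the bound \(\cV^{1,TV}\).

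Finally, we should ensure that \(\cF\) maps into classes identified almost everywhere, so that \(\cF[\cV^{*}]\equiv\cV^{*}\) holds almost everywhere on \((0,T)\times\R\). Both sides are Lipschitz in \(x\), so this identity upgrades to equality for almost every \(t\) and every \(x\).
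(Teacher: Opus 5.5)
Your proposal is correct and is essentially the paper's proof: Banach's fixed-point theorem on \(\Omega(T)\) in the \(L^{\infty}((0,T);L^{\infty}(\R)\cap TV(\R))\) metric, using \cref{lem:closed_set}, the self-mapping property from \cref{lem:self_mapping_small_time}, and the \(O(T)\) Lipschitz constants from \cref{lem:cF_Lipschitz_velocities} to obtain a contraction on a possibly shorter horizon. There is one technicality that both you and the paper pass over: \(\Omega(T)\) bounds \(|\partial_{2}\cV|_{TV(\R)}\) but not \(\|\partial_{2}\cV\|_{L^{1}(\R)}\), so the distance can be infinite on \(\Omega(T)\) (e.g.\ \(\cV(t,x)=c\sin(\log(1+x^{2}))\) with \(c\) small), but this is harmless: \(\cF[\cV](t,\cdot)\) is \(\kappa\) convolved with a function of bounded variation, so you can run the contraction on the complete, \(\cF\)-invariant subset \(\Omega(T)\cap L^{\infty}((0,T);TV(\R))\), which contains every fixed point.
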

\begin{proof}
    This is seen to be a direct consequence of \cref{lem:closed_set} together with the self-mapping property in \cref{lem:self_mapping_small_time} and \cref{lem:cF_Lipschitz_velocities} when recognizing that we can obtain by \cref{lem:self_mapping_small_time} a time horizon \(T^{***}\in\R_{>0}\) small enough that \(\cF\) is a self-mapping, and by \cref{lem:cF_Lipschitz_velocities} on an even smaller time horizon \(T^{****}\in (0,T^{***}]\) a contraction with respect to the \(L^{\infty}\big((0,T^{****});L^{\infty}\cap TV(\R)\big)\) contraction. Because \(\Omega(T^{****})\) is closed under \(L^{\infty}\big((0,T^{****});L^{\infty}\cap TV(\R)\big)\) (as it has uniform bounds), we can apply Banach's fixed-point theorem and obtain the existence and uniqueness of a fixed point of \(\cF\) in \(\Omega(T^{****})\) with a small enough \(T^{****}\in\R_{>0}\).
\end{proof}

\subsection{Existence and uniqueness for a small time horizon}
Now, we are well prepared to prove the existence and uniqueness of a weak solution to \cref{eq:nonlocal_GARZ,eq:init_data}. We will rely on the fixed point in \cref{theo:fixed_point_mapping_uniqueness}.

\begin{theo}[existence and uniqueness for a small time horizon]\label{theo:existence_uniqueness_small_time}
Let \cref{ass:small_time_existence_uniqueness} hold, that is,
\begin{multicols}{3}
\begin{itemize}
    \item \(\keta\in BV(\R)\)
\item \(V\in C^{1}(\R^{2};\R)\)
\item \((\rho_{0},\omega_{0})\in \big(TV(\R)\big)^{2}\).
\end{itemize}
\end{multicols}
\noindent Then, there exists \(T\in\R_{>0}\) 
such that \cref{eq:nonlocal_GARZ,eq:init_data} have a unique weak solution 
\begin{equation}
(\rho,\omega)\in \big(C\big([0,T];L^{1}_{\textnormal{loc}}(\R)\big)\cap L^{\infty}((0,T); TV(\R))\big)^{2}\label{eq:solution_minimal_regularity}
\end{equation}
in the sense of \cref{defi:weak_solution}. 
Moreover, the solution can be written as
\begin{equation}
\rho(t,x)=\rho_{0}(\xi_{\cV^{*}}(t,x;0))\partial_{2}\xi_{\cV^{*}}(t,x;0),\qquad \omega(t,x)=\omega_{0}(\xi_{\cV^{*}}(t,x;0)),\ (t,x)\in (0,T)\times \R \text{ a.e.}\label{eq:solution}
\end{equation}
with \(\cV^{*}\) the unique solution in \cref{theo:fixed_point_mapping_uniqueness} and \(\xi_{\cV^{*}}:(0,T)\times\R\times (0,T)\rightarrow\R\) the solution to the corresponding characteristic equation, as defined in \cref{defi:characteristics} for the velocity \(\cV^{*}\).
\end{theo}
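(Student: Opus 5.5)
We take \(T\) from \cref{theo:fixed_point_mapping_uniqueness} and let \(\cV^{*}\in\Omega(T)\) be the unique fixed point of \(\cF\). We then \emph{define} \((\rho,\omega)\) by \cref{eq:solution}. The proof has three parts: the defined functions have the claimed regularity, they form a weak solution, and any weak solution coincides with them.

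\textbf{Regularity.} By \cref{lem:properties_characteristics}, \(\partial_{2}\xi_{\cV^{*}}(t,\cdot;0)\) is bounded above and below by \(\e^{\pm T\cV^{1,\infty}}\). Hence \(\rho,\omega\in L^{\infty}((0,T);L^{\infty}(\R))\). For the \(TV\) bound we use that \(x\mapsto\xi_{\cV^{*}}(t,x;0)\) is an increasing bi-Lipschitz map, so \(|\omega(t,\cdot)|_{TV}\le|\omega_{0}|_{TV}\). We also use the product estimate \(|\rho(t,\cdot)|_{TV}\le |\rho_{0}|_{TV}\|\partial_2\xi\|_{L^\infty}+\|\rho_0\|_{L^\infty}|\partial_2\xi(t,\cdot;0)|_{TV}\), with the last term controlled by \cref{lem:TV_estimate_partial_2_xi}.

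Continuity in \(C([0,T];L^{1}_{\loc})\) follows from three ingredients. First, \cref{eq:characteristics_Lipschitz_time} gives Lipschitz dependence of \(\xi\) on the first variable. Second, \cref{lem:TV_estimate_composition} gives \(\|u\circ s-u\circ\tilde s\|_{L^1}\) control, applied locally. Third, \cref{eq:time_regularity_partial_2_xi} handles the factor \(\partial_2\xi\).

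Next we identify the fixed point with the nonlocal velocity of \((\rho,\omega)\). We substitute \(z=\xi_{\cV^{*}}(0,y;t)\) in \cref{eq:fixed_point_mapping}, using the inverse relations of \cref{lem:properties_characteristics}. We have \(\xi(t,\xi(0,y;t);0)=y\) and \(\partial_2\xi(t,z;0)=1/\partial_2\xi(0,y;t)\). This yields
\[\cF[\cV^{*}](t,x)=\int_{\R}\keta(x-z)V\big(\rho_0(\xi(t,z;0))\partial_2\xi(t,z;0),\omega_0(\xi(t,z;0))\big)\dd z=\cV[\rho,\omega](t,x).\]
Hence \(\cV^{*}=\cV[\rho,\omega]\).

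\textbf{Weak formulation.} With the velocity \(\cV^{*}\in L^\infty((0,T);W^{1,\infty})\) now fixed, \cref{eq:weak_1,eq:weak_2} are linear continuity and transport equations. We verify them by changing variables \(x=\xi_{\cV^*}(0,y;t)\) in the space–time integrals. For \(\rho\), the Jacobian cancels \(\partial_2\xi\), and we get \(\int\rho_0(y)\int_0^T\tfrac{\dd}{\dd t}\phi_1(t,\xi(0,y;t))\dd t\dd y\), which equals \(-\int\rho_0\phi_1(0,\cdot)\). For \(\omega\), the term \(\partial_x(\phi_2\cV)\) together with the Jacobian \(\partial_2\xi(0,y;t)\) produces \(\tfrac{\dd}{\dd t}\big(\phi_2(t,\xi(0,y;t))\partial_2\xi(0,y;t)\big)\), because \(\tfrac{\dd}{\dd t}\partial_2\xi(0,y;t)=\partial_2\cV^*\,\partial_2\xi\). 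The chain rule steps are justified by first taking \(\rho_0,\omega_0\) and \(\keta\) smooth and then passing to the limit, or directly, since \(\xi\) is Lipschitz in all variables.

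\textbf{Uniqueness.} This is the main obstacle. Let \((\tilde\rho,\tilde\omega)\) be another weak solution in the class \cref{eq:solution_minimal_regularity}, and set \(\tilde\cV\coloneqq\cV[\tilde\rho,\tilde\omega]\). By \cref{lem:TV_partial_2_cV}, \(\tilde\cV\in L^\infty((0,T);W^{1,\infty})\) and \(\partial_2\tilde\cV\in L^\infty(TV)\). The key step is uniqueness for the linear equations with Lipschitz velocity \(\tilde\cV\). We plan a duality (Holmgren-type) argument: solve the backward adjoint transport problem along \(\xi_{\tilde\cV}\) with smooth data and use it as test function. This shows that \(\tilde\rho,\tilde\omega\) must be given by the characteristic formulas \cref{eq:solution} with \(\tilde\cV\). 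The test functions are only \(W^{1,\infty}\), which is why \cref{defi:weak_solution} permits them. Consequently \(\tilde\cV=\cF[\tilde\cV]\).

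It remains to show \(\tilde\cV\in\Omega(T')\) on some possibly smaller horizon \(T'\). At \(t=0\) the bounds defining \(\Omega\) hold with room to spare, because of the factor \(42\) in its constants. Combined with the time continuity of \(\tilde\rho,\tilde\omega\) and the \(L^\infty\) bounds, the estimates in the proof of \cref{lem:self_mapping_small_time} then give \(\tilde\cV\in\Omega(T')\). Uniqueness of the fixed point in \cref{theo:fixed_point_mapping_uniqueness} yields \(\tilde\cV=\cV^*\) on \((0,T')\), hence the two solutions coincide there. A continuation argument extends the equality to all of \([0,T]\).
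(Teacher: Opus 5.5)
Your proposal follows the paper's proof essentially step by step: the characteristic representation built from the fixed point $\cV^{*}$, the change of variables $x=\xi_{\cV^{*}}(0,y;t)$ to verify the weak form, and uniqueness via backward-characteristic test functions leading to $\tilde\cV=\cF[\tilde\cV]$. Your explicit identification $\cV^{*}=\cV[\rho,\omega]$, and your check that $\tilde\cV\in\Omega(T')$ followed by a continuation argument, fill steps the paper leaves implicit, since the paper invokes uniqueness of the fixed point without ever verifying that $\tilde\cV$ lies in $\Omega(T)$. One technical caveat, which the paper shares: the adjoint test function for the $\omega$-equation, $\psi_{0,2}(\xi_{\tilde\cV}(t,x;T))\,\partial_{2}\xi_{\tilde\cV}(t,x;T)$, is in general only $BV$ rather than $W^{1,\infty}$ in $x$, because $\partial_{2}\tilde\cV$ is only $BV$; it must therefore be reached as a limit of test functions built from an $x$-mollified velocity, with the commutator error vanishing because $\tilde\omega\in L^{\infty}$.
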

\begin{proof}
    We need to show that 
\begin{enumerate}    
 \item \cref{eq:solution} is a weak solution in the sense of \cref{defi:weak_solution}, 
 
\item  that it has the required regularity in \cref{eq:solution_minimal_regularity}, and 
\item that the solution is unique.
\end{enumerate}

\begin{description}
    \item[1. \cref{eq:solution} is a solution:]
    Let \(\cV^{*}\) be the unique solution in \cref{theo:fixed_point_mapping_uniqueness} with \(T\in\R_{>0}\), so that the unique fixed-point \(\cV^{*}\) actually exists. Let \(\xi_{\cV^{*}}:(0,T)\times\R\times (0,T)\rightarrow\R\) be the solution to the corresponding characteristic equation in \cref{defi:characteristics} for the velocity \(\cV^{*}\). Then, \cref{eq:solution} is well-defined.
We will now show that such a \((\rho, \omega)\) is a weak solution to the nonlocal GARZ model in \cref{eq:nonlocal_GARZ} supplemented with the initial data in \cref{eq:init_data}, in the sense of  \cref{defi:weak_solution}. 

To this end, we take \(\phi_{1} \in W^{1,\infty}_{\textnormal{c}}((-42,T)\times\R)\) and plug the solution formula for \(\rho\) into the weak formulation in \cref{defi:weak_solution} to arrive at
    \begin{align*}
        & \iint_{\OT} \rho(t,x)\big(\partial_{1}\phi_{1}(t,x)+\partial_{2}\phi_{1}(t,x)\cV^{*}(t,x)\big)\dd x\dd t+\int_{\R}\rho_{0}(x)\phi_{1}(0,x)\dd x\\
        & =  \iint_{\OT} \rho_{0}(\xi_{\cV^{*}}(t,x;0))\partial_{2}\xi_{\cV^{*}}(t,x;0)\partial_{1}\phi_{1}(t,x)\dd x\dd t\\
        & \quad + \iint_{\OT} \rho_{0}(\xi_{\cV^{*}}(t,x;0))\partial_{2}\xi_{\cV^{*}}(t,x;0)\partial_{2}\phi_{1}(t,x)\cV^{*}(t,x)\dd x\dd t
         +\int_{\R}\rho_{0}(x)\phi_{1}(0,x)\dd x.
        \intertext{Substituting the characteristics in space by \(y(x)=\xi_{\cV^{*}}(t,x;0)\) for any given time \(t\in(0,T)\) gives}
        & \iint_{\OT} \rho(t,x)\big(\partial_{1}\phi_{1}(t,x)+\partial_{2}\phi_{1}(t,x)\cV^{*}(t,x)\big)\dd x\dd t+\int_{\R}\rho_{0}(x)\phi_{1}(0,x)\dd x\\
        & =  \iint_{\OT} \rho_{0}(y)\partial_{1}\phi_{1}(t,\xi_{\cV^{*}}(0,y;t))\dd y\dd t\\
        & \quad + \iint_{\OT} \rho_{0}(y)\partial_{2}\phi_{1}(t,\xi_{\cV^{*}}(0,y;t))\cV^{*}(t,\xi_{\cV^{*}}(0,y;t))\dd y\dd t+\int_{\R}\rho_{0}(x)\phi_{1}(0,x)\dd x.
        \intertext{Next, taking advantage of the definition of the characteristics in \cref{defi:characteristics} leads to}
        & \iint_{\OT} \rho(t,x)\big(\partial_{1}\phi_{1}(t,x)+\partial_{2}\phi_{1}(t,x)\cV^{*}(t,x)\big)\dd x\dd t+\int_{\R}\rho_{0}(x)\phi_{1}(0,x)\dd x\\
        &=   \int_{\R} \int_0^T \rho_{0}(y)\tfrac{\dd}{\dd t}\phi_1(t,\xi_{\cV^{*}}(0,y;t))\dd t\dd y +\int_{\R}\rho_{0}(x)\phi_{1}(0,x)\dd x,
        \intertext{applying the fundamental theorem of calculus yields}
        & = - \int_{\R} \rho_0(y)  \phi_1(0,\xi_{\cV^{*}}(0,y;0))\dd y  + \int_{\R}\rho_{0}(x)\phi_{1}(0,x)\dd x\\
          & = - \int_{\R} \rho_0(y)  \phi_1(0,y)\dd y  + \int_{\R}\rho_{0}(x)\phi_{1}(0,x)\dd x=0.
    \end{align*}
Thus, the given \(\rho\) is a weak solution.
Similarly, for \(\phi_{2} \in W^{1,\infty}_{\textnormal{c}}((-42,T)\times\R)\), we show that \(\omega\) is a solution by plugging it into the weak formulation in \cref{defi:weak_solution} and substituting the characteristics in space by \(y(x)=\xi_{\cV^{*}}(t,x;0)\) for any given time \(t\in(0,T)\):
    \begin{align*} 
    & \iint_{\OT} \omega(t,x)\big(\partial_{1}\phi_{2}(t,x)+\partial_{2}\big(\phi_{2}(t,x)\cV^{*}(t,x)\big)\big)\dd x\dd t+\int_{\R}\omega_{0}(x)\phi_{2}(0,x)\dd x\\
    &= \iint_{\OT}  \omega_{0}(y)\partial_{2}\xi_{\cV^{*}}(0,y;t)\Big(\partial_{1}\phi_{2}(t,\xi_{\cV^{*}}(0,y;t))+\partial_{2}\phi_{2}(t,\xi_{\cV^{*}}(0,y;t))\cV^{*}(t,\xi_{\cV^{*}}(0,y;t))\Big) \dd y\dd t \\
    & \quad + \iint_{\OT}\!\!\!\! \omega_{0}(y) \phi_{2}(t,\xi_{\cV^{*}}(0,y;t)) \partial_2 \cV^{*}(t,\xi_{\cV^{*}}(0,y;t)) \partial_2 \xi_{\cV^{*}}(0, y; t) \dd y\dd t +\int_{\R}\omega_{0}(x)\phi_{2}(0,x)\dd x\\
    &= \iint_{\OT}  \omega_{0}(y)\partial_{2}\xi_{\cV^{*}}(0,y;t)\tfrac{\dd}{\dd t}\phi_{2}(t,\xi_{\cV^{*}}(0,y;t))\dd y\dd t \\
    & \quad + \iint_{\OT} \omega_{0}(y) \phi_{2}(t,\xi_{\cV^{*}}(0,y;t)) \partial_2 \cV^{*}(t,\xi_{\cV^{*}}(0,y;t)) \partial_2 \xi_{\cV^{*}}(0, y; t) \dd y\dd t +\int_{\R}\omega_{0}(x)\phi_{2}(0,x)\dd x,
    \intertext{an integration by parts in time in the first term yields}
    &=\int_{\R} \omega_{0}(y)\Big[ \partial_{2}\xi_{\cV^{*}}(0,y;t)\phi_{2}(t,\xi_{\cV^{*}}(0,y;t)) \Big]_{t=0}^{t=T} \dd y\! -\!\iint_{\OT}\!\!\!\!  \omega_{0}(y)\partial_{t}\partial_{2}\xi_{\cV^{*}}(0,y;t)\phi_{2}(t,\xi_{\cV^{*}}(0,y;t))\dd y\dd t\\
    & \quad + \iint_{\OT} \omega_{0}(y) \phi_{2}(t,\xi_{\cV^{*}}(0,y;t)) \partial_2 \cV^{*}(t,\xi_{\cV^{*}}(0,y;t)) \partial_2 \xi_{\cV^{*}}(0, y; t) \dd y\dd t +\int_{\R}\omega_{0}(x)\phi_{2}(0,x)\dd x,
    \intertext{and using the fact that \(\phi_2(T,\cdot)=0\), as well as \(\partial_{t}\partial_{2}\xi_{\cV^{*}}(0,y;t)=\partial_{2}\cV^{*}[\rho,\omega](t,\xi_{\cV^{*}}(0,y;t))\partial_{2}\xi_{\cV^{*}}(0,y;t)\), which follows from the definition of the characteristics in \cref{defi:characteristics}, we have}
    & \iint_{\OT} \omega(t,x)\big(\partial_{1}\phi_{2}(t,x)+\partial_{2}\big(\phi_{2}(t,x)\cV^{*}(t,x)\big)\big)\dd x\dd t+\int_{\R}\omega_{0}(x)\phi_{2}(0,x)\dd x\\
    &= -\iint_{\OT}\!\!\!\!  \omega_{0}(y)\phi_{2}(t,\xi_{\cV^{*}}(0,y;t))\partial_{2}\cV^{*}[\rho,\omega](t,\xi_{\cV^{*}}(0,y;t))\partial_{2}\xi_{\cV^{*}}(0,y;t)\dd y\dd t\\
    &\quad-\int_{\R}\omega_{0}(y)\partial_{2}\xi_{\cV^{*}}(0,y;0)\phi_{2}(0,\xi_{\cV^{*}}(0,y;0))\dd y\\
    & \quad + \iint_{\OT} \omega_{0}(y) \phi_{2}(t,\xi_{\cV^{*}}(0,y;t)) \partial_2 \cV^{*}[\rho,\omega](t,\xi_{\cV^{*}}(0,y;t)) \partial_2 \xi_{\cV^{*}}(0, y; t) \dd y\dd t +\int_{\R}\omega_{0}(x)\phi_{2}(0,x)\dd x.
    \intertext{Now, using \(\partial_{2}\xi_{\cV^{*}}(0,y;0)=1\), as well as \(\xi_{\cV^{*}}(0,y;0)=y\), which is a consequence of \cref{lem:characteristics_properties}, we find that}
    &=-\int_{\R}\omega_{0}(y)\phi_{2}(0,y)\dd y+\int_{\R}\omega_{0}(x)\phi_{2}(0,x)\dd x=0.
\end{align*}
Concluding, we have shown that  the functions defined in \cref{eq:solution}  form a weak solution in the sense of \cref{defi:weak_solution}.

\item[2. Regularity of the weak solution:]

We start by proving the claimed regularity for \(\omega \).
Notice that \(\omega_0 \in L^{\infty}(\R)\), \(\xi_{\cV^{*}}(t, x; \cdot) \colon [0, T] \to \R\) for any \((t, x) \in \Omega_T\), and \(\omega(t,x)=\omega_{0}(\xi_{\cV^{*}}(t,x;0))\ \text{ for } (t,x)\in (0,T)\times \R \text{ a.e.}\) imply that \(\omega \in L^{\infty}((0,T);L^{\infty}(\R))\). 

Next, we claim that for \(t \in (0, T)\), \(\omega(t, \cdot) \in TV(\R)\) a.e. This is true as the composition of a function with bounded \(TV\) seminorm and a Lipschitz-continuous function is again \(TV\) bounded \cite{Josephy1981} (the result is for pointwise \(BV\); however, it carries over to the \(BV\) class considered here).

Now we claim that \( \omega \in C([0, T]; L^{1}_{\textnormal{loc}}(\R))\). For this, we require an approximation result and claim that for each compact \(K\subset\R\), \(\forall \varepsilon >0\), there exists \(\omega_{0, \varepsilon,K} \in C^{\infty}(\R)\) such that 
\begin{align}
\int_K |\omega_0 (x) - \omega_{0, \varepsilon,K}(x)| \dd x < \tfrac{\varepsilon}{3\exp\Big(T\|\partial_{2}\cV^{*}\|_{L^{\infty}((0,T);L^{\infty}(\R))}\Big)}. 
\label{eq:epsilon_omega}
\end{align}
Using this, we obtain \(\forall\, t_1, t_2 \in [0, T]\) that
\begin{align*}
   \int_{K}| \omega(t_1, x)  - \omega(t_2, x)|\dd x   & = \int_K |\omega_{0}(\xi_{\cV^{*}}(t_1,x;0)) -  \omega_{0}(\xi_{\cV^{*}}(t_2,x;0))| \dd x.
   \intertext{By approximating \(\omega_{0}\) smoothly and then adding zeros, we reach}
   \int_{K}| \omega(t_1, x)  - \omega(t_2, x)|\dd x & \leq \int_K |\omega_{0}(\xi_{\cV^{*}}(t_1,x;0)) -\omega_{0, \varepsilon,K}(\xi_{\cV^{*}}(t_1,x;0))|\dd x\\
   &\quad+\int_K |\omega_{0, \varepsilon,K}(\xi_{\cV^{*}}(t_1,x;0)) -\omega_{0, \varepsilon,K}(\xi_{\cV^{*}}(t_2,x;0))| \dd x \\
   & \quad +\int_{K}| \omega_{0, \varepsilon,K}(\xi_{\cV^{*}}(t_2,x;0))-\omega_{0}(\xi_{\cV^{*}}(t_2,x;0))| \dd x. 
   \intertext{By a substitution, we have}
   \int_{K}| \omega(t_1, x)  - \omega(t_2, x)|\dd x & \leq \int_K |\omega_{0}(y) -\omega_{0, \varepsilon,K}(y)| \partial_2 \xi_{\cV^{*}}(0, y; t_1)\dd y\\
   &\quad +\int_K |\omega_{0, \varepsilon,K}(\xi_{\cV^{*}}(t_1,x;0)) -\omega_{0, \varepsilon,K}(\xi_{\cV^{*}}(t_2,x;0)) \dd x \\
   & \quad +\int_{K}| \omega_{0, \varepsilon,K}(y)-\omega_{0}(y)| \partial_2 \xi_{\cV^{*}}(0, y; t_2)\dd y.\\
   \intertext{By \cref{lem:properties_characteristics},}
   \int_{K}| \omega(t_1, x)  - \omega(t_2, x)|\dd x & \leq 2\exp\Big(T\|\partial_{2}\cV^{*}\|_{L^{\infty}((0,T);L^{\infty}(\R))}\Big) \int_K |\omega_{0}(y) -\omega_{0, \varepsilon,K}(y)| \dd y\\
   &\quad+\int_K |\omega_{0, \varepsilon}(\xi_{\cV^{*}}(t_1,x;0)) -\omega_{0, \varepsilon,K}(\xi_{\cV^{*}}(t_2,x;0)) \dd x. \\
   \intertext{Then, by \cref{eq:epsilon_omega},}
   \int_{K}| \omega(t_1, x)  - \omega(t_2, x)|\dd x &\leq  \tfrac{2 \varepsilon }{3} + \int_K |\omega_{0, \varepsilon,K}(\xi_{\cV^{*}}(t_1,x;0)) -\omega_{0, \varepsilon,K}(\xi_{\cV^{*}}(t_2,x;0)) |\dd x.
\end{align*}
Taking advantage of \cref{lem:TV_estimate_composition} and \cref{lem:characteristics_properties}, we derive the following estimate:
\[
\int_K\!\! |\omega_{0, \varepsilon,K}(\xi_{\cV^{*}}(t_1,x;0)) -\omega_{0, \varepsilon,K}(\xi_{\cV^{*}}(t_2,x;0)) |\dd x\leq |\omega_{0, \varepsilon,K}|_{TV(\R)}\|\xi_{\cV^{*}}(0,\cdot;t_{1})-\xi_{\cV^{*}}(0,\cdot;t_{2})\|_{L^{\infty}(\R)}.
\]
However, when choosing a suitable mollification (standard mollifier), we obtain
\[
\sup_{\varepsilon\in\R_{>0}}|\omega_{0, \varepsilon,K}|_{TV(\R)}\leq |\omega_{0}|_{TV(\R)} .
\]
Using the definition of the characteristics in \cref{defi:characteristics}, 
we obtain, recalling \cref{defi:set_Omega}, that
\[
\lim_{t_{1}\rightarrow t_{2}}\|\xi_{\cV^{*}}(0,\cdot;t_{1})-\xi_{\cV^{*}}(0,\cdot;t_{2})\|_{L^{\infty}(\R)}\leq \lim_{t_{1}\rightarrow t_{2}}|t_{1}-t_{2}|\|\cV^{*}\|_{L^{\infty}((0,T);L^{\infty}(\R))}\leq  \lim_{t_{1}\rightarrow t_{2}}|t_{1}-t_{2}|\cV^{\infty}=0.
\]
Hence, it holds that
\(
\lim_{t_{1}\rightarrow t_{2}}\int_{K}\big| \omega(t_1, x)  - \omega(t_2, x)\big|\dd x \to 0,
\)
which gives the claimed regularity.

Now, we turn our attention to the proof of the claimed regularity of \(\rho\).
Notice that \(\rho_0 \in L^{\infty}(\R)\), \(\xi_{\cV^{*}}(t, x; \cdot) \colon [0, T] \to \R\) for any \((t, x) \in \Omega_T\), \(\partial_2\xi_{\cV^{*}}(\cdot, \cdot; 0) \colon \Omega_T \to \R \) is bounded by \cref{lem:properties_characteristics}, and \(\rho(t,x)=\rho_{0}(\xi_{\cV^{*}}(t,x;0))\partial_{2}\xi_{\cV^{*}}(t,x;0),\ (t,x)\in (0,T)\times \R \text{ a.e.}\) imply that \(\rho \in L^{\infty}((0,T);L^{\infty}(\R))\). 

Now, we need to show for \(t \in (0, T)\), \(\rho(t, \cdot) \in TV(\R)\) a.e.
Notice that for all \(\phi \in C_{\textnormal{c}}^1(\R)\) such that \(\|\phi\|_{L^{\infty}(\R)}\leq 1 \), it holds that
\begin{align*}
|\rho(t,\cdot)|_{TV(\R)}&\leq\|\rho_0\|_{L^{\infty}(\R)} |\partial_2 \xi_{\cV^{*}}(t, \cdot; 0)|_{TV(\R)} + \exp\Big(T\|\partial_{2}\cV^{*}\|_{L^{\infty}((0,T);L^{\infty}(\R))}\Big)|\rho_0|_{TV(\R)}.
\end{align*}
Because \(\rho_{0}\in TV(\R)\) and by \cref{lem:TV_estimate_partial_2_xi}, we have, recalling \cref{defi:set_Omega}, that
 \[
\big|\partial_{2}\xi_{\cV^{*}}(t,\cdot;0)\big|_{TV(\R)}\leq \|\partial_{2}\xi_{\cV}(t,\cdot;0)\|_{L^{\infty}(\R)}t\|\partial_{2}\cV\|_{L^{\infty}((0, T);TV(\R))}\leq T\exp(T\cV^{1,\infty})\cV^{1,TV},
 \]
which implies
\begin{align*}
    |\rho|_{L^{\infty}((0,T);TV(\R))}\leq \|\rho_{0}\|_{L^{\infty}(\R)}T\exp(T\cV^{1,\infty})\cV^{1,TV}+|\rho_{0}|_{TV(\R)}\exp(T\cV^{1,\infty}).
\end{align*}
Next, we prove that \( \rho \in C\big([0, T]; L^{1}_{\textnormal{loc}}(\R)\big)\). 

First, for any \(t \in (0, T)\) and any compact \(K \subset \R\), 
\begin{align*}
    \int_K |\rho(t, x)| \dd x &= \int_K |\rho_{0}(\xi_{\cV^{*}}(t,x;0))\partial_{2}\xi_{\cV^{*}}(t,x;0)| \dd x. \\
    \intertext{Setting \(y(x) = \xi_{\cV^{*}}(t,x;0)\) gives us}
    \int_K |\rho(t, x)| \dd x &= \int_{\{\xi_{\cV^{*}}(t,x;0)\colon x \in K\}}|\rho_{0}(y)|\dd y \leq \|\rho_0\|_{L^{\infty}(\R)} \lambda(\{\xi_{\cV^{*}}(t,x;0)\colon x \in K\}) < \infty,
\end{align*}
where \(\lambda(\{\xi_{\cV^{*}}(t,x;0)\colon x \in K\})\) represents the Lebesgue measure of the set \(\{\xi_{\cV^{*}}(t,x;0)\colon x \in K\}\), which proves that the solution is integrable for almost all \(t\in(0,T)\).

Second, for any \(t_1, t_2 \in [0, T]\), 
\begin{align*}
    &\int_{K} |\rho(t_1, x) - \rho(t_2, x)| \dd x \\
    &=  \int_{K} |\rho_{0}(\xi_{\cV^{*}}(t_1,x;0))\partial_{2}\xi_{\cV^{*}}(t_1,x;0) - \rho_{0}(\xi_{\cV^{*}}(t_2,x;0))\partial_{2}\xi_{\cV^{*}}(t_2,x;0)|\dd x \\
    &\leq \int_{K} |\rho_{0}(\xi_{\cV^{*}}(t_1,x;0))\partial_{2}\xi_{\cV^{*}}(t_1,x;0) - \rho_{0}(\xi_{\cV^{*}}(t_1,x;0))\partial_{2}\xi_{\cV^{*}}(t_2,x;0)| \dd x \\
    & \quad+\int_{K} |\rho_{0}(\xi_{\cV^{*}}(t_1,x;0))\partial_{2}\xi_{\cV^{*}}(t_2,x;0) -\rho_{0}(\xi_{\cV^{*}}(t_2,x;0))\partial_{2}\xi_{\cV^{*}}(t_2,x;0)|\dd x \\
    &= \int_{K} |\rho_{0}(\xi_{\cV^{*}}(t_1,x;0))| |\partial_{2}\xi_{\cV^{*}}(t_1,x;0) - \partial_{2}\xi_{\cV^{*}}(t_2,x;0)| \dd x \\
    &\quad +\int_{K} |\rho_{0}(\xi_{\cV^{*}}(t_1,x;0))-\rho_{0}(\xi_{\cV^{*}}(t_2,x;0))|\partial_{2}\xi_{\cV^{*}}(t_2,x;0)\dd x \\
    & \leq\|\rho_0\|_{L^{\infty}(\R)}\int_{K}  |\partial_{2}\xi_{\cV^{*}}(t_1,x;0) - \partial_{2}\xi_{\cV^{*}}(t_2,x;0)| \dd x \\
    &\quad +\exp\Big(T\|\partial_{2}\cV^{*}\|_{L^{\infty}((0,T);L^{\infty}(\R))}\Big) \int_{K} |\rho_{0}(\xi_{\cV^{*}}(t_1,x;0))-\rho_{0}(\xi_{\cV^{*}}(t_2,x;0))|\dd x. 
\end{align*}
Observe that 
\begin{align*}\int_{K} |\rho_{0}(\xi_{\cV^{*}}(t_1,x;0))-\rho_{0}(\xi_{\cV^{*}}(t_2,x;0))|\dd x \to 0 \text{ as } t_1 \to t_2
\label{limit_1}
\end{align*}
for the same reason as in \cref{eq:epsilon_omega} when mollifying \(\rho_{0}\) and using \cref{lem:characteristics_properties}, namely, \cref{eq:characteristics_Lipschitz_time} together with \cref{lem:TV_estimate_composition}.
Moreover, from \cref{eq:time_regularity_partial_2_xi} in \cref{lem:stability_characteristics}, we obtain
\begin{align*}
    \lim_{t_{1}\rightarrow t_{2}}\int_{K}  |\partial_{2}\xi_{\cV^{*}}(t_1,x;0) - \partial_{2}\xi_{\cV^{*}}(t_2,x;0)| \dd x=0
\end{align*}
so that we have proven the claimed regularity.
    \item[3. Uniqueness of the weak solution:] We follow the argument in \cite{wang} and \cite{pflug} for uniqueness of solutions.
Let two weak solutions \[(\rho, \omega),\ (\tilde{\rho}, \tilde{\omega}) \in \Big(C\big([0,T];L^{1}_{\textnormal{loc}}(\R)\big)\cap L^{\infty}((0,T);L^{\infty}(\R)\cap TV(\R))\Big)^{2} \] in the sense of \cref{defi:weak_solution} be given. Define, for \((t,x)\in\OT\),
\begin{align*}
    \tilde{\cV}(t,x)=\int_{\R}\keta(x-y)V(\tilde{\rho}(t,y),\tilde{\omega}(t,y))\dd y
\end{align*}
and denote by \(\xi_{\tilde{\cV}} \colon (0, T) \times \R \times (0, T) \to \R\) the solution to the corresponding characteristic equation in \cref{defi:characteristics} for the velocity \(\tilde{\cV}\). Let \(\psi_{0,1}, \psi_{0,2} \in C_c^1([-42, T+1]\times\R)\) be given, and define 
\begin{align}
\psi_1 (t,x) &= \psi_{0, 1}(\xi_{\tilde{\cV}}(t,x;T)), &&(t, x) \in \Omega_T \\ 
\psi_2(t,x) &= \psi_{0, 2}(\xi_{\tilde{\cV}}(t,x;T))\partial_2 \xi_{\tilde{\cV}}(t, x;T), &&(t, x) \in \Omega_T.
\intertext{Notice that for \((t,x)\in \OT\), \(\psi_1\) satisfies the following a.e.:}
    \partial_t \psi_1(t, x) &= \psi_{0, 1}'(\xi_{\tilde{\cV}}(t,x;T)) \partial_1 \xi_{\tilde{\cV}}(t, x;T)\notag\\
    \partial_x \psi_1(t,x) &= \psi_{0, 1}'(\xi_{\tilde{\cV}}(t,x;T)) \partial_2 \xi_{\tilde{\cV}}(t, x;T)\notag
\end{align}
which implies that
\begin{align*}
\partial_t \psi_1(t, x) + \tilde{\cV}(t, x) \partial_x \psi_1 (t, x) & = \psi_{0, 1}'(\xi_{\tilde{\cV}}(t,x;T)) \partial_1 \xi_{\tilde{\cV}}(t, x;T) + \tilde{\cV}(t, x)\psi_{0,1}'(\xi_{\tilde{\cV}}(t,x;T)) \partial_2 \xi_{\tilde{\cV}}(t, x;T)\\
& = \psi_{0, 1}'(\xi_{\tilde{\cV}}(t,x;T)) (\partial_1 \xi_{\tilde{\cV}}(t, x;T) + \tilde{\cV}(t, x) \partial_2 \xi_{\tilde{\cV}}(t,x;T))=0,
\end{align*}
where we have used \cref{lem:characteristics_properties}.
In addition, it holds by \cref{lem:characteristics_properties} that \(\psi_1(T, x) = \psi_{0, 1}(\xi_{\tilde{\cV}}(T,x;T)) = \psi_{0, 1}(x)\ \forall x\in\R\).
Furthermore, it can be shown that it holds for \((t,x)\in\OT\) that a.e.,
\begin{align*}
\partial_{t}\psi_2(t,x)+\partial_{x}\big(\psi_2(t,x)\tilde{\cV}(t,x)\big)=0
\end{align*}
and 
\begin{align}
    \psi_2(T, x) = \psi_{0, 2}(\xi_{\tilde{\cV}}(T,x;T))\partial_2 \xi_{\tilde{\cV}}(T, x;T) = \psi_{0, 2}(x)\partial_2 \xi_{\tilde{\cV}}(T,x;T)= \psi_{0, 2}(x),
\end{align}
where we have again used \cref{lem:properties_characteristics}.

Since \(\psi_1, \psi_2 \in W_\textnormal{c}^{1, \infty}((-42, T) \times \R)\) and  \((\tilde{\rho}, \tilde{\omega}) \in \Big(C\big([0,T];L^{1}_{\textnormal{loc}}(\R)\big)\cap L^{\infty}((0,T);L^{\infty}(\R)\cap TV(\R))\Big)^{2} \) is a weak solution (as defined in \cref{defi:weak_solution}) to \cref{eq:nonlocal_GARZ} and \cref{eq:init_data}, we can plug in \(\psi_{1}\) as a test function in the weak formulation and obtain
\begin{align*}
        & \iint_{\OT} \tilde{\rho}(t,x)\big(\partial_{t}\psi_1(t,x)+\partial_{x}\psi_1(t,x)\tilde{\cV}(t,x)\big)\dd x\dd t+\int_{\R}\rho_{0}(x)\psi_1(0,x)\dd x=\int_{\R}\tilde{\rho}(T,x)\psi_1(T,x)\dd x,
        \intertext{which, following the previous analysis, is equivalent to}
        & \int_{\R}\rho_{0}(x)\psi_{0, 1}(\xi_{\tilde{\cV}}(0, x;T))\dd x - \int_{\R}\tilde{\rho}(T,x)\psi_1(T,x)\dd x=0.\\
        \intertext{After a substitution, we have}
        & \int_{\R}\rho_{0}(x)\psi_{0, 1}(\xi_{\tilde{\cV}}(0, x;T))\dd x - \int_{\R}\tilde{\rho}(T,x)\psi_{0,1}(x)\dd x=0,
        \intertext{and after another substitution, taking advantage of \cref{lem:properties_characteristics}, we arrive at}
  &  \int_{\R}\tilde{\rho}(T,x)\psi_{0,1}(x)\dd x=\int_{\R}\rho_{0}(\xi_{\tilde{\cV}}(T, y;0))\partial_2 \xi_{\tilde{\cV}}(T, y;0) \psi_{0, 1}(y) \dd y.
\end{align*}

Since \(\psi_{0,1} \in W_\textnormal{c}^{1,\infty}([-42, T+1]\times\R)\) was arbitrary, by the fundamental Theorem of the calculus of variations, we have, for almost every \(y \in \R \) and \(\tau \in [0, T]\), 
\begin{align}
    \tilde{\rho}(\tau,y) & = \rho_{0}(\xi_{\tilde{\cV}}(\tau, y;0))\partial_2 \xi_{\tilde{\cV}}(\tau, y;0). 
\end{align}
However, this means that any weak solution \(\rho\) can be written in terms of characteristics. 

We repeat the process for \(\omega\) and obtain, according to the weak solution for \(\omega\) in \cref{defi:weak_solution}, when plugging in \(\psi_{2}\) as a test function,
\begin{align*}
     & \iint_{\OT} \tilde{\omega}(t,x)\big(\partial_{t}\psi_2(t,x)+\partial_{x}\big(\psi_2(t,x)\tilde{\cV}(t,x)\big)\big)\dd x\dd t+\int_{\R}\omega_{0}(x)\psi_2(0,x)\dd x=\int_{\R}\tilde{\omega}(T, x)\psi_2(T,x)\dd x.
     \intertext{This implies, according to the previous computations, that}
     & \int_{\R}\tilde{\omega}(T, x)\psi_2(T,x)\dd x = \int_{\R}\omega_{0}(x)\psi_2(0,x)\dd x,\\
     \intertext{which is equivalent to}
     & \int_{\R}\tilde{\omega}(T, x)\psi_{0,2}(x)\dd x = \int_{\R}\omega_{0}(x)\psi_{0, 2}(\xi_{\tilde{\cV}}(0, x;T))\partial_2 \xi_{\tilde{\cV}}(0, x;T)\dd x.\\
     \intertext{Owing to \cref{lem:properties_characteristics}, a substitution leads to}
     & \int_{\R}\tilde{\omega}(T, x)\psi_{0,2}(x)\dd x = \int_{\R}\omega_{0}(\xi_{\tilde{\cV}}(T, y;0))\psi_{0, 2}(y)\dd y.
\end{align*}
Again, because \(\psi_{0,2} \in W_{\textnormal{c}}^{1,\infty}([-42, T+1]\times\R)\) was arbitrary and using the fundamental lemma of calculus of variations, we have, for almost every \(y \in \R \) and \(\tau \in [0, T]\), 
\begin{align}
    \tilde{\omega}(\tau,y) & = \omega_{0}(\xi_{\tilde{\cV}}(\tau, y;0)).
\end{align}
Thus, we have shown that any weak solution \(\omega\) can be represented in terms of suitable characteristics.

Hence, for \((t, x) \in \Omega_T\), we can compute the nonlocal velocity, and we have
\begin{align*}
 \tilde{\cV}(t,x) & =\int_{\R}\keta(x-y)V(\tilde{\rho}(t,y),\tilde{\omega}(t,y))\dd y \\
& =\int_{\R}\keta(x-y)V(\rho_{0}(\xi_{\tilde{\cV}}(T, y;0))\partial_2 \xi_{\tilde{\cV}}(T, y;0),\omega_{0}(\xi_{\tilde{\cV}}(T, y;0)))\dd y.
\intertext{After substitution, this becomes}
\tilde{\cV}(t,x) &=\int_{\R}\keta(x-\xi_{\tilde{\cV}}(0, z;t))V\big(\tfrac{\rho_{0}(z)}{\partial_{2}\xi_{\tilde{\cV}}(0,z;t)},\omega_{0}(z)\big)\partial_2 \xi_{\tilde{\cV}}(0, z;t) \dd z.
\end{align*}
However, the right-hand side constitutes the fixed-point mapping in \cref{defi:fixed_point_mapping} and thus, we obtain
\[
\tilde{\cV}=\cF[\tilde{\cV}] \text{ on }\OT.
\]
By \cref{theo:fixed_point_mapping_uniqueness}, such a fixed point is unique, and thus, we have a unique \(\cV^{*}\), and by the solution formula in terms of characteristics, \((\rho,\omega)\) is unique. 
\end{description}
This completes the proof of \cref{theo:existence_uniqueness_small_time}.
\end{proof}
In the following, we obtain a property that is helpful in proving results. We start with the following lemma. 
\begin{lem}[continuity of {\(\cV\)} with respect to initial conditions]\label{lem:continuity_cV_initial_data}
Assume that \cref{ass:small_time_existence_uniqueness} holds.
   Let \((\rho_{0,i}, \omega_{0,i}) \in TV(\R;\R_{\geq 0})^2\) for \( i \in\{1,2\} \) satisfy the following bounds:  
\[
\|\rho_{0,i}\|_{L^{\infty}(\R)} \leq \rho_{0, \infty}, \, \|\omega_{0,i}\|_{L^{\infty}(\R)} \leq \omega_{0, \infty}, \, |\rho_{0,i}|_{TV(\R)} \leq \rho_{0, TV}, \, |\omega_{0,i}|_{TV(\R)} \leq \omega_{0, TV},
\]  
where \(\rho_{0, \infty}, \omega_{0, \infty}, \rho_{0, TV}, \omega_{0, TV} \in\R_{>0}\) are some given constants. Without loss of generality, we set  
\begin{equation}
Q(\rho_{0, \infty}, \omega_{0, \infty}) = (-42\rho_{0, \infty}, 42\rho_{0, \infty}) \times (-\omega_{0,\infty}, \omega_{0, \infty}) \subset \R^2\label{eq:Q_0}
\end{equation}
and define the corresponding fixed-point mapping set \( \Omega(T) \) and other related constants as in \cref{defi:set_Omega}, using \(Q(\rho_{0, \infty}, \omega_{0, \infty})\) with \(T>0\) sufficiently small. 
For \(i\in\{1,2\}\), let \( \cV_{i} \) be the unique solution to the corresponding fixed-point problem in \cref{theo:fixed_point_mapping_uniqueness}, and let \( \xi_{\cV_{i}}:(0,T)\times\R\times (0,T)\rightarrow\R \) be the solution to the corresponding characteristic equation in \cref{defi:characteristics} for the velocity \( \mathcal{V}_{i} \). Denote the corresponding weak solutions by \[(\rho_i, \omega_i) \in \Big(C\big([0,T];L^{1}_{\textnormal{loc}}(\R)\big) \cap L^{\infty}((0,T);TV(\R))\Big)^2,\] in the sense of \cref{defi:weak_solution}.  
Then, for every \( \varepsilon\in\R_{>0} \), there exists \( \delta\in\R_{>0} \) such that   \(\forall t \in [0, T]\),
\[
\|\rho_{0,1} - \rho_{0,2}\|_{L^1(\R)} + \|\omega_{0,1} - \omega_{0,2}\|_{L^1(\R)} \leq \delta \implies \|\mathcal{V}_1(t, \cdot) - \mathcal{V}_2(t, \cdot)\|_{L^{\infty}(\R)} + \|\mathcal{V}_1(t, \cdot) - \mathcal{V}_2(t, \cdot)\|_{TV(\R)} \leq \varepsilon.
\]
\end{lem}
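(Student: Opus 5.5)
The proof uses the fixed-point identities \(\cV_i=\cF_i[\cV_i]\), where \(\cF_i\) denotes the mapping of \cref{defi:fixed_point_mapping} built with the initial datum \((\rho_{0,i},\omega_{0,i})\). Since both data obey the same uniform bounds \(\rho_{0,\infty},\omega_{0,\infty},\rho_{0,TV},\omega_{0,TV}\), the constants \(\cV^{\infty},\cV^{1,\infty},\cV^{1,TV}\) can be defined from these bounds. Then \(\Omega(T)\) and the time horizon \(T\) from \cref{lem:self_mapping_small_time} and \cref{theo:fixed_point_mapping_uniqueness} are common to \(i=1,2\). We split
\[
\cV_1-\cV_2=\big(\cF_1[\cV_1]-\cF_1[\cV_2]\big)+\big(\cF_1[\cV_2]-\cF_2[\cV_2]\big).
\]
By \cref{lem:cF_Lipschitz_velocities}, after possibly shrinking \(T\) uniformly in the data, the first term is bounded in \(L^{\infty}((0,T);L^{\infty}\cap TV(\R))\) by \(\tfrac12\) times the corresponding norm of \(\cV_1-\cV_2\), since the Lipschitz constants depend only on the common bounds. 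Absorbing it into the left-hand side gives
\[
\|\cV_1-\cV_2\|\le 2\|\cF_1[\cV_2]-\cF_2[\cV_2]\|
\]
in that norm. It therefore suffices to show that \(\cF_1[\cW]-\cF_2[\cW]\to0\) in \(L^{\infty}((0,T);L^{\infty}\cap TV(\R))\) as the data converge in \(L^1\), for the fixed field \(\cW:=\cV_2\in\Omega(T)\).

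For fixed \(\cW\) the characteristics \(\xi_{\cW}\) are the same in \(\cF_1\) and \(\cF_2\). Substituting \(z=\xi_{\cW}(0,y;t)\) as in the proof of \cref{lem:cF_Lipschitz_velocities}, and writing \(\rho_{0,i}(\xi)\) for \(\rho_{0,i}(\xi_{\cW}(t,z;0))\) and \(\omega_{0,i}(\xi)\) analogously,
\[
\cF_1[\cW](t,x)-\cF_2[\cW](t,x)=\int_{\R}\kappa(x-z)\Big(V\big(\rho_{0,1}(\xi)\,\partial_2\xi_{\cW},\omega_{0,1}(\xi)\big)-V\big(\rho_{0,2}(\xi)\,\partial_2\xi_{\cW},\omega_{0,2}(\xi)\big)\Big)\dd z .
\]
The mean value theorem on the compact set \(Q(\rho_{0,\infty},\omega_{0,\infty})\) bounds the difference of the \(V\) terms by \(\|\partial_1V\|_{L^\infty}|\rho_{0,1}-\rho_{0,2}|(\xi)\,\partial_2\xi_{\cW}+\|\partial_2V\|_{L^\infty}|\omega_{0,1}-\omega_{0,2}|(\xi)\). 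Taking \(\|\kappa\|_{L^\infty}\) (or \(|\kappa|_{TV}\) after the \(\kappa_\eps\) approximation \cref{eq:BV_approximation} for the \(x\)-derivative) out of the integral and substituting back \(y=\xi_{\cW}(t,z;0)\), the Jacobian bound \(\exp(T\cV^{1,\infty})\) from \cref{lem:properties_characteristics} gives
\[
\|\cF_1[\cW]-\cF_2[\cW]\|_{L^\infty}+|\cF_1[\cW]-\cF_2[\cW]|_{TV}\le C\big(\|\rho_{0,1}-\rho_{0,2}\|_{L^1}+\|\omega_{0,1}-\omega_{0,2}\|_{L^1}\big).
\]
Here \(C\) depends only on \(\|\kappa\|_{BV}\), on \(\|\nabla V\|_{L^\infty(Q)}\) and on \(\e^{T\cV^{1,\infty}}\). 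For the \(TV\) part I use that \(|f|_{TV}=\|\partial_x f\|_{L^1}\), together with Young's inequality \(\|\kappa_\eps'\ast g\|_{L^1}\le|\kappa_\eps|_{TV}\|g\|_{L^1}\). For the \(L^\infty\) part, \(\|\kappa\ast g\|_{L^\infty}\le\|\kappa\|_{L^\infty}\|g\|_{L^1}\) holds because \(\kappa\in BV\) is bounded. In fact this argument yields a Lipschitz estimate, so one may choose \(\delta=\varepsilon/(2C)\).

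The main obstacle is the uniformity of the contraction step. I need \(T\) and the contraction factor of \(\cF_1\) to be chosen independently of the particular data, using only the common bounds \(\rho_{0,\infty},\omega_{0,\infty},\rho_{0,TV},\omega_{0,TV}\). This has to be checked by rereading \cref{lem:self_mapping_small_time} and \cref{lem:cF_Lipschitz_velocities} with the constants of \cref{defi:set_Omega} built from \cref{eq:Q_0}. A second delicate point is that \(\cV_2\) must lie in the set \(\Omega(T)\) on which \(\cF_1\) is a contraction. This holds because \(\Omega(T)\) is defined through the common bounds, and it is exactly why the statement fixes the shared box \(Q(\rho_{0,\infty},\omega_{0,\infty})\).
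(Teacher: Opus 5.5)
Your proof is correct and takes a cleaner route than the paper's.

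The paper works directly with $\cV_1-\cV_2=\cF_1[\cV_1]-\cF_2[\cV_2]$ (in your notation). It adds zeros inside the integral to get five terms: kernel shift, $\rho_0$-difference, difference of $\partial_2\xi$ inside $V$, $\omega_0$-difference, and Jacobian difference. Each is bounded with \cref{lem:TV_estimate_composition} and the characteristic stability estimates of \cref{lem:stability_characteristics}, the whole computation is repeated for the $TV$ seminorm, and all terms carrying a factor $t$ are finally absorbed into the left-hand side (see \crefrange{eq:compensation_0}{eq:compensation_2}).

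Your splitting through $\cF_1[\cV_2]$ is the standard fact that fixed points of a uniform contraction depend Lipschitz-continuously on a parameter. All velocity-dependent terms are handled at once by \cref{lem:cF_Lipschitz_velocities}. The only new estimate is the data dependence at a frozen field $\mathcal{W}$, which after the substitution $z=\xi_{\mathcal{W}}(0,y;t)$ reduces to Young's inequality and needs no characteristic-stability bounds.

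The price is the uniformity check you flagged, and it does hold. The constants of \cref{defi:set_Omega}, the self-mapping estimates of \cref{lem:self_mapping_small_time} and the Lipschitz constants of \cref{lem:cF_Lipschitz_velocities} depend on the data only through $\rho_{0,\infty},\omega_{0,\infty},\rho_{0,TV},\omega_{0,TV}$. Hence $\Omega(T)$ and the contraction constant $L<1$ are common to both data, and $\cV_2$ is an admissible argument of $\cF_1$. In fact any such $L<1$ suffices, giving an explicit Lipschitz bound with constant $C/(1-L)$. This is the same type of estimate the paper extracts from its $(1-C(t))$-inequality, with less bookkeeping.

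Both absorption steps tacitly use $|\cV_1(t,\cdot)-\cV_2(t,\cdot)|_{TV(\R)}<\infty$. This holds since $\cV_i=\kappa\ast V(\rho_i,\omega_i)$ with $V(\rho_i,\omega_i)(t,\cdot)\in TV(\R)$.
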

\begin{proof}
We begin by examining the term \(\|\cV_1(t, \cdot) - \cV_2(t, \cdot)\|_{L^{\infty}(\R)}\), \(\forall\, t\in [0,T]\). Notice that \( \forall \, (t, x) \in [0, T] \times \R\), according to \cref{defi:fixed_point_mapping}, when splitting the integrals into different regions (compare the similar estimate in the proof of \cref{lem:cF_Lipschitz_velocities}), it holds that
\begin{align*}
    & |\cV_1(t,x) - \cV_2(t, x)| \\
     & = \bigg|\int_{\R} \keta(x-\xi_{\cV_1}(0,y;t))V\Big(\tfrac{\rho_{0, 1}(y)}{\partial_{2}\xi_{\cV_1}(0,y;t)},\omega_{0, 1}(y)\Big)\partial_{2}\xi_{\cV_1}(0,y;t)\dd y \\
    & \quad - \int_{\R} \keta(x-\xi_{\cV_2}(0,y;t))V\Big(\tfrac{\rho_{0, 2}(y)}{\partial_{2}\xi_{\cV_2}(0,y;t)},\omega_{0, 2}(y)\Big)\partial_{2}\xi_{\cV_2}(0,y;t)\dd y\bigg|.\\
        \intertext{After adding several zeros and applying triangle inequalities, this becomes}
     & |\cV_1(t,x) - \cV_2(t, x)| \\
     & \leq \int_{\R} \Big| \keta(x-\xi_{\cV_1}(0,y;t)) - \keta(x-\xi_{\cV_2}(0,y;t))\Big|V\Big(\tfrac{\rho_{0, 1}(y)}{\partial_{2}\xi_{\cV_1}(0,y;t)},\omega_{0, 1}(y)\Big)\partial_{2}\xi_{\cV_1}(0,y;t)\dd y\\
      & \quad +  \int_{\R} |\keta(x-\xi_{\cV_2}(0,y;t))|\Big|V\Big(\tfrac{\rho_{0, 1}(y)}{\partial_{2}\xi_{\cV_1}(0,y;t)},\omega_{0, 1}(y)\Big) -V\Big(\tfrac{\rho_{0, 2}(y)}{\partial_{2}\xi_{\cV_1}(0,y;t)},\omega_{0, 1}(y)\Big)\Big|\partial_{2}\xi_{\cV_1}(0,y;t)\dd y\\
        & \quad +  \int_{\R} |\keta(x-\xi_{\cV_2}(0,y;t))|\Big| V\Big(\tfrac{\rho_{0, 2}(y)}{\partial_{2}\xi_{\cV_1}(0,y;t)},\omega_{0, 1}(y)\Big) - V\Big(\tfrac{\rho_{0, 2}(y)}{\partial_{2}\xi_{\cV_2}(0,y;t)},\omega_{0, 1}(y)\Big) \Big| \partial_{2}\xi_{\cV_1}(0,y;t)\dd y \\
          & \quad + \int_{\R} |\keta(x-\xi_{\cV_2}(0,y;t))| \Big| V\Big(\tfrac{\rho_{0, 2}(y)}{\partial_{2}\xi_{\cV_2}(0,y;t)},\omega_{0, 1}(y)\Big) - V\Big(\tfrac{\rho_{0, 2}(y)}{\partial_{2}\xi_{\cV_2}(0,y;t)},\omega_{0, 2}(y)\Big) \Big| \partial_{2}\xi_{\cV_1}(0,y;t)\dd y \\
          & \quad + \int_{\R} \Big|\keta(x-\xi_{\cV_2}(0,y;t))V\Big(\tfrac{\rho_{0, 2}(y)}{\partial_{2}\xi_{\cV_2}(0,y;t)},\omega_{0, 2}(y)\Big)\Big| \Big| \partial_{2}\xi_{\cV_1}(0,y;t) - \partial_{2}\xi_{\cV_2}(0,y;t)\Big| \dd y.
          \intertext{Using the fact that \(\kappa\) is essentially bounded under \cref{ass:small_time_existence_uniqueness} as a consequence of the assumed \(BV\) regularity, \cref{lem:TV_estimate_composition} as well as the bounds on \(\partial_{2}\xi\) as given in \cref{lem:properties_characteristics}, the bound on the spatial derivative of \(\cV\) supplied in \cref{lem:self_mapping_small_time}, and the definition of \(\Omega(T)\) in \cref{defi:set_Omega}, we find that}
          & |\cV_1(t,x) - \cV_2(t, x)| \\
          &\leq \|V\|_{L^{\infty}(Q(\rho_{0, \infty}, \omega_{0, \infty}))}\exp(t\cV^{1, \infty})\int_{\R} \Big| \keta(x-\xi_{\cV_1}(0,y;t)) - \keta(x-\xi_{\cV_2}(0,y;t))\Big| \dd y \\
      & \quad +  \|\keta\|_{L^{\infty}(\R)} \exp(t \cV^{1,\infty})\int_{\R} \Big|V\Big(\tfrac{\rho_{0, 1}(y)}{\partial_{2}\xi_{\cV_1}(0,y;t)},\omega_{0, 1}(y)\Big) -V\Big(\tfrac{\rho_{0, 2}(y)}{\partial_{2}\xi_{\cV_1}(0,y;t)},\omega_{0, 1}(y)\Big)\Big|\dd y\\
        & \quad +   \|\keta\|_{L^{\infty}(\R)} \exp(t \cV^{1,\infty}) \int_{\R} \Big| V\Big(\tfrac{\rho_{0, 2}(y)}{\partial_{2}\xi_{\cV_1}(0,y;t)},\omega_{0, 1}(y)\Big) - V\Big(\tfrac{\rho_{0, 2}(y)}{\partial_{2}\xi_{\cV_2}(0,y;t)},\omega_{0, 1}(y)\Big) \Big| \dd y \\
          & \quad +  \|\keta\|_{L^{\infty}(\R)} \exp(t \cV^{1,\infty}) \int_{\R} \Big| V\Big(\tfrac{\rho_{0, 2}(y)}{\partial_{2}\xi_{\cV_2}(0,y;t)},\omega_{0, 1}(y)\Big) - V\Big(\tfrac{\rho_{0, 2}(y)}{\partial_{2}\xi_{\cV_2}(0,y;t)},\omega_{0, 2}(y)\Big) \Big| \dd y \\
          & \quad +  \|\keta\|_{L^{\infty}(\R)} \|V\|_{L^{\infty}(Q(\rho_{0, \infty}, \omega_{0, \infty}))} \int_{\R} \Big| \partial_{2}\xi_{\cV_1}(0,y;t) - \partial_{2}\xi_{\cV_2}(0,y;t)\Big| \dd y.
          \intertext{Using the local Lipschitz continuity of \(V\) with respect to both arguments, we have}
          & |\cV_1(t,x) - \cV_2(t, x)| \\
          & \leq |\kappa|_{TV(\R)}\|\xi_{\cV_{1}}(t,\cdot;0)-\xi_{\cV_{2}}(t,\cdot;0)\|_{L^{\infty}(\R)}\|V\|_{L^{\infty}(Q(\rho_{0, \infty}, \omega_{0, \infty}))}\exp(t\cV^{1, \infty})\\
      & \quad +  \|\keta\|_{L^{\infty}(\R)} \exp(2t \cV^{1,\infty}) 
       \|\partial_1 V\|_{L^{\infty}(Q(\rho_{0, \infty}, \omega_{0, \infty}))} 
       \int_{\R} \big|\rho_{0, 1}(y)-\rho_{0, 2}(y)\big|\dd y\\
      & \quad +   \|\keta\|_{L^{\infty}(\R)} \exp(3t \cV^{1,\infty})  \|\partial_1 V\|_{L^{\infty}(Q(\rho_{0, \infty}, \omega_{0, \infty}))}\|\rho_{0,2}\|_{L^{\infty}(\R)}\int_{\R} \Big| \partial_{2}\xi_{\cV_1}(0,y;t) - \partial_{2}\xi_{\cV_2}(0,y;t) \Big| \dd y \\
     & \quad +  \|\keta\|_{L^{\infty}(\R)} \exp(t \cV^{1,\infty})  \|\partial_2 V\|_{L^{\infty}(Q(\rho_{0, \infty}, \omega_{0, \infty}))} \int_{\R} \big| \omega_{0, 1}(y) - \omega_{0, 2}(y) \big| \dd y \\
     & \quad +  \|\keta\|_{L^{\infty}(\R)} \|V\|_{L^{\infty}(Q(\rho_{0, \infty}, \omega_{0, \infty}))} \int_{\R} \big| \partial_{2}\xi_{\cV_1}(0,y;t) - \partial_{2}\xi_{\cV_2}(0,y;t)\big| \dd y.
     \intertext{By applying the stability result of the characteristics in \cref{lem:stability_characteristics}, we obtain}
      & |\cV_1(t,x) - \cV_2(t, x)| \\
      & \leq t |\kappa|_{TV(\R)}\|V\|_{L^{\infty}(Q(\rho_{0, \infty}, \omega_{0, \infty}))}\exp(2t\cV^{1, \infty})\|\cV_{1}(t,\cdot)-\cV_{2}(t,\cdot)\|_{L^{\infty}(\R)}\\
      & \quad +  \|\keta\|_{L^{\infty}(\R)} \exp(2t \cV^{1,\infty}) 
       \|\partial_1 V\|_{L^{\infty}(Q(\rho_{0, \infty}, \omega_{0, \infty}))}\|\rho_{0,1}-\rho_{0,2}\|_{L^{1}(\R)}\\
      & \quad +  \|\keta\|_{L^{\infty}(\R)} \exp(t \cV^{1,\infty})  \|\partial_2 V\|_{L^{\infty}(Q(\rho_{0, \infty}, \omega_{0, \infty}))} \|\omega_{0, 1}- \omega_{0, 2}\|_{L^{1}(\R)}\\
      &\quad +\|\kappa\|_{L^{\infty}(\R)}\big(\exp(3t \cV^{1,\infty})  \|\partial_1 V\|_{L^{\infty}(Q(\rho_{0, \infty}, \omega_{0, \infty}))}\|\rho_{0,2}\|_{L^{\infty}(\R)}+\|V\|_{L^{\infty}(Q(\rho_{0, \infty}, \omega_{0, \infty}))}\big)\\
      &\qquad \cdot\exp\big(t\cV^{1,\infty}\big) t\Big(\cV^{1,TV} \|\xi_{\cV_{1}} - \xi_{\cV_{2}}\|_{L^{\infty}(\Omega_T \times [0, T])} + \exp\big(t\cV^{1,\infty}\big) \|\cV_{1} - \cV_{2}\|_{L^{\infty}((0, T); TV(\R))} \Big).
      \intertext{Applying \cref{lem:stability_characteristics} once more leads to}
      & |\cV_1(t,x) - \cV_2(t, x)| \\
      &\leq t |\kappa|_{TV(\R)}\|V\|_{L^{\infty}(Q(\rho_{0, \infty}, \omega_{0, \infty}))}\exp(2t\cV^{1, \infty})\|\cV_{1}(t,\cdot)-\cV_{2}(t,\cdot)\|_{L^{\infty}(\R)}\\
      & \quad +  \|\keta\|_{L^{\infty}(\R)} \exp(2t \cV^{1,\infty}) 
       \|\partial_1 V\|_{L^{\infty}(Q(\rho_{0, \infty}, \omega_{0, \infty}))}\|\rho_{0,1}-\rho_{0,2}\|_{L^{1}(\R)}\\
      & \quad +  \|\keta\|_{L^{\infty}(\R)} \exp(t \cV^{1,\infty})  \|\partial_2 V\|_{L^{\infty}(Q(\rho_{0, \infty}, \omega_{0, \infty}))} \|\omega_{0, 1}- \omega_{0, 2}\|_{L^{1}(\R)}\\
      &\quad +\|\kappa\|_{L^{\infty}(\R)}\big(\exp(3t \cV^{1,\infty})  \|\partial_1 V\|_{L^{\infty}(Q(\rho_{0, \infty}, \omega_{0, \infty}))}\|\rho_{0,2}\|_{L^{\infty}(\R)}+\|V\|_{L^{\infty}(Q(\rho_{0, \infty}, \omega_{0, \infty}))}\big)\\
      &\qquad \cdot\exp\big(2t\cV^{1,\infty}\big) t\Big(\cV^{1,TV} t\|\cV_{1}-\cV_{2}\|_{L^{\infty}((0,T);L^{\infty}(\R))}+ \|\cV_{1} - \cV_{2}\|_{L^{\infty}((0, T); TV(\R))} \Big).
\end{align*}
As can be seen, the term on the right-hand side involving \(\|\cV-\tilde{\cV}\|_{L^{\infty}((0,T);L^{\infty}(\R))}\) can be absorbed into the same term on the left-hand side (after taking the supremum over \(x\in\R\)), owing to the coefficient \(t\), which we can choose to be small enough that only the \(L^{1}\) distance of the initial data remains. However, one further term also remains, namely, \(\|\cV - \tilde{\cV}\|_{L^{\infty}((0, T); TV(\R))}\), which necessitates us to go further with our analysis and look also into the following total variation estimate.

For this estimate, we again follow the strategy of the proof of \cref{lem:cF_Lipschitz_velocities}. We find, for a smooth kernel \(\kappa_{\eps}\) as in \cref{eq:BV_approximation}, that
\begin{align*}
   & |\partial_2 \cV_1(t, x) - \partial_2 \cV_2(t, x)| \\
    &\leq \bigg|\int_{\R} \keta_{\eps}'(x-\xi_{\cV_1}(0,y;t))V\Big(\tfrac{\rho_{0,1}(y)}{\partial_{2}\xi_{\cV_1}(0,y;t)},\omega_{0,1}(y)\Big)\partial_{2}\xi_{\cV_1}(0,y;t)\dd y\\
    &\qquad\qquad - \int_{\R} \keta_{\eps}'(x-\xi_{\cV_2}(0,y;t))V\Big(\tfrac{\rho_{0,2}(y)}{\partial_{2}\xi_{\cV_2}(0,y;t)},\omega_{0,2}(y)\Big)\partial_{2}\xi_{\cV_2}(0,y;t)\dd y\bigg|.\\
     \intertext{Setting \(z(y) = \xi_{\cV_1}(0, y; t)\) and \(z(y) = \xi_{\cV_2}(0, y; t)\) separately in the two integrals and adding zeros yields}
     & |\partial_2 \cV_1(t, x) - \partial_2 \cV_2(t, x)| \\
    &\leq \|\partial_2 V\|_{L^{\infty}(Q(\rho_{0, \infty}, \omega_{0, \infty}))} \int_{\R} \big|\keta_{\eps}'(x-z)  \big|\big| \omega_{0,1}(\xi_{\cV_1}(t, z; 0))- \omega_{0,1}(\xi_{\cV_2}(t, z; 0)) \big| \dd z \\
     &\quad +\|\partial_2 V\|_{L^{\infty}(Q(\rho_{0, \infty}, \omega_{0, \infty}))} \int_{\R} \big|\keta_{\eps}'(x-z)  \big|\big| \omega_{0,1}(\xi_{\cV_2}(t, z; 0))- \omega_{0,2}(\xi_{\cV_2}(t, z; 0)) \big|  \dd z \\
      &\quad +\|\partial_1 V\|_{L^{\infty}(Q(\rho_{0, \infty}, \omega_{0, \infty}))} \int_{\R} \big|\keta_{\eps}'(x-z)  \big|\big| \rho_{0,1}(\xi_{\cV_1}(t, z; 0))\partial_2 \xi_{\cV_1}(t, z; 0)- \rho_{0,2}(\xi_{\cV_1}(t, z; 0))\partial_2 \xi_{\cV_1}(t, z; 0) \big|  \dd z \\
     &\quad +\|\partial_1 V\|_{L^{\infty}(Q(\rho_{0, \infty}, \omega_{0, \infty}))} \int_{\R} \big|\keta_{\eps}'(x-z)  \big|\big| \rho_{0,2}(\xi_{\cV_1}(t, z; 0))\partial_2 \xi_{\cV_1}(t, z; 0)- \rho_{0,2}(\xi_{\cV_2}(t, z; 0))\partial_2 \xi_{\cV_1}(t, z; 0) \big|  \dd z \\
      &\quad +\|\partial_1 V\|_{L^{\infty}(Q(\rho_{0, \infty}, \omega_{0, \infty}))} \int_{\R} \big|\keta_{\eps}'(x-z)  \big| \big| \rho_{0,2}(\xi_{\cV_2}(t, z; 0))\partial_2 \xi_{\cV_1}(t, z; 0)- \rho_{0,2}(\xi_{\cV_2}(t, z; 0))\partial_2 \xi_{\cV_2}(t, z; 0) \big|  \dd z.
\end{align*}
Then, for \(\forall t \in [0, T]\), after applying the \(TV\) seminorm, which comes down to computing the \(L^{1}\)-norm of \(\partial_{2}\cV_{1}-\partial_{2}\cV_{2}\) because \(\cV\) is Lipschitz in the spatial variable, we have
\begin{align*}
    & |\cV_1(t, \cdot) - \cV_2(t, \cdot)|_{TV(\R)} = \int_{\R} |\partial_2 \cV_1(t, x) - \partial_{2}\cV_2(t,x)|\dd x \\
    &\leq \|\partial_2 V\|_{L^{\infty}(Q(\rho_{0, \infty}, \omega_{0, \infty}))} \iint_{\R^{2}} \big|\keta_{\eps}'(x-z)  \big|\big| \omega_{0,1}(\xi_{\cV_1}(t, z; 0))- \omega_{0,1}(\xi_{\cV_2}(t, z; 0)) \big|  \dd z \dd x \\
     &\ +\|\partial_2 V\|_{L^{\infty}(Q(\rho_{0, \infty}, \omega_{0, \infty}))} \iint_{\R^{2}} \big|\keta_{\eps}'(x-z)  \big|\big| \omega_{0,1}(\xi_{\cV_2}(t, z; 0))- \omega_{0,2}(\xi_{\cV_2}(t, z; 0)) \big|  \dd z \dd x \\
      &\ +\|\partial_1 V\|_{L^{\infty}(Q(\rho_{0, \infty}, \omega_{0, \infty}))}\!\!\! \iint_{\R^{2}}\!\!\!\! \big|\keta_{\eps}'(x-z)  \big|\big| \rho_{0,1}(\xi_{\cV_1}(t, z; 0))\partial_2 \xi_{\cV_1}(t, z; 0)- \rho_{0,2}(\xi_{\cV_1}(t, z; 0))\partial_2 \xi_{\cV_1}(t, z; 0) \big| \dd z \dd x \\
     &\ +\|\partial_1 V\|_{L^{\infty}(Q(\rho_{0, \infty}, \omega_{0, \infty}))} \!\!\!\iint_{\R^{2}}\!\!\!\! \big|\keta_{\eps}'(x-z)  \big|\big| \rho_{0,2}(\xi_{\cV_1}(t, z; 0))\partial_2 \xi_{\cV_1}(t, z; 0)- \rho_{0,2}(\xi_{\cV_2}(t, z; 0))\partial_2 \xi_{\cV_1}(t, z; 0) \big| \dd z \dd x \\
      &\ +\|\partial_1 V\|_{L^{\infty}(Q(\rho_{0, \infty}, \omega_{0, \infty}))} \!\!\!\iint_{\R^{2}}\!\!\!\!\big|\keta_{\eps}'(x-z)  \big| \big| \rho_{0,2}(\xi_{\cV_2}(t, z; 0))\partial_2 \xi_{\cV_1}(t, z; 0)- \rho_{0,2}(\xi_{\cV_2}(t, z; 0))\partial_2 \xi_{\cV_2}(t, z; 0) \big| \dd z \dd x.
      \intertext{Now, by changing the order of integration and applying \cref{lem:TV_estimate_composition}, we can see that}
      & |\cV_1(t, \cdot) - \cV_2(t, \cdot)|_{TV(\R)} \\
      &\leq \|\partial_2 V\|_{L^{\infty}(Q(\rho_{0, \infty}, \omega_{0, \infty}))} |\kappa_{\eps}|_{TV(\R)}|\omega_{0,1}|_{TV(\R)}\|\xi_{\cV_1}(0, \cdot; t)- \xi_{\cV_2}(0, \cdot; t)\|_{L^{\infty}(\R)}\\
     &\ +\|\partial_2 V\|_{L^{\infty}(Q(\rho_{0, \infty}, \omega_{0, \infty}))}\exp\big(t\cV^{1,\infty}\big)|\kappa_{\eps}|_{TV(\R)}\|\omega_{0,1}-\omega_{0,2}\|_{L^{1}(\R)}\\
      &\ +\|\partial_1 V\|_{L^{\infty}(Q(\rho_{0, \infty}, \omega_{0, \infty}))}\exp\big(2t\cV^{1,\infty}\big)|\kappa_{\eps}|_{TV(\R)} \|\rho_{0,1}-\rho_{0,2}\|_{L^{1}(\R)}\\
     &\ +\|\partial_1 V\|_{L^{\infty}(Q(\rho_{0, \infty}, \omega_{0, \infty}))}\exp\big(t\cV^{1,\infty}\big)|\kappa_{\eps}|_{TV(\R)}|\rho_{0,2}|_{TV(\R)}\|\xi_{\cV_{1}}(0,\cdot;t)-\xi_{\cV_{2}}(0,\cdot;t)\|_{L^{\infty}(\R)}\\
      &\ +\|\partial_1 V\|_{L^{\infty}(Q(\rho_{0, \infty}, \omega_{0, \infty}))}|\kappa_{\eps}|_{TV(\R)}\|\rho_{0,2}\|_{L^{\infty}(\R)}\|\partial_{2}\xi_{\cV_{1}}(t,\cdot;0)-\partial_{2}\xi_{\cV_{2}}(t,\cdot;0)\|_{L^{1}(\R)}.
      \intertext{By rearranging terms, applying the stability results on the characteristics in \cref{lem:stability_characteristics}, and passing to the limit \(\eps\rightarrow 0\), we have}
      & |\cV_1(t, \cdot) - \cV_2(t, \cdot)|_{TV(\R)} \\
      &\leq|\kappa|_{TV(\R)}\Big(\|\partial_2 V\|_{L^{\infty}(Q(\rho_{0, \infty}, \omega_{0, \infty}))} |\omega_{0,1}|_{TV(\R)}+\|\partial_1 V\|_{L^{\infty}(Q(\rho_{0, \infty}, \omega_{0, \infty}))}\exp\big(t\cV^{1,\infty}\big)|\rho_{0,2}|_{TV(\R)}\Big)\\
      &\qquad\cdot t\|\cV_{1}-\cV_{2}\|_{L^{\infty}((0,t);L^{\infty}(\R))}\exp\big(t\cV^{1,\infty}\big)\\
     &\quad +\|\partial_2 V\|_{L^{\infty}(Q(\rho_{0, \infty}, \omega_{0, \infty}))}\exp\big(t\cV^{1,\infty}\big)|\kappa|_{TV(\R)}\|\omega_{0,1}-\omega_{0,2}\|_{L^{1}(\R)}\\
      &\quad +\|\partial_1 V\|_{L^{\infty}(Q(\rho_{0, \infty}, \omega_{0, \infty}))}\exp\big(2t\cV^{1,\infty}\big)|\kappa|_{TV(\R)} \|\rho_{0,1}-\rho_{0,2}\|_{L^{1}(\R)}\\
      &\quad +\|\partial_1 V\|_{L^{\infty}(Q(\rho_{0, \infty}, \omega_{0, \infty}))}|\kappa|_{TV(\R)}\|\rho_{0,2}\|_{L^{\infty}(\R)}\cdot t\exp\big(t\cV^{1,\infty}\big)\\
      &\qquad\cdot\Big(\cV^{1,\infty}\|\xi_{\cV_{1}}-\xi_{\cV_{2}}\|_{L^{\infty}((0,t)\times\R\times(0,t))}+\exp\big(t\cV^{1,\infty}\big)|\cV_{1}-\cV_{2}|_{L^{\infty}((0,t);TV(\R))}\Big).
      \intertext{Applying \cref{lem:stability_characteristics} once more leads to}
      & |\cV_1(t, \cdot) - \cV_2(t, \cdot)|_{TV(\R)} \\
      &\leq|\kappa|_{TV(\R)}\Big(\|\partial_2 V\|_{L^{\infty}(Q(\rho_{0, \infty}, \omega_{0, \infty}))} |\omega_{0,1}|_{TV(\R)}+\|\partial_1 V\|_{L^{\infty}(Q(\rho_{0, \infty}, \omega_{0, \infty}))}\exp\big(t\cV^{1,\infty}\big)|\rho_{0,2}|_{TV(\R)}\Big)\\
      &\qquad\cdot t\|\cV_{1}-\cV_{2}\|_{L^{\infty}((0,t);L^{\infty}(\R))}\exp\big(t\cV^{1,\infty}\big)\\
     &\quad +\|\partial_2 V\|_{L^{\infty}(Q(\rho_{0, \infty}, \omega_{0, \infty}))}\exp\big(t\cV^{1,\infty}\big)|\kappa|_{TV(\R)}\|\omega_{0,1}-\omega_{0,2}\|_{L^{1}(\R)}\\
      &\quad +\|\partial_1 V\|_{L^{\infty}(Q(\rho_{0, \infty}, \omega_{0, \infty}))}\exp\big(2t\cV^{1,\infty}\big)|\kappa|_{TV(\R)} \|\rho_{0,1}-\rho_{0,2}\|_{L^{1}(\R)}\\
      &\quad +\|\partial_1 V\|_{L^{\infty}(Q(\rho_{0, \infty}, \omega_{0, \infty}))}|\kappa|_{TV(\R)}\|\rho_{0,2}\|_{L^{\infty}(\R)}\cdot t\exp\big(2t\cV^{1,\infty}\big)\\
      &\qquad\cdot\Big(t\|\cV_{1}-\cV_{2}\|_{L^{\infty}((0,t);L^{\infty}(\R))}+|\cV_{1}-\cV_{2}|_{L^{\infty}((0,t);TV(\R))}\Big).
\end{align*}
When we combine both estimates, namely, the estimate in terms of \(\|\cV_{1}-\cV_{2}\|_{L^{\infty}((0,t);L^{\infty}(\R))}\) and the total variation estimate, we obtain, structurally,
\begin{align}
&\|\cV_{1}-\cV_{2}\|_{L^{\infty}((0,t);L^{\infty}(\R))}+|\cV_{1}-\cV_{2}|_{L^{\infty}((0,t);TV(\R))}\label{eq:compensation_0}\\
&\leq C(t)\big(\|\cV_{1}-\cV_{2}\|_{L^{\infty}((0,t);L^{\infty}(\R))}+|\cV_{1}-\cV_{2}|_{L^{\infty}((0,t);TV(\R))}\big)\label{eq:compensation_1}\\
&\quad+ D\big(\|\rho_{0,1}-\rho_{0,2}\|_{L^{1}(\R)}+\|\omega_{0,1}-\omega_{0,2}\|_{L^{1}(\R)}\big)\label{eq:compensation_2}
\end{align}
with \(D\in\R_{>0}\) a properly chosen constant independent of \(\rho_{0,i}\) and \(\omega_{0,i}\), for \(i\in\{1,2\}\), and a continuous function \(C:[0,T]\rightarrow\R\) for a \(T\in\R_{>0}\) with \(\lim_{t\rightarrow 0} C(t)=0\).
Thus, by choosing the time horizon \(t\in\R_{>0}\) small enough, one can absorb \cref{eq:compensation_1} with \cref{eq:compensation_0} and show that the right-hand side becomes small, that is,
\[
(1-C(t))\Big(\|\cV_{1}-\cV_{2}\|_{L^{\infty}((0,t);L^{\infty}(\R))}+|\cV_{1}-\cV_{2}|_{L^{\infty}((0,t);TV(\R))}\Big)\leq D\big(\|\rho_{0,1}-\rho_{0,2}\|_{L^{1}(\R)}+\|\omega_{0,1}-\omega_{0,2}\|_{L^{1}(\R)}\big).
\]
Therefore, we can make the left-hand side arbitrarily small when the \(L^{1}\) distance on the right-hand side is small, given that
\(C(t)<1\), which can be achieved by choosing a small enough time.
This concludes the proof.
\end{proof}
\begin{rem}[Gr\"onwall vs.\ compensation method]
   Note that we could have instead worked with a Gr\"onwall argument in the proof of \cref{lem:continuity_cV_initial_data} to derive the claimed convergence. However, this would have necessitated a refinement of the estimates in \cref{lem:stability_characteristics}, and because we were at that point only interested in the stability on a small time horizon, we chose to use compensating terms for a small enough time, as in \crefrange{eq:compensation_0}{eq:compensation_2}.
\end{rem}
Having derived the stability estimate in \cref{lem:continuity_cV_initial_data} of the nonlocal velocity \(\cV\) in terms of the initial data in \(L^{1}(\R)\), we now analyze the stability of solutions in the following.
\begin{theo}[continuity of \(\rho\) and \(\omega\) with respect to initial conditions]\label{theo:stability}
    Let the assumptions in \cref{lem:continuity_cV_initial_data} hold. Then, for every \(\varepsilon\in  \R_{>0}\), there exists \(\delta\in\R_{>0}\) such that 
\begin{align*}
    \|\rho_{0, 1} - \rho_{0, 2}\|_{L^1(\R)} + \|\omega_{0, 1} - \omega_{0,2}\|_{L^1(\R)} \leq \delta \implies \|\rho_1 - \rho_2\|_{C([0, T]; L^1(\R))}+\|\omega_1 - \omega_2\|_{C([0, T]; L^1(\R))} \leq \varepsilon.
\end{align*}
\end{theo}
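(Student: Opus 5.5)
We use the characteristic representation \cref{eq:solution} of both solutions from \cref{theo:existence_uniqueness_small_time} and reduce everything to the stability of the nonlocal velocities established in \cref{lem:continuity_cV_initial_data}, combined with the stability of characteristics in \cref{lem:stability_characteristics}. Fix \(t\in[0,T]\) and write \(\xi_i\coloneqq\xi_{\cV_i}\). By \cref{lem:continuity_cV_initial_data}, for a given \(\varepsilon\) we may choose \(\delta\) so that
\[
\|\cV_1-\cV_2\|_{L^{\infty}((0,T);L^{\infty}(\R))}+|\cV_1-\cV_2|_{L^{\infty}((0,T);TV(\R))}\leq \eta,
\]
with \(\eta\) to be fixed in terms of \(\varepsilon\). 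Then \cref{xi_cV_L_infty} gives \(\|\xi_1(t,\cdot;0)-\xi_2(t,\cdot;0)\|_{L^\infty}\leq T\eta e^{T\cV^{1,\infty}}\) (and likewise for \(\xi_i(0,\cdot;t)\)). Moreover, \cref{partial_2_xi_cV_L_1}, together with the uniform bound \(\cV^{1,TV}\) from \cref{defi:set_Omega}, gives \(\|\partial_2\xi_1(t,\cdot;0)-\partial_2\xi_2(t,\cdot;0)\|_{L^1}\leq C\eta\).

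For \(\omega\), split
\[
\omega_1(t,\cdot)-\omega_2(t,\cdot)=\big(\omega_{0,1}-\omega_{0,2}\big)\circ\xi_1(t,\cdot;0)+\big(\omega_{0,2}\circ\xi_1(t,\cdot;0)-\omega_{0,2}\circ\xi_2(t,\cdot;0)\big).
\]
The first term is bounded in \(L^1\) by \(e^{T\cV^{1,\infty}}\|\omega_{0,1}-\omega_{0,2}\|_{L^1}\), via the substitution \(y=\xi_1(t,x;0)\) and \cref{lem:properties_characteristics}. The second term is bounded by \(\omega_{0,TV}\|\xi_1(0,\cdot;t)-\xi_2(0,\cdot;t)\|_{L^\infty}\) through \cref{lem:TV_estimate_composition}, which applies because \(\partial_2\xi_i\geq e^{-T\cV^{1,\infty}}>0\).

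For \(\rho\), we add and subtract to get three terms. The first is \(\big((\rho_{0,1}-\rho_{0,2})\circ\xi_1\big)\partial_2\xi_1\), whose \(L^1\) norm equals exactly \(\|\rho_{0,1}-\rho_{0,2}\|_{L^1}\) after substitution. The second is \(\big(\rho_{0,2}\circ\xi_1-\rho_{0,2}\circ\xi_2\big)\partial_2\xi_1\), bounded by \(e^{T\cV^{1,\infty}}\rho_{0,TV}\|\xi_1(0,\cdot;t)-\xi_2(0,\cdot;t)\|_{L^\infty}\). The third is \((\rho_{0,2}\circ\xi_2)(\partial_2\xi_1-\partial_2\xi_2)\), bounded by \(\rho_{0,\infty}\|\partial_2\xi_1(t,\cdot;0)-\partial_2\xi_2(t,\cdot;0)\|_{L^1}\). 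All constants depend only on \(\rho_{0,\infty},\omega_{0,\infty},\rho_{0,TV},\omega_{0,TV},\kappa,V\) and \(T\), so they are uniform in \(t\). Taking the supremum over \(t\in[0,T]\) gives
\[
\|\rho_1-\rho_2\|_{C([0,T];L^1)}+\|\omega_1-\omega_2\|_{C([0,T];L^1)}\leq C\big(\delta+\eta\big).
\]
Choosing \(\eta\) and then \(\delta\) small yields the claim. The time continuity in \(L^1\) itself (not just \(L^1_{\text{loc}}\)) follows from the regularity part of \cref{theo:existence_uniqueness_small_time}, since the difference is in \(L^1\).

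The main obstacle is the \(L^1\) estimate of \(\partial_2\xi_1-\partial_2\xi_2\). This is where the \(TV\) part of the velocity stability is needed, and the reason \cref{lem:continuity_cV_initial_data} controls \(|\cV_1-\cV_2|_{TV}\) and not only the sup norm. We need to check that \cref{partial_2_xi_cV_L_1} applies with \(\tau=0\) and the roles of the times as used, and that the \(L^\infty\) distance of characteristics over \(\Omega_T\times[0,T]\) is covered by \cref{xi_cV_L_infty}. A secondary subtlety is that \cref{lem:TV_estimate_composition} is stated for the global inverse maps; here those inverses are \(\xi_i(0,\cdot;t)\) by \cref{lem:properties_characteristics}, so the composition estimate applies directly.
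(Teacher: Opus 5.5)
Your proposal is correct and follows essentially the same route as the paper. Both use the characteristic representation \cref{eq:solution}, a two-term add-and-subtract for \(\omega\) and a three-term one for \(\rho\) (the same three types of terms, only with a different index frozen), each handled by \cref{lem:TV_estimate_composition}, the substitution \(y=\xi_{\cV_i}(t,x;0)\), and \cref{lem:stability_characteristics}, with everything reduced to the velocity stability in \cref{lem:continuity_cV_initial_data}. You correctly bound the \(\partial_2\xi\) term through the \(L^1\) norm of \(\partial_2\xi_{\cV_1}-\partial_2\xi_{\cV_2}\), which is what \cref{partial_2_xi_cV_L_1} provides, whereas the paper's intermediate display writes a \(TV\) seminorm there.
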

\begin{proof}
Let \((\rho_{1},\omega_{1})\in C([0,T];L^{1}_{\textnormal{loc}}(\R))^{2}\ni (\rho_{2},\omega_{2})\) denote the solution for the corresponding initial datum.
Using the solution formula in \cref{eq:solution}, we can write the \(L^{1}\) distance of the solution as follows for \(t\in[0,T]\) (we do not yet know whether this difference is finite, but our estimates later will prove that it is):
\begin{align*}
&\|\omega_1(t, \cdot) - \omega_2(t, \cdot)\|_{L^{1}(\R)}\\
& = \int_{\R} |\omega_{0,1}(\xi_{\cV_{1}}(t,x;0)) - \omega_{0,2}(\xi_{\cV_2}(t,x;0))| \dd x .
\intertext{After adding zeros and using the triangular inequality, we have}
    &\|\omega_1(t, \cdot) - \omega_2(t, \cdot)\|_{L^{1}(\R)}\\
    & \leq  \int_{\R} |\omega_{0,1}(\xi_{\cV_{1}}(t,x;0)) - \omega_{0,1}(\xi_{\cV_{2}}(t,x;0)) | \dd x + \int_{\R}|\omega_{0,1}(\xi_{\cV_{2}}(t,x;0))- \omega_{0,2}(\xi_{\cV_2}(t,x;0))| \dd x.
    \intertext{By applying \cref{lem:TV_estimate_composition} and performing an integration by substituting \(y = \xi_{\cV_2}(t, x; 0)\), we obtain}
    &\|\omega_1(t, \cdot) - \omega_2(t, \cdot)\|_{L^{1}(\R)}\\
    & \leq |\omega_{0,1}|_{TV(\R)} \|\xi_{\cV_1}(0, \cdot; t)-\xi_{\cV_2}(0, \cdot; t)\|_{L^{\infty}(\R)} + \exp(t \cV^{1, \infty}) \|\omega_{0,1} - \omega_{0,2}\|_{L^1(\R)}.
    \intertext{Finally, using \cref{xi_cV_L_infty}, we arrive at}
    & \leq \exp(t \cV^{1, \infty}) \|\omega_{0,1} - \omega_{0,2}\|_{L^1(\R)} + |\omega_{0,1}|_{TV(\R)} t \exp\big(t\cV^{1, \infty}\big)\|\cV_1-\cV_2\|_{L^{\infty}((0,T);L^{\infty}(\R))}.
\end{align*}
Thus, for the specified time horizon, when choosing a small \(L^{1}\)-norm of the initial data, both summands become arbitrarily small, 
the latter due to \cref{lem:continuity_cV_initial_data}.
We continue with \(\rho_{1}-\rho_{2}\) and obtain, again using the solution formula in \cref{eq:solution},
\begin{align*}
    & \int_{\R}|\rho_1(t, x) - \rho_2(t, x)| \dd x
     = \int_{\R} |\rho_{0,1}(\xi_{\cV_1}(t,x;0))\partial_{2}\xi_{\cV_1}(t,x;0) - \rho_{0,2}(\xi_{\cV_2}(t,x;0))\partial_{2}\xi_{\cV_2}(t,x;0)| \dd x.
     \intertext{Adding zeros and using the triangular inequality yields}
     &\int_{\R}|\rho_1(t, x) - \rho_2(t, x)| \dd x\\
     & \leq \int_{\R} |\rho_{0,1}(\xi_{\cV_1}(t,x;0))\partial_{2}\xi_{\cV_1}(t,x;0) - \rho_{0,1}(\xi_{\cV_1}(t,x;0))\partial_{2}\xi_{\cV_2}(t,x;0) | \dd x  \\
     & \quad +\int_{\R} | \rho_{0,1}(\xi_{\cV_1}(t,x;0))\partial_{2}\xi_{\cV_2}(t,x;0) -\rho_{0,1}(\xi_{\cV_2}(t,x;0))\partial_{2}\xi_{\cV_2}(t,x;0)| \dd x  \\
     & \quad + \int_{\R} | \rho_{0,1}(\xi_{\cV_2}(t,x;0))\partial_{2}\xi_{\cV_2}(t,x;0) -\rho_{0,2}(\xi_{\cV_2}(t,x;0))\partial_{2}\xi_{\cV_2}(t,x;0) | \dd x.
     \intertext{By again using \cref{lem:characteristics_properties} and \cref{lem:TV_estimate_composition}, as well as making a substitution in the last term, we find that}
     &\int_{\R}|\rho_1(t, x) - \rho_2(t, x)| \dd x\\
     & \leq \|\rho_{0,1}\|_{L^{\infty}(\R)} |\partial_{2}\xi_{\cV_1}(t, \cdot; 0) - \partial_{2}\xi_{\cV_2}(t, \cdot; 0)|_{TV(\R)} + \exp(t \cV^{1, \infty})|\rho_{0,1}|_{TV(\R)}\|\xi_{\cV_1}(0, \cdot; t) - \xi_{\cV_2}(0, \cdot; t)\|_{L^{\infty}(\R)}\\
     & \quad + \|\rho_{0,1} - \rho_{0,2}\|_{L^1(\R)}.
      \intertext{Using \cref{xi_cV_L_infty} and \cref{partial_2_xi_cV_L_1} from \cref{lem:stability_characteristics}, we obtain}
      &\int_{\R}|\rho_1(t, x) - \rho_2(t, x)| \dd x\\
     & \leq \|\rho_{0,1} - \rho_{0,2}\|_{L^1(\R)} + \exp(t \cV^{1, \infty})|\rho_{0,1}|_{TV(\R)}t\|\cV_1-\cV_2\|_{L^{\infty}((0,T);L^{\infty}(\R))}\exp\big(t\cV^{1, \infty}\big)\\
      & \quad + \|\rho_{0,1}\|_{L^{\infty}(\R)} \exp\big(t\cV^{1, \infty}\big)t \Big(\cV^{1, TV} t\|\cV_1-\cV_2\|_{L^{\infty}((0,T);L^{\infty}(\R))}+ \exp\big(t\cV^{1, \infty}\big) \|\cV_1 - \cV_2\|_{L^{\infty}((0, T); TV(\R))} \Big).
\end{align*}
This right-hand side can be made small by choosing a small \(L^{1}\) distance of the initial data and again applying \cref{lem:continuity_cV_initial_data}.

Altogether, we obtain the continuity of the solution with respect to a change of the initial data in \(L^{1}(\R)\). This concludes the proof.
\end{proof}
The stated stability result also enables us to prove---as is common for nonlocal conservation laws of a specific class \cite[Lemma 29]{Keimer2023}---that smooth solutions can approximate any weak solution and, in particular, that smoothness is conserved (depending on the regularity of the velocity function) \cite[Theorem 3.1]{coclite2022general}. This is detailed in the following.
\begin{cor}[stability of solutions with respect to the initial data/preservation of 
smoothness] \label{cor:Stability_of_solutions}
Let \cref{ass:small_time_existence_uniqueness} hold, and let \((\phi_{\eps})_{\eps\in\R_{>0}}\subset C^{\infty}(\R)\) be a standard mollifier. Define the mollified (smooth) initial data \(\omega_{0}^{\eps}\equiv \phi_{\eps}\ast \omega_{0}\) and \(\rho_{0}^{\eps}\equiv \phi_{\eps}\ast \rho_{0}\). Denote by \((\rho_{\eps},\omega_{\eps})\in \big(C([0,T];L^{1}_{\textnormal{loc}}(\R))\big)^{2}\) the corresponding solutions for the mollified initial data and by \((\rho,\omega)\in C([0,T];L^{1}_{\textnormal{loc}}(\R))^{2}\) the corresponding solution for the initial data \(\rho_{0},\omega_{0}\in TV(\R)\). Then, there exists a time horizon \(T\in\R_{>0}\) (independent of \(\eps\in\R_{>0}\)) such that
\[
\lim_{\eps\rightarrow 0}\left(\|\rho-\rho_{\eps}\|_{C([0,T];L^{1}(\R))}+\|\omega-\omega_{\eps}\|_{C([0,T];L^{1}(\R))}\right)=0.
\]
Furthermore, it holds that \(\rho_{\eps},\omega_{\eps}\in C^{1}([0,T]\times\R)\); for
\(V\in C^{\infty}(\R^{2})\), we even have \(\rho_{\eps},\omega_{\eps}\in C^{\infty}([0,T]\times\R)\).
\end{cor}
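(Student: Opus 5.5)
The plan has two independent parts: the $L^1$ convergence as $\eps\to0$, and the preservation of smoothness for each fixed $\eps$.

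\textbf{Uniform setting.} First we check that the mollified data fit the uniform framework of \cref{lem:continuity_cV_initial_data}. Since $\phi_\eps\ge0$ has unit mass, we have $\|\rho_0^\eps\|_{L^\infty(\R)}\le\|\rho_0\|_{L^\infty(\R)}$ and $|\rho_0^\eps|_{TV(\R)}\le|\rho_0|_{TV(\R)}$, and likewise for $\omega_0^\eps$. Hence the constants $\rho_{0,\infty},\omega_{0,\infty},\rho_{0,TV},\omega_{0,TV}$ can be chosen independently of $\eps$. The set $Q(\rho_{0,\infty},\omega_{0,\infty})$ in \cref{eq:Q_0}, the constants of \cref{defi:set_Omega}, and therefore the time horizons delivered by \cref{lem:self_mapping_small_time}, \cref{theo:fixed_point_mapping_uniqueness} and \cref{lem:continuity_cV_initial_data} depend only on these bounds. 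This yields a common $T>0$ on which all $(\rho_\eps,\omega_\eps)$ and $(\rho,\omega)$ exist by \cref{theo:existence_uniqueness_small_time}.

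\textbf{$L^1$ convergence.} Next we show $\|\rho_0-\rho_0^\eps\|_{L^1(\R)}\le\eps|\rho_0|_{TV(\R)}$ (up to a constant depending on the support of $\phi$). This is the standard estimate $\int|f(x)-f(x-y)|\dd x\le|y||f|_{TV(\R)}$, integrated against $\phi_\eps(y)$; the same bound holds for $\omega_0$. Note that this is meaningful even though $\rho_0$ itself need only lie in $L^1_{\textnormal{loc}}$, because only the difference is required to be integrable. \cref{theo:stability} then gives the claimed convergence in $C([0,T];L^1(\R))$. Its proof uses only the solution formula \cref{eq:solution}, \cref{lem:TV_estimate_composition} and \cref{lem:continuity_cV_initial_data}, all of which need merely $L^1$-closeness of the data, not integrability of each datum separately.

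\textbf{Smoothness.} For fixed $\eps$ we must show that $\rho_\eps,\omega_\eps$ are $C^1$, and $C^\infty$ if $V\in C^\infty$. We use the representation \cref{eq:solution} with the fixed point $\cV^*_\eps$, so everything reduces to showing that $\cV^*_\eps$ is regular in $x$ and $t$. This is the main obstacle, since $\kappa$ is only $BV$ and the regularity cannot come from the kernel. Our approach is to move derivatives onto the solution via the fixed-point identity
\begin{equation*}
\cV^*_\eps(t,x)=\int_\R\kappa(x-z)\,V\big(\rho_\eps(t,z),\omega_\eps(t,z)\big)\dd z .
\end{equation*}
Then $\partial_x^k\cV^*_\eps=\kappa\ast\partial_x^k V(\rho_\eps,\omega_\eps)$. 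We bootstrap on the characteristic equations: differentiating \cref{eq:char_xi} in $x$ gives linear ODEs for $\partial_2\xi$, $\partial_2^2\xi$, and so on, whose coefficients involve $\partial_x^j\cV^*_\eps$. Conversely, $\partial_x^j\rho_\eps$ and $\partial_x^j\omega_\eps$ are expressed through $\rho_0^\eps,\omega_0^\eps$ (which are smooth with bounded derivatives) and $\partial_2^{i}\xi$ for $i\le j+1$. This produces a closed linear Volterra/Gronwall system for $\|\partial_x^{j}\cV^*_\eps(t)\|_{L^\infty(\R)}$, or alternatively we run the fixed-point argument of \cref{theo:fixed_point_mapping_uniqueness} in a $W^{k,\infty}$-version of $\Omega(T)$. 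Either route yields bounds on a horizon that does not shrink with $k$, because the equations for higher derivatives are linear in the highest-order term. Time regularity follows from the equations themselves, since $\partial_t\omega_\eps=-\cV^*_\eps\partial_x\omega_\eps$ and $\partial_t\rho_\eps=-\partial_x(\cV^*_\eps\rho_\eps)$, and $\partial_t\cV^*_\eps=\kappa\ast\partial_tV(\rho_\eps,\omega_\eps)$. This gives $C^1$ for $V\in C^1$ (where one spatial derivative of $V(\rho_\eps,\omega_\eps)$ is available), and $C^\infty$ by induction when $V\in C^\infty$. Continuity of the derivatives, as opposed to mere boundedness, follows from the continuity of $\xi$ together with the smoothness of the data.
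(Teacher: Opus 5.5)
Your uniform-in-$\eps$ set-up and the $L^1$-convergence part are correct. Checking that $\|\rho_0^\eps\|_{L^\infty(\R)}$ and $|\rho_0^\eps|_{TV(\R)}$ (and likewise for $\omega_0^\eps$) are bounded independently of $\eps$ is exactly what makes the horizon in \cref{lem:continuity_cV_initial_data} independent of $\eps$; the paper simply invokes \cref{theo:stability} together with $\|\rho_0-\rho_0^\eps\|_{L^1(\R)}\to0$.

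The gap is in the smoothness bootstrap: as you set it up, it loses one derivative at every level and never closes. You put all derivatives on the solution, $\partial_x^k\cV^*_\eps=\kappa\ast\partial_x^kV(\rho_\eps,\omega_\eps)$. You also note, correctly, that $\partial_x^j\rho_\eps$ involves $\partial_2^{j+1}\xi$, and differentiating \cref{eq:char_xi} $j+1$ times shows that $\partial_2^{j+1}\xi$ is driven by $\partial_x^{j+1}\cV^*_\eps$. With $u_j(t)\coloneqq\|\partial_x^j\cV^*_\eps(t,\cdot)\|_{L^\infty(\R)}$, your scheme therefore only gives $u_j(t)\le C+C\int_0^t u_{j+1}(s)\dd s$. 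Each level needs the next one, so there is no closed Gr\"onwall system at any finite order. Concretely, $\rho_\eps\in C^1$ already needs $\partial_x^2\cV^*_\eps$, which your formula writes as $\kappa\ast\partial_x^2V(\rho_\eps,\omega_\eps)$. That requires $V\in C^2$ and $\rho_\eps\in C^2$, so your argument does not give ``$C^1$ for $V\in C^1$''.

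The missing idea is the opposite of your remark that the regularity cannot come from the kernel: exactly one derivative has to be put on $\kappa$. Since $\kappa\in BV(\R)$, $D\kappa$ is a finite measure of mass $|\kappa|_{TV(\R)}$, and
\[
\partial_x^{j+1}\cV^*_\eps=D\kappa\ast\partial_x^{j}V(\rho_\eps,\omega_\eps),\qquad
u_{j+1}(t)\le|\kappa|_{TV(\R)}\big\|\partial_x^{j}V(\rho_\eps,\omega_\eps)(t,\cdot)\big\|_{L^\infty(\R)}\le A_j+B_j\int_0^t u_{j+1}(s)\dd s,
\]
with $A_j,B_j$ depending only on lower-order quantities. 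This is linear in the top-order term and closes by Gr\"onwall on the whole interval. The paper's sketch does this implicitly. It differentiates $\cF$ in \cref{eq:fixed_point_mapping} twice: one derivative falls on $\kappa$, and the other, after the integration by parts used in \cref{lem:self_mapping_small_time}, falls on $y\mapsto V\big(\rho_0(y)/\partial_2\xi_{\cV}(0,y;t),\omega_0(y)\big)$. Hence $\partial_x^2\cF[\cV]$ involves only $\partial_2^2\xi$, i.e.\ $\partial_x^2\cV$.

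Continuity of $\partial_x^{j+1}\cV^*_\eps$ then follows because $D\kappa\ast g$ is continuous for bounded continuous $g$, by dominated convergence. Note that $D\kappa$ may contain atoms, e.g.\ for the exponential kernel.

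Finally, the Gr\"onwall bound is only an a priori estimate. A contraction in a $W^{k,\infty}$-version of $\Omega(T)$ gives existence of the derivatives only on a horizon that may shrink with $k$. To reach the common horizon $T$, you must combine it with the linear a priori bound, a continuation argument, and the uniqueness from \cref{theo:existence_uniqueness_small_time}.
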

\begin{proof}
    The first part of the approximation can be seen to be a direct consequence of \cref{theo:stability} when recalling that for initial data \(\rho_{0},\omega_{0}\in TV(\R)\), it holds that
    \[
        \lim_{\eps\rightarrow0}\|\rho_{0}-\rho_{0}^{\eps}\|_{L^{1}(\R)}=0=\lim_{\eps\rightarrow0}\|\omega_{0}-\omega_{0}^{\eps}\|_{L^{1}(\R)}.
    \]
    It remains to show that the solution is smooth enough. By \cref{eq:solution}, we can write the solution for \((t,x)\in (0,T)\times\R\) a.e.\ as
    \[
\rho(t,x)=\rho_{0}(\xi_{\cV^{*}}(t,x;0))\partial_{2}\xi_{\cV^{*}}(t,x;0),\qquad \omega(t,x)=\omega_{0}(\xi_{\cV^{*}}(t,x;0)),\ 
    \]
    with the characteristics as defined in \cref{defi:characteristics} and the nonlocal \(\cV^{*}\) being the solution to the unique fixed-point equation in \cref{theo:fixed_point_mapping_uniqueness}.
Because we have proven that \(\partial_{2}\xi_{\cV^{*}}(\cdot,\ast;0)\in L^{\infty}((0,T);W^{1,\infty}(\R))\), it is apparent that when assuming a higher regularity on \(\omega_{0}\), \(\omega\) is Lipschitz continuous in space and time when applying the chain rule. This is not as clear for \(\rho\) as its solution formula already contains \(\partial_{2}\xi_{\cV^{*}}\), for which we have so far only proved that it admits some \(TV\) regularity. However, for \(\rho\) to be Lipschitz, we would require \(\partial_{2}\xi_{\cV^{*}}\) to be Lipschitz. As \(\xi\) is defined through the unique fixed point \(\cV^{*},\) we require such a fixed point to be smoother if the datum is smoother. This can be seen from \cref{eq:fixed_point_mapping} as the second spatial derivative of the right-hand side of the fixed-point equation involves derivatives of \(V,\ \rho_{0},\ \omega_{0}\), and \(x\mapsto \partial_{2}\xi_{\cV^{*}}(0,x;t)\), of which the latter exists and is bounded when \(\partial_{2}\cV\) is Lipschitz.

Thus, for more regular data, the solutions are Lipschitz in space and time; as such, they are strong solutions. Higher regularity and continuity follow by similar arguments.
\end{proof}
\subsection{Maximum and minimum principle, global existence}
So far, our assumptions on the involved datum, particularly on the velocity \(V\) and the kernel \(\kappa\) in \cref{ass:small_time_existence_uniqueness}, have been generic, allowing only an existence and uniqueness result on small time horizons.
In this section, we will restrict those data so that we can obtain the existence and uniqueness of solutions for arbitrarily large time horizons. The assumptions, which we already detailed in \cref{ass:long_time_horizon}, will also lead to invariant regions for the solution, a property that is particularly reasonable in traffic flow modeling and that enables us to obtain uniform bounds on the solution in a certain topology, so that we can work with a time-clustering argument to extend the solution to any finite time horizon.
\begin{theo}[maximum and minimum principle given the first part of \cref{ass:long_time_horizon}]\label{theo:maximum_principle}
Let the first part of \cref{ass:long_time_horizon} hold; that is, in addition to \cref{ass:small_time_existence_uniqueness}, there exists a \(\rho_{\max}\in\R_{>0}\) such that
\begin{multicols}{2}
    \begin{itemize}
    \item \((\rho_{0},\omega_{0})\in 
    \left(TV(\R;\R_{\geq0})\right)^{2},\ \rho_0\leqq \rho_{\max}\text{ on }\R\)
    \item \(0\leqq V \text{ on }[0,\rho_{\max}]\times [0,\|\omega_{0}\|_{L^{\infty}(\R)}]
    \)
    \item \(V(\rho_{\max},\cdot)\equiv 0 \text{ on } 
  [0,\|\omega_{0}\|_{L^{\infty}(\R)}]
    \)
    \item \( \supp(\kappa)\subset\R_{\leq 0}:\) \(\kappa\geq 0\) monotonically increasing on \(\R_{\leq 0}\),\ \(\|\kappa\|_{L^{1}(\R_{\leq0})}=1\).
\end{itemize}
\end{multicols}\noindent
\smallskip
Then, for any \(T\in\R_{>0}\) there exists a unique weak solution (in the sense of \cref{defi:weak_solution})
\[(\rho,\omega)\in \Big(C\big([0,T];L^{1}_{\textnormal{loc}}(\R)\big)\cap L^{\infty}((0,T);L^{\infty}(\R)\cap TV(\R))\Big)^{2}\]
to the Cauchy problem  
\cref{eq:nonlocal_GARZ}-\cref{eq:init_data}. Even more, the solution satisfies the following:
\begin{description}
\item[\(L^{\infty}\) bound:] \(
0\leqq \rho\leqq \rho_{\max},\qquad 0\leqq \omega\leqq \|\omega_{0}\|_{L^{\infty}(\R)}\ \text{ on } (0,T)\times\R, 
\)
\item[\(TV\) bound:] for a.e.\ \(t \in [0, T]\),  \(|\omega(t, \cdot)|_{TV(\R)}= |\omega_0|_{TV(\R)}\) and
\(Q_{\max}\coloneqq [0,\rho_{\max}]\times[0,\|\omega_{0}\|_{L^{\infty}(\R)}]\subset\R^{2}\)
\begin{align*}
   |\rho(t, \cdot)|_{TV(\R)} & \leq \Big(|\rho_{0}|_{TV(\R)}+t\rho_{\max}|\kappa|_{TV(\R_{<0})}\|\partial_{2}V\|_{L^{\infty}(Q_{\max})}|\omega_{0}|_{TV(\R)}\Big)\\
  &\quad \cdot\exp\Big(t|\kappa|_{TV(\R_{<0})}\big(\|V\|_{L^{\infty}(Q_{\max})}+\|\partial_{1}V\|_{L^{\infty}(Q_{\max})}\rho_{\max}\big)\Big).
\end{align*}
\end{description}

In addition, if \(V\) is monotonically decreasing in its first argument and increasing in its second argument, that is, if \(\partial_{1}V\leqq 0,\ \partial_{2}V\geqq0\), and \(\omega_{0}\) is monotonically decreasing, then we have that \(x\mapsto \omega(t,x)\) is nonincreasing for almost all times \(t\in[0,T],\) and we obtain a lower bound on the density \(\rho\):
\begin{equation}\label{eq:rho-lower-bound}
\essinf_{y\in\R} \rho_{0}(y)\leq\rho(t,x),\quad (t,x)\in\OT \text{ a.e.}    
\end{equation}
\end{theo}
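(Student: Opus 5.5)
The plan is to obtain a priori bounds along characteristics on any local solution from \cref{theo:existence_uniqueness_small_time}. These bounds depend only on the data and on \(T\), so the local time step can be chosen uniformly and solutions glued on consecutive intervals up to any finite \(T\). Throughout, we work first with smooth data via \cref{cor:Stability_of_solutions} and pass to the limit in \(L^{1}\) at the end; the bounds below are closed under that limit.

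\textbf{Bounds on \(\omega\) and the density.} Since \(\omega(t,x)=\omega_{0}(\xi_{\cV}(t,x;0))\) and \(x\mapsto\xi_{\cV}(t,x;0)\) is an increasing homeomorphism (\cref{lem:properties_characteristics}), we get directly \(0\le\omega\le\|\omega_{0}\|_{L^{\infty}(\R)}\), \(|\omega(t,\cdot)|_{TV(\R)}=|\omega_{0}|_{TV(\R)}\), and that \(\omega(t,\cdot)\) is nonincreasing whenever \(\omega_{0}\) is. For \(\rho\), differentiate along characteristics:
\[
\tfrac{\dd}{\dd t}\rho(t,\xi(0,y;t))=-\partial_{2}\cV(t,\xi)\,\rho .
\]
This gives \(\rho\ge0\) immediately. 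For the upper bound we write, using \(\supp\kappa\subset\R_{\le0}\) and integration by parts,
\[
\partial_{2}\cV(t,x)=-\kappa(0^-)V(\rho(t,x),\omega(t,x))+\int_{x}^{\infty}\kappa'(x-y)\,V(\rho(t,y),\omega(t,y))\dd y .
\]
At a point where \(\rho=\rho_{\max}\), the first term vanishes because \(V(\rho_{\max},\cdot)=0\). The second term is nonnegative because \(\kappa'\ge0\) on \(\R_{<0}\) and \(V\ge0\), provided \(\rho\le\rho_{\max}\) still holds. So \(\partial_{2}\cV\ge0\) there, and \(\rho\) cannot exceed \(\rho_{\max}\).

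To make this rigorous, use a comparison/continuity argument in time. Let \(t^{*}\) be the first time the bound is violated; apply the identity with \(V\) replaced by \(V\) extended as a nonnegative function, together with a Gr\"onwall argument on \((\rho-\rho_{\max})_{+}\) along characteristics, using the Lipschitz bound \(V(\rho,\omega)\le\|\partial_{1}V\|(\rho_{\max}-\rho)\)-type estimates. This is the main obstacle: closing the argument without circularity (positivity of \(V\) is only known on \(Q_{\max}\)). I would first try a truncated velocity \(V(\min\{\rho,\rho_{\max}\},\omega)\), solve the modified problem, show its solution stays in \([0,\rho_{\max}]\), and conclude by uniqueness that it is the original solution.

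\textbf{TV bound for \(\rho\).} Take smooth data and differentiate \(\rho_{t}+(\cV\rho)_{x}=0\) in \(x\). Multiply by \(\operatorname{sgn}(\rho_{x})\) and integrate. Since the transport term integrates to zero, this yields
\[
\tfrac{\dd}{\dd t}|\rho|_{TV}\le\|\partial_{2}\cV\|_{\infty}|\rho|_{TV}+\rho_{\max}\|\partial_{2}^{2}\cV\|_{L^{1}} .
\]
By the same computation as in \cref{lem:TV_partial_2_cV}, \(\|\partial_{2}\cV\|_{\infty}\le|\kappa|_{TV}\|V\|_{L^{\infty}(Q_{\max})}\) and
\[
\|\partial_{2}^{2}\cV\|_{L^{1}}\le|\kappa|_{TV}\big(\|\partial_{1}V\|\,|\rho|_{TV}+\|\partial_{2}V\|\,|\omega_{0}|_{TV}\big).
\]
Gr\"onwall then gives exactly the stated estimate, which passes to general data by lower semicontinuity of \(TV\) under \(L^{1}\) convergence.

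\textbf{Lower bound (monotone case).} If \(\omega(t,\cdot)\) is nonincreasing, \(\partial_{2}V\ge0\), and \(\partial_{1}V\le0\), consider a point with \(\rho=m\coloneqq\essinf\rho_{0}\) (a minimum). In the integral formula for \(\partial_{2}\cV\), compare \(V(\rho(t,y),\omega(t,y))\) with \(V(\rho(t,x),\omega(t,x))\) for \(y>x\): \(\omega(t,y)\le\omega(t,x)\) and \(\rho(t,y)\ge m\) give \(V(\rho(y),\omega(y))\le V(\rho(x),\omega(x))\). Using \(\int_{x}^{\infty}\kappa'(x-y)\dd y=\kappa(0^-)\), this yields \(\partial_{2}\cV\le0\) at minima, so the minimum cannot decrease. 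This is made rigorous by the same first-violation-time argument as for the upper bound.

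Finally, since all bounds are uniform on \([0,T]\), the constants in \cref{defi:set_Omega} stay bounded and the local existence time is uniform. Iterating \cref{theo:existence_uniqueness_small_time} therefore gives existence and uniqueness on any \([0,T]\).
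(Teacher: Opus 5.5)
Your proposal is essentially the paper's proof. The paper also evaluates \(\partial_t\rho=-\rho\,\partial_x\cV\) at an extremum, via Danskin's theorem at a point where \(\partial_x\rho=0\); this is the Eulerian form of your characteristic identity. It splits \(\partial_x\cV\) into the integral against \(\kappa'\ge 0\) and the boundary term \(-\kappa(0^-)V\) exactly as you do, for both the upper and the lower bound. It gets the \(TV\) bound by differentiating the strong form for mollified data and applying Gr\"onwall, and it extends by time patching.

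The paper does not address the circularity you flag. Your Gr\"onwall argument on \((\rho-\rho_{\max})_+\), based on \(|V(\rho,\omega)|\le L(\rho-\rho_{\max})\) for \(\rho\ge\rho_{\max}\), closes it. That is cleaner than the truncation: the truncated velocity \(V(\min\{\rho,\rho_{\max}\},\omega)\) is only Lipschitz, so it is not covered verbatim by the \(C^{1}\) theory.

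One correction concerns the constant in the \(TV\) bound. \cref{lem:TV_partial_2_cV} bounds \(\|\partial_x^2\cV\|_{L^1(\R)}\) with \(|\kappa|_{TV(\R)}=2\kappa(0^-)\). That is twice the \(|\kappa|_{TV(\R_{<0})}=\kappa(0^-)\) in the statement, so your Gr\"onwall step does not reproduce the stated estimate ``exactly''.

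Nor can the factor be dropped, because the jump of \(\kappa\) at the origin genuinely contributes. Take \(\kappa(z)=\exp(z)\) on \(\R_{<0}\), \(V(\rho,\omega)=\omega\,g(\rho)\), \(\rho_0\equiv c\), and \(\omega_0\) with a single jump. The density develops a one-sided exponential dip (or bump) of depth about \(c\,g(c)\,|\omega_0|_{TV(\R)}\,t\) ending in a jump. Hence \(|\rho(t,\cdot)|_{TV(\R)}\approx 2c\,g(c)\,|\omega_0|_{TV(\R)}\,t\) for small \(t\). This exceeds the stated bound \(\approx\rho_{\max}\sup_{[0,\rho_{\max}]}g\,|\omega_0|_{TV(\R)}\,t\) whenever \(2c\,g(c)>\rho_{\max}\sup g\).

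The paper's proof simply writes the smaller constant. The version with \(|\kappa|_{TV(\R)}\) that your argument produces is the one the computation actually supports.
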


\begin{proof}
Because of the solution formula in \cref{eq:solution}, we have for \(\omega_0 \in L^{\infty}(\R; \R_{\geq 0}),\) \((t,x)\in (0,T^{*})\times \R\) and \(\omega(t,x)=\omega_{0}(\xi_{\cV^{*}}(t,x;0))\), that \(0\leqq \omega\leqq \|\omega_{0}\|_{L^{\infty}(\R)}\) a.e.\ on \((0,T^{*})\times\R\)
as long as the solution exists, so for now only on the claimed short enough time horizon \(T^{*}\in\R_{>0}\), whose existence is guaranteed by \cref{theo:existence_uniqueness_small_time}.
The lower bound on the density \(\rho\) by zero follows immediately from the solution formula \cref{eq:solution}, as we proved that
\[
\rho(t,x)=\rho_{0}(\xi_{\cV^{*}}(t,x;0))\partial_{2}\xi_{\cV^{*}}(t,x;0),\ (t,x)\in (0,T^{*})\times\R \text{ a.e.,}
\]
and we have \(\partial_{2}\xi_{\cV^{*}}>0\) by construction and \(\rho_{0}\geqq0\) by assumption.

Concerning the upper bound, we smooth the initial data \((\rho_{0},\omega_{0}) \) with a standard mollifier parameterized by \(\eps\in\R_{>0}\), and we obtain by \cref{cor:Stability_of_solutions} that the corresponding solution \((\rho_{\eps},\omega_{\eps})\in C^{1}([0,T^{*}]\times\R)^{2}\) is continuously differentiable in space and time.
Then, we use Danskin's theorem and assume that \(\rho_{\eps}(t,x)\) is maximal at \(\tilde{x}\in\R\)  at a given time \(t\in[0,T]\). Then, when also smoothing the nonlocal kernel, as was suggested in \cref{eq:BV_approximation}, that for \(\delta\in\R_{>0}\),
\begin{align*}
    \partial_{t}\rho_{\eps}(t,x)&=-\cV[\rho_{\eps},\omega_{\eps}](t,x)\partial_{x}\rho_{\eps}(t,x)-\rho_{\eps}(t,x)\partial_{x}\cV[\rho_{\eps},\omega_{\eps}](t,x).
    \intertext{Evaluating this at \(x=\tilde{x}\), given that we are at a maximal point of \(\rho_{\eps}(t,\cdot)\) so that \(\partial_{2}\rho_{\eps}(t,\tilde{x})=0\), yields}
    \partial_{t}\rho_{\eps}(t,\tilde{x})&=-\rho_{\eps}(t,\tilde{x})\Big(\partial_{x}\int_{x}^{\infty}\kappa_{\delta}(x-y)V(\rho_{\eps}(t,y),\omega_{\eps}(t,y))\dd y\Big)\bigg|_{x=\tilde{x}}\\
    &=-\rho_{\eps}(t,\tilde{x})\bigg(\int_{\tilde{x}}^{\infty}\keta'_{\delta}(\tilde{x}-y)V(\rho_{\eps}(t,y),\omega_{\eps}(t,y))\dd y-\keta_{\delta}(0) V(\rho_{\eps}(t,\tilde{x}),\omega_{\eps}(t,\tilde{x}))\bigg).
    \end{align*}
If \(\rho_{\eps}(t,\tilde{x})<\rho_{\max}\), there is nothing to do, so we assume that \(\rho_{\eps}(t,\tilde{x})=\rho_{\max}\). However, \cref{ass:long_time_horizon} then implies that \(V(\rho_{\max},\cdot)\equiv0\), so
    \begin{align*}
    \partial_{t}\rho_{\eps}(t,\tilde{x})
    &=-\rho_{\eps}(t,\tilde{x})\int_{\tilde{x}}^{\infty}\keta_{\delta}'(\tilde{x}-y)V(\rho_{\eps}(t,y),\omega_{\eps}(t,y))\dd y.
    \intertext{Because \(\kappa_{\delta}'\geq 0\) by the monotonicity of \(\kappa\),\ \(\rho_{\eps}\geqq0\) by the previous argument, and \(V\geqq 0\) by assumption, we have that}
    \partial_{t}\rho_{\eps}(t,\tilde{x})&\leq 0
\end{align*}
uniformly in \(\delta\in\R_{>0}\) and \(\eps\in\R_{>0}\). Thus, we let \(\delta,\eps\rightarrow 0\) and find that the maximum cannot increase further in the unregularized case either. This proves the claimed upper bound on the density.

Now, we look at the total variation bound on \(\omega\). This bound is immediate, as we have a composition of a \(TV\) function with a Lipschitz-continuous function so that the \(TV\) bound does not change  (this becomes clear when realizing that the definition of \cref{defi:TV} is stable when replacing continuously differentiable functions by Lipschitz-continuous functions).
Concerning the total variation of \(\rho,\) we can directly address the conservation law in its strong form by \cref{cor:Stability_of_solutions} and obtain, for a smoothed solution parameterized by \(\eps\in\R_{>0}\),
\begin{align*}
    \partial_{t}\int_{\R}|\partial_{x}\rho_{\eps}(t,x)|\dd x&=-\int_{\R}|\partial_{x}\rho_{\eps}(t,x)|\partial_{x}\cV[\rho_{\eps},\omega_{\eps}](t,x)\dd x-\int_{\R}\sgn(\partial_{x}\rho_{\eps}(t,x))\rho_{\eps}(t,x)\partial_{x}^{2}\cV[\rho_{\eps},\omega_{\eps}](t,x)\dd x.
    \intertext{Taking advantage of the uniform bounds on \(\rho_{\eps},\omega_{\eps}\), as derived earlier, we have}
    \partial_{t}\int_{\R}|\partial_{x}\rho_{\eps}(t,x)|\dd x&\leq \|\partial_{2}\rho_{\eps}(t,\cdot)\|_{L^{1}(\R)}|\kappa|_{TV(\R_{<0})} \|V\|_{L^{\infty}([0,\rho_{\max}]\times [0,\|\omega_{0}\|_{L^{\infty}(\R)}])}\\
    &\quad + \|\rho_{\eps}(t,\cdot)\|_{L^{\infty}(\R)}|\kappa|_{TV(\R_{<0})}\|\partial_{1}V\|_{L^{\infty}([0,\rho_{\max}]\times[0,\|\omega_{0}\|_{L^{\infty}(\R)}])}\|\partial_{2}\rho_{\eps}(t,\cdot)\|_{L^{1}(\R)}\\
    &\quad +\|\rho_{\eps}(t,\cdot)\|_{L^{\infty}(\R)}|\kappa|_{TV(\R_{<0})}\|\partial_{2}V\|_{L^{\infty}([0,\rho_{\max}]\times[0,\|\omega_{0}\|_{L^{\infty}(\R)}])}\|\partial_{2}\omega_{\eps}(t,\cdot)\|_{L^{1}(\R).}
    \intertext{Then, using the previously derived upper bounds on \(\rho\) and thus \(\rho_{\eps}\), as well as the \(TV\)-seminorm of \(\omega\), yields}
    &\leq \|\partial_{2}\rho_{\eps}(t,\cdot)\|_{L^{1}(\R)}|\kappa|_{TV(\R_{<0})}\Big(\|V\|_{L^{\infty}([0,\rho_{\max}]\times [0,\|\omega_{0}\|_{L^{\infty}(\R)}])}\\
    &\qquad\qquad\qquad\qquad\qquad\qquad+\|\partial_{1}V\|_{L^{\infty}([0,\rho_{\max}]\times[0,\|\omega_{0}\|_{L^{\infty}(\R)}])}\rho_{\max}\Big)\\
    &\quad +\rho_{\max}|\kappa|_{TV(\R_{<0})}\|\partial_{2}V\|_{L^{\infty}([0,\rho_{\max}]\times[0,\|\omega_{0}\|_{L^{\infty}(\R)}])}\|\omega_{0,\eps}'\|_{L^{1}(\R)}.
\end{align*}
Applying Gr\"onwall's inequality, we obtain, still for \(\eps\in\R_{>0}\) and \(t\in[0,T]\),
\begin{align*}
  \|\partial_{2}\rho_{\eps}(t,\cdot)\|_{L^{1}(\R)}&\leq \Big(\|\rho_{0,\eps}'\|_{L^{1}(\R)}+t\rho_{\max}|\kappa|_{TV(\R_{<0})}\|\partial_{2}V\|_{L^{\infty}([0,\rho_{\max}]\times[0,\|\omega_{0}\|_{L^{\infty}(\R)}])}\|\omega_{0,\eps}'\|_{L^{1}(\R)}\Big)\\
  &\quad \cdot\exp\Big(t|\kappa|_{TV(\R_{<0})}\big(\|V\|_{L^{\infty}([0,\rho_{\max}]\times [0,\|\omega_{0}\|_{L^{\infty}(\R)}])}+\|\partial_{1}V\|_{L^{\infty}([0,\rho_{\max}]\times[0,\|\omega_{0}\|_{L^{\infty}(\R)}])}\rho_{\max}\big)\Big),
  \intertext{and letting \(\eps\rightarrow 0\) results in}
  |\rho(t,\cdot)|_{TV(\R)}&\leq \Big(|\rho_{0}|_{TV(\R)}+t\rho_{\max}|\kappa|_{TV(\R_{<0})}\|\partial_{2}V\|_{L^{\infty}([0,\rho_{\max}]\times[0,\|\omega_{0}\|_{L^{\infty}(\R)}])}|\omega_{0}|_{TV(\R)}\Big)\\
  &\quad \cdot\exp\Big(t|\kappa|_{TV(\R_{<0})}\big(\|V\|_{L^{\infty}([0,\rho_{\max}]\times [0,\|\omega_{0}\|_{L^{\infty}(\R)}])}+\|\partial_{1}V\|_{L^{\infty}([0,\rho_{\max}]\times[0,\|\omega_{0}\|_{L^{\infty}(\R)}])}\rho_{\max}\big)\Big).
\end{align*}
This is the claimed \(TV\) estimate.
Concerning the minimum principle for \(\rho\) in the case in which \(\omega_{0}\) is monotonically decreasing, we recall that thanks to \cref{eq:solution}, \(\omega\in C([0,T];L^{1}_{\textnormal{loc}}(\R))\) can be written as
\[
\omega(t,x)=\omega_{0}(\xi_{\cV^{*}}(t,x;0))
\]
where \(\xi_{\cV^{*}}\) are the corresponding characteristics as in \cref{defi:characteristics} for \(\cV^{*}\), as the unique solution to the fixed-point mapping in \cref{theo:fixed_point_mapping_uniqueness}.
Because \(\partial_{2}\xi_{\cV^{*}}>0\) by construction and \(\omega_{0}\) is monotonically decreasing, the composition \(\omega\coloneqq \omega_{0}\circ \xi_{\cV^{*}}(t,\cdot;0)\) is also monotonically decreasing, meaning that monotonicity in \(\omega\) is conserved.

Next, we turn our attention to the lower bound for the density.
Following the same argument as for the maximum principle, at a minimal point \(\tilde{x}\in\R\), that is, at a point where \(\essinf_{y\in\R}\rho(t,y)=\rho(t,\tilde{x})\), we have that
\begin{align*}
     \partial_{t}\rho_{\eps}(t,\tilde{x})&=-\rho_{\eps}(t,\tilde{x})\Big(\int_{\tilde{x}}^{\infty}\keta_{\delta}'(\tilde{x}-y)V(\rho_{\eps}(t,y),\omega_{\eps}(t,y))\dd y-\kappa_{\delta}(0) V(\rho_{\eps}(t,\tilde{x}),\omega_{\eps}(t,\tilde{x}))\Big).
\end{align*}
Hence, for the minimum principle to hold, it would suffice that
\[
\int_{\tilde{x}}^{\infty}\keta_{\delta}'(\tilde{x}-y)V(\rho_{\eps}(t,y),\omega_{\eps}(t,y))\dd y-\kappa_{\delta}(0) V(\rho_{\eps}(t,\tilde{x}),\omega_{\eps}(t,\tilde{x}))\leqq 0
\]
because \(\rho_{\eps}(t,\tilde{x})\geq 0\), so the sign of this term would make the time derivative at the minimal point nonnegative, and the solution at a minimum could only increase.
Estimating this term further yields
\begin{align*}
    &\int_{\tilde{x}}^{\infty}\keta_{\delta}'(\tilde{x}-y)V(\rho_{\eps}(t,y),\omega_{\eps}(t,y))\dd y-\kappa_{\delta}(0) V(\rho_{\eps}(t,\tilde{x}),\omega_{\eps}(t,\tilde{x}))\\
    &\leq \int_{\tilde{x}}^{\infty}\keta_{\delta}'(\tilde{x}-y)V(\rho_{\eps}(t,y),\omega_{\eps}(t,\tilde{x}))\dd y-\kappa_{\delta}(0) V(\rho_{\eps}(t,\tilde{x}),\omega_{\eps}(t,\tilde{x})),
    \intertext{where we have used the facts that \(\kappa_{\delta}'\geqq 0\), \(V\) is monotonically increasing in the second component, and \(\omega(t,\cdot)\) is decreasing. Because \(V\) is monotonically decreasing in its first component, we have}
&\int_{\tilde{x}}^{\infty}\keta_{\delta}'(\tilde{x}-y)V(\rho_{\eps}(t,y),\omega_{\eps}(t,y))\dd y-\kappa_{\delta}(0) V(\rho_{\eps}(t,\tilde{x}),\omega_{\eps}(t,\tilde{x}))\\
    &\leq \int_{\tilde{x}}^{\infty}\keta_{\delta}'(\tilde{x}-y)V(\rho_{\eps}(t,\tilde{x}),\omega_{\eps}(t,\tilde{x}))\dd y-\kappa_{\delta}(0) V(\rho_{\eps}(t,\tilde{x}),\omega_{\eps}(t,\tilde{x}))\\
    &=V(\rho_{\eps}(t,\tilde{x}),\omega_{\eps}(t,\tilde{x}))\int_{\tilde{x}}^{\infty}\keta_{\delta}'(\tilde{x}-y)\dd y-\kappa_{\delta}(0) V(\rho_{\eps}(t,\tilde{x}),\omega_{\eps}(t,\tilde{x}))\\
    &=0.
\end{align*}
As this holds uniformly in \(\delta,\eps\in\R_{>0}\), we can first let \(\delta\rightarrow 0\) to obtain the inequality for the considered \(BV\) kernel class, and we can then let \(\eps\rightarrow0\) to obtain the estimate for the nonsmoothed initial data of only the \(TV\) class. Altogether, the minimal value of \(\rho\) can only increase, and we obtain the claimed lower bound.

Finally, because of the uniformity of the \(L^{\infty}\) estimates with respect to time and the given \(TV\) estimate, which is uniform for a given end time \(T\in\R_{>0}\) by a time patching argument, we obtain the existence and uniqueness of solutions on any finite time horizon, as well as the stability estimates in \cref{cor:Stability_of_solutions}.
\end{proof}
\begin{rem}[transport equation, \(L^{\infty}\) and \(TV\) bounds, and more]
The obtained \(L^{\infty}\) and \(TV\) bounds and the monotonicity on the solution \(\omega\) of a transport equation are not surprising: they are an intrinsic property of such a transport equation as long as the velocity field is smooth enough, that is, if it has a solution.
Furthermore, the construction of the solution in terms of a fixed-point argument and the obtained maximum principle show not only that one can obtain a solution \(\rho\) on any time horizon \(T\in\R_{>0}\) but that solutions \(\tilde{\rho}\) found when considering a smaller time horizon \(T^{*}<T\) are identical on the smaller time horizon, that is, \(\rho(t,\cdot)\equiv \tilde{\rho}(t,\cdot)\text{ on }\R \text{ for } t\in[0,T^{*}]\).
\end{rem}
In the following, we provide another condition on the dependence of the initial data on the velocity \(V\) so that the solution stays within a certain invariant region. The key idea here is to prove bounds at first not on \(\rho\) but on the velocity where one has, due to the nonlocal operator, the advantage that a maximal point for the velocity will also be greater than or equal to its nonlocal averaging.
\begin{theo}[invariant region]\label{theo:invariant_region}
Let \cref{ass:small_time_existence_uniqueness} hold, and assume that \(\partial_{1}V\leqq 0\) and \(\supp(\kappa)\subset\R_{\leq 0}\) such that \(\kappa\geq 0\) is monotonically increasing on \(\R_{\leq 0}\), with \(\|\kappa\|_{L^{1}(\R_{\leq0})}=1\).
Let \((\rho_{0},\omega_{0})\in TV(\R)^{2}, \text{ with } \rho_{0}\geqq0\leqq\omega_{0}\), and define 
\begin{equation}\label{def:V-max-min}
\essinf_{x\in\R}V(\rho_{0}(x),\omega_{0}(x))\eqqcolon V_{\min},\qquad V_{\max}\coloneqq \esssup_{x\in\R} V(\rho_{0}(x),\omega_{0}(x)).
\end{equation}
Assume that \(V_{\min}\in\R_{\geq 0}\).
Then, it holds for a small enough \(T\in\R_{>0}\) that the solution 
\[
(\rho,\omega)\in \big(C([0,T];L^{1}_{\loc}(\R))\cap L^{\infty}((0,T);TV(\R))\big)^{2}
\] satisfies
\begin{equation}
V_{\min}\leq V(\rho(t,x),\omega(t,x))\leq V_{\max}, \quad (t,x)\in[0,T]\times\R \text{ a.e.}\label{eq:invariant_velocity}
\end{equation}
and
\[
\essinf_{y\in\R}\omega_{0}(y)\leq \omega(t,x)\leq \|\omega_{0}\|_{L^{\infty}(\R)}.
\]
Furthermore, we have the following:
\begin{enumerate}
\item \label{item:1:invariant} If \cref{ass:long_time_horizon} b and b1 hold, that is, if for \(\Omega\coloneqq \big[\essinf_{x\in\R}\omega_{0}(x),\|\omega_{0}\|_{L^{\infty}(\R)} \big]\subset\R\)\,, 
\begin{equation}
\exists\, c,\underline{\rho}\in\R_{>0}:\ \sup_{b\in\Omega}\partial_{1}V(a,b)\leq -c \ \forall a\in\R_{>\underline{\rho}},\label{eq:invariant_domain}
\end{equation}
then the solution \((\rho,\omega)\in\big(C([0,T];L^{1}_{\loc}(\R))\cap L^{\infty}((0,T);TV(\R))\big)^{2}\) exists on any finite time horizon \(T\in\R_{>0}\), it satisfies \cref{eq:invariant_velocity},
and \(\rho\) obeys the following \(L^{\infty}\) bound:
\begin{equation}
\begin{aligned}
    0&\leq \rho(t,x)\leq \rho_{\max}\coloneqq \Big(\tfrac{\sup_{b\in\Omega}V(\underline{\rho},b)}{c}\Big)^{+}+\underline{\rho},
\end{aligned}
 \qquad (t,x)\in (0,T)\times\R.
 \label{eq:rho_max_invariant_domain}
\end{equation}
    It also obeys the same \(TV\) bounds as in \cref{theo:maximum_principle}. 
   
\item \label{item:2:invariant} If \(\partial_{2}V\geqq 0\) and \(\partial_{1}V<0\), we have the existence of the inverse function \(\underline{V}^{-1}\)\! to
\[
\R\ni\rho\mapsto \underline{V}(\rho)\coloneqq V\Big(\rho,\essinf_{y\in\R}\omega_{0}(y)\Big)\in\big(-\infty,V\big(0,\essinf_{y\in\R}\omega_{0}(y)\big)\big],
\]
and in the case that 
\begin{equation}
\essinf_{y\in\R}\omega_{0}(y)>V_{\max},\label{eq:as_lower_bound_density}
\end{equation}
we have, for small enough \(T\in\R_{>0}\), the lower bound
\begin{equation}\label{eq:positive_lower_bound_density}
\rho(t,x)\geq  \rho_{\min}\eqqcolon \underline{V}^{-1}(V_{\max})>0,\ (t,x)\in\OT \text{ a.e.}    
\end{equation}
We also have the existence of the inverse function \(\overline{V}^{-1}\)\! to
\[
\R_{\geq0}\ni\rho\mapsto\overline{V}(\rho)\coloneqq  V\big(\rho,\|\omega_{0}\|_{L^{\infty}(\R)}\big)\in \ \big(-\infty,V\big(0,\|\omega_{0}\|_{L^{\infty}(\R)}\big)\big],
\]
and if \(V_{\min}>\lim_{\rho\rightarrow\infty}V\big(\rho,\|\omega_{0}\|_{L^{\infty}(\R)}\big)\) (this is indeed \cref{ass:long_time_horizon} b2), then we have the following upper bound on the density:
\begin{equation}
0\leq\rho(t,x)\leq \rho_{\max}\coloneqq\overline{V}^{-1}(V_{\min}),\ (t,x)\in\OT \text{ a.e.}\label{eq:upper_bound_solution}
\end{equation}
In the case that \cref{eq:upper_bound_solution} holds, the solution \((\rho,\omega)\) also exists for all \(T\in\R_{>0}\), and it satisfies the claimed bounds, including \cref{eq:rho_max_invariant_domain}.
\end{enumerate}
\end{theo}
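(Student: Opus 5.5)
The plan is to work with smooth approximations and track the local velocity \(W(t,x)\coloneqq V(\rho(t,x),\omega(t,x))\) along characteristics, in the spirit of the maximum principle in \cref{theo:maximum_principle}. We mollify \((\rho_0,\omega_0)\) and the kernel as in \cref{eq:BV_approximation}. By \cref{cor:Stability_of_solutions} the solutions \((\rho_\eps,\omega_\eps)\) are \(C^1\) on a common short horizon. The bound for \(\omega\) is immediate from \(\omega=\omega_0\circ\xi_{\cV^*}(t,\cdot;0)\) in \cref{eq:solution}. The bounds survive the limits \(\eps,\delta\to0\) by the \(L^1\) stability of \cref{theo:stability}, and mollification preserves \(V_{\min}\le V(\rho_0^\eps,\omega_0^\eps)\le V_{\max}\) up to an error that vanishes as \(\eps\to0\) (\(V\) is only \(C^1\), so this needs a little care).

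The key computation uses \(\partial_t\omega+\cV\partial_x\omega=0\) and \(\partial_t\rho+\cV\partial_x\rho=-\rho\,\partial_x\cV\). Along characteristics these give
\[
(\partial_t+\cV\partial_x)W=-\partial_1V(\rho,\omega)\,\rho\,\partial_x\cV .
\]
Because \(\supp\kappa\subset\R_{\le0}\), we have \(\partial_x\cV(t,x)=\int_x^\infty\kappa'(x-y)W(t,y)\dd y-\kappa(0)W(t,x)\). Since \(\kappa'\ge0\) and \(\int_x^\infty\kappa'(x-y)\dd y=\kappa(0)\), this is a nonnegative average of \(W(t,\cdot)-W(t,x)\) downstream.
\begin{itemize}
\item At a spatial maximum \(\tilde x\) of \(W(t,\cdot)\) we get \(\partial_x\cV\le0\). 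With \(-\partial_1V\ge0\) and \(\rho\ge0\) this gives \((\partial_t+\cV\partial_x)W\le0\), so the maximum does not increase.
\item At a minimum the same argument gives \(\partial_x\cV\ge0\), so the minimum does not decrease.
\end{itemize}
A Danskin-type argument then yields \cref{eq:invariant_velocity}. To handle a supremum that is not attained, we would argue as in \cref{theo:maximum_principle} or add a small penalization.

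For item \cref{item:1:invariant}, suppose \(\rho>\rho_{\max}\) at some point. Integrating \(\partial_1V\le -c\) from \(\underline\rho\) gives
\[
V(\rho,\omega)\le \sup_{b\in\Omega}V(\underline\rho,b)-c(\rho-\underline\rho)<0\le V_{\min},
\]
which contradicts \cref{eq:invariant_velocity}. This gives \cref{eq:rho_max_invariant_domain}, and \(\rho\ge0\) follows from \cref{eq:solution}.

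For item \cref{item:2:invariant}:
\begin{itemize}
\item Strict monotonicity of \(\partial_1V<0\) gives the inverses \(\underline V^{-1}\) and \(\overline V^{-1}\).
\item For the lower bound, \(\partial_2V\ge0\) and \(\omega\ge\essinf\omega_0\) give \(V_{\max}\ge W\ge\underline V(\rho)\), hence \(\rho\ge\underline V^{-1}(V_{\max})\). This quantity is positive provided \(\underline V(0)>V_{\max}\), which is where \cref{eq:as_lower_bound_density} enters (I expect it is used for \(V\) of the type \(\omega-\rho\)).
\item For the upper bound, \(V_{\min}\le W\le\overline V(\rho)\) gives \(\rho\le\overline V^{-1}(V_{\min})\). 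This is finite because \(V_{\min}\) exceeds the limit at infinity.
\end{itemize}

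Global existence in both cases follows the time-patching argument at the end of \cref{theo:maximum_principle}. The \(L^\infty\) bounds on \(\rho,\omega\) are uniform in time, so the \(TV\) Gr\"onwall estimate of \cref{theo:maximum_principle} applies verbatim on \(Q_{\max}=[0,\rho_{\max}]\times\Omega\). The short existence time depends only on these bounds, so the solution can be restarted repeatedly with a fixed step.

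The main obstacle will be making the extremum argument for \(W\) rigorous with only \(C^1\) regularity of \(V\). The identity for \((\partial_t+\cV\partial_x)W\) needs \(C^1\) solutions, which \cref{cor:Stability_of_solutions} provides, but extrema over \(\R\) may not be attained. I would handle this first by a comparison argument: show that \((W-V_{\max})^+\) is transported along characteristics with a nonpositive source, working in the Lagrangian coordinate \(y\) with \(x=\xi(0,y;t)\).
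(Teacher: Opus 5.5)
Your core computation is the paper's. You use the identity \((\partial_t+\cV\partial_x)W=-\partial_1V(\rho,\omega)\,\rho\,\partial_x\cV\) for \(W=V(\rho,\omega)\), and get the sign of \(\partial_x\cV\) at spatial extrema of \(W\) from the monotone one-sided kernel. A Danskin argument on smooth approximations then gives \cref{eq:invariant_velocity}, the density bounds are read off from it, and time patching extends the solution.

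\textbf{The gap is the approximation step.} It is not true that mollifying \(\rho_0\) and \(\omega_0\) separately keeps \(V(\rho_0^\eps,\omega_0^\eps)\) in \([V_{\min},V_{\max}]\) up to an error vanishing as \(\eps\to0\). At a jump the mollified datum runs through the segment joining the two traces, and \(\{V_{\min}\le V\le V_{\max}\}\) need not be convex. For example, take \(V(\rho,\omega)=\omega-(\e^{\rho}-1)\) and a Riemann datum jumping from \((0,0)\) to \((\ln 5,4)\). Then \(V_{\min}=V_{\max}=0\). A symmetric mollification equals \((\tfrac12\ln 5,2)\) at the jump for every \(\eps\), and there \(V=3-\sqrt{5}>0\).

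Consequently Danskin on the smooth solutions only gives the bounds of their own data. The \(L^1\) limit then yields at best \(V(\rho,\omega)\le\liminf_{\eps\to0}\esssup_{x}V(\rho_0^\eps(x),\omega_0^\eps(x))\), which is not \cref{eq:invariant_velocity}. The paper's proof uses the same approximation without comment. In \cref{theo:maximum_principle} this is harmless because \(0\le\rho_0\le\rho_{\max}\) is a convex constraint, but here the constraint is not convex.

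Your fallback does not close the gap as phrased either. \((W-V_{\max})^{+}\) has no pointwise nonpositive source: where \(W(t,x)>V_{\max}\) but \(W\) is larger downstream, \(\partial_x\cV(t,x)\) can be positive.

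\textbf{What works is a Gr\"onwall estimate on the excess.}
\begin{enumerate}
\item Write \(D\kappa=\mu-\kappa(0^-)\delta_0\) with \(\mu\ge0\) a measure on \(\R_{<0}\) of mass \(\kappa(0^-)\). Then for a.e.\ \(x\), \(\partial_x\cV(t,x)=\int(W(t,x-z)-W(t,x))\dd\mu(z)\).
\item Set \(M(t)\coloneqq\esssup_{x\in\R}(W(t,x)-V_{\max})^{+}\). Wherever \(W(t,x)>V_{\max}\), step 1 gives \(\partial_x\cV(t,x)\le\kappa(0^-)M(t)\).
\item Work on the weak solution itself, along a.e.\ characteristic \(x=\xi(0,y;t)\). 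There \(\omega=\omega_0(y)\) and \(\rho=\rho_0(y)/\partial_2\xi(0,y;t)\), with \(\partial_t\partial_2\xi=\partial_x\cV\,\partial_2\xi\).
\item Hence \(t\mapsto W(t,\xi(0,y;t))\) is absolutely continuous. While it exceeds \(V_{\max}\), its derivative is at most \(\|\partial_1V\|_{L^\infty}\|\rho\|_{L^\infty}\kappa(0^-)M(t)\).
\item This gives \(M(t)\le C\int_0^tM(s)\dd s\), so \(M\equiv0\). The bound by \(V_{\min}\) is symmetric.
\end{enumerate}
This argument needs no mollified data, no smoothed kernel and no attained extrema, and it holds on the whole existence interval.

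Alternatively, you can keep the mollification if you apply the same inequality to \(\int_{\R}(W_\eps-V_{\max})^{+}\dd x\) instead of the supremum. Its initial value does tend to zero: it is bounded by a local Lipschitz constant of \(V\) times \(\|\rho_0^\eps-\rho_0\|_{L^1}+\|\omega_0^\eps-\omega_0\|_{L^1}\), even though the \(L^\infty\) excess does not vanish.

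With this repaired, the rest of your proposal goes through: \cref{eq:rho_max_invariant_domain} via \(\partial_1V\le-c\), the item~2 bounds via \(\underline{V}^{-1}\) and \(\overline{V}^{-1}\), and the patching. Your caveat on item~2 is also correct. \cref{eq:as_lower_bound_density} gives \(\rho_{\min}>0\) only under the normalization \(V(0,\omega)=\omega\), as in \cref{ass:K_V_2}. In general one needs \(V_{\max}<V(0,\essinf_{y\in\R}\omega_0(y))\).
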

\begin{proof}
    We recall from \cref{cor:Stability_of_solutions} that one can approximate the solution \((\rho,\omega)\) by a strong solution \((\rho_{\eps},\omega_{\eps})\),\ \(\eps\in\R_{>0}\), on a given small time horizon \(T_{1}\in\R_{>0}\) provided by \cref{theo:existence_uniqueness_small_time}. Then, we look at the time derivative of the ``local'' quantity \(t\mapsto V(\rho_{\eps}(t,x),\omega_{\eps}(t,x))\) for a specifically chosen \(x\in\R\) and obtain
    \begin{align*}
        \tfrac{\dd}{\dd t}V(\rho_{\eps}(t,x),\omega_{\eps}(t,x))&=\partial_{1}V(\rho_{\eps}(t,x),\omega_{\eps}(t,x))\partial_{t}\rho_{\eps}(t,x)+\partial_{2}V(\rho_{\eps}(t,x),\omega_{\eps}(t,x))\partial_{t}\omega_{\eps}(t,x).
        \intertext{Using the strong form for the dynamics \cref{eq:nonlocal_GARZ} in \(\omega\), we have}
        \tfrac{\dd}{\dd t}V(\rho_{\eps}(t,x),\omega_{\eps}(t,x))&=-\partial_{1}V(\rho_{\eps}(t,x),\omega_{\eps}(t,x))\Big(\partial_{x}\cV[\rho_{\eps},\omega_{\eps}](t,x)\rho_{\eps}(t,x)+ \cV[\rho_{\eps},\omega_{\eps}](t,x)\partial_{x}\rho_{\eps}(t,x)\Big)\\
        &\qquad-\partial_{2}V(\rho_{\eps}(t,x),\omega_{\eps}(t,x))\cV[\rho_{\eps},\omega_{\eps}](t,x)\partial_{x}\omega_{\eps}(t,x)\\
        &=-\tfrac{\dd}{\dd x} V(\rho_{\eps}(t,x),\omega_{\eps}(t,x)) \cV[\rho_{\eps},\omega_{\eps}](t,x)\\
        &\qquad-\partial_{1}V(\rho_{\eps}(t,x),\omega_{\eps}(t,x))\partial_{x}\cV[\rho_{\eps},\omega_{\eps}](t,x)\rho_{\eps}(t,x).
        \intertext{Inserting, at a given time \(t\in\R_{>0}\), a spatial location \(\tilde{x}\in\R\) where \(V(\rho_{\eps},\omega_{\eps})\) is minimal, implying in particular that \(\tfrac{\dd}{\dd x}V(\rho_{\eps}(t,x),\omega_{\eps}(t,x))|_{x=\tilde{x}}=0\), leads to} 
\tfrac{\dd}{\dd t}V(\rho_{\eps}(t,\tilde{x}),\omega_{\eps}(t,\tilde{x}))       &= -\partial_{1}V(\rho_{\eps}(t,\tilde{x}),\omega_{\eps}(t,\tilde{x}))\partial_{2}\cV[\rho_{\eps},\omega_{\eps}](t,\tilde{x})\rho_{\eps}(t,\tilde{x}).
    \end{align*}
    As \(\partial_{1}V\leqq 0\) by assumption, it suffices to show that \(\partial_{2}\cV[\rho,\omega](t,\tilde{x})\geqq 0\) as the time derivative then is nonnegative so that the invariant region is according to \cite{Danskin1967} 
respected. However, it holds that when approximating the nonlocal term by a smooth kernel \(\kappa_{\delta},\ \delta\in\R_{>0},\) in the way described in \cref{eq:BV_approximation}
    \begin{align*}
        \partial_{2}\cV[\rho_{\eps},\omega_{\eps}](t,\tilde{x})&=\tfrac{\dd}{\dd\tilde{x}}\int_{\tilde{x}}^{\infty}\kappa_{\delta}(\tilde{x}-y)V(\rho_{\eps}(t,y),\omega_{\eps}(t,y))\dd y\\
        &=\int_{\tilde{x}}^{\infty}\kappa_{\delta}'(\tilde{x}-y)V(\rho_{\eps}(t,y),\omega_{\eps}(t,y))\dd y- \kappa_{\delta}(0)V(\rho_{\eps}(t,\tilde{x}),\omega_{\eps}(t,\tilde{x}))
        \intertext{and as \(V(\rho_{\eps},\omega_{\eps})\) is minimal at \(x=\tilde{x}\) and \(\kappa_{\delta}'\geqq0\) due to \(\kappa\) monotonically increasing}
        &\geq V(\rho_{\eps}(t,\tilde{x}),\omega_{\eps}(t,\tilde{x}))\int_{\tilde{x}}^{\infty}\kappa_{\delta}'(\tilde{x}-y)\dd y-\kappa_{\delta}(0)V(\rho_{\eps}(t,\tilde{x}),\omega_{\eps}(t,\tilde{x}))\\
        &=0.
    \end{align*}
    This inequality is uniform in \(\eps,\delta,\) and thus implies not only that smooth solutions obey the invariant domain but also weaker \(TV-\) solutions.
The same can be obtained for the upper bound, where the last inequality is reversed and we look at the \(V(\rho_{\eps}(t,\tilde{x}),\omega_{\eps}(t,\tilde{x}))\) which is maximal.

    The upper bound on \(\rho\) is a direct consequence of the invariance of \(V(\rho,\omega)\) together with \cref{eq:invariant_domain}. To see this, we recall that for \((t,x)\in \OT \,\text{a.e.}\) assuming that \(\rho(t,x)\geqq \underline{\rho}\) it holds that
    \begin{align*}
    -V(\underline{\rho},\omega(t,x))&\leq V\big(\rho(t,x),\omega(t,x)\big)
    -V(\underline{\rho},\omega(t,x));
    \intertext{
    performing a worst case estimate on the left-hand side in \(\omega\) by recalling \(\Omega=\big(\essinf_{x\in\R}\omega_{0}(x),\|\omega_{0}\|_{L^{\infty}(\R)}\big)\) and a Taylor-expansion on the right-hand side}
    -\sup_{b\in\Omega}V(\underline{\rho},b)&\leq \partial_{1}V\big(\tilde{\rho},\omega(t,x)\big)\big(\rho(t,x)-\underline{\rho}\big)
    \intertext{and as \(\partial_{1}V\leqq 0\) and \(\rho(t,x)\geq \underline{\rho}\) we can furthermore estimate with a \(\tilde{\rho}\in (\underline{\rho},\infty)\) properly chosen}
    -\sup_{b\in\Omega}V(\underline{\rho},b)&\leq \sup_{b\in\Omega}\partial_{1}V\big(\tilde{\rho},b\big)\big(\rho(t,x)-\underline{\rho}\big)
    \end{align*}
    from which we conclude thanks to \cref{eq:invariant_domain}
    \[
\rho(t,x)\leq \underline{\rho} +\tfrac{-\sup_{b\in\Omega}V(\underline{\rho},b)}{\sup_{b\in\Omega}\partial_{1}V(\tilde{\rho},b)}\leq \underline{\rho} +\tfrac{\sup_{b\in\Omega}V(\underline{\rho},b)}{c}.
    \]
    For \(\rho(t,x)<\underline{\rho}\), the given bound still applies when replacing
    \(\tfrac{\sup_{b\in\Omega}V(\underline{\rho},b)}{c}\) by \(\Big(\tfrac{\sup_{b\in\Omega}V(\underline{\rho},b)}{c}\Big)^{+}\).
    
    The arguments for the bounds on \(\omega\) are analogous to the proof of \cref{theo:maximum_principle}, and finally, the \(TV\) estimates as well. This enables us also to use a time clustering argument to extend the solution to any finite time horizon.
\end{proof}
\begin{rem}[\cref{ass:long_time_horizon} and the relation to the assumptions in \cref{theo:invariant_region}]\label{rem:invariant-asm2.4}
The result in \cref{theo:invariant_region} addresses existence and uniqueness of solutions on any finite time horizon for the assumptions stated in \cref{ass:long_time_horizon}. However, it contains more as it also proves that \(V(\rho,\omega)\) stays within \(V_{\min},V_{\max}\) and that there is an explicit lower bound on the density given that \cref{eq:as_lower_bound_density} holds. Such a lower bound is later required in the singular limit analysis and the uniqueness of entropy solutions in \cref{sec:entropy_uniqueness}, however, this is not required for the existence and uniqueness of solutions to the nonlocal system of conservation laws as we have automatically also the lower bound zero.
\end{rem}
\begin{rem}[invariant domains for specific choices of \(V\)]\label{rem:invariant}
For \((\rho_{0},\omega_{0})\in TV(\R)^{2}\),\ \(\omega_{0},\rho_{0}\geqq 0\) we set as before \(\overline{\omega}\coloneqq \|\omega_{0}\|_{L^{\infty}(\R)},\ \underline{\omega}\coloneqq \essinf_{y\in\R}\omega_{0}(y)\).
\begin{description}
\item[linear velocity:] Consider for instance for \(\alpha\in\R_{>0}\) the velocity \[V(\rho,\omega)=\omega-\alpha\rho\] such that the initial data satisfy
\[
V(\rho_{0}(x),\omega_{0}(x))=\omega_{0}(x)-\alpha\rho_{0}(x)\geq0\quad \forall x\in\R,
\] 
(compare also the later \cref{e:initialdatum_1} in \cref{sec:numerical_simulations} where we use such velocity to derive numerical solutions). Then, all the assumptions in \cref{theo:invariant_region}, for having the velocity staying nonnegative, are satisfied. 

Even more, we have uniform bounds on the solutions as \(\partial_{1}V=-\alpha\) and we can choose \(\underline{\rho}=0\) so that
\begin{equation}
0\leq \rho(t,x)\leq \tfrac{\|\omega_{0}\|_{L^{\infty}(\R)}}{\alpha},\ (t,x)\in\OT \text{ a.e.}\label{eq:a_one_time_label}
\end{equation}
when applying \cref{item:1:invariant}.

In the given case, one can also apply \cref{item:2:invariant}. For the upper bound, we set
\[
V_{\min}\coloneqq \essinf_{y\in\R} V(\rho_{0}(y),\omega_{0}(y))=\essinf_{y\in\R} \big(\omega_{0}(y)-\alpha\rho_{0}(y)\big),
\]
compute \(\overline{V}^{-1}(y)=\tfrac{\overline{\omega}-y}{\alpha}\) and obtain
\[
\rho(t,x)\leq \overline{V}^{-1}(V_{\min})=\tfrac{\overline{\omega}-\essinf_{y\in\R} V(\rho_{0}(y),\omega_{0}(y))}{\alpha}=\tfrac{\overline{\omega}-\essinf_{y\in\R} [\omega_{0}(y)-\alpha\rho_{0}(y)]}{\alpha}\leq \tfrac{\overline{\omega}}{\alpha},\ (t,x)\in \OT \text{ a.e.}
\]
due to \(V_{\min}\coloneqq \essinf_{y\in\R}V(\rho_{0}(y),\omega_{0}(y))\geqq 0\) which is identical to the upper bound in \cref{eq:a_one_time_label}. 

Considering the lower bound, we set \(V_{\max}\coloneqq \|V(\rho_{0},\omega_{0})\|_{L^{\infty}(\R)}\), 
compute \(\underline{V}^{-1}(y)=\tfrac{\underline{\omega}-y}{\alpha}\) and have
\begin{align*}
\rho(t,x)&\geq \underline{V}^{-1}(V_{\max})=\tfrac{\underline{\omega}-\esssup_{y\in\R}V(\rho_{0}(y),\omega_{0}(y))}{\alpha}=\tfrac{\underline{\omega}-\esssup_{y\in\R}[\omega_{0}(y)-\alpha \rho_{0}(y)]}{\alpha} \\
&\geq \tfrac{\underline{\omega}-\overline{\omega}}{\alpha}+\essinf_{y\in\R}\rho_{0}(y),\ (t,x)\in\OT\text{ a.e.}
\end{align*}
Thus, whenever \(\alpha\cdot\essinf_{y\in\R}\rho_{0}(y)<\overline{\omega}-\underline{\omega}\), we have a lower bound, bounded away from zero.

Note that here, as a consequence of \cref{eq:invariant_velocity} and the bounds on $\omega$, the values of $(\rho(t,x),\omega(t,x))$ belong to a polygonal region (a parallelogram if $V_{\max}\le \underline{\omega}$). In particular, for this region, $\rho\in [\rho_{\min}, \rho_{\max}]$ with
\begin{equation}\label{eq:rho-min-linear-V}
    \rho_{\min}=\left(\tfrac{\underline{\omega}-V_{\max}}{\alpha} \right)_+ \text{ and }  \rho_{\max}=\tfrac{\overline{\omega}-V_{\min}}{\alpha}.
\end{equation}
The quantity $\left(\underline{\omega}-V_{\max}\right)$ has no sign, while $V_{\min}\ge 0$ implies $\left(\overline{\omega} -V_{\min}\right)\ge 0$. 
\item[nonlinear multiplicative velocity:]
For the velocity function (see \cref{sim:velocity_2} in \cref{sec:numerical_simulations} where it is used for numerical simulations),
\[
V(\rho,\omega)=\tfrac{\omega}{1+\rho}\geq 0,
\]
we have by \cref{theo:invariant_region} that the velocity remains nonnegative.

As a consequence of \cref{eq:invariant_velocity} and the bounds on $\omega$, the values of $\rho(t,x)$ belong to $[\rho_{\min}, \rho_{\max}]$ with 
\begin{equation*}
    \rho_{\min}=\left(\tfrac{\underline{\omega}}{V_{\max}}-1 \right)_+\,,
    \qquad  
    \rho_{\max}=\tfrac{\overline{\omega}}{V_{\min}}-1\,
\end{equation*}
with \((x)_{+}\coloneqq \max\{0,x\},\ x\in\R\).
Observe that the quantity $\left(\underline{\omega}-V_{\max}\right)$ has no sign, while $V_{\min}\ge 0$ implies $\left(\overline{\omega} -V_{\min}\right)\ge 0$.

Even more, it holds that \(\partial_{1}V<0\) and \(\partial_{2}V\geqq 0\) so that \cref{item:2:invariant} applies. We then compute \(\underline{V}(\rho)=\tfrac{\underline{\omega}}{1+\rho}\).
Next, let us pick \(1\leqq \rho_{0}\in TV(\R)\) so that 
\[
V_{\max}=\esssup_{x\in\R}V(\rho_{0}(x),\omega_{0}(x))=\esssup_{x\in\R}\tfrac{\omega_{0}(x)}{1+\rho_{0}(x)}\leq \tfrac{\overline{\omega}}{2}.\]
For a lower bound to hold, we require that
\[
\underline{\omega}>V_{\max}.
\]
Thus, it suffices that \[\underline{\omega}>\tfrac{\overline{\omega}}{2}\]
which we can assume for \(\omega_{0}\). As \(\underline{V}^{-1}(z)=\tfrac{\underline{\omega}}{z} -1\), we obtain as lower bound
\[
\rho(t,x)\geq \underline{V}^{-1}(V_{\max})= \tfrac{\underline{\omega}}{V_{\max}}-1>\tfrac{\underline{\omega}}{\tfrac{\overline{\omega}}{2}}-1>0,\ (t,x)\in \OT \ \text{a.e.}
\]
For the upper bound on \(\rho\), we require \(V_{\min}\) or a lower bound on it
\[
V_{\min}= \essinf_{x\in\R}V(\rho_{0}(x),\omega_{0}(x))=\essinf_{x\in\R}\tfrac{\omega_{0}(x)}{1+\rho_{0}(x)}\geq \tfrac{\underline{\omega}}{1+\|\rho_{0}\|_{L^{\infty}(\R)}}.
\]
Computing the inverse function \(\overline{V}^{-1}(y)=\tfrac{\overline{\omega}}{y}-1\), we have
\[
\rho(t,x)\leq \overline{V}^{-1}(V_{\min})=\tfrac{\overline{\omega}}{\tfrac{\underline{\omega}}{1+\|\rho_{0}\|_{L^{\infty}(\R)}}}-1=\tfrac{\overline{\omega}}{\underline{\omega}}\big(1+\|\rho_{0}\|_{L^{\infty}(\R)}\big)-1,\ (t,x)\in\OT \text{ a.e. }
\]
\end{description}
\end{rem}
We later require that the nonlocal velocity vanishes for \(x\rightarrow \pm \infty\) which is what the following will provide.
\begin{lem}[\(\partial_{2}\cV\) vanishing at \(\pm\infty\)]\label{lem:partial_2cV_vanishing_pm_infty}
Let \cref{ass:long_time_horizon} hold and assume
\(\rho,\omega \in C^{1}(\R)\cap TV(\R)\cap L^{\infty}(\R).
\)

Then it holds that
\[
\lim_{x\rightarrow\pm\infty}\partial_{x} \cV[\rho,\omega](x)=\lim_{x\rightarrow\pm\infty} \partial_x \int_{x}^{\infty}\kappa(x-y)V(\rho(y),\omega(y))\dd y=0.
\]
\end{lem}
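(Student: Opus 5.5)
The plan is to move the spatial derivative from the kernel onto the composite function \(W(y)\coloneqq V(\rho(y),\omega(y))\), and then let \(x\rightarrow\pm\infty\) using dominated convergence.

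First, \(W\) is \(C^{1}\), since \(V\in C^{1}(\R^{2})\) and \(\rho,\omega\in C^{1}(\R)\). Its derivative is \(W'=\partial_{1}V(\rho,\omega)\rho'+\partial_{2}V(\rho,\omega)\omega'\). Because \(\rho,\omega\) are essentially bounded, \(\partial_{1}V,\partial_{2}V\) are bounded on the relevant box \(Q\coloneqq[-\|\rho\|_{L^{\infty}(\R)},\|\rho\|_{L^{\infty}(\R)}]\times[-\|\omega\|_{L^{\infty}(\R)},\|\omega\|_{L^{\infty}(\R)}]\). For \(C^{1}\) functions one has \(\|\rho'\|_{L^{1}(\R)}=|\rho|_{TV(\R)}\), and similarly for \(\omega\). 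Hence
\[
\|W'\|_{L^{1}(\R)}\leq \|\partial_{1}V\|_{L^{\infty}(Q)}|\rho|_{TV(\R)}+\|\partial_{2}V\|_{L^{\infty}(Q)}|\omega|_{TV(\R)}<\infty ,
\]
and \(W\) is also bounded.

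Second, I justify the identity \(\partial_{x}\cV[\rho,\omega](x)=\int_{\R}\kappa(x-y)W'(y)\dd y\). I write \(\cV(x)=\int_{\R}\kappa(z)W(x-z)\dd z\) and form the increment
\[
\cV(x+h)-\cV(x)=\int_{\R}\kappa(z)\int_{0}^{h}W'(x+s-z)\dd s\dd z .
\]
Since \(\kappa\in BV(\R)\subset L^{\infty}(\R)\) and \(W'\in L^{1}(\R)\), Fubini applies. It gives \(\cV(x+h)-\cV(x)=\int_{0}^{h}(\kappa\ast W')(x+s)\dd s\). The function \(\kappa\ast W'\) is continuous, as the convolution of an \(L^{\infty}\) with an \(L^{1}\) function (continuity of translations in \(L^{1}\)). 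The fundamental theorem of calculus then yields the claim. This is the same integration by parts already used formally in the proof of \cref{lem:self_mapping_small_time}, made rigorous here without smoothing \(\kappa\).

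Third, I establish that \(\kappa(z)\rightarrow0\) as \(z\rightarrow\pm\infty\). A function of bounded variation on \(\R\) has one-sided limits at \(\pm\infty\), and integrability forces both limits to be \(0\). Under \cref{ass:long_time_horizon} this is even simpler: \(\kappa\) vanishes on \(\R_{>0}\), and it is monotone and integrable on \(\R_{\leq0}\). Then, for each fixed \(y\), \(\kappa(x-y)W'(y)\rightarrow0\) as \(x\rightarrow\pm\infty\). The integrand is dominated by \(\|\kappa\|_{L^{\infty}(\R)}|W'(y)|\in L^{1}(\R)\), so Lebesgue's dominated convergence theorem gives \(\lim_{x\rightarrow\pm\infty}\partial_{x}\cV[\rho,\omega](x)=0\).

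The only delicate point is the second step: justifying the derivative formula when \(\kappa\) is merely \(BV\), possibly with a jump at \(0\). The Fubini-based increment argument avoids differentiating \(\kappa\) at all, so I expect no genuine obstacle there. Alternatively, one could use the smoothing \(\kappa_{\eps}\) from \cref{eq:BV_approximation} and pass to the limit uniformly in \(x\), using \(\|\kappa-\kappa_{\eps}\|_{L^{1}(\R)}\rightarrow0\) together with boundedness of \(W'\). However, boundedness of \(W'\) is not assumed, so the direct argument is preferable.
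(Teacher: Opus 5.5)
Your proof is correct and follows the same route as the paper. The paper's proof is only the remark that the claim follows from the $TV$ bounds on $(\rho,\omega)$, the monotonicity of $\kappa$, and its vanishing at $-\infty$, and these are exactly your ingredients ($W'\in L^{1}(\R)$ from the $TV$ bounds; $\kappa\in L^{\infty}(\R)$ with $\kappa(z)\to 0$ as $z\to\pm\infty$ from monotonicity, integrability and one-sided support); your Fubini derivation of $\partial_{x}\cV=\kappa\ast W'$, done without smoothing $\kappa$, together with the dominated-convergence step rigorously supplies the details the paper leaves implicit.
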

\begin{proof}
This is a consequence of the assumed \(TV\) bounds on the solution, the monotonicity of the nonlocal kernel and that it vanishes at \(-\infty\).
\end{proof}
\begin{rem}[solution at \(\pm\infty\)]\label{rem:values-at-infty}
Thanks to \cref{lem:partial_2cV_vanishing_pm_infty} and the solution formula in \cref{eq:solution}, we also obtain that for \(t\in[0,T] \text{ a.e.}\) it holds that
\[
\lim_{x\rightarrow\pm\infty}\rho(t,x)=\rho_{\pm\infty},\ \lim_{x\rightarrow\pm\infty}\omega(t,x)=\omega_{\pm\infty}
\]
where
\[
\rho_{\pm\infty}\coloneqq \lim_{x\rightarrow\pm\infty}\rho_{0}(x),\ \omega_{\pm\infty}\coloneqq \lim_{x\rightarrow\pm\infty}\omega_{0}(x).
\]
Thanks to the assumed \(TV\) bounds on \(\omega_{0},\rho_{0},\) the latter limits actually exist.
\end{rem}
\section{The singular limit: convergence to a local weak solution}\label{sec:singular_limit}
In this section, we study the limit when the nonlocal kernel \(\kappa\) is of exponential type, see \cref{eq:nonlocal_velocity_definition} below,
that was introduced in \cite{bressan-shen2021entropy} and exploited in several papers (\cite{coclite2022general,Chiarello2024,friedrich2022conservation}).

The strategy is to first derive uniform \(TV\) bounds of the nonlocal velocity with exponential kernel, uniformly with respect to the scaling parameter $\eta>0$.
To this end, we identify a PDE which is a nonlocal transport equation in the velocity and which enables us to derive such \(TV\) bounds under a few further restrictions on the velocity. This then enables us to pass to the limit and to demonstrate that the limit is a weak solution of the corresponding local GARZ model. To this end, we follow the line of the proof in \cite{coclite2022general,Chiarello2024,friedrich2022conservation}. 
In \cref{sec:entropy_uniqueness}, we will show that it is even entropy admissible. However, uniqueness of the local entropy solution then follows only when the density is bounded away from zero, which is what we then also assume in the nonlocal equation, taking advantage of \cref{theo:maximum_principle} or  of \cref{theo:invariant_region}, Item 2, first part.

\begin{lem}[transport equation satisfied by the nonlocal operator]\label{lem:nonlocal_transport_equation}
Let \(T\in\R_{>0}\) be given so that there exists a unique weak solution to \cref{eq:nonlocal_GARZ}.
Define for \((t,x)\in\OT\) 
\begin{equation}
    \cV_{\eta}(t,x)\coloneqq \tfrac{1}{\eta}\int_{x}^{\infty}\exp\big(\tfrac{x-y}{\eta}\big)V(\rho_{\eta}(t,y),\omega_{\eta} (t,y))\dd y,
    \label{eq:nonlocal_velocity_definition}
\end{equation}
that is, assume that the nonlocal velocity in \cref{eq:nonlocal_GARZ} is equipped with the exponential kernel and the parameter \(\eta\in\R_{>0}\).
Then, \(\cV_{\eta}\) satisfies
\begin{equation} \label{eq:nonlocal_identity}
\partial_{x}\cV_{\eta}(t,x)=\tfrac{1}{\eta}\cV_{\eta}(t,x)-\tfrac{1}{\eta}V(\rho_{\eta}(t,x),\omega_{\eta}(t,x)),\qquad (t,x)\in (0,T)\times\R
\end{equation}
as well as the following transport equation with nonlocal source:
\begin{equation}
\begin{split}
  &  \partial_t \cV_{\eta}(t,x)+ \cV_{\eta}(t,x) \partial_x \cV_{\eta}(t,x)\\
  &= -\tfrac{1}{\eta} \int_x^\infty \exp\big(\tfrac{x-y}{\eta}\big) \partial_1 V(\rho_{\eta}(t,y),\omega_{\eta}(t,y))\partial_{y}\cV_{\eta}(t,y)\rho_{\eta}(t,y)        \dd y
- \int_x^\infty \exp\big(\tfrac{x-y}{\eta}\big) \big(\partial_y \cV_{\eta}(t,y) \big)^2 \dd y 
\\
    &= -\tfrac{1}{\eta} \int_x^\infty \exp\big(\tfrac{x-y}{\eta}\big)\partial_{y}\cV_{\eta}(t,y)\Big( \partial_1 V(\rho_{\eta}(t,y),\omega_{\eta}(t,y))\rho_{\eta}(t,y)  +\eta\partial_{y}\cV_{\eta}(t,y)\Big)      \dd y
    \end{split}
    \label{eq:entirely_nonlocal}
\end{equation} 
supplemented by the initial condition:
        \begin{equation}
\cV_{\eta}(0,x)=\tfrac{1}{\eta} \int_x^\infty\exp(\tfrac{x-y}{\eta}) V\left(\rho_{0}(y), \omega_0(y)\right) \dd y,\qquad x\in\R.\label{eq:lem:nonlocal_transport_equation_initial_datum}
        \end{equation}
The \(\rho_{\eta}\) showing up in \cref{eq:entirely_nonlocal} is thereby the nonlocal weak solution in the sense of \cref{defi:weak_solution} with the mentioned and outlined in \cref{eq:nonlocal_velocity_definition} (one-sided) exponential kernel.
\end{lem}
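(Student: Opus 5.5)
Our plan is to first prove everything for smooth data and then pass to the limit. By \cref{cor:Stability_of_solutions} we replace \((\rho_0,\omega_0)\) by mollified data, so that \((\rho_\eta,\omega_\eta)\in C^1([0,T]\times\R)^2\) are classical solutions of \cref{eq:nonlocal_GARZ}. The exponential kernel \(\kappa(z)=\tfrac1\eta\exp(z/\eta)\mathds{1}_{z\le0}\) satisfies the assumptions on \(\kappa\). All identities below are linear in \(\cV_\eta\) and \(V(\rho_\eta,\omega_\eta)\), or quadratic in \(\partial_x\cV_\eta\), and are tested against \(L^\infty\) data. Hence they pass to the limit in the distributional or a.e. sense, using \(C([0,T];L^1_{\loc})\) convergence, the uniform \(L^\infty\) and \(TV\) bounds, and convergence of \(\partial_x\cV_\eta\) in \(L^1_{\loc}\), which follows from \cref{eq:nonlocal_identity} itself.

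Identity \cref{eq:nonlocal_identity} follows from Leibniz's rule. Differentiating \(\tfrac1\eta\int_x^\infty \exp(\tfrac{x-y}\eta)V\dd y\) in \(x\) gives \(\tfrac1\eta\cV_\eta\) from the exponent and \(-\tfrac1\eta V(\rho_\eta(t,x),\omega_\eta(t,x))\) from the lower limit. The initial condition \cref{eq:lem:nonlocal_transport_equation_initial_datum} is just the definition of \(\cV_\eta\) at \(t=0\).

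For \cref{eq:entirely_nonlocal}, differentiate under the integral in time and use the chain rule together with the strong form of the equations, which we write as \(\partial_t\rho=-\partial_y(\cV\rho)\) and \(\partial_t\omega=-\cV\partial_y\omega\). This gives
\[
\partial_t V(\rho_\eta,\omega_\eta)=-\cV_\eta\,\partial_y V(\rho_\eta,\omega_\eta)-\partial_1V(\rho_\eta,\omega_\eta)\,\rho_\eta\,\partial_y\cV_\eta ,
\]
which is the same computation as in the proof of \cref{theo:invariant_region}. The second term directly produces the first integral on the right-hand side. For the first term we integrate by parts in \(y\):
\[
-\tfrac1\eta\int_x^\infty \exp\big(\tfrac{x-y}\eta\big)\cV_\eta\partial_yV\dd y
=\tfrac1\eta\cV_\eta(t,x)V(t,x)+\tfrac1\eta\int_x^\infty \partial_y\Big(\exp\big(\tfrac{x-y}\eta\big)\cV_\eta\Big)V\dd y .
\]
We then substitute \(V=\cV_\eta-\eta\partial_y\cV_\eta\) from \cref{eq:nonlocal_identity} everywhere inside the integrals. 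Combined with the explicit expression for \(\cV_\eta\partial_x\cV_\eta\) (again from \cref{eq:nonlocal_identity}), the \(\cV_\eta V\) boundary terms cancel. The remaining terms are of the form \(\int \exp\cdot(\partial_y\cV_\eta\,\cV_\eta-\tfrac1\eta\cV_\eta^2+\dots)\). A second integration by parts of \(\int \exp(\tfrac{x-y}\eta)\partial_y(\tfrac12\cV_\eta^2)\dd y\) reduces these to the term \(-\int_x^\infty \exp(\tfrac{x-y}\eta)(\partial_y\cV_\eta)^2\dd y\). Factoring out \(\partial_y\cV_\eta\) then gives the second line of \cref{eq:entirely_nonlocal}.

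The main obstacle is the bookkeeping and justification of the boundary terms at \(y=+\infty\). We handle these with the exponential decay of the kernel together with the boundedness of \(\cV_\eta\), \(V(\rho_\eta,\omega_\eta)\) and \(\partial_y\cV_\eta\), where the last one is bounded by \cref{lem:TV_partial_2_cV} or \cref{lem:partial_2cV_vanishing_pm_infty}. A quicker alternative route that we would try first is to set \(W\coloneqq\partial_t\cV_\eta+\cV_\eta\partial_x\cV_\eta\) and verify that both \(W\) and the right-hand side \(R\) satisfy the same first-order ODE \(\partial_xW-\tfrac1\eta W=-\tfrac1\eta(\text{source at }x)\), with bounded solutions. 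Uniqueness of the bounded solution of this linear ODE, since the homogeneous solution \(\exp(x/\eta)\) is unbounded as \(x\to\infty\), then gives \(W=R\) without tracking every term.
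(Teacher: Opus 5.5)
Your proposal is correct and follows the paper's proof. Both mollify via \cref{cor:Stability_of_solutions}, differentiate under the integral in time, use the chain rule with the strong form to get $\partial_tV=-\cV_\eta\partial_yV-\partial_1V\rho_\eta\partial_y\cV_\eta$, integrate by parts, substitute $V=\cV_\eta-\eta\partial_y\cV_\eta$, and integrate by parts once more. Your ODE shortcut is also valid: both $W$ and the right-hand side satisfy $\partial_x u-\tfrac1\eta u=\tfrac1\eta\partial_x\cV_\eta\big(\partial_1V\rho_\eta+\eta\partial_x\cV_\eta\big)$, which trades the boundary-term bookkeeping for the easy check that $W\exp(-x/\eta)\to0$ as $x\to\infty$.
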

\begin{proof} 
Starting with \cref{eq:nonlocal_GARZ}, we use \cref{cor:Stability_of_solutions} and compute
for \((t,x)\in\OT\)
\begin{align*}
    \partial_{t}\cV_{\eta}(t,x)&=\tfrac{1}{\eta}\int_{x}^{\infty}\exp\big(\tfrac{x-y}{\eta}\big)\tfrac{\dd}{\dd t}V(\rho_{\eta}(t,y),\omega_{\eta}(t,y))\dd y\\
    &=\tfrac{1}{\eta}\int_{x}^{\infty}\exp\big(\tfrac{x-y}{\eta}\big)\partial_{1}V(\rho_{\eta}(t,y),\omega_{\eta}(t,y))\partial_{t}\rho_{\eta}(t,y)\dd y\\
    &\qquad +\tfrac{1}{\eta}\int_{x}^{\infty}\exp\big(\tfrac{x-y}{\eta}\big)\partial_{2}V(\rho_{\eta}(t,y),\omega_{\eta}(t,y))\partial_{t}\omega_{\eta}(t,y)\dd y
    \intertext{and plugging in the corresponding PDEs as in \cref{eq:nonlocal_GARZ} yields}
    &=-\tfrac{1}{\eta}\int_{x}^{\infty}\exp\big(\tfrac{x-y}{\eta}\big)\partial_{1}V(\rho_{\eta}(t,y),\omega_{\eta}(t,y))\tfrac{\dd}{\dd y}\big(\cV_{\eta}(t,y)\rho_{\eta}(t,y)\big)\dd y\\
    &\quad-\tfrac{1}{\eta}\int_{x}^{\infty}\exp\big(\tfrac{x-y}{\eta}\big)\partial_{2}V(\rho_{\eta}(t,y),\omega_{\eta}(t,y))\cV_{\eta}(t,y)\partial_{y}\omega_{\eta}(t,y)\dd y\\
    \intertext{rearranging terms}
    &=-\tfrac{1}{\eta}\int_{x}^{\infty}\exp\big(\tfrac{x-y}{\eta}\big)\partial_{1}V(\rho_{\eta}(t,y), \omega_{\eta}(t,y))\partial_{y}\cV_{\eta}(t,y)\rho_{\eta}(t,y)\dd y\\
    &\quad-\tfrac{1}{\eta}\int_{x}^{\infty}\exp\big(\tfrac{x-y}{\eta}\big)\tfrac{\dd}{\dd y}\big(V(\rho_{\eta}(t,y),\omega_{\eta}(t,y))\big)\cV_{\eta}(t,y)\dd y\\
    \intertext{and an integration by parts in the second term yields}
    &=-\tfrac{1}{\eta}\int_{x}^{\infty}\exp\big(\tfrac{x-y}{\eta}\big)\partial_{1}V(\rho_{\eta}(t,y),\omega_{\eta}(t,y))\partial_{y}\cV_{\eta}(t,y)\rho_{\eta}(t,y)\dd y\\
    &\quad+\tfrac{1}{\eta}\int_{x}^{\infty}\exp\big(\tfrac{x-y}{\eta}\big)V(\rho_{\eta}(t,y),\omega_{\eta}(t,y))\partial_{y}\cV_{\eta}(t,y)\dd y\\
    &\quad-\tfrac{1}{\eta^{2}}\int_{x}^{\infty}\exp\big(\tfrac{x-y}{\eta}\big)V(\rho_{\eta}(t,y),\omega_{\eta}(t,y))\cV_{\eta}(t,y)\dd y\\
    &\quad+\tfrac{1}{\eta}V(\rho_{\eta}(t,x),\omega_{\eta}(t,x))\cV_{\eta}(t,x).
    \intertext{Taking next advantage of \cref{eq:nonlocal_identity}, that is, \(V(\rho_{\eta},\omega_{\eta})\equiv\cV_{\eta}-\eta\partial_{2}\cV_{\eta}\) in the second and third term}
    &=-\tfrac{1}{\eta}\int_{x}^{\infty}\exp\big(\tfrac{x-y}{\eta}\big)\partial_{1}V(\rho_{\eta}(t,y),\omega_{\eta}(t,y))\partial_{y}\cV_{\eta}(t,y)\rho_{\eta}(t,y)\dd y\\
    &\quad+\tfrac{1}{\eta}\int_{x}^{\infty}\exp\big(\tfrac{x-y}{\eta}\big)\cV_{\eta}(t,y)\partial_{y}\cV_{\eta}(t,y)\dd y-\int_{x}^{\infty}\exp\big(\tfrac{x-y}{\eta}\big)\big(\partial_{y}\cV_{\eta}(t,y)\big)^{2}\dd y\\
    &\quad-\tfrac{1}{\eta^{2}}\int_{x}^{\infty}\exp\big(\tfrac{x-y}{\eta}\big)\cV_{\eta}(t,y)^{2}\dd y+\tfrac{1}{\eta}\int_{x}^{\infty}\exp\big(\tfrac{x-y}{\eta}\big)\cV_{\eta}(t,y)\partial_{y}\cV_{\eta}(t,y)\dd y\\
    &\quad+\tfrac{1}{\eta}\cV_{\eta}(t,x)^{2}-\cV_{\eta}(t,x)\partial_{x}\cV_{\eta}(t,x),\\
    \intertext{and by performing another integration by parts in the second term, we find that}
    \partial_{t}\cV_{\eta}(t,x)&=-\tfrac{1}{\eta}\int_{x}^{\infty}\exp\big(\tfrac{x-y}{\eta}\big)\partial_{1}V(\rho_{\eta}(t,y),\omega_{\eta}(t,y))\partial_{y}\cV_{\eta}(t,y)\rho_{\eta}(t,y)\dd y\\
    &\quad-\int_{x}^{\infty}\exp\big(\tfrac{x-y}{\eta}\big)\big(\partial_{y}\cV_{\eta}(t,y)\big)^{2}\dd y-\cV_{\eta}(t,x)\partial_{x}\cV_{\eta}(t,x).
\end{align*}
This is the claimed identity.
\end{proof}
\subsection{Uniform bounds on the total variation}\label{subsec:4.1}
The identity established in \cref{lem:nonlocal_transport_equation}
allows us to prove total variation bounds of the nonlocal velocity \(\cV_{\eta}\) as in \cref{eq:nonlocal_velocity_definition}, uniformly in the nonlocal scaling $\eta\in\R_{>0}$.
\begin{theo}[spatial total variation bound uniform in $\eta$]\label{theo:total_variation_bound}
Let \cref{ass:long_time_horizon} hold, and assume that 
the kernel is exponential, that is, the nonlocal velocity is given by \cref{eq:nonlocal_velocity_definition}. Assume further that
\begin{equation}
\partial_{1}V(\rho_{\eta}(t,x),\omega_{\eta}(t,x))\rho_{\eta}(t,x)+\eta \partial_{x}\cV_{\eta}(t,x)\leqq 0\qquad \forall (t,x)\in \OT.
\label{eq:requirement_TV_diminishing}
\end{equation}
Then $\cV_{\eta}$, 
as in \cref{eq:nonlocal_velocity_definition}, satisfies the following total variation bound:
\begin{align}\label{eq:bound-on-TV-V}
   |\cV_{\eta}|_{L^{\infty}((0,T);TV(\R))}\leq |V(\rho_{0}, \omega_0)|_{TV(\R)} < \infty.
\end{align}
\end{theo}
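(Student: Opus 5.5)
We work with the transport equation \cref{eq:entirely_nonlocal} from \cref{lem:nonlocal_transport_equation}, which we first establish for smooth solutions. By \cref{cor:Stability_of_solutions} we may mollify the initial data and assume \((\rho_\eta,\omega_\eta)\) and hence \(\cV_\eta\) are \(C^{1}\) (indeed \(C^2\) in space after also mollifying \(V\) if needed). We prove the bound for these smooth solutions and pass to the limit at the end, using the \(L^{1}\)-stability together with lower semicontinuity of the total variation. Set \(u\coloneqq\partial_x\cV_\eta\) and
\[
S(t,x)\coloneqq -\tfrac{1}{\eta}\int_x^\infty \exp\big(\tfrac{x-y}{\eta}\big)u(t,y)\,G(t,y)\dd y,\qquad G\coloneqq \partial_1V(\rho_\eta,\omega_\eta)\rho_\eta+\eta u .
\]
Then \cref{eq:entirely_nonlocal} reads \(\partial_t\cV_\eta+\cV_\eta\partial_x\cV_\eta=S\). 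Differentiating in \(x\) gives
\[
\partial_t u+\partial_x(\cV_\eta u)=\partial_x S=\tfrac1\eta S+\tfrac1\eta uG .
\]

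The core step is the time derivative of \(\int_\R|u|\dd x\). Multiplying by \(\sgn(u)\) (via a smooth approximation \(\sgn_\delta\), as is standard) and integrating, the transport term \(\int \sgn(u)\partial_x(\cV_\eta u)\) integrates to zero. Boundary terms vanish by \cref{lem:partial_2cV_vanishing_pm_infty}. This leaves
\[
\tfrac{\dd}{\dd t}\int_\R|u|\dd x=\tfrac1\eta\int_\R \sgn(u(t,x))S(t,x)\dd x+\tfrac1\eta\int_\R |u|G\dd x .
\]
We bound the first term by \(\tfrac1\eta\int|S|\dd x\). Since \(G\le0\) by \cref{eq:requirement_TV_diminishing}, we have \(|uG|=-|u|G\). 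Fubini then gives
\[
\int_\R|S(t,x)|\dd x\le \tfrac1\eta\int_\R |u(t,y)|\,|G(t,y)|\int_{-\infty}^{y}\exp\big(\tfrac{x-y}{\eta}\big)\dd x\dd y=-\int_\R|u|G\dd y .
\]
Consequently \(\tfrac{\dd}{\dd t}\int|u|\dd x\le -\tfrac1\eta\int|u|G+\tfrac1\eta\int|u|G=0\). The exact cancellation of the exponential kernel's mass, \(\tfrac1\eta\int_{-\infty}^y e^{(x-y)/\eta}\dd x=1\), is what makes the estimate uniform in \(\eta\). Hence \(|\cV_\eta(t,\cdot)|_{TV(\R)}\le|\cV_\eta(0,\cdot)|_{TV(\R)}\).

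It remains to bound the initial TV. Because \(\cV_\eta(0,\cdot)\) is a convolution of \(V(\rho_0,\omega_0)\) with a probability kernel, \(|\cV_\eta(0,\cdot)|_{TV}\le|V(\rho_0,\omega_0)|_{TV}\). The right-hand side is finite since \(V\) is \(C^1\) and \(\rho_0,\omega_0\in TV\cap L^\infty\); the \(L^\infty\) bounds come from \cref{theo:maximum_principle} or \cref{theo:invariant_region}.

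The main obstacle is the rigor of the smooth approximation. Condition \cref{eq:requirement_TV_diminishing} is imposed on the actual solution, not the mollified one. We plan to either interpret it as holding along the approximating family or note that the estimate only uses the sign of \(G\) pointwise; the limit \(\eps\to0\) then uses that \(\cV_\eta^\eps\to\cV_\eta\) locally uniformly, by the Lipschitz bounds and stability, so the TV bound passes by lower semicontinuity. Justifying the boundary terms and the \(\sgn\)-regularization is routine.
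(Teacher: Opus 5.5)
Your proposal is correct and follows essentially the same route as the paper. You differentiate the nonlocal transport identity \cref{eq:entirely_nonlocal} and compute $\tfrac{\dd}{\dd t}\int_\R|\partial_x\cV_\eta|\dd x$. You then use the sign condition \cref{eq:requirement_TV_diminishing} to dominate the nonlocal source term, and exchange the order of integration so that the unit mass of the exponential kernel cancels it exactly against $\tfrac{1}{\eta}\int_\R|\partial_x\cV_\eta|\big(\partial_1V\rho_\eta+\eta\partial_x\cV_\eta\big)\dd x$. Writing the equation in divergence form, $\partial_t u+\partial_x(\cV_\eta u)=\partial_x S$, is only a repackaging of the paper's integration by parts, and the approximation caveat you flag (the hypothesis being imposed on the unmollified solution) is a point the paper also passes over.
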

\begin{proof} By \cref{cor:Stability_of_solutions}, we can consider $(\rho_{\eta},\omega_{\eta})$ to be smooth.
Using the identity on the velocity in \cref{eq:entirely_nonlocal}, we compute the spatial derivative of $\partial_{t}\cV_{\eta}$ and obtain, for \((t,x)\in\OT\),
\begin{align*}
    \partial_t \partial_{x}&\cV_{\eta}(t,x)\\
    &=- \big(\partial_x \cV_{\eta}(t,x)\big)^{2}-\cV_{\eta}(t,x)\partial_{x}^{2}\cV_{\eta}(t,x)\\
    &\quad-\tfrac{1}{\eta^{2}} \int_x^\infty \exp\big(\tfrac{x-y}{\eta}\big) \partial_1 V(\rho_{\eta}(t,y),\omega_{\eta}(t,y))\partial_{y}\cV_{\eta}(t,y)\rho_{\eta}(t,y)        \dd y\\
    &\quad+\tfrac{1}{\eta}\partial_{1}V(\rho_{\eta}(t,x),\omega_{\eta}(t,x))\partial_{x}\cV_{\eta}(t,x)\rho_{\eta}(t,x)- \tfrac{1}{\eta}\int_x^\infty \!\!\!\!\exp\big(\tfrac{x-y}{\eta}\big) \big(\partial_y \cV_{\eta}(t,y) \big)^2 \dd y +\big(\partial_{x}\cV_{\eta}(t,x)\big)^{2}\\
    &=- \cV_{\eta}(t,x)\partial_{x}^{2}\cV_{\eta}(t,x)-\tfrac{1}{\eta^{2}} \int_x^\infty \exp\big(\tfrac{x-y}{\eta}\big) \partial_1 V(\rho_{\eta}(t,y),\omega_{\eta}(t,y))\partial_{y}\cV_{\eta}(t,y)\rho_{\eta}(t,y)        \dd y\\
    &\qquad+\tfrac{1}{\eta}\partial_{1}V(\rho_{\eta}(t,x),\omega_{\eta}(t,x))\partial_{x}\cV_{\eta}(t,x)\rho_{\eta}(t,x)- \tfrac{1}{\eta}\int_x^\infty \exp\big(\tfrac{x-y}{\eta}\big) \big(\partial_y \cV_{\eta}(t,y) \big)^2 \dd y \\
    &=- \cV_{\eta}(t,x)\partial_{x}^{2}\cV_{\eta}(t,x)-\tfrac{1}{\eta^{2}} \int_x^\infty\!\!\!\! \exp\big(\tfrac{x-y}{\eta}\big) \partial_{y}\cV_{\eta}(t,y)\Big(\partial_1 V(\rho_{\eta}(t,y),\omega_{\eta}(t,y))\rho_{\eta}(t,y)+\eta \partial_{y}\cV_{\eta}(t,y)\Big) \dd y\\
    &\qquad+\tfrac{1}{\eta}\partial_{1}V(\rho_{\eta}(t,x),\omega_{\eta}(t,x))\partial_{x}\cV_{\eta}(t,x)\rho_{\eta}(t,x).
\end{align*} 
Estimating the total variation then yields
\begin{align*}
   &\partial_{t}|\cV_{\eta}(t,\cdot)|_{TV(\R)}\\
   &=\int_{\R}\sgn(\partial_{x}\cV_{\eta}(t,x))\partial_{t}\partial_{x}\cV_{\eta}(t,x)\dd x\\
   &=-\int_{\R}\sgn(\partial_{x}\cV_{\eta}(t,x))\partial_{x}^{2}\cV_{\eta}(t,x)\cV_{\eta}(t,x)\dd x+\tfrac{1}{\eta}\int_{\R}|\partial_{x}\cV_{\eta}|\rho_{\eta}(t,x)\partial_{1}V(\rho_{\eta}(t,x),\omega_{\eta}(t,x))\dd x\\
    &\quad-\tfrac{1}{\eta^{2}}\int_{\R}\sgn(\partial_{x}\cV_{\eta}(t,x)) \!\int_x^\infty\!\!\!\! \exp\big(\tfrac{x-y}{\eta}\big) \partial_{y}\cV_{\eta}(t,y)\Big(\partial_1 V(\rho_{\eta}(t,y),\omega_{\eta}(t,y))\rho_{\eta}(t,y)+\eta \partial_{y}\cV_{\eta}(t,y)\Big) \dd y
    \intertext{and an integration by parts taking advantage of \cref{lem:partial_2cV_vanishing_pm_infty} so that the boundary terms at \(\pm\infty\) vanish}
    &=\int_{\R}|\partial_{x}\cV_{\eta}(t,x)|\partial_{x}\cV_{\eta}(t,x)\dd x +\tfrac{1}{\eta}\int_{\R}|\partial_{x}\cV_{\eta}|\rho_{\eta}(t,x)\partial_{1}V(\rho_{\eta}(t,x),\omega_{\eta}(t,x))\dd x\\
    &\quad-\tfrac{1}{\eta^{2}}\int_{\R}\sgn(\partial_{x}\cV_{\eta}(t,x)) \!\int_x^\infty\!\!\!\! \exp\big(\tfrac{x-y}{\eta}\big) \partial_{y}\cV_{\eta}(t,y)\Big(\partial_1 V(\rho_{\eta}(t,y),\omega_{\eta}(t,y))\rho_{\eta}(t,y)+\eta \partial_{y}\cV_{\eta}(t,y)\Big) \dd y\dd x
    \intertext{and rearranging terms}
    &=\tfrac{1}{\eta}\int_{\R}|\partial_{x}\cV_{\eta}(t,x)|\big(\rho_{\eta}(t,x)\partial_{1}V(\rho_{\eta}(t,x),\omega_{\eta}(t,x))+\eta \partial_{x}\cV_{\eta}(t,x)\big)\dd x\\
    &\quad-\tfrac{1}{\eta^{2}}\int_{\R}\sgn(\partial_{x}\cV_{\eta}(t,x)) \!\int_x^\infty\!\!\!\! \exp\big(\tfrac{x-y}{\eta}\big) \partial_{y}\cV_{\eta}(t,y)\Big(\partial_1 V(\rho_{\eta}(t,y),\omega_{\eta}(t,y))\rho_{\eta}(t,y)+\eta \partial_{y}\cV_{\eta}(t,y)\Big) \dd y\dd x
    \intertext{and as we assume that \cref{eq:requirement_TV_diminishing} holds, that is \(\rho_{\eta}\partial_{1}V(\rho_{\eta},\omega_{\eta})+\eta \partial_{2}\cV_{\eta}\leqq 0\)}
    &\leq\tfrac{1}{\eta}\int_{\R}|\partial_{x}\cV_{\eta}(t,x)|\big(\rho_{\eta}(t,x)\partial_{1}V(\rho_{\eta}(t,x),\omega_{\eta}(t,x))+\eta \partial_{x}\cV_{\eta}(t,x)\big)\dd x\\
    &\quad-\tfrac{1}{\eta^{2}}\int_{\R}\int_x^\infty\!\!\!\! \exp\big(\tfrac{x-y}{\eta}\big) |\partial_{y}\cV_{\eta}(t,y)|\Big(\partial_1 V(\rho_{\eta}(t,y),\omega_{\eta}(t,y))\rho_{\eta}(t,y)+\eta \partial_{y}\cV_{\eta}(t,y)\Big) \dd y\dd x
    \intertext{it yields after another exchange of the order of integration}
    &=\tfrac{1}{\eta}\int_{\R}|\partial_{x}\cV_{\eta}(t,x)|\big(\rho_{\eta}(t,x)\partial_{1}V(\rho_{\eta}(t,x),\omega_{\eta}(t,x))+\eta \partial_{x}\cV_{\eta}(t,x)\big)\dd x\\
    &\quad-\tfrac{1}{\eta^{2}}\int_{\R} |\partial_{y}\cV_{\eta}(t,y)|\Big(\rho_{\eta}(t,y)\partial_1 V(\rho_{\eta}(t,y),\omega_{\eta}(t,y))+\eta \partial_{y}\cV_{\eta}(t,y)\Big)\int_{-\infty}^{y}\exp\big(\tfrac{x-y}{\eta}\big)\dd x \dd y=0.
\end{align*}
Thus, the map $t\mapsto |\cV_{\eta}(t,\cdot)|_{TV(\R)}$ is nonincreasing. As \(V\in W^{1,\infty}_{\textnormal{loc}}(\R^{2})\) and by \cref{ass:long_time_horizon} on $(\rho_0,\omega_0)$, the quantity \(|V(\rho_{0},\omega_{0})|_{TV(\R)}\) is finite. Therefore we conclude that \cref{eq:bound-on-TV-V} holds.
\end{proof}

\begin{rem}[comments on \cref{theo:total_variation_bound} and, in particular, on assumption \cref{eq:requirement_TV_diminishing}]\label{rem:5.3}
~
\begin{itemize}
\item
The assumption in \cref{eq:requirement_TV_diminishing} may appear rather restrictive and remains implicit, as it still requires knowledge of \((\rho_{\eta},\omega_{\eta} )\). Nevertheless, it can be reformulated in a more explicit manner. Indeed, by virtue of \cref{eq:nonlocal_identity,eq:invariant_velocity}, a sufficient condition for \cref{eq:requirement_TV_diminishing} to hold is
\begin{align}
\partial_{1}V(\rho_{\eta},\omega_\eta)\rho_{\eta}+\eta \partial_{x}\cV_{\eta}&=\partial_{1}V(\rho_{\eta},\omega_{\eta})\rho_{\eta}+\cV_{\eta}-V(\rho_{\eta},\omega_{\eta})\notag\\
&  \leqq ~\partial_{1}V(\rho_{\eta},\omega_{\eta})\rho_{\eta}+V_{\max}-V(\rho_{\eta},\omega_{\eta})  
~{\leqq} ~0\quad \text{ on } (0,T)\times\R\label{eq:sufficent_condition-TV_bounds}
\end{align}
with \(V_{\max}\) as in \cref{def:V-max-min}. Given the available bounds on \(\rho_\eta\) and \(\omega_{\eta}\), condition \cref{eq:sufficent_condition-TV_bounds} is easier to verify in terms of the initial data; see the Examples 1 and 2 here below.

\item In the case of \cref{eq:requirement_TV_diminishing} one can also show that \(\cV_{\eta}\) remains monotone as long as the initial nonlocal velocity is monotone. This could also be used to prove compactness under the more restrictive assumptions on the nonlocal velocity being monotone. However, as such result is rather restrictive, we leave it as a remark.
\end{itemize}
\end{rem}

Below we present several examples of choices of $V$ and of initial data for which \cref{ass:long_time_horizon} 
is satisfied as well as condition \cref{eq:sufficent_condition-TV_bounds}.
\begin{description}
\item[Example 1---Linear velocity] Recalling the notation in \cref{rem:invariant}, namely that \(\overline{\omega}\coloneqq \|\omega_{0}\|_{L^{\infty}(\R)},\ \underline{\omega}\coloneqq \essinf_{y\in\R}\omega_{0}(y)\), we obtain 
when choosing a linear velocity function as in \cref{rem:invariant}, 
\(V(\rho,\omega)=\omega-\alpha \rho\) 
for \(\alpha\in\R_{>0}\) and \((\rho,\omega)\in\R^{2}\), and compatible initial data according to \cref{rem:invariant} \[V_{\max}=\|V(\rho_{0},\omega_{0})\|_{L^{\infty}(\R)}=\esssup_{x\in\R} \big(\omega_{0}(x)-\alpha\rho_{0}(x)\big)\leq \overline{\omega}-\alpha \essinf_{x\in\R} \rho_{0}(x)\] and furthermore by \cref{eq:sufficent_condition-TV_bounds}
\begin{equation}
-\alpha \rho_{\eta}+\overline{\omega}-\alpha \essinf_{x\in\R} \rho_{0}(x)-\omega_{\eta}+\alpha \rho_{\eta}\leqq 0\Longleftarrow \overline{\omega}-\underline{\omega}-\alpha \essinf_{x\in\R}\rho_{0}(x)\leqq 0 \label{eq:0815}
\end{equation}
meaning that whenever \(\overline{\omega}-\underline{\omega}\leq \alpha\essinf_{x\in\R}\rho_{0}(x)\) holds, this is satisfied.

Under the assumptions
\begin{equation}
\underline{\omega}> V_{\max},\qquad \tfrac{\overline{\omega} + V_{\max}}2 \le \underline{\omega}, \label{eq:0816}
\end{equation}
from \cref{eq:rho-min-linear-V} we have
\[  
\alpha\essinf_{x\in\R}\rho_{0}(x)\ge {\underline{\omega}-V_{\max}} > \overline{\omega}-\underline{\omega}\,.
\]
The above condition \cref{eq:0816} is nonempty and guarantees the positive lower bound on the density as well as \cref{eq:sufficent_condition-TV_bounds}, see \cref{graph:linear_velocity}.
\begin{figure}[htbp]
\centering
\begin{tikzpicture}[xscale=2]
    \fill[blue!20] (1.05,2)  -- (1.8,2) -- (3,0) -- (2.25,0);
    \draw[->,thick] (0,0) -- (0,5.5);
    \draw[->,thick] (0,0) -- (3.25,0);
    \draw (3.25,-0.25) node {$\rho$}; 
    \draw (-0.15,5.5) node {$V$};
    \draw[-] (2.25,0) -- (0,3.75);
    \draw[-] (3,0) -- (0,5);
    \draw (-0.15,3.75) node {$\underline\omega$};
    \draw (-0.15,5) node {$\bar\omega$};
    \draw (-0.25,2) node {$V_{\max}$};
    \draw[dashed] (0,2) -- (1.8,2);
    \draw[dashed] (1.05,2) --(1.05,-0.1);
    \draw (1.25,-0.25) node {$\rho_{\inf}$};
    \draw (3,-0.25) node {$\rho_{\sup}$};
   \end{tikzpicture}
\caption{Graph of a region, shaded in violet, in which 
\cref{eq:sufficent_condition-TV_bounds} is satisfied for the linear velocity \(V(\rho,\omega) = \omega-\alpha\rho\), \(\alpha\in\R_{>0}\) as stated in \crefrange{eq:0815}{eq:0816}.
}\label{graph:linear_velocity}
\end{figure}
\item[Example 2---Additively separable nonlinear velocity] A nonlinear velocity function that is additively separable consists of
\(V(\rho,\omega) = \omega - p(\rho)\), where \((\rho,\omega)\in\R^{2}\) and \(p:\R\rightarrow\R\) is a function yet to be defined. To this end, let
\begin{align*}
 \bullet\ &V:\Omega\to \R\text{ with }  \Omega\coloneqq\{(\rho,\omega)\in\R_{\geq0}^{2}:\ \omega\leq \overline{\omega}\}\\
\bullet\ &\forall\,\omega \ge 0\
\exists\ R(\omega)\in\R_{>0}:\  V(R(\omega) ,\omega)=0 \\
\bullet\ & p\in C^{2}(\R):\ p(0) = 0\ \wedge\ p'>0\ \wedge\ p''>0
\end{align*}
for \(\overline{\omega}\coloneqq \|\omega_{0}\|_{L^{\infty}(\R)}\) as above. Under these assumptions, one has 
\begin{equation*}
  V(\rho,\omega) \geq 0 \qquad \forall \, (\rho,\omega)\in \Omega_{\underline{\rho},\overline{\omega}}\coloneqq\{(\rho,\omega):\ \omega\in [\underline{\omega},\overline{\omega}],\
    { \underline{\rho}\le \rho\le R(\omega)}
     \}
\end{equation*}
for $\underline{\rho} \in (0,\bar\rho)$, where $\bar\rho$ is the only value such that $p(\bar \rho) = \overline{\omega}$, that is, $\bar\rho = R(\overline{\omega})$. 
In this way, the set $\Omega_{\underline{\rho},\overline{\omega}}$ is nonempty and has a nonempty interior.
Then, \cref{eq:requirement_TV_diminishing} and, in particular, \cref{eq:sufficent_condition-TV_bounds} boil down to
\begin{align*}     
\partial_{1}V(\rho_{\eta},\omega_{\eta})\rho_{\eta}+V_{\max}-V(\rho_{\eta},\omega_{\eta}) &= - p'(\rho_{\eta})\rho_{\eta} + \overline{\omega} - p(\underline{\rho} )- \omega_{\eta}+ p(\rho_{\eta}) \\
     &\le p(\rho_{\eta}) - p'(\rho_{\eta})\rho_{\eta}+ \left(\overline{\omega} - \underline{\omega} - p(\underline{\rho} )\right)  .
\end{align*}
The above quantity is nonpositive  on $\Omega_{\underline{\rho},\bar\omega}$ whenever
\begin{equation*}
  \overline{\omega} - \underline{\omega} \le \min_{\rho\in [{\underline{\rho}}, \overline{\rho}]}  \left\{ p'(\rho)\rho - p(\rho)\right\} + p(\underline{\rho} ).
\end{equation*}
 Since $p$ is convex, the function \(\R_{\geq0}\ni\rho\mapsto p'(\rho)\rho-p(\rho)\) increases monotonically, and the minimum is reached at \(\underline \rho.\) Therefore, if
\begin{equation}
    \overline{\omega} - \underline{\omega} \le   p'({\underline{\rho}}){\underline{\rho}} \,,
    \label{eq:sufficient_condition-TV_bounds_reformulated}
\end{equation}
then \cref{eq:sufficent_condition-TV_bounds} is satisfied.
If we take, more specifically, \(p(\rho)\coloneqq \alpha\rho^{\gamma}\) for \(\rho\in\R_{\geq0}\), with \(\alpha\in\R_{>0}\) and \(\gamma\in\R_{\ge 1}\), then \cref{eq:sufficient_condition-TV_bounds_reformulated} results in
\begin{equation*}
 \overline{\omega} - \underline{\omega} \le   \alpha \gamma {\underline{\rho}}^\gamma 
\end{equation*}
from which, for \(\gamma=1\), the linear velocity case \cref{eq:0815} is recovered.

Performing the same computations in \cref{rem:invariant}, we obtain
\begin{equation}\label{eq:example_nonlinear}
    \underline{\rho} = \left( \tfrac{\underline{\omega}-V_{\max}}{\alpha} \right)_+^{\frac{1}{\gamma}},
\end{equation}
where $(x)_+ \coloneqq \max\{x,0\}$ for all $x\in\R$.
The quantity in \cref{eq:example_nonlinear} is nonzero under the assumption $\underline{\omega}>V_{\max}$. Moreover, if 
\begin{equation*}
    \underline{\omega}>\tfrac{\bar\omega+\gamma V_{\max}}{\gamma+1},
\end{equation*}
then the condition \cref{eq:sufficent_condition-TV_bounds} is satisfied with $\underline{\rho}$ as in \cref{eq:example_nonlinear}. Notice that when $\gamma=1$, the quantities found in the nonlinear case $V(\rho,\omega)=\omega-p(\rho)$ coincide with those in the linear case; see \cref{eq:0816}.

\begin{figure}[htbp]
\centering
\begin{tikzpicture}
    \fill[blue!20] (2.95,2) -- (3.87,2) -- plot[domain=3.87:5, samples=120,smooth](\x,{5 - 0.2*\x^2}) -- (5,0) -- (4.3,0) -- plot[domain=4.3:2.95, samples=120,smooth](\x,{3.75 - 0.2*\x^2}) -- cycle;
    \draw[->,thick] (0,0) -- (0,5.75);
    \draw[->,thick] (0,0) -- (5.75,0);
    \draw (5.75,-0.25) node {$\rho$}; 
    \draw (-0.25,5.75) node {$V$};
    \draw plot[domain=0:4.32,samples=120,smooth]
(\x,{3.75 - 0.2*\x^2});

\draw plot[domain=0:5,samples=120,smooth]
(\x,{5 - 0.2*\x^2});

    \draw (-0.25,3.75) node {$\underline\omega$};
    \draw (-0.25,5) node {$\bar\omega$};
    \draw (-0.5,2) node {$V_{\max}$};
    \draw[dashed] (0,2) -- (4.0,2);
    \draw[dashed] (2.95,2) -- (2.95,-0.1);
    \draw (3.25,-0.25) node {$\rho_{\min}$};
    \draw (5,-0.25) node {$\rho_{\max}$};    
\end{tikzpicture}
\caption{Graph of a region, shaded in violet, in which \cref{eq:sufficent_condition-TV_bounds} holds 
for the nonlinear velocity $V(\rho,\omega) = \omega-\alpha\rho^\gamma$, with $\alpha\in\R_{>0}$ and $\gamma\in\R_{\ge 1}$.
}\label{graph:nonlinear_velocity}
\end{figure}
\end{description}

\subsection{Compactness and convergence to a weak solution}\label{Sec:4}
So far, we have established spatial \(TV\) bounds of \(\cV_{\eta}\) under additional constraints on the velocity, but we require also equi-integrability in time to obtain compactness in a suitable space. This is established in the following.
\begin{theo}[compactness of \(\cV_\eta\) as defined in \cref{eq:nonlocal_velocity_definition}] \label{theo:compacteness}
Let either the assumptions in
\begin{itemize}
    \item \cref{theo:invariant_region} with the existence of a \(\rho_{\max}\in\R_{>0}\) or
    \item \cref{theo:maximum_principle}
\end{itemize}
together with \cref{eq:requirement_TV_diminishing} hold.
Then, the set (of nonlocal velocities)
\((\cV_\eta)_{\eta>0 
} \subset C\big([0,T]; L^{1}_{\textnormal{loc}}(\R)\big)\) as in \cref{eq:nonlocal_velocity_definition}
is relatively compact in $C\big([0,T];L^{1}_{\textnormal{loc}}(\R)\big),$ that is, \(\forall\, \Omega\subset\R\) open and bounded, it holds
\begin{equation*}
    \big\{\cV_{\eta}\big|_{[0,T]\times\Omega}\in C\big([0,T];L^{1}(\Omega)\big),\,\eta>0 
    \big\} \overset{\textnormal{c}}{\hookrightarrow} C\big([0,T]; L^1(\Omega)\big).
\end{equation*}
\end{theo}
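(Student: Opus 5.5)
We would prove the claim with a vector-valued Arzel\`a--Ascoli (or Aubin--Lions--Simon) compactness criterion in \(C([0,T];L^{1}(\Omega))\). Two things are needed. First, for every fixed \(t\in[0,T]\) the family \(\{\cV_{\eta}(t,\cdot)|_{\Omega}\}_{\eta>0}\) must be relatively compact in \(L^{1}(\Omega)\). Second, the family must be equicontinuous in time with values in \(L^{1}(\Omega)\), uniformly in \(\eta\).

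\emph{Spatial compactness.} We first derive a uniform \(L^{\infty}\) bound on \(\cV_{\eta}\). In either case, \cref{theo:maximum_principle} or \cref{theo:invariant_region} with a finite \(\rho_{\max}\), the pair \((\rho_{\eta},\omega_{\eta})\) stays in the compact set \([0,\rho_{\max}]\times[0,\|\omega_{0}\|_{L^{\infty}(\R)}]\), independently of \(\eta\). Since the exponential kernel has unit mass,
\[
\|\cV_{\eta}\|_{L^{\infty}(\OT)}\leq \|V\|_{L^{\infty}([0,\rho_{\max}]\times[0,\|\omega_{0}\|_{L^{\infty}(\R)}])}.
\]
Combining this with the \(TV\) bound \cref{eq:bound-on-TV-V} of \cref{theo:total_variation_bound}, which is available because \cref{eq:requirement_TV_diminishing} is assumed, Helly's selection theorem gives relative compactness of \(\{\cV_{\eta}(t,\cdot)\}_{\eta}\) in \(L^{1}(\Omega)\) for every \(t\).

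\emph{Time equicontinuity (the main step).} We work with the smooth approximations from \cref{cor:Stability_of_solutions} and pass to the limit at the end. By \cref{eq:entirely_nonlocal}, for \(t_{1}<t_{2}\),
\[
\int_{\Omega}|\cV_{\eta}(t_{2},x)-\cV_{\eta}(t_{1},x)|\dd x\leq \int_{t_{1}}^{t_{2}}\!\!\int_{\Omega}|\cV_{\eta}\partial_{x}\cV_{\eta}|\dd x\dd s+\int_{t_{1}}^{t_{2}}\!\!\int_{\Omega}\big|S_{\eta}(s,x)\big|\dd x\dd s,
\]
where \(S_{\eta}\) denotes the nonlocal source term on the right-hand side of \cref{eq:entirely_nonlocal}.

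The transport term is bounded by \(\|\cV_{\eta}\|_{L^{\infty}}|\cV_{\eta}|_{L^{\infty}(TV)}\,|t_{2}-t_{1}|\).

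For the source term we use the form \(-\tfrac1\eta\int_{x}^{\infty}e^{(x-y)/\eta}\partial_{y}\cV_{\eta}\big(\partial_{1}V\rho_{\eta}+\eta\partial_{y}\cV_{\eta}\big)\dd y\). We integrate in \(x\) over all of \(\R\) and exchange the order of integration; the kernel \(\tfrac1\eta e^{(x-y)/\eta}\) integrates to one. This bounds the spatial integral by
\[
\|\partial_{1}V\|_{L^{\infty}}\rho_{\max}|\cV_{\eta}(s,\cdot)|_{TV(\R)}+\eta\int_{\R}(\partial_{y}\cV_{\eta})^{2}\dd y.
\]
The second piece is controlled via \cref{eq:nonlocal_identity}: \(\eta|\partial_{y}\cV_{\eta}|=|\cV_{\eta}-V(\rho_{\eta},\omega_{\eta})|\leq 2\|V\|_{L^{\infty}}\). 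Hence
\[
\eta\int_{\R}(\partial_{y}\cV_{\eta})^{2}\dd y\leq 2\|V\|_{L^{\infty}}|\cV_{\eta}|_{TV(\R)}.
\]
This is the only place where the singular factor \(1/\eta\) must be absorbed, and it is the step I expect to be the obstacle. The trick is to pair one factor of \(\eta\partial_{y}\cV_{\eta}\), which is bounded in \(L^{\infty}\), with the other factor \(\partial_{y}\cV_{\eta}\), which is bounded in \(L^{1}\) uniformly in \(\eta\).

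Altogether, \(\|\partial_{t}\cV_{\eta}\|_{L^{\infty}((0,T);L^{1}(\R))}\leq C\) with \(C\) independent of \(\eta\). Therefore \(\|\cV_{\eta}(t_{2})-\cV_{\eta}(t_{1})\|_{L^{1}(\Omega)}\leq C|t_{2}-t_{1}|\), uniformly in \(\eta\). These bounds are stable under the mollification limit by \cref{theo:stability}, so they hold for the original solutions.

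Finally, Arzel\`a--Ascoli for functions with values in the Banach space \(L^{1}(\Omega)\) gives the claimed relative compactness in \(C([0,T];L^{1}(\Omega))\). A diagonal argument over an exhaustion of \(\R\) by bounded sets \(\Omega\) then yields compactness in \(C([0,T];L^{1}_{\textnormal{loc}}(\R))\).
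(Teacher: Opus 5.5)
Your proposal is correct and follows the paper's proof essentially step for step. Spatial compactness comes from the uniform \(L^{\infty}\) bound together with the \(\eta\)-uniform \(TV\) bound of \cref{theo:total_variation_bound}. Uniform Lipschitz continuity in time in \(L^{1}\) comes from \cref{eq:entirely_nonlocal}, where you exchange the order of integration and absorb the factor \(\tfrac{1}{\eta}\) via \(|\eta\partial_{x}\cV_{\eta}|\leq 2\|V\|_{L^{\infty}}\) from \cref{eq:nonlocal_identity}; the conclusion then follows from an Arzel\`a--Ascoli/Simon-type criterion in \(C([0,T];L^{1}(\Omega))\), exactly as in the paper.
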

\begin{proof}
For $t\in[0,T]$ define
\begin{equation*}
    F(t)\coloneqq\{\cV_\eta(t,\cdot)\in L^1_\textnormal{loc}(\R),\,\eta>0 
    \}.
\end{equation*} 
Applying the result in \cite[Theorem 14.39]{leoni}, we can see the set $F(t)$ is compact in $L^1_{\textnormal{loc}}(\R)$ because of the spatial total variation bound in \cref{theo:total_variation_bound} uniform with respect to $\eta\in\R_{>0}$ and the uniform bounds. Now, we prove that the set $(\cV_{\eta})_{\eta\in\R_{>0}}$ is uniformly equicontinuous in time; we can assume that we have sufficiently regular solutions, thanks to \cref{cor:Stability_of_solutions}. Using the dynamics of \(\cV_{\eta}\) in \cref{eq:entirely_nonlocal}, we thus estimate for $t_1, t_2 \in [0,T]$, 
such that $t_{1}\leq t_{2}$,
\begin{align*}
    \big\|\cV_{\eta}(t_{1},\cdot)-\cV_{\eta}(t_{2},\cdot) \big\|_{L^{1}(\R)}&\leq\int_{t_{1}}^{t_{2}}\int_{\R}|\partial_{t}\cV_{\eta}(t,x)|\dd x\dd t\\
    &\overset{\text{\cref{eq:entirely_nonlocal}}}{\leq}\int_{t_{1}}^{t_{2}}\int_{\R}\cV_{\eta}(t,x)\big|\partial_{x}\cV_{\eta}(t,x)\big|\dd x\dd t\\
    &\quad+\int_{t_{1}}^{t_{2}}\int_{\R}\tfrac{1}{\eta} \int_x^\infty \exp\big(\tfrac{x-y}{\eta}\big) \big|\partial_1 V(\rho_{\eta}(t,y),\omega_{\eta}(t,y))\big|\big|\partial_{y}\cV_{\eta}(t,y)\big|\rho_{\eta}(t,y)\dd y\dd x\dd t\\
    &\quad +\int_{t_{1}}^{t_{2}}\int_{\R}\int_{x}^{\infty}\exp\big(\tfrac{x-y}{\eta}\big)\big(\partial_{y}\cV_{\eta}(t,y)\big)^{2}\dd y\dd x\dd t.
    \intertext{By changing the order of integration and multiplying by \(\tfrac{\eta}{\eta}\), we have}
    \big\|\cV_{\eta}(t_{1},\cdot)-\cV_{\eta}(t_{2},\cdot) \big\|_{L^{1}(\R)}&\leq V_{\max}(t_{2}-t_{1})|\cV_{\eta}|_{L^{\infty}((0,T);TV(\R))}\\
    &\quad +\|\rho_{\eta}\|_{L^{\infty}((0,T);L^{\infty}(\R))}\|\partial_{1}V\|_{L^{\infty}}\int_{t_{1}}^{t_{2}}\int_{\R}\big|\partial_{y}\cV_{\eta}(t,y)\big|\tfrac{1}{\eta}\int_{-\infty}^{y}\exp\big(\tfrac{x-y}{\eta}\big)\dd x\dd y\dd t\\
    &\quad +\int_{t_{1}}^{t_{2}}\int_{\R}\big|\eta\partial_{y}\cV_{\eta}(t,y)\big|\big|\partial_{y}\cV_{\eta}(t,y)\big|\tfrac{1}{\eta}\int_{-\infty}^{y}\exp\big(\tfrac{x-y}{\eta}\big)\dd x\dd y\dd t.
    \intertext{Then, because \(|\eta\partial_{y}\cV_{\eta}(t,y)|\leq 2V_{\max}\), according to \cref{eq:nonlocal_identity}, we obtain}
    \big\|\cV_{\eta}(t_{1},\cdot)-\cV_{\eta}(t_{2},\cdot) \big\|_{L^{1}(\R)}&\leq V_{\max}(t_{2}-t_{1})|\cV_{\eta}|_{L^{\infty}((0,T);TV(\R))} +2V_{\max}(t_{2}-t_{1})|\cV_{\eta}|_{L^{\infty}((0,T);TV(\R))}\\
    &\quad+\|\rho_{\eta}\|_{L^{\infty}((0,T);L^{\infty}(\R))}\|\partial_{1}V\|_{L^{\infty}}(t_{2}-t_{1})|\cV_{\eta}|_{L^{\infty}((0,T);TV(\R))}
\end{align*}
with
\begin{align*}
    V_{\max}&\coloneqq \|V\|_{L^{\infty}((0,\rho_{\max})\times(0,\|\omega_{0}\|_{L^{\infty}(\R)}))},
    \\
    \|\partial_{1}V\|_{L^{\infty}}&\coloneqq \|\partial_{1}V\|_{L^{\infty}((0,\rho_{\max})\times(0,\|\omega_{0}\|_{L^{\infty}(\R)}))},
\end{align*}
where $\rho_{\max}$ is as follows:
\begin{enumerate}
\item in the first case of \cref{ass:long_time_horizon}, \(\rho_{\max}\in\R_{>0}\), so that \(V(\rho_{\max},\cdot)\equiv 0\);
\item in the second case of \cref{ass:long_time_horizon}, \(\rho_{\max}\) is as in \cref{eq:rho_max_invariant_domain} or \cref{eq:upper_bound_solution}.
\end{enumerate}
The quantities \(V_{\max}\) and \(\|\partial_{1}V\|_{L^{\infty}}\) are uniformly bounded in \(\eta\) thanks to the maximum principle in \cref{theo:maximum_principle} 
and the invariant region in \cref{theo:invariant_region}, so that \( \big\|\cV_{\eta}(t_{1},\cdot)-\cV_{\eta}(t_{2},\cdot) \big\|_{L^{1}(\R)}\) 
converges uniformly in \(\eta\) to zero for \(t_{1}\rightarrow t_{2}\), resulting in the equi-continuity in time. 
Thanks to \cite[Section 6, Theorem 3]{Simon1986}, we can infer the claimed compactness from these estimates.
\end{proof}

In the following, we investigate the convergence to solutions of the corresponding system of \textbf{local} conservation laws 
when \(\eta\rightarrow 0\). However, we cannot converge to the ``local'' version of \cref{defi:weak_solution} because in the limit, 
we would have a transport equation in \(w\), which would require higher regularity. However, both the nonlocal and local systems can also 
be written in conservative form (see, e.g., \cite[Eq.3.1, 3.2]{chiarello2020micro}), avoiding the problem in \cref{defi:weak_solution}.

\begin{lem}[weak solution of the nonlocal dynamics in conservative form]\label{lem:weak_solution_nonlocal_conservative}
    Let \((\rho,\omega)\in \big(C\big([0,T];L^{1}_{\text{\loc}}(\R)\big)\cap L^{\infty}((0,T);L^{\infty}(\R))\big)^{2}\) be a weak solution as in \cref{defi:weak_solution}.
    Then, the solution in the ``new'' variables \[
    (\rho,\rho\omega)\in \big(C([0,T];L^{1}_{\textnormal{loc}}(\R))\big)^{2}
    \] is a weak solution of the following problem:
    \begin{equation}
    \begin{aligned}
            \partial_{t}\rho
    +\partial_{x}\big(\cV[\rho,\omega](t,x)\rho\big)&=0,&& (t,x)\in\OT,\\
    \partial_{t}\big(\rho\omega)+\partial_{x}\big(\cV[\rho,\omega](t,x)\rho 
    \omega\big)&=0, && (t,x)\in\OT,\\
    \rho(0,x)&=\rho_{0}(x),&& x\in\R,\\
    \rho(0,x)\omega(0,x)&=\rho_{0}(x)\omega_{0}(x),&& x\in\R,\\
     \cV[\rho,\omega](t,x)&
 \coloneqq \int_{\R}\keta(x-y)V(\rho(t,y),\omega(t,y))\dd y, &&(t,x)\in \OT,
    \end{aligned}
    \label{eq:GARZ_strong_conservative}
    \end{equation}
    and is unique. Furthermore, for a weak solution of \cref{eq:GARZ_strong_conservative}, it must hold
        that \((\rho,\omega)\in C([0,T];L^{1}_{\text{\loc}}(\R))^{2}\) and that for all \(\phi_{1},\phi_{2}\in W^{1, \infty}_{\textnormal{c}}((-42,T)\times\R)\), the following is satisfied:
    \begin{align}
        \iint_{\OT} \rho(t,x)\big(\partial_{t}\phi_{1}(t,x)+\partial_{x}\phi_{1}(t,x)\cV[\rho,\omega](t,x)\big)\dd x\dd t+\int_{\R}\rho_{0}(x)\phi_{1}(0,x)\dd x&=0,\label{eq:conservation_weak_1}\\
        \iint_{\OT} \rho(t,x)\omega(t,x)\big(\partial_{t}\phi_{2}(t,x)+\partial_{x}\phi_{2}(t,x)\cV[\rho,\omega](t,x)\big)\dd x\dd t+\int_{\R}\rho_{0}(x)\omega_{0}(x)\phi_{2}(0,x)\dd x&=0.\label{eq:conservation_weak_2}
    \end{align}
    The reverse is also true: If there exists a weak solution in the sense of \crefrange{eq:conservation_weak_1}{eq:conservation_weak_2}, it is also a weak solution in the sense of \cref{defi:weak_solution}.

\end{lem}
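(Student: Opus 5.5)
The plan is to work throughout with the characteristic representation from \cref{theo:existence_uniqueness_small_time}, extended to any finite horizon by \cref{theo:maximum_principle} or \cref{theo:invariant_region}. The density equation \cref{eq:conservation_weak_1} is literally \cref{eq:weak_1}, so it is inherited directly, and only the equation for \(\rho\omega\) needs work. By \cref{eq:solution},
\[
\rho(t,x)\omega(t,x)=\rho_{0}\big(\xi_{\cV^{*}}(t,x;0)\big)\,\omega_{0}\big(\xi_{\cV^{*}}(t,x;0)\big)\,\partial_{2}\xi_{\cV^{*}}(t,x;0).
\]
This is exactly the characteristic formula for a conservation law with velocity \(\cV^{*}\) and initial datum \(\rho_{0}\omega_{0}\in L^{\infty}\cap TV(\R)\). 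I will therefore repeat step 1 of the proof of \cref{theo:existence_uniqueness_small_time} verbatim with \(\rho_{0}\) replaced by \(\rho_{0}\omega_{0}\). For \(t\) fixed, substitute \(y=\xi_{\cV^{*}}(t,x;0)\), identify the integrand as \(\tfrac{\dd}{\dd t}\phi_{2}(t,\xi_{\cV^{*}}(0,y;t))\), and apply the fundamental theorem of calculus; this yields \cref{eq:conservation_weak_2}. The regularity \(\rho\omega\in C([0,T];L^{1}_{\textnormal{loc}}(\R))\) follows from the \(L^{\infty}\) bounds together with the continuity of \(\rho\) and \(\omega\), since \(|\rho\omega(t_1)-\rho\omega(t_2)|\le \|\omega\|_\infty|\rho(t_1)-\rho(t_2)|+\|\rho\|_\infty|\omega(t_1)-\omega(t_2)|\).

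For the converse and for uniqueness, let \((\rho,\omega)\) satisfy \crefrange{eq:conservation_weak_1}{eq:conservation_weak_2}, and let \(\cV\coloneqq\cV[\rho,\omega]\), which is Lipschitz in space by \cref{lem:TV_partial_2_cV}. I will use the duality argument from step 3 of the proof of \cref{theo:existence_uniqueness_small_time}, testing with the backward-transported functions \(\psi(t,x)=\psi_{0}(\xi_{\cV}(t,x;T))\), which satisfy \(\partial_t\psi+\cV\partial_x\psi=0\). Applied to \cref{eq:conservation_weak_1}, this gives \(\rho=\rho_{0}(\xi_{\cV}(\cdot,\cdot;0))\partial_{2}\xi_{\cV}(\cdot,\cdot;0)\). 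Applied to \cref{eq:conservation_weak_2}, it gives \(\rho\omega=(\rho_{0}\omega_{0})(\xi_{\cV}(\cdot,\cdot;0))\partial_{2}\xi_{\cV}(\cdot,\cdot;0)\).

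The step I expect to be the main obstacle is recovering \(\omega\) from the second identity. Where \(\rho>0\), dividing gives \(\omega=\omega_{0}(\xi_{\cV}(\cdot,\cdot;0))\). On the vacuum set \(\{\rho=0\}\), which equals \(\{\rho_0\circ\xi_{\cV}=0\}\) because \(\partial_2\xi_{\cV}>0\), \(\omega\) is not determined by the conservative system. There I will define or identify \(\omega\) as \(\omega_{0}\circ\xi_{\cV}\), noting that it enters \(\cV\) only through \(V(\rho,\omega)\). Uniqueness must then be understood in the variables \((\rho,\rho\omega)\), or under a positive lower bound on \(\rho\) as in \cref{eq:rho-lower-bound} or \cref{eq:positive_lower_bound_density}. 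I would state this caveat explicitly; with it, the vacuum region is empty.

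With these representations in hand, inserting them into \cref{eq:weak_3} shows that \(\cV=\cF[\cV]\), exactly as in step 3 of that proof. The fixed-point uniqueness of \cref{theo:fixed_point_mapping_uniqueness}, applied on successive short time intervals, then gives \(\cV=\cV^{*}\). Consequently \((\rho,\omega)\) is given by \cref{eq:solution}, so by step 1 of the proof of \cref{theo:existence_uniqueness_small_time} it satisfies \cref{eq:weak_2} as well as \cref{eq:weak_1}; this proves the converse and the uniqueness claim.
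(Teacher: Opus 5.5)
Your forward direction is the paper's proof: insert \cref{eq:solution}, substitute $y=\xi_{\cV^{*}}(t,x;0)$ so that $\rho\omega\dd x=\rho_{0}\omega_{0}\dd y$, recognise $\tfrac{\dd}{\dd t}\phi_{2}(t,\xi_{\cV^{*}}(0,y;t))$, and apply the fundamental theorem of calculus. For the converse, the paper only says it ``follows by some integration by parts and the fundamental lemma of the calculus of variations.'' Your duality argument with backward characteristics (step~3 of the proof of \cref{theo:existence_uniqueness_small_time}) is an explicit version of this. The obstruction you isolate is real, and the paper does not address it. On either route one only obtains, formally, $\rho\,(\partial_{t}\omega+\cV\partial_{x}\omega)=0$, so $\omega$ is undetermined on $\{\rho=0\}$. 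In fact the converse and the uniqueness claim are false as stated when $\rho_{0}$ has vacuum. Take $\rho_{0}\equiv0$ and $\omega_{0}\equiv1$. The pair $(0,1+t)$ satisfies \cref{eq:conservation_weak_1,eq:conservation_weak_2} trivially. But $\cV$ is then constant in $x$, so the left-hand side of \cref{eq:weak_2} equals $-\iint_{\OT}\phi_{2}\dd x\dd t\neq0$.

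What you should correct is your first proposed remedy. In the converse direction you cannot ``define'' $\omega\coloneqq\omega_{0}\circ\xi_{\cV}$ on the vacuum set. There $\omega$ is given, and it enters $\cV$ through $V(0,\omega)$, which genuinely depends on $\omega$ (e.g.\ $V(0,\omega)=\omega$ under \cref{ass:K_V_2}). For the same reason, uniqueness fails even in the variables $(\rho,\rho\omega)$. Equation \cref{eq:conservation_weak_2} sees $\omega_{0}$ only through $\rho_{0}\omega_{0}$. So for any $\tilde\omega_{0}$ with $\tilde\omega_{0}=\omega_{0}$ on $\{\rho_{0}>0\}$, the solution of \cref{defi:weak_solution} with data $(\rho_{0},\tilde\omega_{0})$ is, by the forward direction, a conservative solution for the same conservative data. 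For a concrete case, take:
\begin{itemize}
\item $V(\rho,\omega)=\omega-\rho$;
\item $\kappa(x)=\exp(x)$ for $x<0$ and $0$ otherwise;
\item $\rho_{0}=\tfrac12$ on $(0,1)$ and $0$ elsewhere;
\item $\omega_{0}\equiv1$, and $\tilde\omega_{0}=1$ on $(0,1)$, $\tilde\omega_{0}=2$ elsewhere.
\end{itemize}
The kernel averages downstream across the vacuum, so the initial nonlocal velocities on $(0,1)$ are $\tfrac12+\tfrac12\exp(x-1)$ and $\tfrac12+\tfrac32\exp(x-1)$. The two densities are therefore transported differently and already differ for small $t>0$.

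So keep only the positivity hypothesis. It is enough to assume $\rho_{0}>0$ a.e.: since $\xi_{\cV}(t,\cdot;0)$ is bi-Lipschitz, $\{\rho(t,\cdot)=0\}=\{\rho_{0}\circ\xi_{\cV}(t,\cdot;0)=0\}$ is then a null set. A uniform lower bound as in \cref{eq:rho-lower-bound} is more than needed. Under this hypothesis your argument proves the corrected statement. The converse already follows by running step~1 of that proof with the velocity $\cV[\rho,\omega]$, so the fixed point is needed only for uniqueness. There, exactly as in the paper's step~3, one tacitly needs the competitor's velocity to lie in $\Omega(T)$. That requires $TV$ bounds which the lemma's solution class does not include.
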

\begin{proof}
    This follows along the lines of the proof of \cref{theo:existence_uniqueness_small_time}. We know by \cref{eq:solution} that the solution can be written as
    \[
\rho(t,x)=\rho_{0}(\xi_{\cV^{*}}(t,x;0))\partial_{2}\xi_{\cV^{*}}(t,x;0),\ \omega(t,x)=\omega_{0}(\xi_{\cV^{*}}(t,x;0)),\ (t,x)\in (0,T)\times \R
    \]
    with \(\cV^{*}\) being the unique solution of the fixed-point problem in \cref{theo:fixed_point_mapping_uniqueness}.
    Plugging this into \cref{eq:conservation_weak_2} (\cref{eq:conservation_weak_1} holds as it is identical to the weak formulation of \(\rho\) in \cref{defi:weak_solution}) yields, for \(\phi_{2}\in W^{1,\infty}_{\textnormal{c}}((-42,T)\times\R)\),
    \begin{align*}
        &\iint_{\OT} \rho(t,x)\omega(t,x)\big(\partial_{t}\phi_{2}(t,x)+\partial_{x}\phi_{2}(t,x)\cV[\rho,\omega](t,x)\big)\dd x\dd t\\
        &=\iint_{\OT} \rho_{0}(\xi_{\cV^{*}}(t,x;0))\partial_{2}\xi_{\cV^{*}}(t,x;0)\omega_{0}(\xi_{\cV^{*}}(t,x;0))\big(\partial_{t}\phi_{2}(t,x)+\partial_{x}\phi_{2}(t,x)\cV[\rho,\omega](t,x)\big)\dd x\dd t.
        \intertext{Substituting \(\xi_{\cV^{*}}(t,x;0)=y\) results in}
        &\iint_{\OT} \rho(t,x)\omega(t,x)\big(\partial_{t}\phi_{2}(t,x)+\partial_{x}\phi_{2}(t,x)\cV[\rho,\omega](t,x)\big)\dd x\dd t\\
        &=\iint_{\OT} \rho_{0}(y)\omega_{0}(y)\tfrac{\dd}{\dd t} \phi(t,\xi_{\cV^{*}}(0,y;t))\dd y\dd t=\int_{\R}\rho_{0}(y)\omega_{0}(y)\int_{0}^{T}\tfrac{\dd}{\dd t} \phi_{2}(t,\xi_{\cV^{*}}(0,y;t))\dd t\dd y.
        \intertext{Using properties of the characteristics in \cref{lem:properties_characteristics}, as well as the test function being zero at \(t=T\), yields}
        &\iint_{\OT} \rho(t,x)\omega(t,x)\big(\partial_{t}\phi_{2}(t,x)+\partial_{x}\phi_{2}(t,x)\cV[\rho,\omega](t,x)\big)\dd x\dd t\\
        &=-\int_{\R}\rho_{0}(y)\omega_{0}(y)\phi_{2}(t,\xi_{\cV^{*}}(0,y;0))\dd y=-\int_{\R}\rho_{0}(x)\omega_{0}(x)\phi_{2}(0,x)\dd x,
    \end{align*}
    as postulated. The converse implication follows by some integration by parts and the fundamental lemma of the calculus of variations.
\end{proof}
Next, we state the definition of a weak solution for the corresponding system of local conservation laws.

\begin{defi}[weak solution---local conservation laws]\label{defi:weak_solution_local}
Let the nonlocal dynamics in \cref{lem:weak_solution_nonlocal_conservative} be given. Then, the system
    \begin{equation}
    \begin{aligned}
            \partial_{t}\rho
    +\partial_{x}\big(V(\rho,\omega)\rho\big)&=0,&& (t,x)\in\OT,\\
    \partial_{t}\big(\rho\omega)+\partial_{x}\big(V(\rho,\omega)\rho \omega\big)&=0, && (t,x)\in\OT,\\
        \left(\rho(0,x),\omega(0,x)\right) &=\left(\rho_{0}(x), \omega_{0}(x)\right), && x\in\R  
       \end{aligned}
    \label{eq:GARZ__local_strong-1}
   \end{equation}
is referred to as the corresponding \textbf{local} system of conservation laws.
We define \((\rho,\omega)\in C\big([0,T];L^{1}_{\text{\loc}}(\R)\big)^{2}\cap L^{\infty}(\OT)^{2}\) to be a weak solution to the Cauchy problem \cref{eq:GARZ__local_strong-1} if,    
    \(\forall\phi_{1},\phi_{2}\in W^{1,\infty}_{\textnormal{c}}((-42,T)\times\R)\), it holds that
    \begin{align}
        \iint_{\OT} \rho(t,x)\big(\partial_{t}\phi_{1}(t,x)+\partial_{x}\phi_{1}(t,x)V(\rho(t,x),\omega(t,x))\big)\dd x\dd t+\int_{\R}\rho_{0}(x)\phi_{1}(0,x)\dd x&=0\label{eq:conservation_local_weak_1}\\
        \iint_{\OT} \rho(t,x)\omega(t,x)\big(\partial_{t}\phi_{2}(t,x)+\partial_{x}\phi_{2}(t,x)V(\rho(t,x),\omega(t,x))\big)\dd x\dd t+\int_{\R}\rho_{0}(x)\omega_{0}(x)\phi_{2}(0,x)\dd x&=0.\label{eq:conservation_local_weak_2}
    \end{align}
\end{defi}
If we assume \((\rho_{0},\omega_{0})\in TV(\R)^{2}\) and we are given suitable assumptions on the function $V$, the problem \cref{eq:GARZ__local_strong-1} admits weak solutions in the sense of \crefrange{eq:conservation_local_weak_1}{eq:conservation_local_weak_2}, and moreover, entropy solutions can be studied following the classical theory as in \cite{Bressan2000,Dafermos2016}; we refer to \cref{sec:entropy_uniqueness} for further discussion.

The following \cref{cor:convergence_weak_solution} shows that solutions to the nonlocal system converge, up to a subsequence, in \(C\big([0,T];L^{1}_{\textnormal{loc}}(\R)\big)^2\) to a weak solution of the corresponding 
local system when \(\eta\) tends to zero, given the assumptions of this section, particularly those in \cref{theo:total_variation_bound}. It also proves the existence of weak solutions for the local system of conservation laws
and thus provides an alternative proof of the existence of weak solutions for the local system of conservation laws \cref{eq:GARZ__local_strong-1} under the assumptions of \cref{theo:total_variation_bound}.

Because the compactness we have derived so far was on the velocity function, we also require an assumption on the velocity implying that 
we can infer the strong convergence of \(\rho_{\eta}\) in \(L^{1}\). 

\begin{theo}[convergence to a weak local solution]\label{cor:convergence_weak_solution}
Let the assumptions of \cref{theo:total_variation_bound} be given. Assume in addition that the velocity \(V\) satisfies the following: if 
\(V(\rho_{\eta},\omega_{\eta})\overset{\eta\rightarrow 0}{\longrightarrow} \cV^{*}\in C\big([0,T];L^{1}_{\text{\loc}}(\R)\big)\) and \(\omega_{\eta}\overset{\eta\rightarrow 0}{\longrightarrow} \omega^{*}\in C\big([0,T];L^{1}_{\textnormal{loc}}(\R)\big)\) on a subsequence, 
we can infer \(\rho^{*}\in C\big([0,T];L^{1}_{\textnormal{loc}}(\R)\big)\) so that
\[
\rho_{\eta}\rightarrow \rho^{*} \text{ in } C\big([0,T];L^{1}_{\textnormal{loc}}(\R)\big)\ \text{ and, as such, }\ \cV^{*}\equiv V(\rho^{*},\omega^{*})
\]
along a suitable subsequence.

Then, \((\rho_{\eta},\omega_{\eta})\in C\big([0,T];L^{1}_{\textnormal{loc}}(\R)\big)^{2}\) converge on a subsequence to a weak solution \((\rho^{*},\omega^{*})\) 
of the corresponding local system of conservation laws in \(C\big([0,T];L^{1}_{\textnormal{loc}}(\R)\big)\); 
that is, $(\rho^{*},\omega^{*})\in C([0,T];L^{1}_{\textnormal{loc}}(\R))^{2}$ satisfy \cref{defi:weak_solution_local} and
\[
\lim_{k\rightarrow \infty}\|\rho_{\eta_{k}}-\rho^{*}\|_{C([0,T];L^{1}(\Omega))}+\|\omega_{\eta_{k}}-\omega^{*}\|_{C([0,T];L^{1}(\Omega))}=0\ \quad\forall\ \text{open and bounded } \Omega\subset\R.
\]
In addition, the following \(TV\) bound on \(\cV^{*}\) holds:
\begin{equation}
    |\cV^{*}(t,\cdot)|_{TV(\R)}\leq |V(\rho_{0},\omega_{0})|_{TV(\R)},\ t\in[0,T].\label{eq:TV_bound_V_star}
\end{equation}
Finally, \begin{itemize}
    \item if \cref{ass:long_time_horizon}(a) holds, we obtain with the constants defined therein,
\[
0\leq \rho^{*}\leq \rho_{\max},\quad \underline{\omega}\leq\omega^{*}\leq \overline{\omega} \text{ on } (0,T)\times\R,
\]
and for \(\partial_{1}V\leqq 0,\ \partial_{2}V\geqq 0\) and \(\omega_{0}\) monotonically decreasing, we have the lower bound
\[
\essinf_{y\in\R}\rho_{0}(y)\leq \rho^{*} \text{ on } (0,T)\times\R;
\]
\item if \cref{ass:long_time_horizon}(b) holds, with the constants defined in \cref{theo:invariant_region}, we have that for the limit \(\cV^{*}\), \cref{eq:invariant_velocity} is satisfied, along with, depending on the additional assumptions on \(V\), either \cref{eq:rho_max_invariant_domain} on \(\rho^{*}\) or \cref{eq:positive_lower_bound_density} and \cref{eq:upper_bound_solution}.
\end{itemize} 
\end{theo}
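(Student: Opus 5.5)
The argument is a standard compactness/passage-to-the-limit scheme, organized around the nonlocal velocity $\cV_\eta$ rather than $\rho_\eta$ directly.

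\textbf{Step 1: a common subsequence.} By \cref{theo:compacteness}, $(\cV_\eta)_\eta$ is relatively compact in $C([0,T];L^1_{\textnormal{loc}}(\R))$. For $\omega_\eta$ I would repeat the argument of \cref{theo:compacteness}:
\begin{itemize}
\item in space, $|\omega_\eta(t,\cdot)|_{TV(\R)}= |\omega_0|_{TV(\R)}$, as in \cref{theo:maximum_principle};
\item $\omega_\eta$ is bounded in $L^\infty$, uniformly in $\eta$;
\item in time, using the transport equation, $\|\omega_\eta(t_1)-\omega_\eta(t_2)\|_{L^1}\le V_{\max}|\omega_0|_{TV(\R)}|t_1-t_2|$, since $|\cV_\eta|\le V_{\max}$ uniformly.
\end{itemize}
Simon's theorem then gives a subsequence $\eta_k$ along which $\cV_{\eta_k}\to\cV^*$ and $\omega_{\eta_k}\to\omega^*$ in $C([0,T];L^1_{\textnormal{loc}}(\R))$.

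\textbf{Step 2: identifying the limit of $V(\rho_\eta,\omega_\eta)$.} The identity \cref{eq:nonlocal_identity} gives
\[
V(\rho_\eta,\omega_\eta)-\cV_\eta=-\eta\partial_x\cV_\eta .
\]
Hence $\|V(\rho_\eta,\omega_\eta)(t)-\cV_\eta(t)\|_{L^1(\R)}\le \eta\,|\cV_\eta(t)|_{TV(\R)}\le \eta\,|V(\rho_0,\omega_0)|_{TV(\R)}$ by \cref{theo:total_variation_bound}. So $V(\rho_{\eta_k},\omega_{\eta_k})\to\cV^*$ in $C([0,T];L^1_{\textnormal{loc}})$ as well. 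This is exactly the hypothesis of the additional assumption on $V$, which yields $\rho_{\eta_k}\to\rho^*$ (after a further subsequence) and $\cV^*=V(\rho^*,\omega^*)$.

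\textbf{Step 3: passing to the limit in the weak formulation.} I would use the conservative form \crefrange{eq:conservation_weak_1}{eq:conservation_weak_2} from \cref{lem:weak_solution_nonlocal_conservative}. The test functions have compact support, and all quantities are uniformly bounded in $L^\infty$ by the maximum principle or invariant region. Therefore
\begin{itemize}
\item $\rho_\eta\cV_\eta\to\rho^*V(\rho^*,\omega^*)$ in $L^1_{\textnormal{loc}}$, and
\item $\rho_\eta\omega_\eta\cV_\eta\to\rho^*\omega^*V(\rho^*,\omega^*)$ in $L^1_{\textnormal{loc}}$,
\end{itemize}
because each is a product of strongly convergent, uniformly bounded sequences. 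This yields \crefrange{eq:conservation_local_weak_1}{eq:conservation_local_weak_2}. Continuity in time of the limit is inherited from convergence in $C([0,T];L^1_{\textnormal{loc}})$.

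\textbf{Step 4: the bounds on the limit.} The bound \cref{eq:TV_bound_V_star} follows from lower semicontinuity of the total variation under $L^1_{\textnormal{loc}}$ convergence, applied pointwise in $t$. The pointwise bounds come from a.e.\ convergence along a subsequence combined with the $\eta$-uniform bounds:
\begin{itemize}
\item under \cref{ass:long_time_horizon}(a), from \cref{theo:maximum_principle} (upper and lower bounds on $\rho$, bounds on $\omega$, and the lower bound \cref{eq:rho-lower-bound});
\item under \cref{ass:long_time_horizon}(b), from \cref{theo:invariant_region}; here \cref{eq:invariant_velocity} passes to $V(\rho^*,\omega^*)=\cV^*$ directly.
\end{itemize}

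\textbf{Main obstacle.} The delicate step is the strong convergence of $\rho_\eta$. Compactness only controls $\cV_\eta$ and $\omega_\eta$; recovering $\rho$ needs the structural assumption on $V$ (for instance, strict monotonicity in $\rho$ with invertibility on the invariant region). Step 2's estimate $\|V(\rho_\eta,\omega_\eta)-\cV_\eta\|_{L^1}=O(\eta)$ is what makes this assumption applicable.
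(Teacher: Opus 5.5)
Your proposal is correct and follows essentially the same route as the paper: compactness of $\cV_\eta$ from \cref{theo:compacteness}, compactness of $\omega_\eta$ from its uniform $TV$ and time-continuity bounds, the structural assumption on $V$ to recover strong convergence of $\rho_\eta$, passage to the limit in the conservative weak formulation, and the bounds on the limit via lower semicontinuity of $TV$ together with the $\eta$-uniform $L^{\infty}$/invariant-region estimates. Your Step 2 uses \cref{eq:nonlocal_identity} to get $\|V(\rho_\eta,\omega_\eta)(t,\cdot)-\cV_\eta(t,\cdot)\|_{L^{1}(\R)}\le \eta\,|V(\rho_0,\omega_0)|_{TV(\R)}$, which makes explicit why $V(\rho_\eta,\omega_\eta)$ and $\cV_\eta$ have the same limit $\cV^{*}$; the paper's proof uses this link only implicitly when it invokes the assumption on $V$.
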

\begin{proof}
We apply \cref{theo:compacteness} to the nonlocal velocity, and we know that for any open and bounded \(\Omega\subset\R\), 
there exists a subsequence \((\eta_{k})_{k\in\N}\subset\R\) and \(\cV^{*}\in C([0,T];L^{1}(\Omega))\) such that
\[
\lim_{k\rightarrow \infty}\|\cV_{\eta_{k}}-\cV^{*}\|_{C([0,T];L^{1}(\Omega))}=0.
\]
Furthermore, we know that the limit satisfies the same \(TV\) bound as the convergent sequence; that is, the bound in \cref{eq:bound-on-TV-V} is satisfied, resulting in \cref{eq:TV_bound_V_star}.
Moreover, we have
\[|\omega_{\eta}(t,\cdot)|_{TV(\R)}\leq |\omega_{0}|_{TV(\R)}\]
uniformly in \(\eta\) as it is a transport equation, so that following the reasoning in the proof of \cref{theo:compacteness},
there exists \(\omega^{*}\in C([0,T];L^{1}(\Omega))\) with
\begin{equation}
\lim_{n\rightarrow\infty} \|\omega_{\eta_{k_{n}}}-\omega^{*}\|_{C([0,T];L^{1}(\Omega))}=0,
\label{eq:convergence_weak_solution_1}
\end{equation}
where \(\big(\eta_{k_{n}}\big)_{n\in\N}\) is a subsequence of \((\eta_{k})_{k\in\N}\). 
Then, by the assumption on the velocity, we can infer \(\rho^{*}\in C([0,T];L^{1}(\Omega))\), so that
\begin{equation}
\lim_{n\rightarrow\infty} \|\rho_{\eta_{k_{n}}}-\rho^{*}\|_{C([0,T];L^{1}(\Omega))}=0\label{eq:convergence_weak_solution_2}
\end{equation}
and thus
\begin{equation}
\lim_{n\rightarrow\infty} \|V(\rho_{\eta_{k_{n}}},\omega_{\eta_{k_{n}}})-V(\rho^{*},\omega^{*})\|_{C([0,T];L^{1}(\Omega))}=0=\lim_{n\rightarrow\infty}\|V(\rho_{\eta_{k_{n}}},\omega_{\eta_{k_{n}}})- \cV^{*}\|_{C([0,T];L^{1}(\Omega))}.  
\label{eq:convergence_weak_solution_3}
\end{equation}
This is a direct consequence of the strong convergence of \(\rho_{\eta}\) and \(\omega_{\eta}\), their essential boundedness from 
\cref{theo:maximum_principle}, and the local Lipschitz-continuity of \(V\) in both arguments as postulated in \cref{ass:small_time_existence_uniqueness}.
The stated bounds on \(\cV^{*}\), \(\rho^{*}\), and \(\omega^{*}\) are immediate consequences of the results in \cref{theo:maximum_principle} and \cref{theo:invariant_region}.

\smallskip Now, let us look into the weak formulation for \(\rho_{\eta}\) in \cref{eq:conservation_weak_1} first. Refining the notation, 
it holds for all \(\eta\in\R_{>0}\) and \(\phi_{1}\in W^{1,\infty}_{\textnormal{c}}((-42,T)\times\R)\) that
    \[   \iint_{\OT} \rho_{\eta}(t,x)\big(\partial_{t}\phi_{1}(t,x)+\partial_{x}\phi_{1}(t,x)\cV[\rho_{\eta},\omega_{\eta}](t,x)\big)\dd x\dd t+\int_{\R}\rho_{0}(x)\phi_{1}(0,x)\dd x =0.
    \]
On subsequences in the limit \(\eta\rightarrow 0\) and recalling that \(\phi\) is compactly supported, we obtain, thanks to \crefrange{eq:convergence_weak_solution_1}{eq:convergence_weak_solution_3} and the uniform maximum principle in \cref{theo:maximum_principle},
    \[   \iint_{\OT} \rho^{*}(t,x)\big(\partial_{t}\phi_{1}(t,x)+\partial_{x}\phi_{1}(t,x)V(\rho^{*}(t,x),\omega^{*}(t,x))\big)\dd x\dd t+\int_{\R}\rho_{0}(x)\phi_{1}(0,x)\dd x =0,
    \]
which is indeed the first part of the definition of a weak solution for the corresponding local system of conservation laws in \cref{eq:conservation_local_weak_1}.

Next, we move to the second part, that is, considering
 \(\omega_{\eta}\cdot \rho_{\eta}\) in \cref{eq:conservation_weak_2}. From this, it holds for all \(\eta\in\R_{>0}\) and \(\phi_{2}\in W^{1,\infty}_{\textnormal{c}}((-42,T)\times\R)\) that
\[
\iint_{\OT} \rho_{\eta}(t,x)\omega_{\eta}(t,x)\big(\partial_{t}\phi_{2}(t,x)+\partial_{x}\phi_{2}(t,x)\cV[\rho_{\eta},\omega_{\eta}](t,x)\big)\dd x\dd t+\int_{\R}\rho_{0}(x)\omega_{0}(x)\phi_{2}(0,x)\dd x=0.
\]
On subsequences in the limit \(\eta\rightarrow 0\) and recalling that \(\phi\) is compactly supported, we obtain, thanks to \crefrange{eq:convergence_weak_solution_1}{eq:convergence_weak_solution_3} and the uniform maximum principle in \cref{theo:maximum_principle}, that
 \[
\iint_{\OT} \rho^{*}(t,x)\omega^{*}(t,x)\big(\partial_{t}\phi_{2}(t,x)+\partial_{x}\phi_{2}(t,x)V(\rho^{*}(t,x),\omega^{*}(t,x))\big)\dd x\dd t+\int_{\R}\rho_{0}(x)\omega_{0}(x)\phi_{2}(0,x)\dd x=0,
\]
which is the second part of the definition of a weak solution to the corresponding system of local conservation laws 
\cref{eq:conservation_local_weak_2}.
\end{proof}
\begin{rem}[the implicit assumption in \cref{cor:convergence_weak_solution} on the velocity \(V\)] \label{rem:4.8}
In \cref{cor:convergence_weak_solution}, we have assumed that we can infer from the strong convergence 
of \(V(\rho_{\eta},\omega_{\eta})\rightarrow V^{*}\in C([0,T];L^{1}(\Omega))\), where \(\Omega\subset\R\) is open and bounded, 
and the strong convergence of \(\omega_{\eta}\rightarrow \omega^{*}\in C([0,T];L^{1}(\Omega))\), the strong convergence of \(\rho_{\eta}\) 
and can identify as such the limit \(V^{*}\equiv V(\rho^{*},\omega^{*})\). For this, it suffices to postulate the strict monotonicity 
of \(x\mapsto V(x,y)\ \forall y\in\big[\underline{\omega},\overline{\omega}\big]\) (see also \cite[Prop. B.1]{Amadori-Gosse-Guerra_2004}). However, this is not especially restrictive, as we already assume from a modeling perspective that \(\partial_{1}V\leqq 0\), which is in line with the traffic modeling law that the higher the density, the lower the velocity should be. Even more, recalling \cref{theo:invariant_region}, we require in \cref{item:1:invariant} only that \(\partial_{1}V<0\). In \cref{item:2:invariant}, \(\partial_{1}V<0\) is already assumed. Concerning \cref{theo:maximum_principle}, we would indeed need to postulate in addition that \(\partial_{1}V<0\). Concluding, the additional assumption does not restrict the applicability. 

In the previous proof, a weak or weak* convergence of \(\rho_{\eta}\) in \(L^{p},\ p\in[1,\infty]\), would also suffice, but because of the nonlinearity of the flux, we would not be able to infer convergence to the ``correct'' velocity function, that is, the velocity \(V(\rho^{*},\omega^{*})\), where \(\rho^{*}\) denotes the corresponding weak or weak* limit.
\end{rem}

\begin{rem}[more general kernels]
It might be possible to generalize these results to kernels as suggested in \cite{Marconi2023} or \cite{keimer42}; however, as our aim was to obtain a \textbf{general} convergence result in the system's case, we are not focusing on that in this contribution.
\end{rem}

\begin{rem}[constant Lagrangian marker]
Let us assume that \(\omega_{0}\equiv \textnormal{const.}\) Then, according to \cref{theo:existence_uniqueness_small_time} and in particular \cref{eq:solution}, \(\omega(t,x)\equiv \textnormal{const.}\) on any time horizon, so that the dynamics in \(\rho_{\eta}\) reduce to
\[
\partial_{t}\rho_{\eta}+ \partial_{x}\big(\rho_{\eta}\cV_{\eta}(t,x)\big)=0
\]
with \(\cV_{\eta}(t,x)=\tfrac{1}{\eta}\int_{x}^{\infty}\exp(\tfrac{x-y}{\eta})V(\rho_{\eta}(t,y),\textnormal{const.})\dd y,\ (t,x)\in\OT.\) For this problem class, we could still invoke the previous results, but we also know from \cite{friedrich2022conservation}, which considers a scalar nonlocal (in the velocity) conservation law, that there exists a solution on any finite time horizon as long as \(V'\leqq 0\) (this is an abuse of notation, but what is meant is that \(V\) then depends only on the first argument since only the first argument remains, so \(V'\) makes sense), and that in this specific case, a maximum principle holds. This is almost in line with the result in \cref{theo:maximum_principle} except that one requires a maximal density \(\rho_{\max}\), so that \(V(\rho_{\max},\textnormal{const.})=0\), and no sign on \(V'\) is imposed.

The fact that in the given case of \(\omega_{0}\equiv \textnormal{const.}\), a minimum principle and with that a classical maximum and minimum principle holds, is due to the second part of \cref{theo:maximum_principle} under the assumption that \(\omega_{0}\) is decreasing, which, in particular, is satisfied for a constant datum.

Considering the \(TV\) bounds---uniform in \(\eta\in\R_{>0}\)---of \(\rho_{\eta}\), we require \cref{eq:requirement_TV_diminishing}, which, in this specific case of \(\omega_{0}\equiv \textnormal{const.}\), becomes
\begin{equation}
\partial_{1}V(\rho_{\eta},\textnormal{const.})\rho_{\eta}+\eta\partial_{2}\cV_{\eta}\leqq 0.\label{eq:nonlocal_velocity_TV}
\end{equation}
This can be found in \cite[Proposition 4.3, (4.4)]{friedrich2022conservation} in the form
\[
V'(x)x-V(x)+V\Big(\essinf_{x\in\R}\rho_{0}(x)\Big)\leqq 0\ \forall x\in \big[\essinf_{x\in\R}\rho_{0}(x),\|\rho_{0}\|_{L^{\infty}(\R)}\big],
\]
which is---when using the nonlocal identity in \cref{eq:nonlocal_identity} for \cref{eq:nonlocal_velocity_TV}---identical to \cref{eq:nonlocal_velocity_TV}.
Thus, one can state that the present work contains, (almost) as a special case, the convergence for \(\eta\rightarrow 0\) to a local weak solution established in \cite{friedrich2022conservation}.
\end{rem}

\section{Entropy admissibility and uniqueness of the limit}\label{sec:entropy_uniqueness}
In this section, we address the entropy admissibility and uniqueness of the limit $(\rho,\omega)$
established in \cref{cor:convergence_weak_solution} and \cref{lem:weak_solution_nonlocal_conservative}.
We consider the local system in conserved variables,
\begin{equation}
\begin{aligned}
    \partial_t\rho+\partial_x\left(\rho V\left(\rho, \omega
        \right)\right)&=0, &&(t,x)\in (0,T)\times\R,\\
        \partial_t q+\partial_x\left(q V\left(\rho,\omega
        \right)\right)&=0, &&(t,x)\in (0,T)\times\R,\\
    q&=\rho\omega, && \text{ on } (0,T)\times\R,\\
    \big(\rho(0,\cdot),\omega(0,\cdot)\big)&\equiv (\rho_{0},\omega_{0}) &&\text{ on } \R.
    \end{aligned}
    \label{eq:local_system}
\end{equation}
We start by summarizing the assumptions on the velocity field needed in this section.
\begin{ass}[assumptions on the velocity field] 
\label{ass:K_V_2}
We assume that
\begin{align}\label{S5-V1}
& V\in C^{2}(\R^{2}_{\geq0},\R)\\[1mm]
& \forall\,\omega \in\R_{\geq0}:\qquad ~~~V(0,\omega)=\omega\,,\quad\, \exists\ R(\omega) \in \R_{\geq0}: \ V(R(\omega) ,\omega)=0 \label{S5-V2}
\\[1mm] &
\forall\,(\rho,\omega)\in \R_{\geq0}^{2}:\qquad 
\left(\rho V (\rho,\omega)\right)_{\rho\rho} <0\,,\qquad V_\omega(\rho,\omega)\ge 0\,.
\label{S5-V3}
\end{align}
\end{ass}
\begin{rem}[about the assumptions on \(V\)]\label{rem:5.1}~
\begin{itemize}
\item[(a)] The concavity condition in \cref{S5-V3}\((1)\) implies that $\rho\mapsto V (\rho,\omega)$ is strictly decreasing with $\partial_{1}V<0$; see, for instance, \cite[Lemma 1]{FanHertySeibold2014}.
In  particular, it follows that
\begin{equation*}
    \forall\, \omega\in\R_{>0}:\  V (\rho,\omega)\begin{cases} >0& \mbox{ if }\ 0\le \rho< R(\omega)\\
    <0 & \mbox{ if }\ \rho>R(\omega)\end{cases}
\end{equation*}
and that
\begin{equation}\label{V-positive}
  \forall\, \overline{\omega}\in\R_{>0}:\   V(\rho,\omega) \geq 0 \qquad \forall \, (\rho,\omega)\in \Omega_{\overline{\omega}}\coloneqq\big\{(\rho,\omega)\in\R_{\geq0}^{2}:\ \omega\in [0,\overline{\omega}]\ \wedge\ 0\le \rho\le R(\omega)\big\}.
\end{equation}

\item[(b)] We assume $\partial_{2}V\ge 0$ instead of $\partial_{2}V> 0$ as in \cite{FanHertySeibold2014}. However, notice that, given \cref{S5-V2}, it holds that $\partial_{2}V (0,\omega)> 0$ for every $\omega\in \R_{\ge0}$.

\item[(c)] Notice that conditions \cref{S5-V3} imply that \(R\) is differentiable and that $R'(\omega) = - \tfrac {\partial_{2}V(R(\omega),\omega)}{\partial_{1}V(R(\omega),\omega)}\ge 0$ for \(\omega\in\R_{\geq0}\).
This is satisfied given \cref{ass:long_time_horizon}(a) and (b2), 
and in particular by the classical Aw--Rascle--Zhang system, in which $V(\rho,\omega)=\omega - p(\rho)$ \(\forall (\rho,\omega)\in\R_{\geq0}^{2}\) with $p'>0$.

\item[(d)] If the initial data $(\rho_0,\omega_0)$
are such that $V_{\min}\in\R_{\geq 0}$ as in \cref{def:V-max-min},
then \cref{ass:long_time_horizon}(b2) holds true. 
Indeed,
the inequality 
\begin{equation*}
    V_{\min} > \lim_{\rho\to\infty} V\big(\rho,\|\omega_0\|_{L^\infty(\R)}\big),
\end{equation*}
holds because the limit on the right-hand side is negative.
Furthermore, for every $\omega>0$, the map $\rho\mapsto V(\rho,\omega)$ is strictly decreasing and vanishes at $\rho=R(\omega)$. Therefore, $ \lim_{\rho\to\infty} V(\rho,\omega)$ is either negative and finite or $-\infty$.
\end{itemize}
\end{rem}
\begin{lem}[strict hyperbolicity and Riemann invariants for \cref{eq:local_system}]\label{lem:local_system_strict_hyperbolicity}
    Let \cref{ass:K_V_2} hold. 
    Then, the system of (local) conservation laws \cref{eq:local_system} is strictly hyperbolic on \(\R_{>0}\times\R_{> 0}\) with Riemann invariants $\R_{>0}\times\R_{>0}\ni (\rho,\omega)\mapsto V\left(\rho,\omega\right),\ \omega$.
\end{lem}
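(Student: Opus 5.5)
We work with the conserved variables \(U=(\rho,q)\), where \(q=\rho\omega\), on the state domain \(\rho>0\). There we have \(\omega=q/\rho\). The flux is \(F(\rho,q)=\big(\rho V(\rho,q/\rho),\,qV(\rho,q/\rho)\big)\). Hyperbolicity and Riemann invariants are invariant under a smooth change of variables, so it is convenient to compute in the primitive variables \((\rho,\omega)\). For smooth solutions with \(\rho>0\), system \cref{eq:local_system} is equivalent to the quasilinear system
\begin{align*}
\partial_t\rho+\big(V+\rho\,\partial_1V\big)\partial_x\rho+\rho\,\partial_2V\,\partial_x\omega&=0,\\
\partial_t\omega+V\,\partial_x\omega&=0.
\end{align*}
The second equation follows by expanding \(\partial_t(\rho\omega)+\partial_x(\rho\omega V)=0\) and subtracting \(\omega\) times the first equation. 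The coefficient matrix
\[
A(\rho,\omega)=\begin{pmatrix} V+\rho\partial_1V & \rho\partial_2V\\ 0 & V\end{pmatrix}
\]
is upper triangular. Its eigenvalues are therefore \(\lambda_1=V+\rho\partial_1V\) and \(\lambda_2=V\). The Jacobian \(DF(U)\) is similar to \(A\) via the Jacobian of the map \((\rho,\omega)\mapsto(\rho,\rho\omega)\), whose determinant is \(\rho>0\). Hence \(DF(U)\) has the same eigenvalues.

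For strict hyperbolicity it remains to show \(\lambda_1\neq\lambda_2\), that is, \(\rho\,\partial_1V(\rho,\omega)\neq 0\) for \(\rho>0\). By \cref{rem:5.1}(a), the concavity \((\rho V)_{\rho\rho}<0\) in \cref{S5-V3} gives \(\partial_1V<0\). We would verify this directly rather than only cite it. The function \(g(\rho)=\partial_\rho(\rho V)=V+\rho\partial_1V\) is strictly decreasing, so
\[
\rho\,\partial_1V(\rho,\omega)=g(\rho)-V(\rho,\omega)=\int_0^\rho\big(g(\rho)-g(s)\big)\tfrac{\dd s}{\rho}\,\rho<0,
\]
using \(V(\rho,\omega)=\tfrac1\rho\int_0^\rho g(s)\,\dd s\). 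Thus \(\lambda_1<\lambda_2\) everywhere on \(\R_{>0}\times\R_{>0}\), and the eigenvalues are real and distinct. This is the only substantive step. The main obstacle is handling the possibly degenerate behavior of \(V\) near \(\rho=0\), which is why the domain is restricted to \(\rho>0\). On that domain the change of variables is a diffeomorphism, and the averaging identity above is clean.

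For the Riemann invariants, we compute the right eigenvectors of \(A\). For \(\lambda_1\) the eigenvector is \(r_1=(1,0)^{\top}\), and for \(\lambda_2\) it is \(r_2=(\rho\partial_2V,\,-\rho\partial_1V)^{\top}\), from \((\lambda_2-\lambda_1)r_{2,1}=\rho\partial_2V\,r_{2,2}\). A function \(w\) is a \(k\)-Riemann invariant if \(\nabla w\cdot r_k=0\). The function \(\omega\) satisfies \(\nabla\omega\cdot r_1=0\), so it is the Riemann invariant associated with the first family. The function \(V\) satisfies
\[
\nabla V\cdot r_2=\partial_1V\,\rho\,\partial_2V-\partial_2V\,\rho\,\partial_1V=0,
\]
so it is the Riemann invariant associated with the second family. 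Equivalently, along smooth solutions, \(\omega\) is transported with speed \(\lambda_2=V\) (the second equation), and one checks that \(\partial_tV+\lambda_1\partial_xV=0\).

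Finally, the map \((\rho,\omega)\mapsto(V,\omega)\) has Jacobian determinant \(\partial_1V<0\), so it is a local diffeomorphism and the two invariants are independent. This completes the plan.
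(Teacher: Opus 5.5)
Your proof is correct. It reaches the same spectrum, eigenvectors and Riemann invariants as the paper, but by a somewhat different route. Under $D\Phi$, your $r_1=(1,0)^\top$ and $r_2$ coincide, up to scalar factors, with the paper's $\br_1=(\rho,q)^\top$ and $\br_2$.

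The paper works directly with $D\bF$ in the conserved variables $(\rho,q)$ and takes $\partial_1V<0$ from \cref{rem:5.1}(a), i.e.\ from Fan--Herty--Seibold. It then computes $\nabla\lambda_1\cdot\br_1=\partial_\rho^2(\rho V)<0$ and $\nabla\lambda_2\cdot\br_2=0$. Since $\lambda_2=V$, the fact that $V$ is the $2$-Riemann invariant comes for free from linear degeneracy of the second field; $\omega$ is then checked by hand.

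You instead pass to primitive variables, where the matrix is upper triangular. This makes the eigenvalues and eigenvectors immediate and reduces both Riemann-invariant checks to one-line computations. You also prove $\partial_1V<0$ directly from the concavity rather than citing it, and you add the transport interpretation and the independence of $(V,\omega)$.

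What the paper's computation supplies and yours does not is the genuine nonlinearity of the first family and linear degeneracy of the second. This is not part of the statement, but it is what the paper relies on later: the Temple structure and the uniqueness argument in \cref{theo:uniqueness-of-entropy-weak-solution}. With your eigenvectors it is an equally short addition: $\nabla_{(\rho,\omega)}\lambda_1\cdot r_1=\partial_\rho(V+\rho\partial_1V)=\partial_\rho^2(\rho V)<0$, and $\nabla V\cdot r_2=0$ you already computed.

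One small slip: your averaging display carries an extra factor $\rho$. The correct identity is $\rho\,\partial_1V=g(\rho)-V=\tfrac{1}{\rho}\int_0^\rho\big(g(\rho)-g(s)\big)\dd s$. This does not affect the sign conclusion.
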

\begin{proof}
    Define the flux of the system of conservation laws in \cref{eq:local_system} as follows:
    \begin{equation}
        \bF:\begin{cases}
                \R_{>0}\times\R_{>0}&\rightarrow\R^{2},\\
                (\rho,q)&\mapsto 
        \begin{pmatrix}
            \rho \,V\left(\rho,\tfrac{q}{\rho}\right)\\
            q\, V\left(\rho,\tfrac{q}{\rho}\right)
        \end{pmatrix},
        \end{cases}
        \label{defi:flux}
    \end{equation}
    with \(V\) as in \cref{ass:K_V_2}.
    Then, we compute the Jacobian matrix and obtain
    \begin{equation*}
        D\bF(\rho,q) =
        \begin{pmatrix}
            V\left(\rho,\tfrac{q}{\rho}\right) + \rho \tfrac{d}{d\rho}V\left(\rho,\tfrac{q}{\rho}\right) & \rho\tfrac{\dd}{\dd q} V\left(\rho,\tfrac{q}{\rho}\right)\\
            q\tfrac{\dd}{\dd\rho} V\left(\rho,\tfrac{q}{\rho}\right) & V\left(\rho,\tfrac{q}{\rho}\right) + q\tfrac{\dd}{\dd q} V\left(\rho,\tfrac{q}{\rho}\right) 
        \end{pmatrix},\quad (\rho,q)\in\R_{>0}\times\R_{\geq0}.
    \end{equation*}
    The eigenvalues of \(D\bF(\rho,q)\) can be computed for \((\rho,q)\in\R_{>0}\times\R_{>0}\) as
    \begin{equation*}
        \lambda_1(\rho,q) = V\left(\rho,\tfrac{q}{\rho}\right) + \rho\partial_1 V\left(\rho,\tfrac{q}{\rho}\right),\qquad\lambda_2 = V\left(\rho,\tfrac{q}{\rho}\right),
    \end{equation*}
    and the corresponding eigenvectors are
    \begin{equation*}
        \br_1(\rho,q)\coloneqq (\rho,q)^{\top},\qquad \br_2(\rho,q) \coloneqq \left( -\tfrac{\dd}{\dd q} V\left(\rho,\tfrac{q}{\rho}\right), \tfrac{\dd}{\dd\rho}V\left( \rho,\tfrac{q}{\rho}\right) \right)^{\top}.
    \end{equation*}
    Given $\rho\in\R_{>0}$ and \(\partial_{1}V<0\), we obtain
    \begin{equation*}
        \lambda_1(\rho,q) < \lambda_2(\rho,q), 
    \end{equation*}
    from which the strict hyperbolicity follows.
    Moreover,  thanks to \cref{S5-V3}, it holds for \((\rho,q)\in\R_{>0}\times\R_{\geq0}\) that
    \begin{equation}\label{eq:char-fields}
        \nabla\lambda_1(\rho,q)\circ \br_1 = \partial^2_\rho\left(\rho V\left(\rho,\omega\right)\right)\big|_{\omega=\frac{q}{\rho}} <0,\qquad \nabla\lambda_2(\rho,q)\circ \br_2=0,
    \end{equation}
    and so the first characteristic field is genuinely 
    nonlinear, whereas the second one is linearly degenerate. 
    Thanks to the equality in \cref{eq:char-fields}, we have that $v_2(\rho,\omega)=V(\rho,\omega)$ is the \(2\)-Riemann invariant, and by straightforward computations, the \(1\)-Riemann invariant is $v_1(\rho,\omega)=\omega$.
\end{proof}
\begin{rem}[Temple system]
  The system of conservation laws in \cref{eq:local_system} is a Temple system on \(\R_{>0}\times\R_{>0}\) \cite{Temple01}.
\end{rem}
Next, we provide the definition of an entropy--entropy flux pair for the system in \cref{eq:local_system} and the corresponding definition of entropy weak solutions. We then prove the existence of a strictly convex entropy for \cref{eq:local_system}.
\begin{defi}[entropy--entropy flux for \cref{eq:local_system}]\label{defi:entropy-entropy flux}
The functions $\alpha$, $\beta \in C^2\big(\R_{>0}^{2};\R\big)$
  are called an entropy--entropy flux pair for the system in \cref{eq:local_system}
  if $\alpha$ is convex and it holds that
    \begin{equation*}
        \nabla\alpha(\rho,q)^{\top}D\bF(\rho,q)=\nabla\beta(\rho,q)^{\top}\quad \forall (\rho,q)\in\R_{>0}\times\R_{>0},
    \end{equation*}
    where \(\bF\) is the flux associated with \cref{eq:local_system} and defined in the proof of \cref{lem:local_system_strict_hyperbolicity}, namely in \cref{defi:flux}.
    \end{defi}
\begin{defi}[entropy weak solution]\label{defi:entropy-weak-soltion}
    Let $(\rho,\omega)$ be a weak solution of the Cauchy problem \cref{eq:local_system} in the sense of \cref{defi:weak_solution_local}. Then, we call $(\rho,q=\rho\omega)$ an entropy weak solution to \cref{eq:local_system} if the entropy inequality
    \begin{equation}\label{eq:entropy inequality}
        \partial_t \alpha(\rho,q) + \partial_x \beta(\rho,q) \leqq 0
    \end{equation}
    holds in the sense of distributions on $(0,T)\times\R$ for every entropy--entropy flux pair $(\alpha,\beta)$ defined as in \cref{defi:entropy-entropy flux}.
\end{defi}
\begin{lem}[a strictly convex entropy--entropy flux pair for \cref{eq:local_system}]\label{lem:convex_entropy}
Assume \cref{ass:K_V_2}, 
and let $0< \rho_{\min} < \rho_{\max}$ and $0< q_{\min} < q_{\max}$ be fixed constant values. 
Then, the system in \cref{eq:local_system}
admits an entropy--entropy flux pair $(\alpha,\beta)$ with $\alpha$ 
as constructed in the proof, satisfying the convexity condition
\begin{equation}\label{eq:convexity_entropy}
    \alpha(\rho,q)\geq\alpha(\tilde \rho,\tilde q)+\nabla\alpha(\tilde \rho,\tilde q)\circ
(\rho-\tilde \rho,q-\tilde q)+\tfrac{c_0}{2}\|(\rho-\tilde\rho,q-\tilde q)\|^2,
\end{equation}
for some $c_0\in\R_{>0}$ and for every $(\rho,q)$, $(\tilde\rho,\tilde q)\in[\rho_{\min},\rho_{\max}]\times [q_{\min},q_{\max}]$.
\end{lem}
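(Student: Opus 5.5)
The plan is to build $\alpha$ as the sum of two entropies. The first is a degenerate but explicit family that controls every direction except $\br_1$. The second is an entropy, defined by a linear Goursat problem in Riemann invariant coordinates, that is strictly convex along $\br_1$. Adding them with a large weight on the first then gives \cref{eq:convexity_entropy} on the compact rectangle $K\coloneqq[\rho_{\min},\rho_{\max}]\times[q_{\min},q_{\max}]$. On $K$ we have $\omega=q/\rho\in[q_{\min}/\rho_{\max},q_{\max}/\rho_{\min}]\subset\R_{>0}$, so all derivatives of $V$ up to order two are bounded there, as are $\partial_1V<0$ and $(\rho V)_{\rho\rho}<0$.

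\textbf{Step 1 (explicit family).} For any $h\in C^2(\R_{>0})$, set $\alpha_h(\rho,q)\coloneqq\rho\, h(q/\rho)$ and $\beta_h\coloneqq \rho\,h(q/\rho)\,V(\rho,q/\rho)$. Since $\omega$ is transported with speed $V$ and $\rho$ is conserved with that same speed, one checks directly that $\nabla\alpha_h^{\top}D\bF=\nabla\beta_h^{\top}$ with $\bF$ as in \cref{defi:flux}. Its Hessian is the perspective Hessian
\[
D^2\alpha_h(\rho,q)=\tfrac{h''(q/\rho)}{\rho^{3}}\begin{pmatrix} q^2 & -\rho q\\ -\rho q & \rho^2\end{pmatrix},
\]
which is positive semidefinite if $h''>0$. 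Its kernel is spanned exactly by $\br_1=(\rho,q)^{\top}$, and on the orthogonal complement it is bounded below by $c\,\inf h''$ on $K$.

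\textbf{Step 2 (entropy convex along $\br_1$).} By \cref{lem:local_system_strict_hyperbolicity}, the map $(\rho,\omega)\mapsto(w,v)\coloneqq(\omega,V(\rho,\omega))$ is a $C^2$ diffeomorphism near the image of $K$, because $\partial_1V<0$. In these coordinates the entropy equation becomes the standard $2\times2$ linear hyperbolic system $\partial_w\beta=\lambda_2\partial_w\alpha$, $\partial_v\beta=\lambda_1\partial_v\alpha$. Equivalently, it is a second-order linear equation for $\alpha$ with principal part $\partial_{w}\partial_{v}$ and smooth coefficients, using $\lambda_1<\lambda_2$. I would solve the Goursat problem for it on a rectangle in the $(w,v)$ plane containing the image of $K$. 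On the line $w=\bar w$ I would prescribe $\theta(\cdot,\bar w)$ as a function of $v$, equivalently of $\rho$ along $\omega=\bar w$, namely the $\br_1$-integral curve, and I would choose it so that $\frac{\dd^2}{\dd s^2}\theta$ along that curve equals $1$. On $v=\bar v$ I would prescribe arbitrary $C^2$ data. This yields a $C^2$ pair $(\theta,\beta_\theta)$.

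\textbf{Step 3 (convexity along $\br_1$ everywhere on $K$).} This is the expected main obstacle: positivity of $D^2\theta(\br_1,\br_1)$ on all of $K$, not only on the initial line. I would first differentiate the entropy equation along $\br_1$, which is possible because the $1$-field is genuinely nonlinear by \cref{eq:char-fields}. This gives a transport-type equation in $w$ for $m\coloneqq D^2\theta(\br_1,\br_1)$ with bounded coefficients. Gronwall's inequality on the compact $w$-interval then keeps $m\ge c_1>0$, after possibly also adding $M\alpha_h$ to absorb lower-order terms. If this fails, the fallback is to use the Temple structure (\cref{lem:local_system_strict_hyperbolicity}) and the known explicit entropies of Temple systems, of the form $\Phi(w)+\Psi(v)$ multiplied by integrating factors, and to check $m>0$ by direct computation.

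\textbf{Step 4 (conclusion).} Set $\alpha\coloneqq\theta+M\alpha_h$ with $h(\omega)=\omega^2$, so $h''=2$, and $M$ large. The quadratic form $D^2\alpha(\xi,\xi)$ is then bounded below by $c_1|\xi_{\parallel}|^2+ (2Mc-C)|\xi_\perp|^2 - C|\xi_\parallel||\xi_\perp|$, where $\xi_\parallel$ and $\xi_\perp$ are the components along $\br_1$ and orthogonal to it, and $C$ bounds $\|D^2\theta\|$ on $K$. A Schur complement or Young inequality argument then gives $D^2\alpha\ge c_0\,\mathrm{Id}$ on $K$ for $M$ large. Since $K$ is convex, Taylor's formula with integral remainder yields \cref{eq:convexity_entropy}. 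The entropy flux is $\beta\coloneqq\beta_\theta+M\beta_h$.
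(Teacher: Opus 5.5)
Your architecture is the paper's. The paper also takes $\alpha=\rho\,h(q/\rho)+\theta_0$ with $h(y)=\bar C y^2$ and $\bar C$ large, and its check that $\det\bH[\alpha]>0$ and $\partial_q^2\alpha>0$ on the rectangle is your Step 4 written in coordinates. The genuine difference is how you produce the entropy that is strictly convex along $\br_1$.

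The paper writes it down explicitly as $\theta_0(\rho,q)=\rho\int_\rho^{\bar\rho}V(z,q/\rho)\,z^{-2}\dd z$. For any $\theta$ one has $D^2\theta(\br_1,\br_1)=\rho^2\partial_\rho^2\theta$ at fixed $\omega=q/\rho$, so $D^2\theta_0(\br_1,\br_1)=-\rho\,\partial_1V>0$ follows in one line. This is essentially your Temple-type fallback, and it makes your Steps 2--3 unnecessary. It also buys two things:
\begin{itemize}
\item a $C^2$ entropy defined on all of $\R_{>0}^2$ under $V\in C^2$;
\item the identity $\rho\partial_\rho\alpha+q\partial_q\alpha-\alpha=-V$, which the proof of \cref{theo:singular_limit_problem} relies on.
\end{itemize}
A Goursat-built $\theta$ has neither property in general.

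Your Step 3 does go through, but for a reason you did not state, and your proposed patch could not have helped. In $(\rho,\omega)$ variables the entropy equation reads $\partial_\rho(\theta_\omega/\rho)=k\,\theta_{\rho\rho}$ with $k=\partial_2V/(\rho\,\partial_1V)$. Using $\partial_\omega\theta_{\rho\rho}=\partial_\rho^2\theta_\omega$, one finds that $m=\rho^2\theta_{\rho\rho}$ satisfies $(\partial_\omega-\rho k\,\partial_\rho)m=\rho\,(\partial_\rho k)\,m$. This is a \emph{closed, homogeneous} linear ODE along the level sets of $V$, i.e.\ along $v=$ const in your coordinates. Hence the sign of $m$ on the line $w=\bar w$ propagates, and the data on $v=\bar v$ are irrelevant to it: changing them only adds terms $\rho\,a(q/\rho)$. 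Adding $M\alpha_h$ can never absorb anything in the equation for $m$, since $D^2\alpha_h(\br_1,\br_1)\equiv0$.

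Two technical points remain on your route.
\begin{itemize}
\item A characteristic rectangle containing the image of $K$ may leave the image of $\{\rho>0\}$. For $V=\omega-\rho$ with a wide $\omega$-range, the smallest such rectangle contains points with $\rho<0$. You must either place the data lines carefully or extend $V$.
\item With only $V\in C^2$, the Goursat coefficients involve $\partial_w\lambda_1$, which is merely continuous. The claimed $C^2$ regularity of $\theta$ therefore needs an extra argument.
\end{itemize}
In return, your approach is more structural. Any entropy that is strictly convex in $\rho$ on one line $\omega=\bar\omega$ is $\br_1$-convex wherever that line is reached along level sets of $V$. This would carry over to Temple-type systems that lack a closed-form entropy.
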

\begin{proof}
By straightforward computations, we can verify that the following pair $(\alpha, \beta)$ is an entropy--entropy flux pair for \cref{eq:local_system}:
\begin{align} \label{def:alpha}
\alpha(\rho,q) & =
 \rho h\left(  \tfrac{q}{\rho}\right)+ \rho\int_{\rho}^{\bar\rho}   V 
\left(z,\tfrac{q}{\rho}\right)\tfrac{1}{z^2}\dd z,\\
\beta(\rho,q) 
    & = -\tfrac{V^2}{2}+V\rho\left( h\left( \tfrac{q}{\rho}\right)+\int_{\rho}^{\bar\rho} V\left(z,\tfrac{q}{\rho}\right)\tfrac{1}{z^2}\dd z \right), \label{def:beta}
\end{align}
for a specific
$h\in C^2(\R;\R)$ deduced in the following, with $\bar \rho\in[\rho_{\min},\rho_{\max}]$ fixed.

We claim that $h$ can be chosen so that \cref{eq:convexity_entropy} holds.
\smallskip
First, we write the Hessian matrix $\bH[\alpha](\rho,q)$ of $\alpha$ for \((\rho,q)\in \R_{>0}\times\R_{>0}\):
\begin{equation*} \bH[\alpha](\rho, q) = 
    \begin{bmatrix}
        \partial_\rho^2\alpha & \partial_\rho\partial_q\alpha\\
        \partial_\rho\partial_q\alpha & \partial_q^2 \alpha
    \end{bmatrix},
\end{equation*}
with
\begin{align}
    \partial_\rho\alpha(\rho,q) & =h\left( \tfrac{q}{\rho} \right)-h'\left(\tfrac{q}{\rho}\right)\tfrac{q}{\rho}+\int_\rho^{\bar\rho}V\left(z,\tfrac{q}{\rho}\right)\tfrac{1}{z^2}\dd z-\tfrac{q}{\rho}\int_\rho^{\bar\rho}\partial_2V\left( z,\tfrac{q}{\rho}\right)\tfrac{1}{z^2}\dd z-\tfrac{1}{\rho}V\left(\rho,\tfrac{q}{\rho}\right),\nonumber\\
    \partial_q\alpha(\rho,q) & = h'\left( \tfrac{q}{\rho} \right)+\int_{\rho}^{\bar\rho}\partial_2V\left(z,\tfrac{q}{\rho} \right)\tfrac{1}{z^2}\dd z,\nonumber\\
    \partial_\rho^2\alpha(\rho,q) & =h''\left(\tfrac{q}{\rho}\right)\tfrac{q^2}{\rho^3}+2\tfrac{q}{\rho^3}\partial_2V\left(\rho,\tfrac{q}{\rho}\right)+\tfrac{q^2}{\rho^3}\int_{\rho}^{\bar\rho}\partial_2^2V\left(z,\tfrac{q}{\rho}\right)\tfrac{1}{z^2}\dd z-\tfrac{1}{\rho}\partial_1V\left(\rho,\tfrac{q}{\rho}\right),\label{eq:partial-derivatives-entropy}\\
    \partial_\rho\partial_q\alpha(\rho, q) & = -h''\left( \tfrac{q}{\rho}\right)\tfrac{q}{\rho^2}-\partial_2V\left( \rho,\tfrac{q}{\rho}\right)\tfrac{1}{\rho^2}-\tfrac{q}{\rho^2}\int_\rho^{\bar\rho}\partial_2^2V\left( z,\tfrac{q}{\rho}\right)\tfrac{1}{z^2}\dd z,\nonumber\\
    \partial_q^2\alpha(\rho, q) & = h''\left( \tfrac{q}{\rho} \right)\tfrac{1}{\rho}+\tfrac{1}{\rho}\int_\rho^{\bar\rho}\partial_2^2V\left( z,\tfrac{q}{\rho}\right)\tfrac{1}{z^2}\dd z.\nonumber
\end{align}

Recalling that $\partial_1V<0$ thanks to \cref{rem:5.1}, 
we obtain that
\begin{align}
    \label{eq:determinante}\det\big(\bH[\alpha](\rho,\tfrac{q}{\rho})\big)
    & = -\tfrac{1}{\rho^2}\partial_1V\left(\rho,\tfrac{q}{\rho}\right)\left(
    h''\left( \tfrac{q}{\rho}\right)+\int_\rho^{\bar\rho}\partial_2^2V\left(z,\tfrac{q}{\rho}\right)\tfrac{1}{z^2}\dd z \right)-\tfrac{1}{\rho^4}\left(\partial_2V\left(\rho,\tfrac{q}{\rho}\right)
    \right)^2
    \\
    & = \tfrac{1}{\rho^2}\big|\partial_1V\left(\rho,\tfrac{q}{\rho}\right)\big| 
    \, h''\left( \tfrac{q}{\rho}\right)- G\left(\rho,\tfrac{q}{\rho}\right)\notag
\end{align}
with 
\[
G(\rho,\omega)\eqqcolon
\tfrac{1}{\rho^2}\partial_1V\left(\rho,\omega
\right)\int_\rho^{\bar\rho}\partial_2^2V\left(z,\omega 
\right)\tfrac{1}{z^2}\dd z 
    +\tfrac{1}{\rho^4}\left(\partial_2V\left(\rho,\omega 
    \right)
   \right)^2\,,\qquad \omega=\tfrac{q}{\rho}\,.
\]
Since $G$ is bounded on the set $[\rho_{\min},\rho_{\max}]\times [q_{\min},q_{\max}]$, we can choose a quadratic $h$,
\begin{equation}\label{def:h}
h(y)\coloneqq \bar C y^2,\ y\in\R,
\end{equation}
with $\bar C$ large enough that \cref{eq:determinante} is strictly positive on \([\rho_{\min},\rho_{\max}]\times [q_{\min},q_{\max}]\).

Under the same condition on $\bar C$, we obtain that \(\partial_q^2\alpha >0\). Therefore,
we conclude that $\alpha$ is strictly convex and that it satisfies \cref{eq:convexity_entropy}, with $c_0$ defined as
\begin{equation*}
    c_0\coloneqq\min \big\{\min\{\mu_1(\rho,q),\mu_2(\rho,q)\}; \ (\rho,q)\in [\rho_{\min},\rho_{\max}] \times [q_{\min},q_{\max}]\big\}
\end{equation*}
and $\mu_1$ and $\mu_2$ the eigenvalues of \(\bH[\alpha]\). 
\end{proof}

\begin{theo}[uniqueness of the entropy weak solution]\label{theo:uniqueness-of-entropy-weak-solution}
Assume \cref{ass:K_V_2}. 

Fix the constant values $M,\omega_{\infty},\rho_{\infty}\in\R_{>0}$ such that $0<\rho_\infty<R(\omega_\infty)$, and define $V_\infty\coloneqq V(\rho_\infty,\omega_\infty)$. 
Let $a_i,$ and $b_i$ for  $i\in\{1,2\}$ be given such that $a_i\le b_i,\ i\in\{1,2\}$, and 

\begin{multicols}{3}
\begin{enumerate}
    \item  \(a_ib_i \le 0 \) for \(i\in\{1,2\}\) \label{item:1}
    \item \(|a_2| \le V_\infty\) \label{item:2}
    \item \(b_2+ |a_1| < \omega_\infty - V_\infty\) \label{item:3}
\end{enumerate}
\label{cond-on-aibi}
\end{multicols}
\noindent Define
\begin{equation}\label{DM}
    \mathcal{D}_M\coloneqq\left\{ (\rho,\omega):\R\to E;\quad
    x\mapsto (v_1,v_2)(\rho(x),\omega(x))
    \in L^1\big(\R;\R^2\big),\quad |v_1|_{TV(\R)}+|v_2|_{TV(\R)}\leq M \right\},
\end{equation}
with
\begin{equation*}
    E\coloneqq\left\{ (\rho,\omega)\in\R_{>0}\times\R: v_i(\rho,\omega)\in[a_i,b_i]\quad i=1,2 \right\}
\end{equation*}
and
\begin{equation*}
    v_1(\rho,\omega)=\omega-\omega_\infty,\qquad v_2(\rho,\omega)=V(\rho,\omega)-V_\infty.
\end{equation*}
Let $(\rho,\omega)$ be an entropy weak solution to the Cauchy problem \cref{eq:local_system}
with initial data $(\rho_0,\omega_0)\in \mathcal{D}_M$ and $t\mapsto (\rho,\omega)(t,\cdot)\in \mathcal{D}_M$ for all $t>0$.
Then,
$(\rho,\omega)$ is the unique entropy weak solution to the Cauchy problem in \cref{eq:local_system} that takes values in $\mathcal{D}_M$. 

\end{theo}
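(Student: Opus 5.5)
The plan is to reduce the statement to the classical uniqueness theory for entropy weak solutions of strictly hyperbolic systems of conservation laws with small total variation. In the present setting it is more natural to use the Temple-system theory developed by Bianchini and by Bressan--Goatin for Temple class systems. Here the domain condition is formulated in Riemann coordinates, and the total variation need not be small as long as the solution stays in an invariant rectangle.

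\textbf{Step 1: the image of $E$ is an invariant rectangle.} By \cref{lem:local_system_strict_hyperbolicity}, the system \cref{eq:local_system} is strictly hyperbolic with Riemann invariants $v_1=\omega-\omega_\infty$ and $v_2=V(\rho,\omega)-V_\infty$. By \cref{lem:local_system_strict_hyperbolicity} and the remark that follows it, it is also a Temple system. In these coordinates $E$ is the rectangle $[a_1,b_1]\times[a_2,b_2]$.

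I would first check, via conditions \cref{item:1}--\cref{item:3}, that this rectangle contains the reference state $(0,0)$ and stays inside the region $\{\rho>0,\ V\ge 0,\ \omega>0\}$. There \cref{ass:K_V_2} and \cref{rem:5.1} give strict hyperbolicity, genuine nonlinearity of the first field and linear degeneracy of the second.

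Concretely, the checks are as follows. Condition \cref{item:2} gives $V=v_2+V_\infty\ge V_\infty-|a_2|\ge 0$. Condition \cref{item:3} gives $\omega-V\ge \omega_\infty-|a_1|-V_\infty-b_2>0$. Since $V(0,\omega)=\omega$ and $\partial_1 V<0$, this forces $\rho$ to be bounded away from $0$ uniformly on $E$. Together with $V\ge0$, i.e.\ $\rho\le R(\omega)$, we get $\rho\in[\rho_{\min},\rho_{\max}]$ with $\rho_{\min}>0$ and $q=\rho\omega$ bounded above and below.

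On this compact set I invoke \cref{lem:convex_entropy} to obtain a strictly convex entropy with the uniform convexity \cref{eq:convexity_entropy}. This is the ingredient needed for relative-entropy or Kru\v{z}kov-type arguments.

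\textbf{Step 2: construct the Standard Riemann Semigroup.} Using the wave-front tracking for Temple systems in Riemann coordinates, I would construct on $\mathcal{D}_M$ a Lipschitz semigroup $S_t$ with the following two properties. First, its trajectories are entropy weak solutions. Second, it is $L^1$-Lipschitz, with a Bressan--Liu--Yang type functional in which the interaction potential is controlled by $M$.

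The coordinate invariance of rectangles for Temple systems guarantees that $S_t\mathcal{D}_M\subset\mathcal{D}_M$. The $L^1$ requirement on $(v_1,v_2)$ in \cref{DM} reflects the fact that states are measured relative to the constant state $(\rho_\infty,\omega_\infty)$.

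\textbf{Step 3: identify any entropy solution with the semigroup trajectory.} This is the main obstacle. Given an entropy weak solution $t\mapsto(\rho,\omega)(t)\in\mathcal{D}_M$, I would apply the Bressan--Goatin (or Bressan--LeFloch) uniqueness theorem. Its hypotheses must be verified one by one.

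The first hypothesis is a tame-variation or Ole\u{\i}nik-type condition on $1$-waves. For Temple systems it can be replaced by the entropy admissibility condition, using the strictly convex entropy from Step~1; this needs genuine nonlinearity of the first field and the fact that $\partial_\rho^2(\rho V)<0$. Second, the $2$-field is linearly degenerate, so contact discontinuities need no admissibility condition. Third, the solution must be $L^1$-continuous in time, which follows from the weak formulation together with the uniform total variation bound $M$.

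With these in place, the error formula
\[
\|u(t)-S_tu_0\|_{L^1}\le L\int_0^t\liminf_{h\to0}\tfrac1h\|u(s+h)-S_hu(s)\|_{L^1}\,\mathrm{d}s
\]
shows that the integrand vanishes. The comparison at points of approximate jump is made with Riemann solutions, and at points of small oscillation with the linearized solution. Hence $u(t)=S_tu_0$, and uniqueness follows.
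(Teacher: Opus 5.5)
Your Steps 1 and 2 coincide with the paper's proof. The paper makes exactly your observations on $E$: $V\ge 0$ by condition 2, $\omega\ge a_1+\omega_\infty>0$ and $V_{\max}<\underline\omega$ by condition 3, hence $\rho\ge\rho_{\min}>0$. It then takes the Baiti--Bressan semigroup for Temple class systems with large data on $\mathcal{D}_M$.

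\textbf{The gap is in Step 3.} The uniqueness theorems you invoke (Bressan--LeFloch, and Bressan--Goatin, including its Temple-class $L^\infty$ version) all carry an \emph{extra hypothesis on the weak solution}: a tame-variation/tame-oscillation condition or an Ole\u{\i}nik-type bound on the decay of positive waves. That hypothesis is what lets one prove the local comparison estimates with Riemann solutions and linearized solutions that characterize semigroup trajectories.

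Entropy admissibility plus genuine nonlinearity of the first field only gives you the Lax condition at points of approximate jump. It gives no control of the rarefaction (positive-wave) content of $D_x u$. So your sentence ``for Temple systems it can be replaced by the entropy admissibility condition, using the strictly convex entropy from Step~1'' is not a routine verification; it is the entire difficulty, asserted without an argument. Tellingly, the uniform convexity \cref{eq:convexity_entropy} you set up in Step~1 is never used anywhere in your proof.

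\textbf{The missing ingredient} is the theorem of Bressan and Guerra (2024), which the paper invokes. For systems whose fields are genuinely nonlinear or linearly degenerate and which admit a strictly convex entropy, entropy weak solutions with bounded variation can be identified with the ``viscosity solutions'' that characterize semigroup trajectories. No Ole\u{\i}nik or tame-oscillation assumption is needed; the strictly convex entropy is what substitutes for that extra hypothesis.

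To close your proof, replace the ``verify the Bressan--Goatin/Bressan--LeFloch hypotheses'' step by an application of Bressan--Guerra's Theorem~1.1, adapted (as the paper does) to the Baiti--Bressan semigroup on $\mathcal{D}_M$. The input is \cref{lem:convex_entropy} on the compact set $[\rho_{\min},\rho_{\max}]\times[q_{\min},q_{\max}]$ from your Step~1. Your error formula then yields $u(t)=S_t u_0$, because every entropy solution with values in $\mathcal{D}_M$ is a viscosity solution and hence a semigroup trajectory. Uniqueness follows from uniqueness of the semigroup.
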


\begin{proof} We start by making several observations:
\begin{itemize}
    \item for every function $(\rho,\omega)\in \mathcal{D}_M$, 
\begin{equation*}
\lim_{x\to\pm\infty}\rho(x)=\rho_\infty, \qquad\lim_{x\to\pm\infty}\omega(x)=\omega_\infty,
\end{equation*}

and $V_\infty = V(\rho_\infty,\omega_\infty) < V(0,\omega_\infty) = \omega_\infty$, so the right-hand side of \cref{item:3} is positive;
\item for every $(\rho,\omega)\in E$,
\begin{equation*}
\omega\in [a_1+\omega_\infty, b_1 +\omega_\infty]
\,,
\qquad 
V(\rho,\omega)\in [a_2+V_\infty, b_2+V_\infty]
\,;
\end{equation*}
\item \cref{item:2} in \cref{cond-on-aibi} implies that $V(\rho,\omega)\ge 0$ for all $(\rho,\omega)\in E$;
\item \cref{item:3} in \cref{cond-on-aibi} implies that $\omega\ge \underline\omega\coloneqq a_1+\omega_\infty >0 $ for all $(\rho,\omega)\in E$ and that
\begin{equation*}
    \max_{(\rho,\omega)\in E}V(\rho,\omega)\eqqcolon  V_{\max} <  \underline\omega.
\end{equation*}
Since $\partial_{1}V<0$, the function
\begin{equation*}
    \rho\mapsto \bar V_\omega (\rho)\coloneqq V(\rho,\omega),\ \rho\in\R_{>0}
\end{equation*}
is invertible for every $\omega\in[a_1+\omega_{\infty},b_1+\omega_{\infty}]$. Denoting by $v\mapsto V^{-1} (v,\omega)$ its inverse, from the fact that $V_{\max} < \underline\omega$, we conclude that there exists a positive quantity 
\[
\rho_{\min}\coloneqq V^{-1}(V_{\max},\underline\omega)=V^{-1}(b_2+V_{\infty},a_1+\omega_{\infty})>0
\] 
such that $\rho \ge \rho_{\min}>0$ for any $(\rho,\omega)\in E$.
\end{itemize}

\par\smallskip
The rest of the proof follows the strategy of \cite[Theorem 1.1]{Bressan-Guerra_2024} for the system in \cref{eq:local_system}, relying on the proof in \cref{lem:convex_entropy} that it admits a strictly convex entropy.
Specifically, we use the semigroup constructed in \cite{Baiti1997} 
on the domain $\mathcal{D}_M$ as above. 
Applying \cite[Theorem 1]{Baiti1997} ensures the existence of a unique continuous semigroup on $\mathcal{D}_M$.
We refer to \cite[Definition 9.1]{Bressan2000} for the definition of a standard Riemann semigroup:
\[
S: [0,\infty)\times \mathcal{D}_M \mapsto \mathcal{D}_M\,,\qquad (\rho(t,\cdot), \omega(t,\cdot)) = S_t (\rho_0,\omega_0) \in \mathcal{D}_M\quad \forall\, t\ge 0\,.
\]
Thanks to \cref{lem:convex_entropy},
the proof of Theorem 1.1 in \cite{Bressan-Guerra_2024} 
can be adapted to our case, indeed the fact that the system is endowed with a strictly convex entropy and the property of the trajectories of the semigroup on \(\mathcal{D}_M\) that can be characterized as ``viscosity solutions'' yield that the proof of \cite[Theorem 1.1]{Bressan-Guerra_2024} holds true for our semigroup \(S\). Then we 
conclude that every entropy weak solution $(\rho, \omega)$ to \cref{eq:local_system} that takes values within the domain of the semigroup $\mathcal{D}_M$ coincides with a semigroup trajectory. This solution is also unique as a consequence of the uniqueness of the semigroup.
\end{proof}
See also \cite{Cheng2025} for the strategy of \cite[Theorem 1.1]{Bressan-Guerra_2024} applied in a similar context for the one-dimensional isothermal Euler system. For Temple systems with genuinely nonlinear characteristic fields, the uniqueness of solutions was studied in \cite{DG91,H94} using Holmgren's principle.

Before we turn our attention to the next theorem on the convergence of the nonlocal solution to the unique local one, 
recall the nonlocal problem \cref{eq:nonlocal_GARZ}-\cref{eq:init_data} together with \cref{eq:nonlocal_velocity_definition}:
\begin{equation}
    \begin{aligned}
        \partial_t \rho + \partial_x (\mathcal{V}_\eta\rho)&=0, && (t,x)\in (0,T)\times\R \\
        \partial_t (\rho\omega) + \partial_x ( \mathcal{V}_\eta\rho\omega)&=0, && (t,x)\in (0,T)\times\R,\\
    \mathcal{V}_\eta(t,x)&= \tfrac{1}{\eta}\int_x^{\infty} \e^{\frac{x-y}{\eta}}V(\rho(t,y),\omega(t,y))\dd y && (t,x)\in (0,T)\times\R,\\
    \rho(0,\cdot)&\equiv\rho_{0}, && \text{ on } \R,\\
    \omega(0,\cdot)&\equiv\omega_{0},&&\text{ on } \R.
    \end{aligned}
    \label{S5:NL}
\end{equation}
We denote by \((\rho_\eta,\omega_\eta)\in C([0,T];L^{1}_{\textnormal{loc}}(\R))^{2}\) the corresponding solution.
In the next theorem, we show that if the sequence $\{(\rho_{\eta},\omega_{\eta})\}_{\eta>0}$ satisfies \cref{eq:requirement_TV_diminishing}, then any limit obtained by compactness is an entropy weak solution to \cref{eq:local_system}.

\begin{theo}[convergence to the entropy solution]\label{theo:singular_limit_problem}
Under the assumptions of \cref{theo:uniqueness-of-entropy-weak-solution}, 
let $(\rho_\eta,\omega_\eta)$,  
where $\eta>0$, be the weak solution to \cref{S5:NL} in the sense of \cref{defi:weak_solution}. 
Assume that the sequence  $\{(\rho_{\eta},\omega_{\eta})\}_{\eta>0}$ satisfies \cref{eq:requirement_TV_diminishing}, and let $(\rho^*,\omega^*)$ be the limit in \(C([0,T];L^{1}_{\textnormal{loc}}(\R))\) of 
$(\rho_{\eta_k},\omega_{\eta_k})$ along a subsequence $\eta_k\to 0$ as $k\to\infty$.
Then, $(\rho^*,\omega^*)$ is an entropy weak solution with respect to the entropy--entropy flux pairs $(\alpha,\beta)$ of the form \cref{def:alpha}--\cref{def:beta}.
\end{theo}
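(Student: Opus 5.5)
We will show that for each entropy pair $(\alpha,\beta)$ of the form \cref{def:alpha}--\cref{def:beta} and each nonnegative test function $\phi\in C^{1}_{\textnormal{c}}((0,T)\times\R)$,
\[
\iint_{\OT}\big(\alpha(\rho_\eta,q_\eta)\partial_t\phi+\beta_\eta\,\partial_x\phi\big)\dd x\dd t\geq -o(1)\quad\text{as }\eta\to0,
\]
where $q_\eta=\rho_\eta\omega_\eta$ and $\beta_\eta$ is a suitable nonlocal flux. We then pass to the limit along $\eta_k$.

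By \cref{cor:Stability_of_solutions} we may assume $(\rho_\eta,\omega_\eta)$ smooth. The invariant-region bounds of \cref{theo:invariant_region} give uniform bounds, away from zero, on $\rho_\eta$ and $q_\eta$: they lie in $[\rho_{\min},\rho_{\max}]\times[q_{\min},q_{\max}]$. So $\alpha$ is smooth and convex there by \cref{lem:convex_entropy}.

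The basic computation uses the Lagrangian structure of the nonlocal system, namely $\partial_t\rho+\partial_x(\cV_\eta\rho)=0$ and $\partial_t\omega+\cV_\eta\partial_x\omega=0$. Write $\alpha(\rho,q)=\rho\,a(\rho,\omega)$ with
\[
a(\rho,\omega)=h(\omega)+\int_\rho^{\bar\rho}V(z,\omega)z^{-2}\dd z .
\]
Then
\[
\partial_t(\rho a)+\partial_x(\cV_\eta\rho a)=\rho\big(\partial_t a+\cV_\eta\partial_x a\big)=\rho\,\partial_\rho a\,(\partial_t\rho+\cV_\eta\partial_x\rho)=-\rho^2\partial_\rho a\,\partial_x\cV_\eta .
\]
The $\omega$-part is transported and drops out. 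Since $\partial_\rho a=-V(\rho,\omega)/\rho^2$, this gives
\[
\partial_t\alpha+\partial_x(\cV_\eta\alpha)=V(\rho_\eta,\omega_\eta)\,\partial_x\cV_\eta .
\]
Now use \cref{eq:nonlocal_identity}, $V=\cV_\eta-\eta\partial_x\cV_\eta$:
\[
V\partial_x\cV_\eta=\partial_x\big(\tfrac12\cV_\eta^2\big)-\eta(\partial_x\cV_\eta)^2 .
\]
Hence
\[
\partial_t\alpha(\rho_\eta,q_\eta)+\partial_x\Big(\cV_\eta\alpha(\rho_\eta,q_\eta)-\tfrac12\cV_\eta^2\Big)=-\eta(\partial_x\cV_\eta)^2\leq0 .
\]
This is exactly the structure of $\beta=-\tfrac{V^2}{2}+V\alpha$ with $V$ replaced by $\cV_\eta$. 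This identity is the key step, and it needs no sign condition beyond the smoothness obtained from approximation.

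To pass to the limit, test with $\phi\geq0$ to obtain
\[
\iint_{\OT}\alpha(\rho_\eta,q_\eta)\partial_t\phi+\big(\cV_\eta\alpha(\rho_\eta,q_\eta)-\tfrac12\cV_\eta^2\big)\partial_x\phi\geq0 .
\]
This inequality survives the approximation $\eps\to0$ of \cref{cor:Stability_of_solutions} by $L^1_{\textnormal{loc}}$ convergence and the uniform bounds. Along $\eta_k$ we have $\rho_{\eta_k}\to\rho^*$ and $\omega_{\eta_k}\to\omega^*$ in $C([0,T];L^1_{\textnormal{loc}})$ by \cref{cor:convergence_weak_solution}, so $\alpha(\rho_{\eta_k},q_{\eta_k})\to\alpha(\rho^*,q^*)$ in $L^1_{\textnormal{loc}}$. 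For $\cV_{\eta_k}$ we use \cref{eq:nonlocal_identity} again:
\[
\|\cV_\eta-V(\rho_\eta,\omega_\eta)\|_{L^1(\R)}=\eta|\cV_\eta(t,\cdot)|_{TV(\R)}\leq\eta|V(\rho_0,\omega_0)|_{TV(\R)}
\]
by \cref{theo:total_variation_bound}, which is where \cref{eq:requirement_TV_diminishing} enters. Thus $\cV_{\eta_k}\to V(\rho^*,\omega^*)$ in $L^1_{\textnormal{loc}}$. Everything is uniformly bounded, so all products converge, and the limit gives $\partial_t\alpha(\rho^*,q^*)+\partial_x\beta(\rho^*,q^*)\leq0$ in $\mathcal{D}'$.

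The main obstacle is justifying that the limit stays in the region where $\alpha$ is defined and convex, i.e.\ uniform lower bounds on $\rho_\eta$ and on $\omega_\eta$. This has to come from the hypotheses of \cref{theo:uniqueness-of-entropy-weak-solution}, with $V_{\max}<\underline\omega$ and $\omega\geq a_1+\omega_\infty>0$, fed into \cref{theo:invariant_region} item 2. A secondary point is the rigorous passage from the smoothed to the $TV$ solutions, and checking that the weak formulation with $\phi$ compactly supported in $(0,T)$ avoids initial-layer issues.
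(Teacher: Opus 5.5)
Your argument is correct and is essentially the paper's proof. The paper also multiplies the equations by $\nabla\alpha$. It then uses $\rho\partial_\rho\alpha+q\partial_q\alpha-\alpha=-V$ (your $\rho^2\partial_\rho a=-V$) together with $V=\cV_\eta-\eta\partial_x\cV_\eta$ to isolate the dissipation $-\eta(\partial_x\cV_\eta)^2$. Once $\beta=V\alpha-\tfrac12V^2$ is inserted, its final inequality rearranges exactly into your nonlocal entropy inequality with flux $\cV_\eta\alpha-\tfrac12\cV_\eta^2$.

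Your Lagrangian shortcut $\alpha=\rho\,a(\rho,\omega)$ is a cleaner packaging of that same computation. The bound $\|\cV_\eta-V(\rho_\eta,\omega_\eta)\|_{L^1(\R)}\le\eta|V(\rho_0,\omega_0)|_{TV(\R)}$ makes explicit the limit passage that the paper only asserts.
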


\begin{proof}
We consider the system in \cref{eq:local_system} in the conserved variables, with $q=\rho\omega$,
and we prove that the limit function $(\rho^*,q^*)$ obtained by passing to the limit $\eta\to 0$ is an entropy weak solution in the sense of \cref{defi:entropy-weak-soltion}. We observe that a convergent subsequence $(\rho_{\eta_k},\omega_{\eta_k})$ exists thanks to \cref{cor:convergence_weak_solution}, \cref{rem:4.8}, 
\cref{ass:K_V_2}, and \cref{rem:5.1}(a).
To prove \cref{eq:entropy inequality},
we apply an argument similar to \cite{bressan-shen2021entropy}, generalizing it to the system case. Let $(\rho_\eta,\omega_\eta)$ be a weak entropy solution of the system in \cref{eq:nonlocal_GARZ}. By rewriting this system, adding and subtracting the local flux in each equation, we obtain
\begin{equation}
\begin{aligned}
     \partial_t\rho+\partial_x\left(\rho V\right)&=\partial_x\left( \rho  V- \rho \mathcal{V}_\eta \right),\\
        \partial_t q+\partial_x\left(q V\right)&=\partial_x\left( q V  - q\mathcal{V}_\eta  \right),
\end{aligned}    
\end{equation}
where we denote $(\rho_\eta,q_\eta=\rho_\eta\omega_\eta)$ 
as $(\rho, q)$ as soon as $\eta>0$ is fixed. After multiplying the first equation
by $\partial_\rho\alpha$ and the second one by $\partial_q\alpha$ and then summing them, we have
\begin{align*}
    & \partial_\rho\alpha\,\partial_t\rho+\partial_\rho\alpha\,\partial_x\left(\rho V\right)+\partial_q\alpha\,\partial_t q+\partial_q\alpha\,\partial_x\left(q V\right)\\
    & = \partial_\rho\alpha\,\partial_x\left( \rho V- \rho \mathcal{V}_\eta\right)+\partial_q\alpha\,\partial_x\left( q V- q\mathcal{V}_\eta \right)    \\
    &= \underbrace{[\partial_\rho\alpha\cdot\partial_x \rho  + \partial_q\alpha\cdot\partial_x q]}_{=\partial_x\alpha}  \left( V- \mathcal{V}_\eta \right) + 
\left[\rho\partial_\rho\alpha + q\partial_q\alpha\right] \partial_x \left(V- \mathcal{V}_\eta \right) 
\\
    &= \partial_x[\alpha \left( V- \mathcal{V}_\eta \right)]   +
\left[\rho\partial_\rho\alpha + q\partial_q\alpha -\alpha\right] \partial_x \left(V- \mathcal{V}_\eta \right)\,.
\end{align*}
Thanks to the properties of $(\alpha,\beta)$, one has that
\begin{equation*}
\partial_\rho\alpha\cdot\partial_x\left(\rho V\right) + \partial_q\alpha\cdot\partial_x\left(q V\right) 
    = \partial_\rho \beta \cdot \partial_x \rho + \partial_q \beta \cdot\partial_x q=
    \partial_x\beta 
\end{equation*}
and then
\begin{align*}
     \partial_t \alpha +\partial_x\beta  &= \partial_\rho\alpha\,\partial_x\left( \rho V- \rho \mathcal{V}_\eta \right) +\partial_q \alpha\,\partial_x\left( q V- q\mathcal{V}_\eta \right) \\
    &=\underbrace{[\partial_\rho\alpha \cdot \rho_x + \partial_q \alpha\cdot q_x]}_{=\partial_x\alpha}\,\left( V -\mathcal{V}_\eta \right)+\left[\rho \partial_\rho\alpha + q \partial_q \alpha \right]\,\partial_x\left( V-\mathcal{V}_\eta\right).
\end{align*}
Multiplying by a test function $\varphi \in W^{1,\infty}_{\textnormal{c}}((0,\infty)\times\R;\R_{\ge0})$ and integrating in space and time, we obtain
\begin{align}
    &-\int_0^{\infty}\int_\R [\alpha\varphi_t+\beta\varphi_x]\dd x\dd t  = \int_0^{\infty}\int_\R \partial_x\alpha[V-\mathcal{V}_\eta]\varphi\dd x\dd t + \int_0^{\infty}\int_\R[\rho\partial_\rho\alpha+q\partial_q\alpha]\partial_x[V-\mathcal{V}_\eta]\varphi\dd x\dd t\nonumber\\
    &\qquad  = -\int_0^{\infty}\!\!\int_\R\alpha\partial_x[V-\mathcal{V}_\eta]\varphi\dd x\dd t - \int_0^{\infty}\!\!\int_\R\alpha[V-\mathcal{V}_\eta]\partial_x\varphi\dd x\dd t + \int_0^{\infty}\!\!\int_\R [\rho\partial_\rho\alpha+q\partial_q\alpha]\partial_x[V-\mathcal{V}_\eta]\varphi\dd x\dd t\nonumber\\
    & \qquad = \int_0^{\infty}\int_\R [\rho\partial_\rho\alpha+q\partial_q\alpha-\alpha]\partial_x[V-\mathcal{V}_\eta]\varphi\dd x\dd t - \int_0^{\infty}\int_\R\alpha[V-\mathcal{V}_\eta]\partial_x\varphi\dd x\dd t.\label{eq:proof-singular-limit}
\end{align}
Recalling the partial derivatives of $\alpha$ in \cref{eq:partial-derivatives-entropy},
\begin{align*}
    \partial_\rho\alpha(\rho,q)
    & =h\left(\tfrac{q}{\rho}\right) -\rho h'\left(\tfrac{q}{\rho}\right)\tfrac{q}{\rho^2}+\int_{\rho}^{\bar\rho}V\left( z,\tfrac{q}{\rho} \right)\tfrac{1}{z^2}\,dz-V\left(\rho,\tfrac{q}{\rho}\right)\tfrac{1}{\rho}- \tfrac{q}{\rho}\int_{\rho}^{\bar\rho}\partial_2V\left( z,\tfrac{q}{\rho}\right)\tfrac{1}{z^2}\,dz,\\
  \partial_q\alpha(\rho,q) 
    & = \rho h'\left(\tfrac{q}{\rho}\right)\tfrac{1}{\rho}+\int_{\rho}^{\bar\rho}\partial_2V\left(z,\tfrac{q}{\rho}\right)\tfrac{1}{z^2}\,dz,  
\end{align*}
we obtain
\begin{equation*}
    \rho\partial_\rho\alpha+q\partial_q\alpha-\alpha = - V,
\end{equation*}
and using this identity and \cref{eq:nonlocal_identity} in \cref{eq:proof-singular-limit}, we have
\begin{align*}
    & - \int_0^{\infty}\int_\R [\alpha\varphi_t+\beta\varphi_x]\dd x\dd t = - \int_0^{\infty}\int_\R V \partial_x[V-\mathcal{V}_\eta]\varphi\dd x\dd t - \int_0^{\infty}\int_\R\alpha [V-\mathcal{V}_\eta]\partial_x\varphi\dd x\dd t\\
    & \qquad  = -\tfrac{1}{2}\int_0^{\infty}\int_\R \partial_x(V^2)\varphi\dd x\dd t + \int_0^{\infty}\int_\R V\partial_x(\mathcal{V}_\eta)\varphi\dd x\dd t - \int_0^{\infty}\int_\R \alpha[V-\mathcal{V}_\eta]\partial_x\varphi\dd x\dd t\\
    & \qquad = \tfrac{1}{2} \int_0^{\infty}\int_\R \partial_x ( \mathcal{V}_\eta^2 - V^2)\varphi\dd x\dd t - \eta \int_0^{\infty}\int_\R (\partial_x\mathcal{V}_\eta)^2\varphi\dd x\dd t - \int_0^{\infty}\int_\R \alpha [V-\mathcal{V}_\eta]\partial_x\varphi\dd x\dd t\\
    & \qquad \le \tfrac{1}{2} \int_0^{\infty}\int_\R \partial_x ( \mathcal{V}_\eta^2 - V^2)\varphi\dd x\dd t - \int_0^{\infty}\int_\R \alpha [V-\mathcal{V}_\eta]\partial_x\varphi\dd x\dd t\\
    &\qquad =  -\tfrac{1}{2} \int_0^{\infty}\int_\R ( \mathcal{V}_\eta^2 - V^2)\partial_x\varphi\dd x\dd t - \int_0^{\infty}\int_\R \alpha [V-\mathcal{V}_\eta]\partial_x\varphi\dd x\dd t.
\end{align*}
Both terms in the last line go to zero when $\eta\to0$. To conclude, we consider a sequence of nonlocal solutions $(\rho_{\eta},\omega_{\eta})$. We know there exists a subsequence of $\{\eta_k\}_{k\in\N}$ such that, as $\lim_{k\to\infty}\eta_k=0$, $(\rho_{\eta_k},\omega_{\eta_k})$ converges to a limit solution $(\rho^*,\omega^*)$. Thanks to the computations above, we obtain
\begin{align*}
    \int_0^{\infty}\!\!\!\int_\R \left(\alpha(\rho_{\eta_k},\omega_{\eta_k})\partial_t\varphi+\beta(\rho_{\eta_k},\omega_{\eta_k})\partial_x\varphi\right)\dd t\dd x\geq \tfrac{1}{2}\!\int_0^\infty\!\!\!\!\int_\R (\mathcal{V}_{\eta_k}^2-V^2)\partial_x\varphi\dd x\dd t +\!\! \int_0^\infty\!\!\!\!\int_\R \alpha(V-\mathcal{V}_{\eta_k})\partial_x\varphi\dd x\dd t.
\end{align*}
By letting $\eta_k\to0$ and using the fact that the right-hand side goes to zero as proven in \cref{cor:convergence_weak_solution}, we find that
\begin{equation*}
    \int_0^{\infty}\!\!\! \int_\R \alpha(\rho^*,\omega^*)\partial_t\varphi+\beta(\rho^*,\omega^*)\partial_x\varphi\dd t\dd x \geq 0.
\end{equation*}
\end{proof}

To conclude in the next corollary, we prove that the sequence of nonlocal solutions \{\((\rho_\eta,\omega_\eta)\}_{\eta>0}\) admits a unique limit \((\rho^*,\omega^*)\) that coincides with a semigroup trajectory \(t\mapsto S_t(\rho_0,\omega_0)\). 
\begin{cor}\label{final-cor}
    Under the assumptions of \cref{theo:uniqueness-of-entropy-weak-solution}, let $ \mathcal{D}_M$ be a domain of the form \cref{DM}
 and let $(\rho_{0},\omega_{0})\in \mathcal{D}_M$. 
Assume that the sequence  $\{(\rho_{\eta},\omega_{\eta})\}_{\eta>0}$ satisfies \cref{eq:requirement_TV_diminishing}. Then, as $\eta\to 0$, the sequence $(\rho_{\eta},\omega_{\eta})$ converges in \(C([0,T];L^{1}_{\textnormal{loc}}(\R))\) to the solution of the semigroup trajectory $t\mapsto S_t(\rho_0,\omega_0)$.
\end{cor}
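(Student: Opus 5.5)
The proof is a standard compactness-plus-uniqueness argument. Fix an arbitrary sequence $\eta_k\to 0$. The first step is to produce a convergent subsequence. Since $(\rho_0,\omega_0)\in\mathcal{D}_M$, the values of the initial data lie in $E$. The observations at the beginning of the proof of \cref{theo:uniqueness-of-entropy-weak-solution} then give the following facts: $\omega_0\ge\underline\omega>0$, $V(\rho_0,\omega_0)\in[a_2+V_\infty,b_2+V_\infty]\subset\R_{\ge0}$, and $V_{\max}<\underline\omega$. Consequently \cref{ass:long_time_horizon}(b) with (b2) holds (compare \cref{rem:5.1}(d)), so \cref{theo:invariant_region}, \cref{item:2:invariant}, applies. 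It yields global existence of $(\rho_\eta,\omega_\eta)$ together with uniform bounds $\rho_{\min}\le\rho_\eta\le\rho_{\max}$ and $\underline\omega\le\omega_\eta\le\overline\omega$. It also yields the invariance $V(\rho_\eta,\omega_\eta)\in[V_{\min},V_{\max}]$, with all bounds independent of $\eta$.

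With these bounds and the assumption \cref{eq:requirement_TV_diminishing}, \cref{theo:total_variation_bound} and \cref{theo:compacteness} apply. \cref{cor:convergence_weak_solution}, together with \cref{rem:4.8} (using $\partial_1V<0$ from \cref{rem:5.1}(a)), then gives a further subsequence $\eta_{k_n}$ with $(\rho_{\eta_{k_n}},\omega_{\eta_{k_n}})\to(\rho^*,\omega^*)$ in $C([0,T];L^1_{\textnormal{loc}}(\R))$. By \cref{theo:singular_limit_problem}, this limit is an entropy weak solution for the pairs \cref{def:alpha}--\cref{def:beta}.

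The second step is to check that $t\mapsto(\rho^*,\omega^*)(t,\cdot)$ takes values in $\mathcal{D}_M$. This is needed so that \cref{theo:uniqueness-of-entropy-weak-solution} can identify the limit with $S_t(\rho_0,\omega_0)$. The pointwise constraint $v_i(\rho^*,\omega^*)\in[a_i,b_i]$ should pass to the limit from the corresponding invariant statements. For $\omega$, the solution formula \cref{eq:solution} shows that $\omega_\eta$ only takes values of $\omega_0$. For $V$, the bound \cref{eq:invariant_velocity} gives $V(\rho_\eta,\omega_\eta)\in[V_{\min},V_{\max}]\subset[a_2+V_\infty,b_2+V_\infty]$. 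Both constraints are closed, so they survive a.e.\ convergence.

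The total variation constraint is the delicate part. The $TV$ of $v_1(\omega_\eta)$ is preserved exactly. For $v_2$, the uniform bound \cref{eq:TV_bound_V_star} controls $\cV^*=V(\rho^*,\omega^*)$ by $|V(\rho_0,\omega_0)|_{TV}$. Combined with lower semicontinuity, this gives $|v_1|_{TV}+|v_2|_{TV}\le|v_1(\rho_0,\omega_0)|_{TV}+|v_2(\rho_0,\omega_0)|_{TV}\le M$. The $L^1$ integrability of $v_i$ should follow from the $L^1$ stability estimates, or alternatively from finite propagation speed together with \cref{rem:values-at-infty}, which gives the limits $\rho_\infty,\omega_\infty$ at $\pm\infty$.

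The final step is the identification. By \cref{theo:uniqueness-of-entropy-weak-solution}, every entropy solution taking values in $\mathcal{D}_M$ coincides with the semigroup trajectory, so $(\rho^*,\omega^*)=S_t(\rho_0,\omega_0)$. Since every sequence has a subsequence converging to this same limit, the whole family converges in $C([0,T];L^1_{\textnormal{loc}}(\R))$.

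I expect the main obstacle to be a mismatch in the entropy condition. \cref{theo:singular_limit_problem} gives the entropy inequality only for the special family \cref{def:alpha}--\cref{def:beta}, whereas \cref{defi:entropy-weak-soltion} asks for all convex pairs. I would handle this by noting that the Bressan--Guerra argument used in \cref{theo:uniqueness-of-entropy-weak-solution} requires only a single strictly convex entropy, and \cref{lem:convex_entropy} supplies one inside that family, so the uniqueness argument still applies.
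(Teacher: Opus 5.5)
Your proposal is correct and follows the paper's own argument: compactness via \cref{cor:convergence_weak_solution}, admissibility via \cref{theo:singular_limit_problem}, identification with $S_t(\rho_0,\omega_0)$ via \cref{theo:uniqueness-of-entropy-weak-solution}, and convergence of the whole family from uniqueness of the limit; your direct check of $\mathcal{D}_M$ on the limit (closed pointwise constraints, lower semicontinuity of the $TV$ bound on $\cV^{*}=V(\rho^{*},\omega^{*})$) and your single-entropy remark are more careful than the paper, which only cites \cref{rem:values-at-infty}. The one soft spot is the $L^1$-integrability of $v_i(\rho^*,\omega^*)(t,\cdot)$, where neither justification you offer works: finite propagation speed fails for the exponential kernel, and \cref{theo:stability} is neither global in time nor uniform in $\eta$; however, Fatou's lemma closes this gap using two $\eta$-uniform bounds, namely $\partial_2\xi_{\cV_\eta}(0,y;t)=\rho_0(y)/\rho_\eta(t,\xi_{\cV_\eta}(0,y;t))\le\|\rho_0\|_{L^\infty(\R)}/\rho_{\min}$ for $v_1$, and, for $v_2$, the estimate $\|\rho_\eta(t,\cdot)-\rho_\infty\|_{L^1(\R)}\le\|\rho_0-\rho_\infty\|_{L^1(\R)}+t\rho_\infty|V(\rho_0,\omega_0)|_{TV(\R)}$, which follows from $\partial_t(\rho_\eta-\rho_\infty)+\partial_x\big((\rho_\eta-\rho_\infty)\cV_\eta\big)=-\rho_\infty\partial_x\cV_\eta$ and \cref{theo:total_variation_bound}.
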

\begin{proof}
In the following, we consider a domain $\mathcal{D}_M$ as defined in the statement of \cref{theo:uniqueness-of-entropy-weak-solution}. Assume that $(\rho_{0},\omega_{0})\in\mathcal{D}_M$; as a consequence, 
\[
\lim_{x\to\pm\infty}\rho_0(x)=\rho_\infty, \qquad\lim_{x\to\pm\infty}\omega_0(x)=\omega_\infty
\]
for some positive values $\rho_\infty$ and $\omega_\infty$. By \cref{rem:values-at-infty}, the solution $t\mapsto (\rho_\eta,\omega_\eta)(t,\cdot)$ admits the same states at $\pm\infty$ as the initial data, and so \((\rho_\eta,\omega_\eta)\in\mathcal{D}_M\) for any \(t\in[0,T]\). The sequence \(\{(\rho_\eta,\omega_\eta)\}_{\eta>0}\) admits a subsequence that converges in \(C([0,T];L^1_{\text{loc}}(\R))\) to the limit \((\rho^*,\omega^*)\), that is an entropy weak solution to \cref{eq:local_system} with respect to the entropy--entropy flux pairs \((\alpha,\beta)\) as in \cref{def:alpha,def:beta}. Then, the limit \((\rho^*,\omega^*)\) satisfies the entropy inequality \cref{eq:entropy inequality} and takes values in the domain \(\mathcal{D}_M\) of the semigroup \(S\). Thus, thanks to \cref{theo:uniqueness-of-entropy-weak-solution} the limit \((\rho^*,\omega^*)\) coincides with the semigroup trajectory \(t\mapsto S_t(\rho_0,\omega_0)\), and the uniqueness of the limit follows by the uniqueness of the semigroup.
\end{proof}

\section{Numerical simulations}\label{sec:numerical_simulations}
In this section, we present some numerical simulations regarding the singular limit.
In particular, we discretize the nonlocal system in conservative form, that is,
\begin{equation}\label{eul-coord}
    \begin{cases}
        \partial_t\rho+\partial_x\left(\rho\, \cV_\eta\right)=0,\\
        \partial_t q+\partial_x\left(q\,\cV_\eta\right)=0,
    \end{cases}
\end{equation}
through an adapted Lax--Friedrichs numerical scheme as in \cite{chiarello}. We consider the exponential kernel function and set the spatial step to $\Delta x=10^{-4},$ the Courant-Friedrichs-Lewy (CFL) condition to \(0.1\), implying as time step $\Delta t=0.1\cdot\Delta x$. 
For the first two simulations, the exact solutions of the local system \cref{eq:local_system} are computed using the Riemann solver for the Aw--Rascle--Zhang model presented in \cite{RosiniAnnales}, whereas for the third simulation the exact solution is obtained analytically.
For all plots in this section, the final time is set to $T = 1$.
For the first simulation (\crefrange{fig:ex1}{fig:ex3}), let us consider 
\begin{equation}\label{eq:arz_vel}
    V(\rho,\omega)=\omega-6\rho,
\end{equation}
    as in \cite{chiarello2020micro}, and the initial data for \(x\in\R\)
\begin{equation}
\label{e:initialdatum_1}
\rho_0(x)=0.05,\
q_0(x)=\begin{cases}
    { 0.0175}&\hbox{if } x<0,\\
    0.04&\hbox{if } x\geq0,
\end{cases}\
v_0(x)=\begin{cases}
    0.05&\hbox{if } x<0,\\
    0.5&\hbox{if } x\geq0,
\end{cases}\
\omega_0(x)=\begin{cases}
    0.35&\hbox{if } x<0,\\
    0.8&\hbox{if } x\geq0
\end{cases}
\end{equation}
with $v_0(x)=V\left(\rho_0(x),\omega_0(x)\right)$.
Starting from a constant initial density and an increasing Riemann-type initial Lagrangian marker, we observe the formation of a rarefaction wave on the left and a contact discontinuity on the right; after a certain time, the density reaches the vacuum. Moreover, the solution remains between zero and the maximum of the initial data.
\begin{figure}[!ht]
\centering
\includegraphics[width=0.3\textwidth]{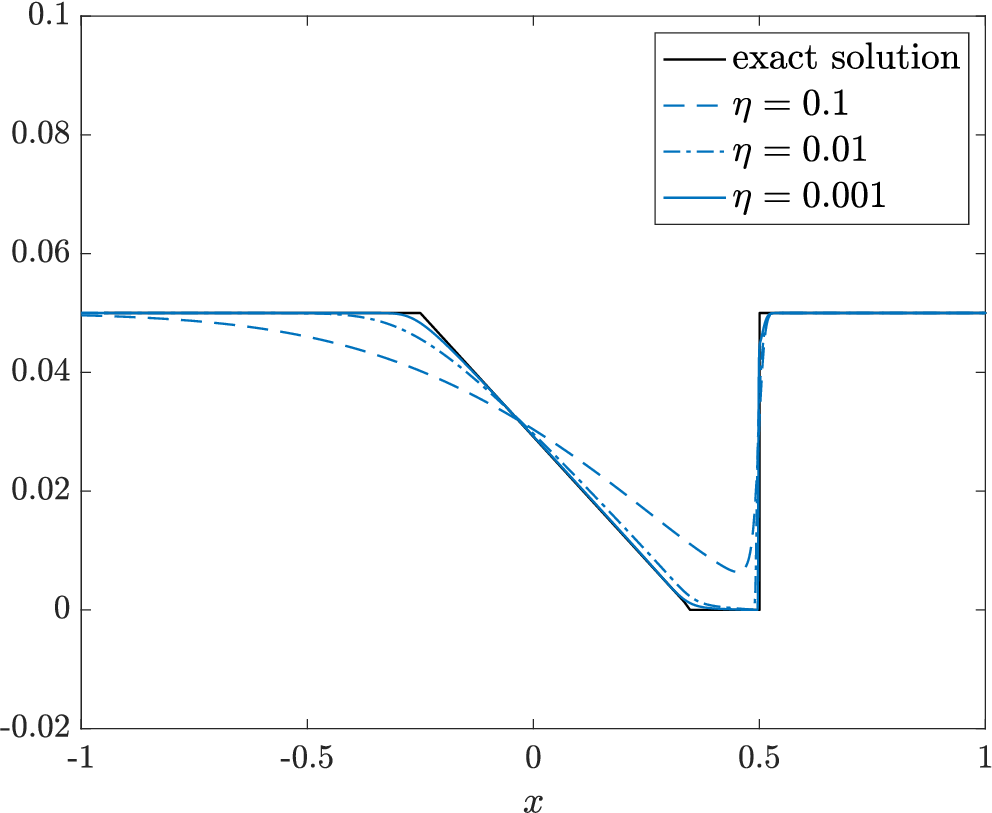}
\hfil
\includegraphics[width=0.3\textwidth]{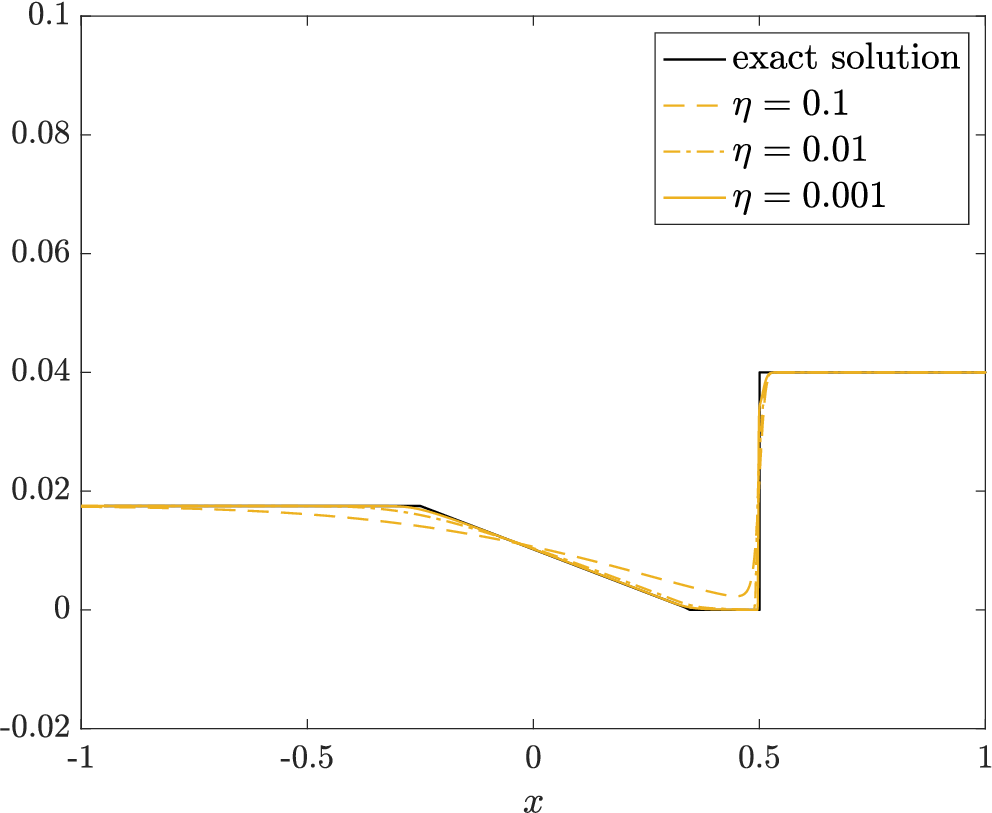}
\caption{Numerical simulations for the initial data in \cref{e:initialdatum_1} and the velocity function \cref{eq:arz_vel}. Left: density $\rho$ for different values of $\eta.$ Right: $q$ for different values of $\eta.$
} 
\label{fig:ex1}
\end{figure}
\begin{figure}[!ht]
\centering
\includegraphics[scale=0.25, trim={6mm 0mm 31mm 0mm}, clip]{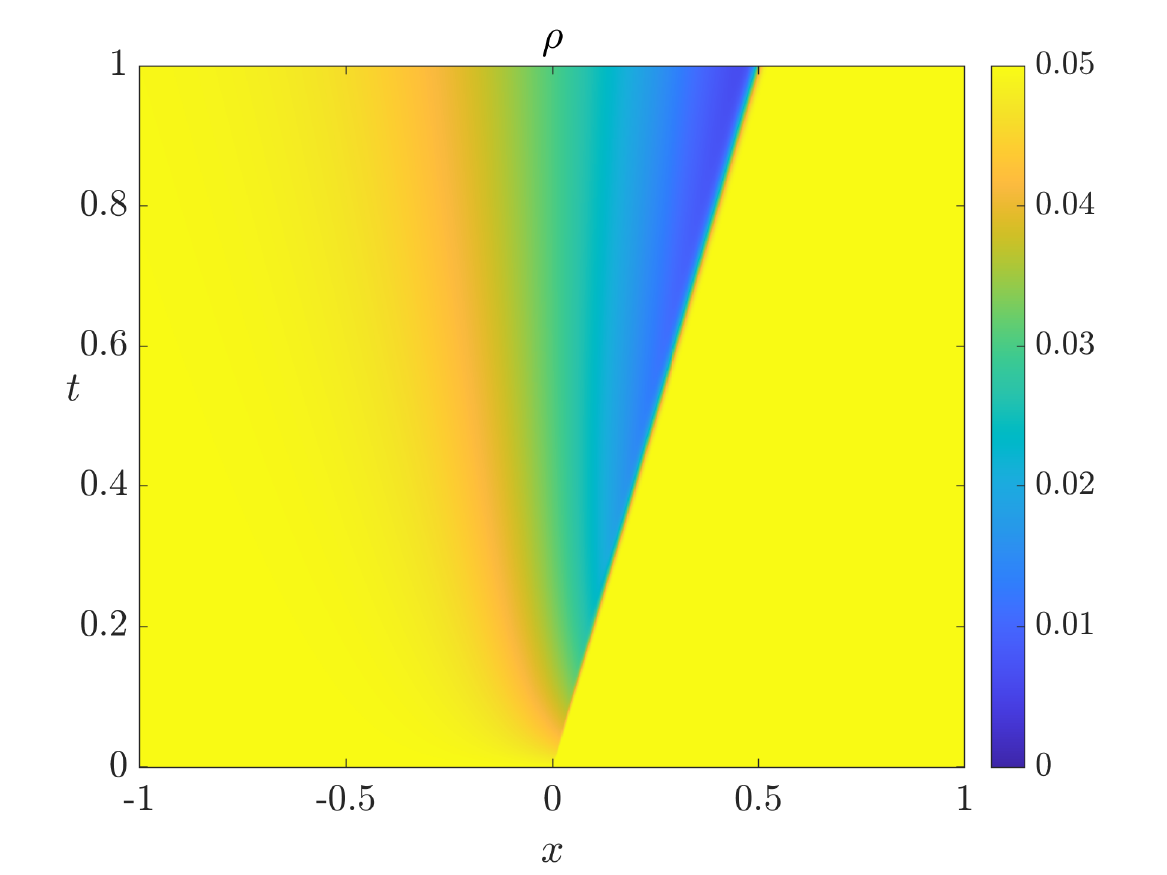}
\includegraphics[scale=0.25, trim={26mm 0mm 31mm 0mm}, clip]{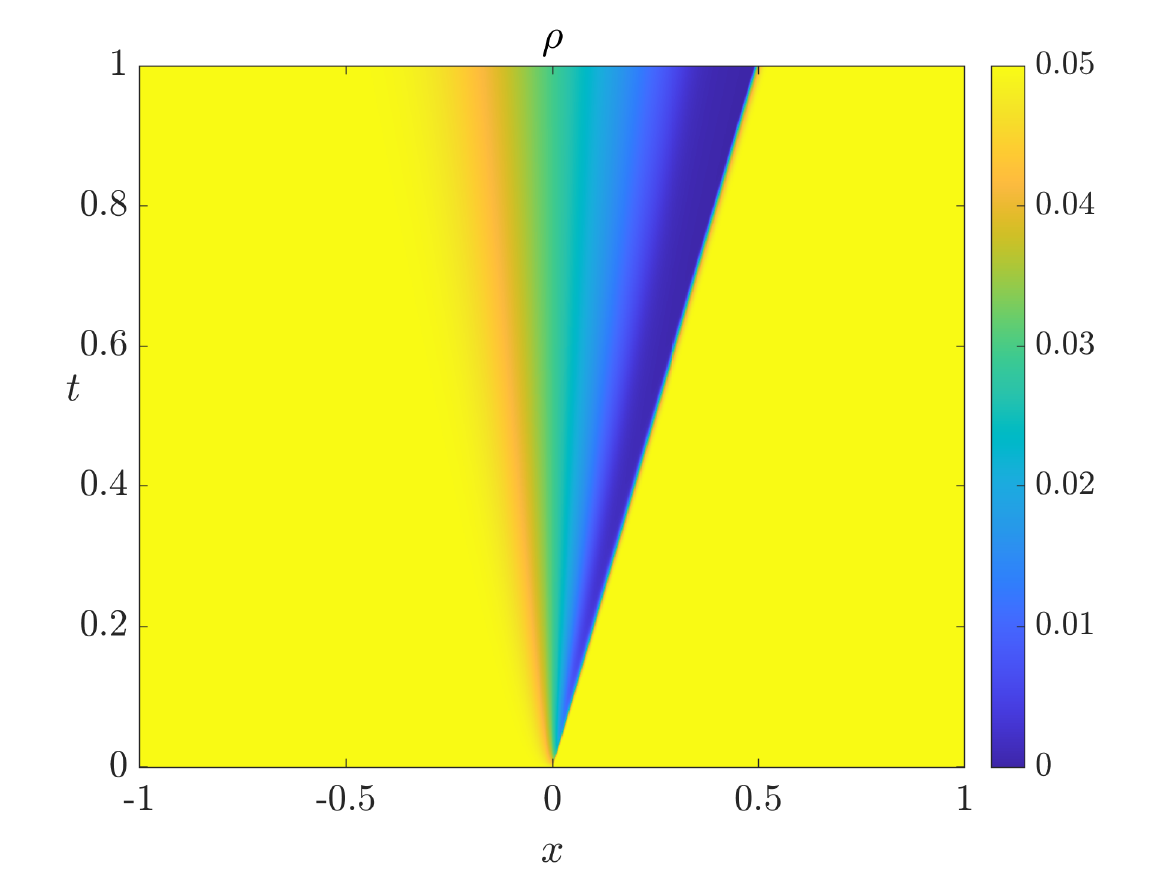}
\includegraphics[scale=0.25, trim={26mm 0mm 31mm 0mm}, clip]{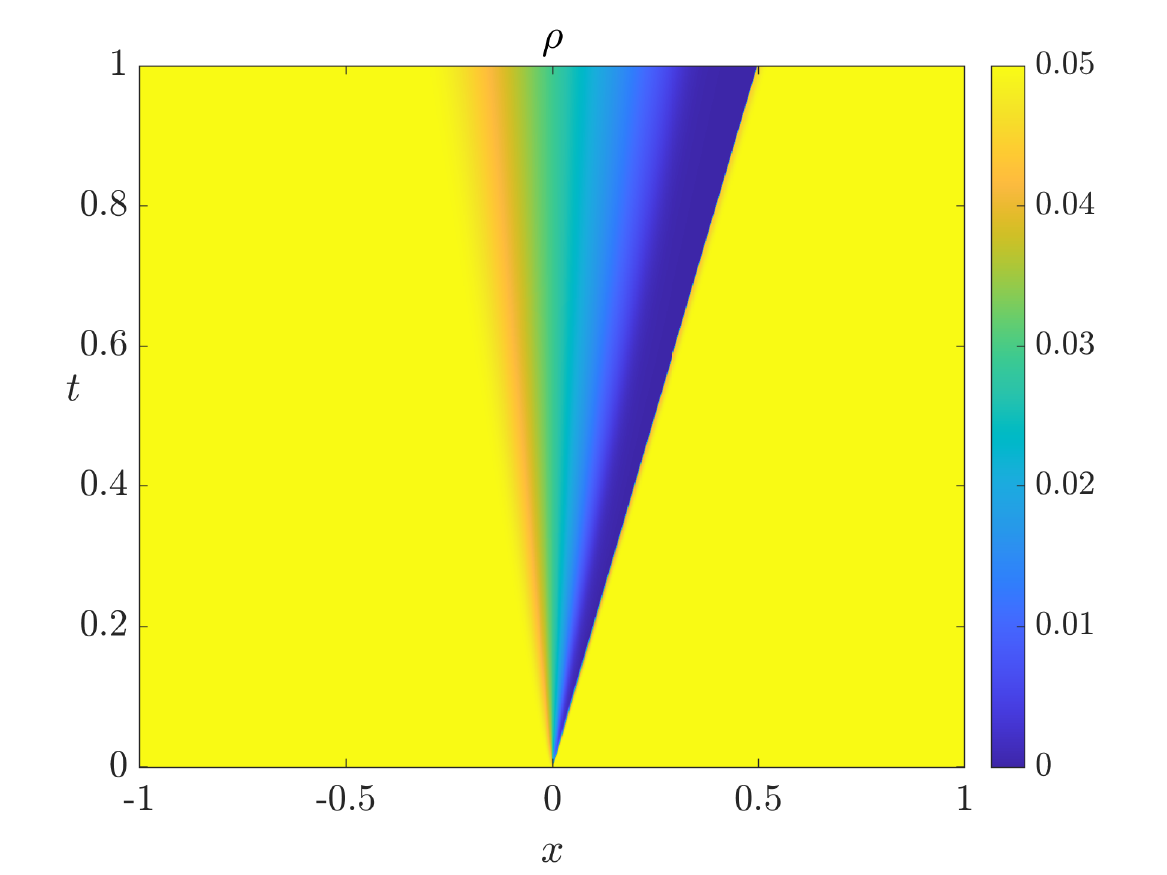}
\includegraphics[scale=0.25, trim={26mm 0mm 6mm 0mm}, clip]{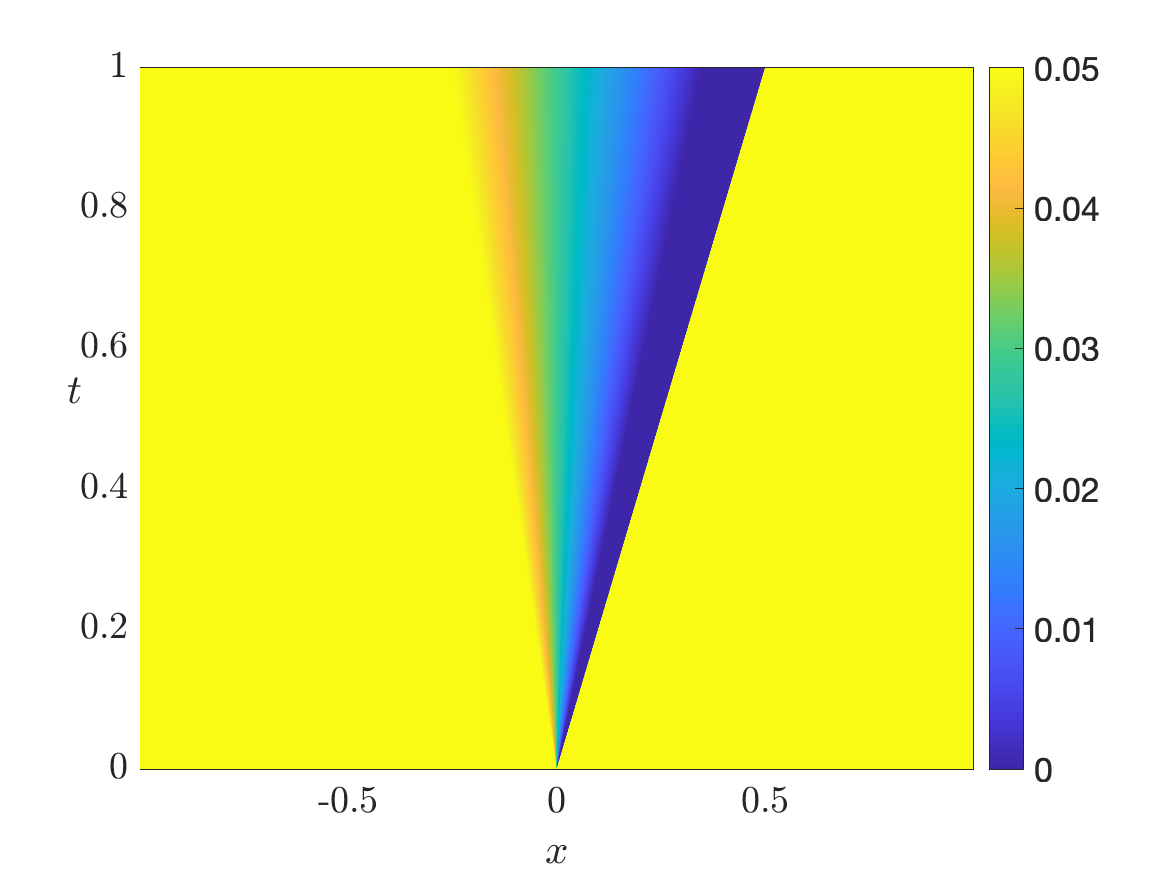}
\caption{$(t,x)$-plots of the density $\rho$ for different values of $\eta.$ 
From left to right, $\eta= 0.1$,\ $\eta=0.01$,\ $\eta=0.001$
followed by the exact solution.
}
\label{fig:ex2}
\end{figure}
\begin{figure}[!ht]
\centering
\includegraphics[scale=0.25, trim={6mm 0mm 31mm 0mm}, clip]{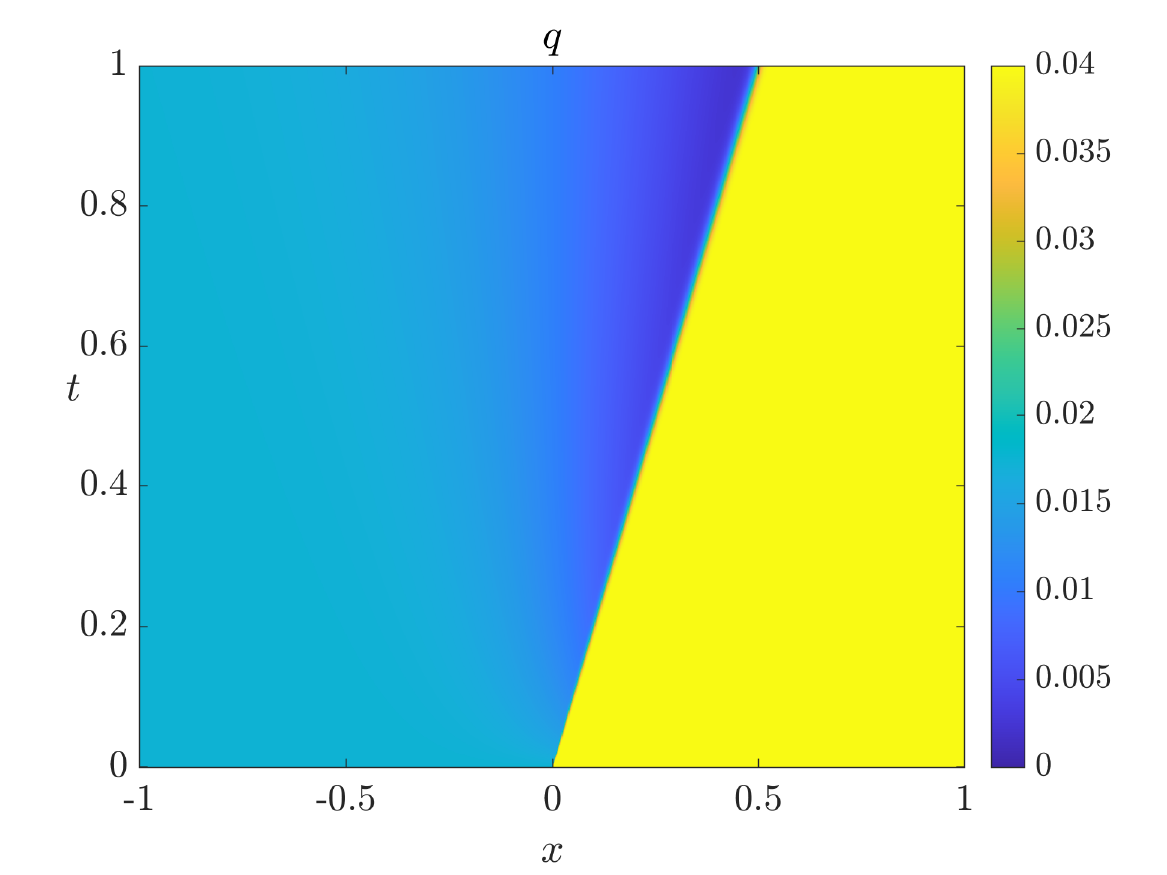}
\includegraphics[scale=0.25, trim={26mm 0mm 31mm 0mm}, clip]{New_Figures/eta1e-1_fast_q.pdf}
\includegraphics[scale=0.25, trim={26mm 0mm 31mm 0mm}, clip]{New_Figures/eta1e-1_fast_q.pdf}
\includegraphics[scale=0.25, trim={26mm 0mm 6mm 0mm}, clip]{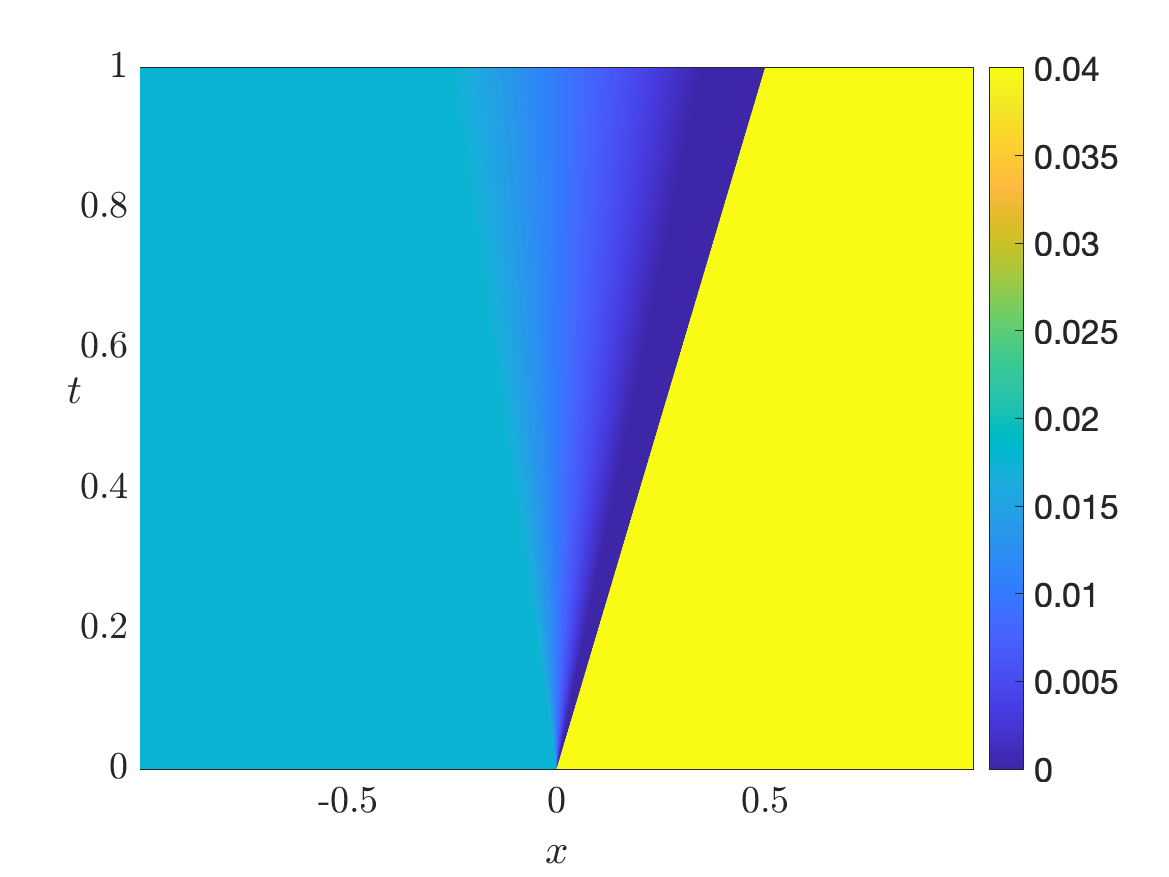}
\caption{$(t,x)$-plots of the momentum $q$ for different values of $\eta.$ 
From left to right, $\eta= 0.1$,\ $\eta=0.01$,\ $\eta=0.001$
followed by the exact solution.
}
\label{fig:ex3}
\end{figure}

For the second simulation (\crefrange{fig:ex4}{fig:ex7}), 
let us consider the same velocity function \cref{eq:arz_vel} and 
\begin{equation}
\label{e:si_C_in}
\begin{aligned}
\rho_0(x)&=0.05
&
q_0(x)&=\begin{cases}
    0.04&\hbox{if } x<0,\\
     0.0175
    &\hbox{if } x\geq0,
\end{cases}\\
v_0(x)&=\begin{cases}
    0.5&\hbox{if } x<0,\\
    0.05&\hbox{if } x\geq0,
\end{cases}&
\omega_0(x)&=\begin{cases}
    0.8&\hbox{if } x<0,\\
    0.35&\hbox{if } x\geq0.
\end{cases}
\end{aligned}
\end{equation}
In this case, starting from a constant initial density and a decreasing Riemann-type initial Lagrangian marker, we observe the formation of a shock and a contact discontinuity, and the solution remains bounded in an invariant region.
\begin{figure}[!ht]
\centering
\includegraphics[scale=0.3, trim={0mm 0mm 0mm 0mm}, clip]{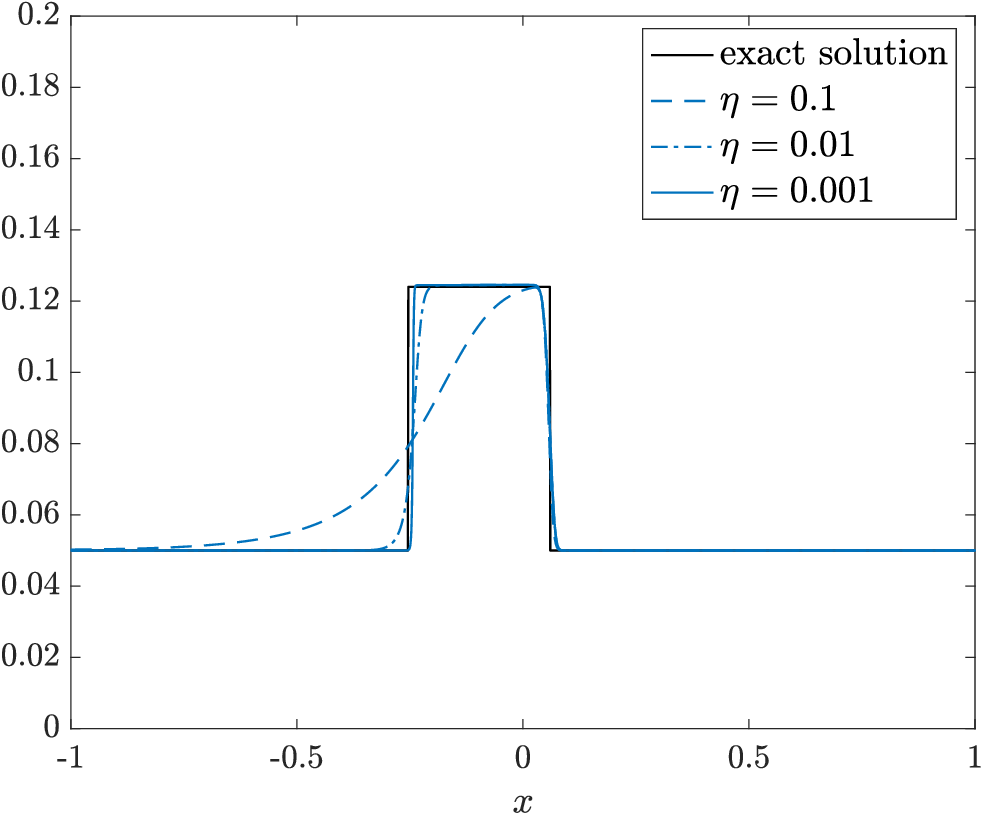}
\hfil
\includegraphics[scale=0.3, trim={0mm 0mm 0mm 0mm}, clip]{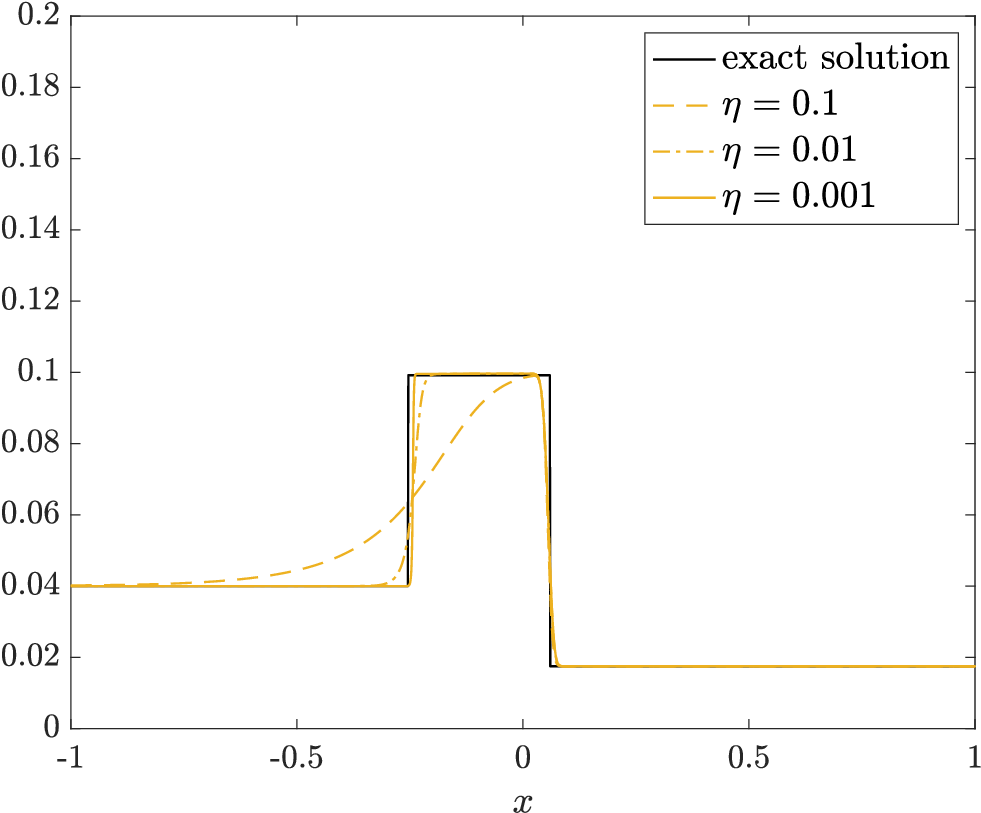}
\hfil
\includegraphics[scale=0.3, trim={0mm 0mm 0mm 0mm}, clip]{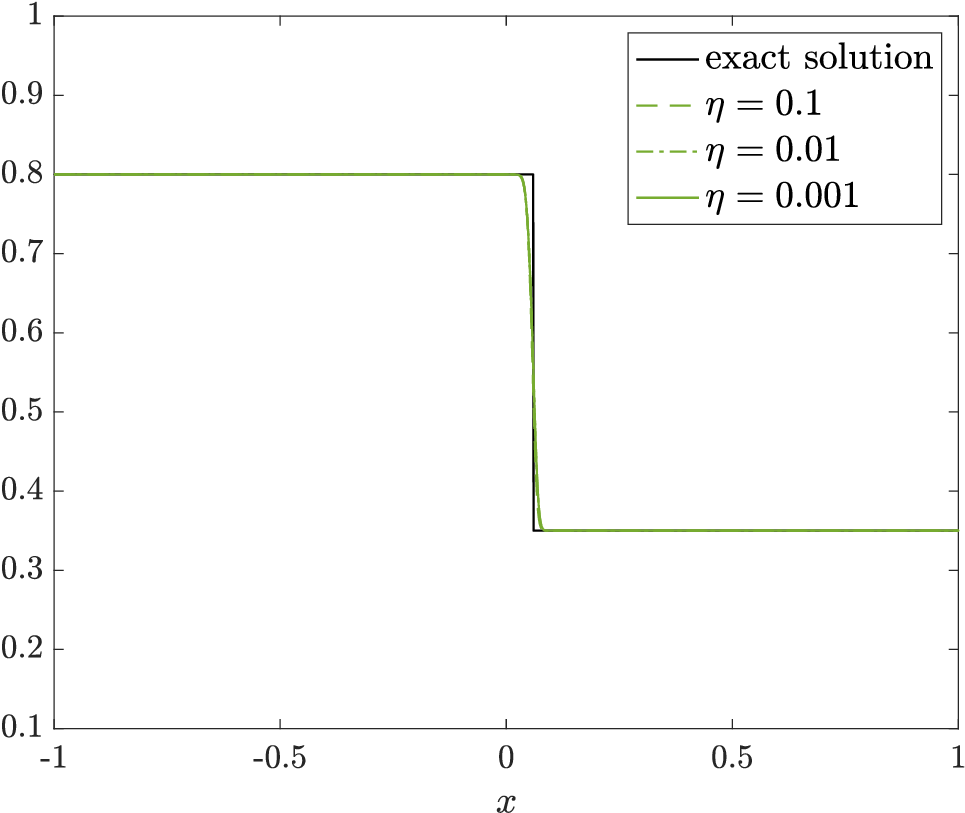}
\caption{Numerical simulations for the initial data in \cref{e:si_C_in} and velocity function \cref{eq:arz_vel}. From the left: density $\rho$, momentum $q$, and  $\omega$ for different values of $\eta.$
}
\label{fig:ex4}
\end{figure}
\begin{figure}[!ht]
\centering
\includegraphics[scale=0.25, trim={6mm 0mm 31mm 0mm}, clip]{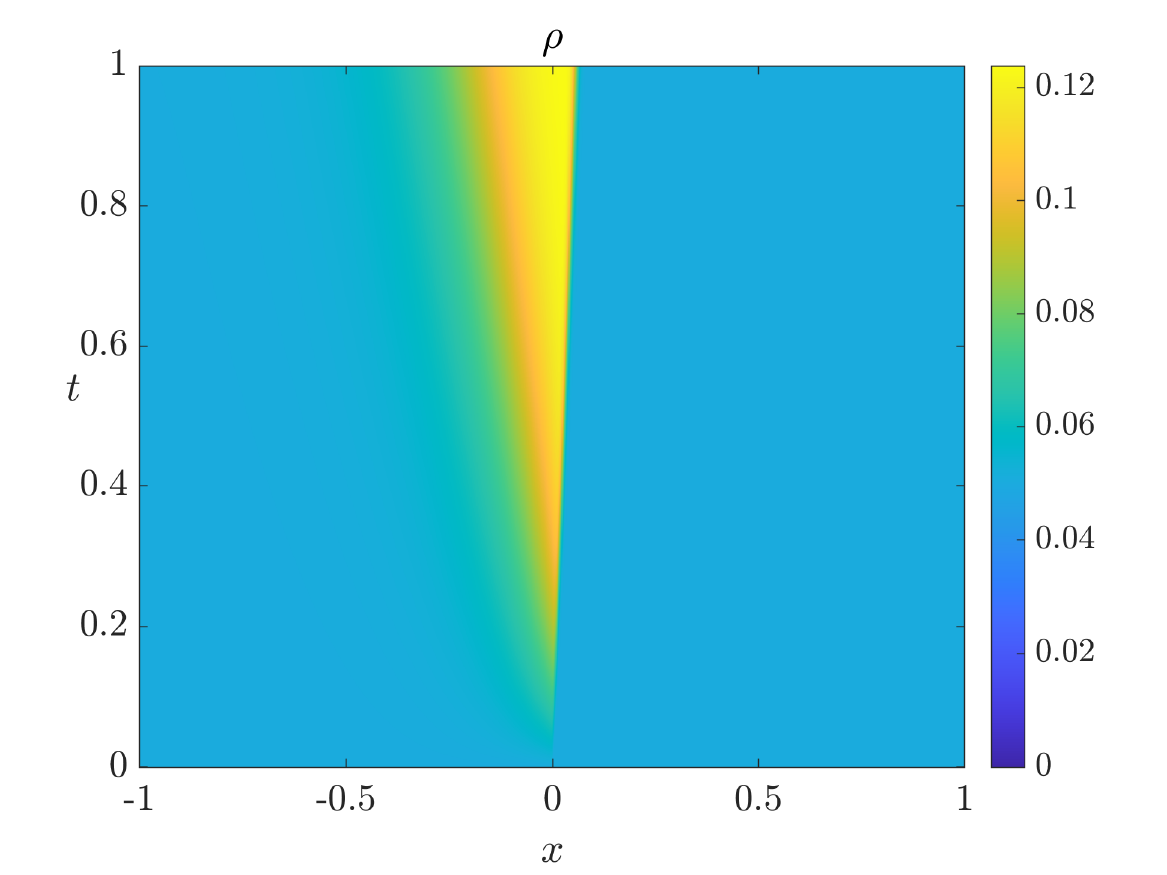}
\includegraphics[scale=0.25, trim={26mm 0mm 31mm 0mm}, clip]{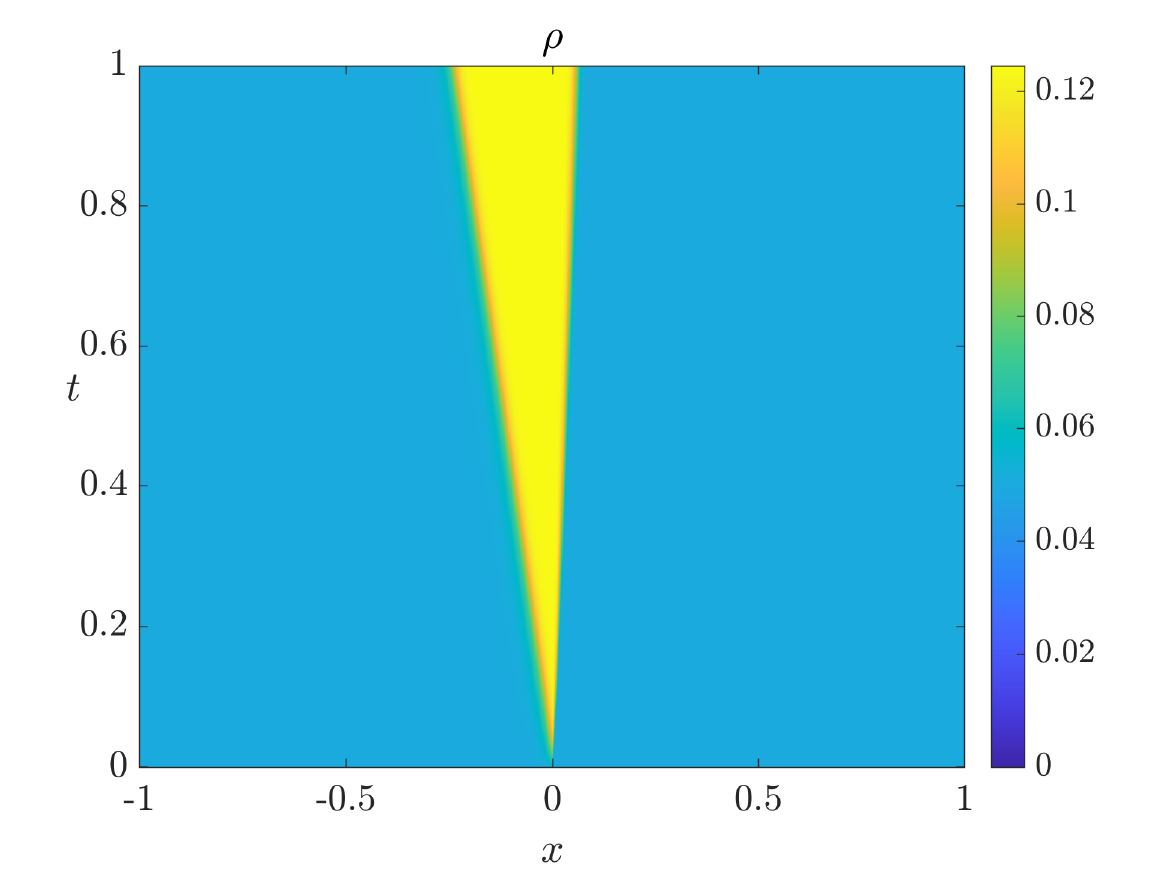}
\includegraphics[scale=0.25, trim={26mm 0mm 31mm 0mm}, clip]{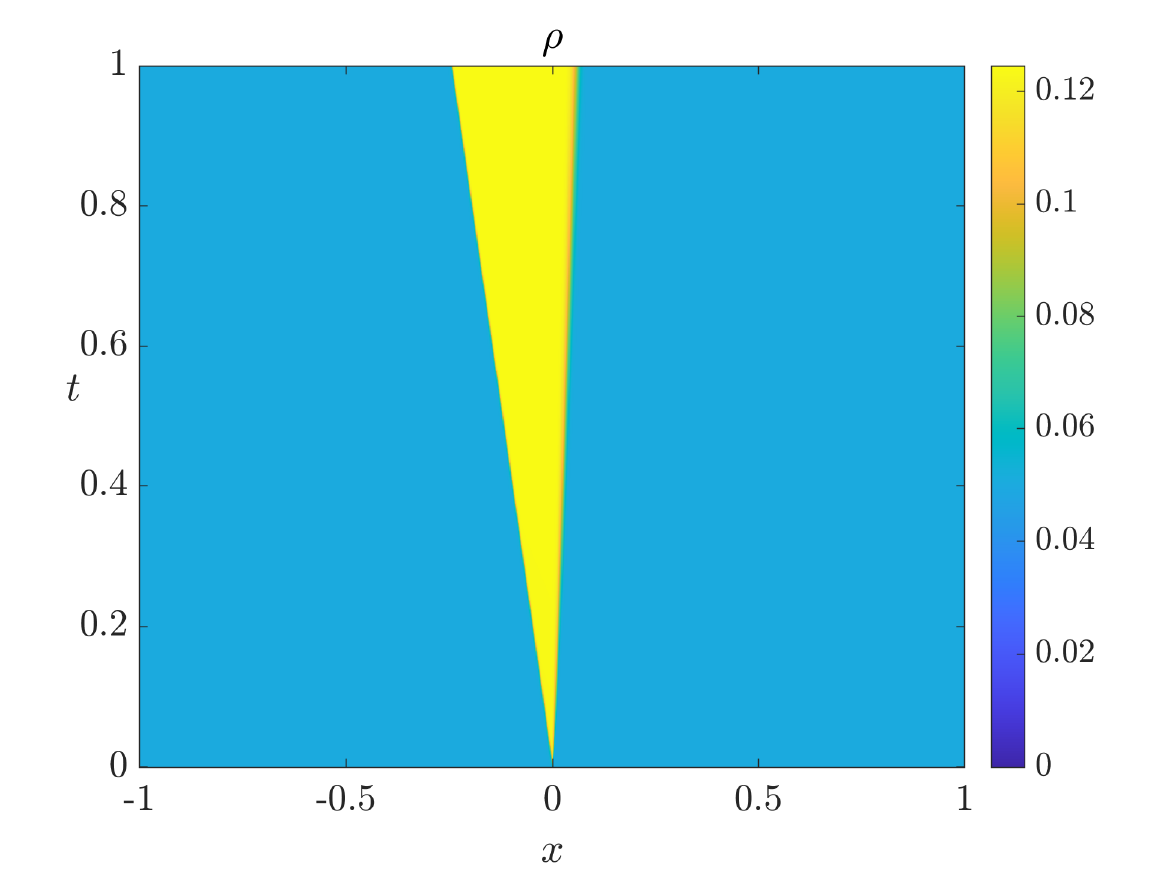}
\includegraphics[scale=0.25, trim={26mm 0mm 6mm 0mm}, clip]{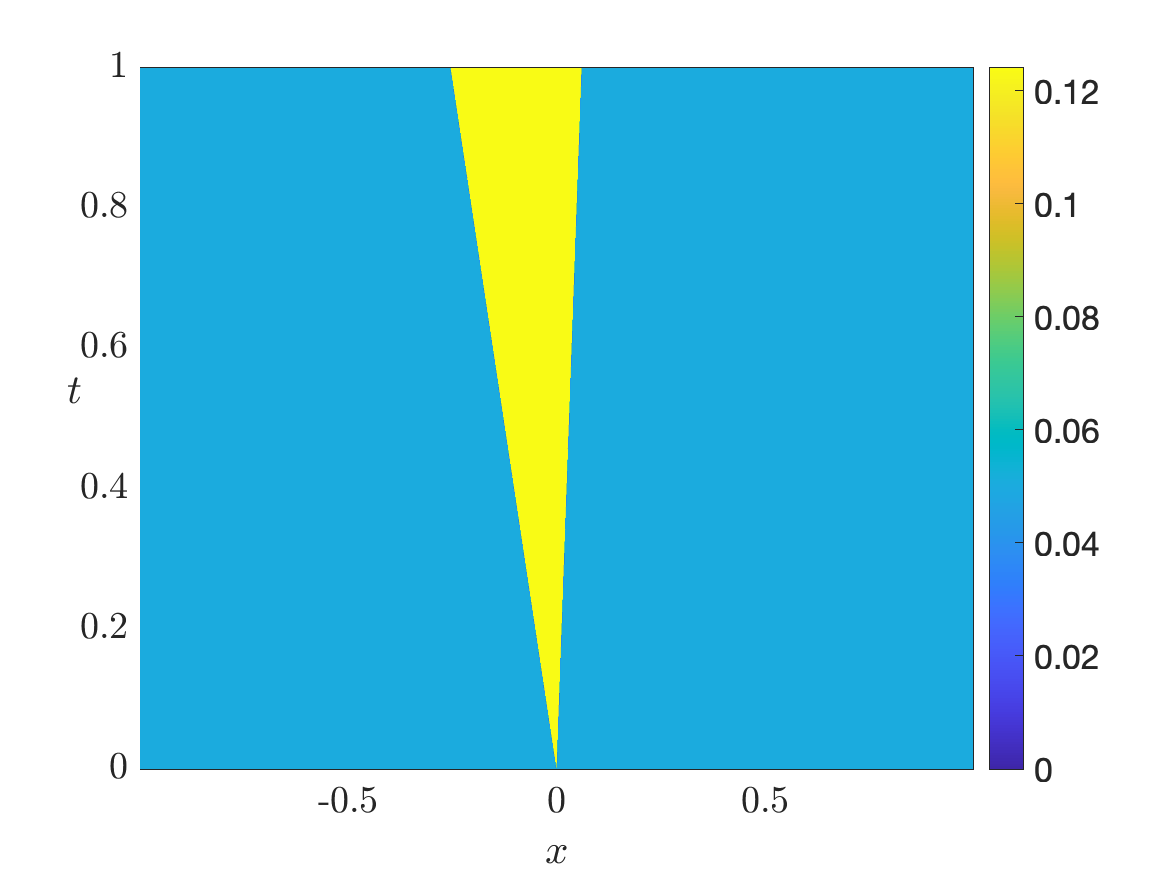}
\caption{$(t,x)$-plots of the density $\rho$ for different values of $\eta.$ 
From left to right, $\eta= 0.1$,\ $\eta=0.01$,\ $\eta=0.001$
followed by the exact solution. 
}
\label{fig:ex5}
\end{figure}
\begin{figure}[!ht]
\centering
\includegraphics[scale=0.25, trim={6mm 0mm 31mm 0mm}, clip]{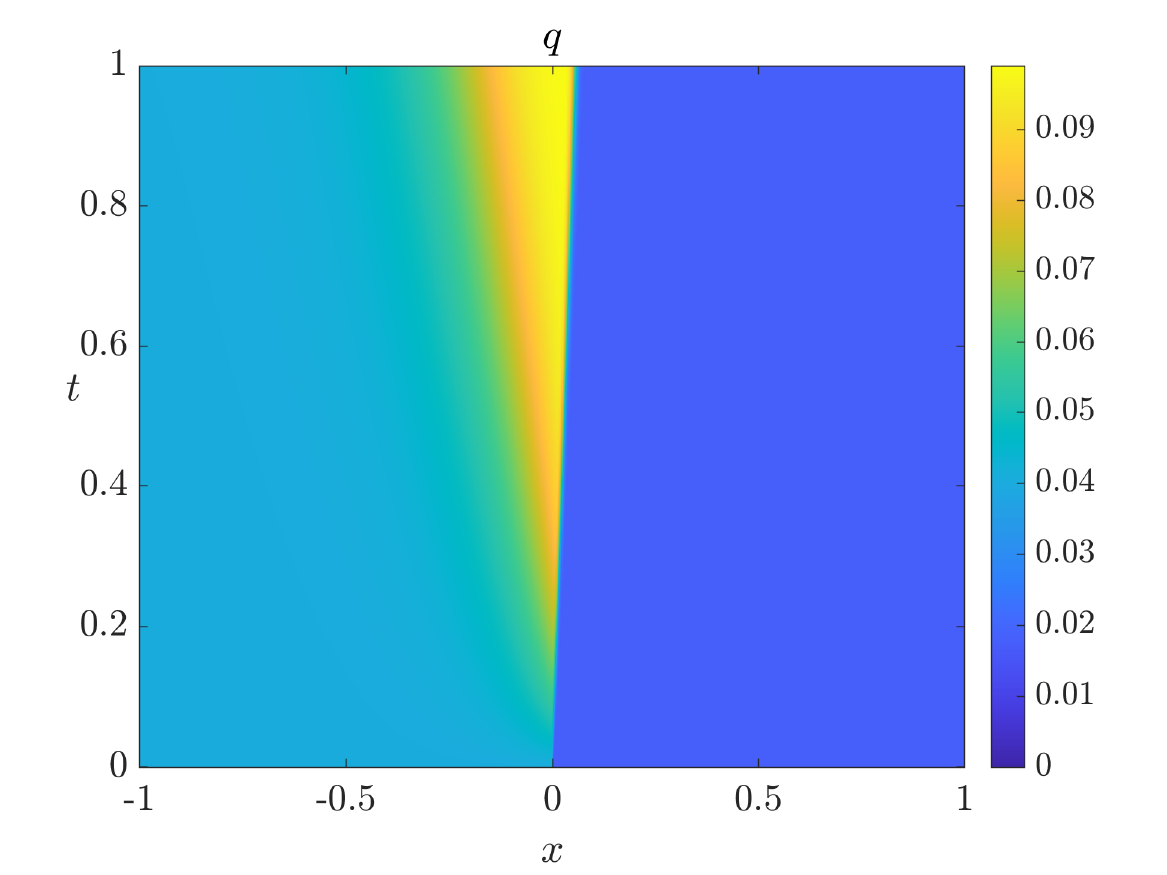}
\includegraphics[scale=0.25, trim={26mm 0mm 31mm 0mm}, clip]{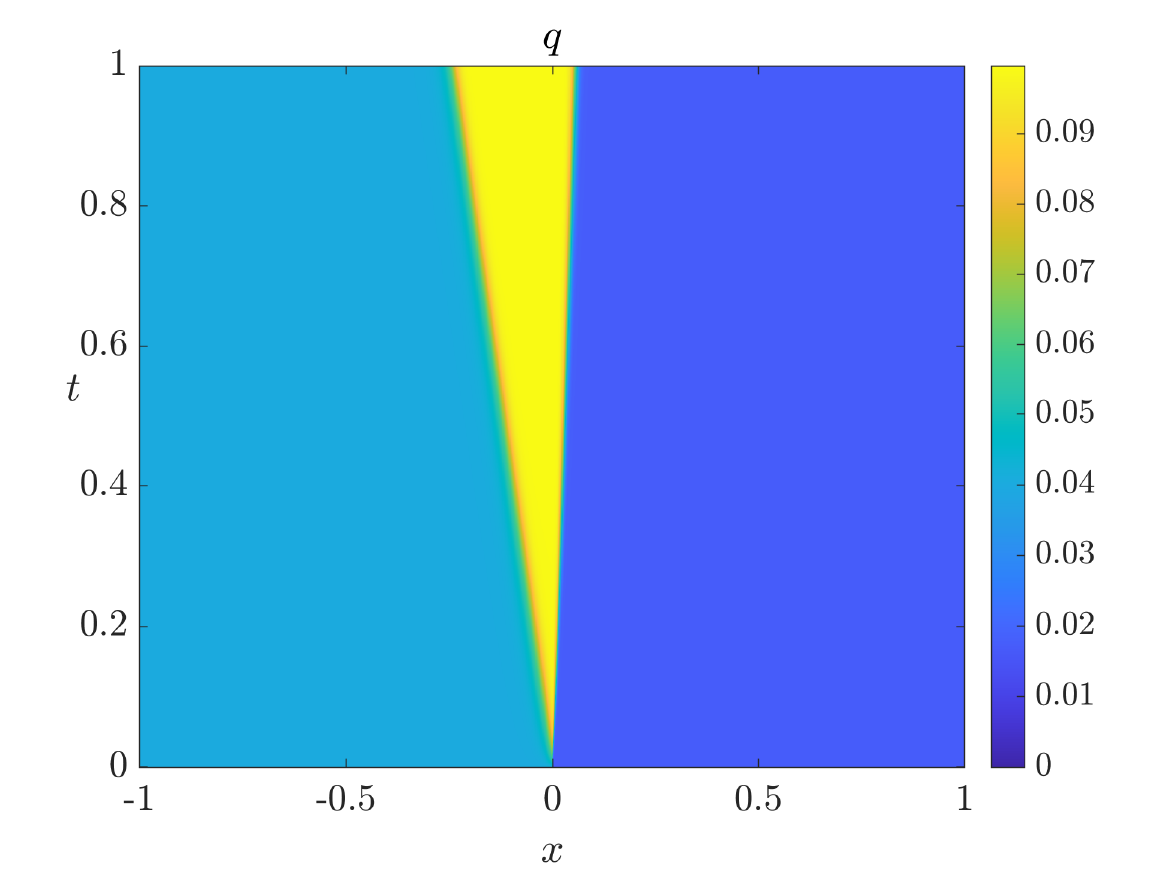}
\includegraphics[scale=0.25, trim={26mm 0mm 31mm 0mm}, clip]{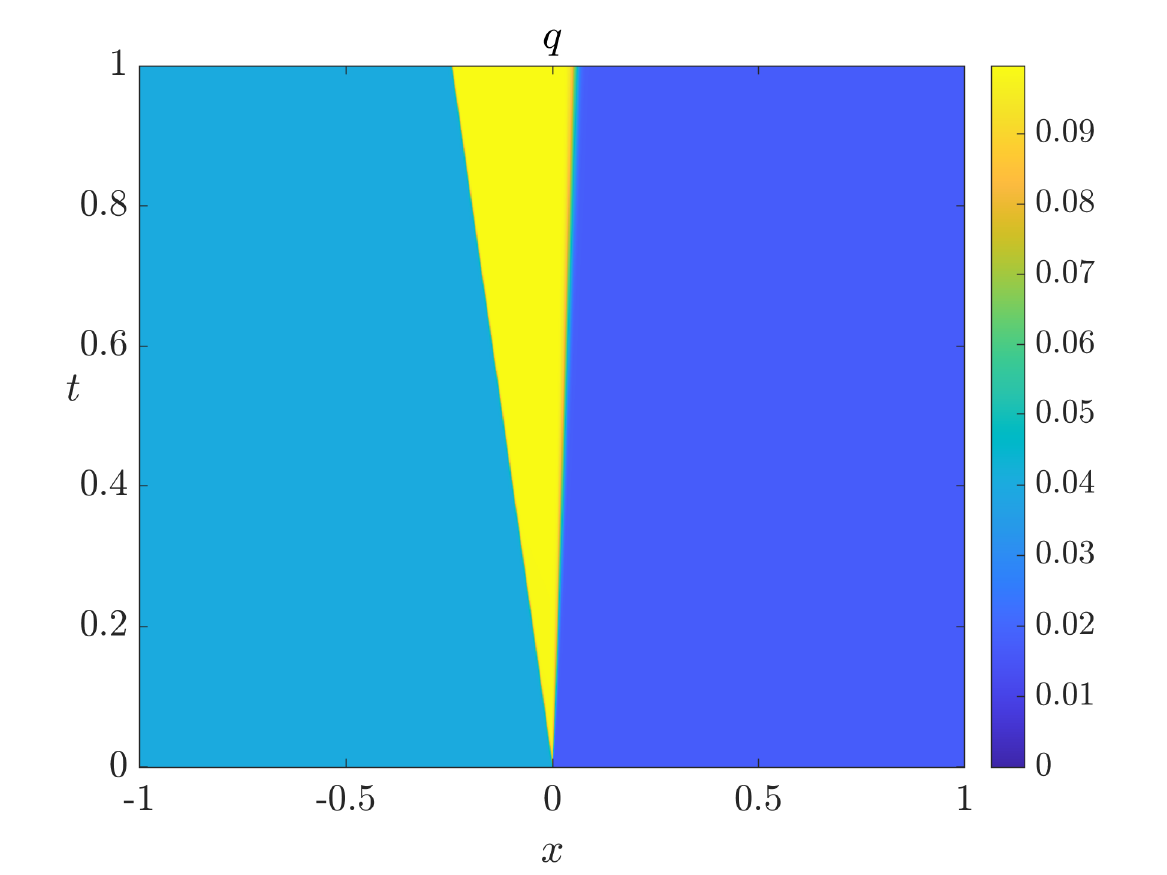}
\includegraphics[scale=0.25, trim={26mm 0mm 6mm 0mm}, clip]{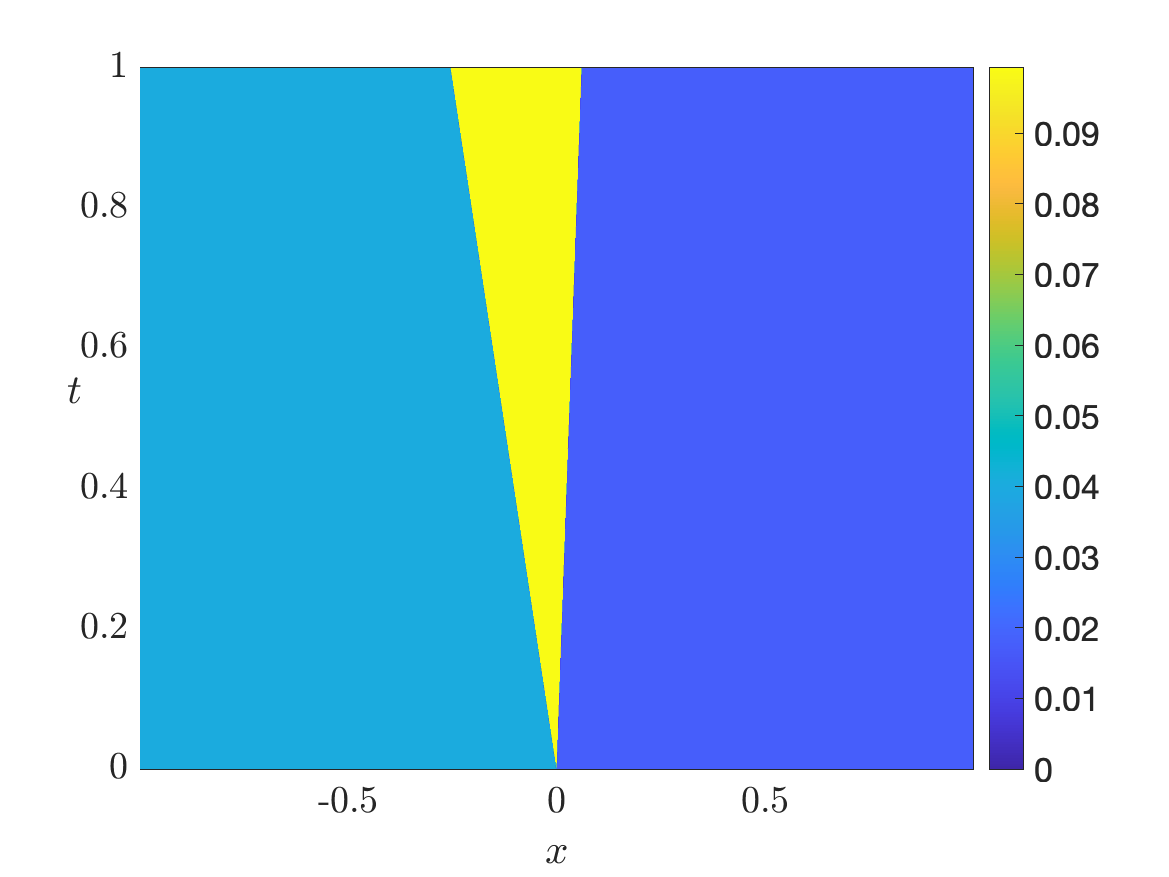}
\caption{$(t,x)$-plots of the momentum $q$ for different values of $\eta.$ 
From left to right, $\eta= 0.1$,\ $\eta=0.01$,\ $\eta=0.001$
followed by the exact solution. 
}
\label{fig:ex6}
\end{figure}
\begin{figure}[!ht]
\centering
\includegraphics[scale=0.25, trim={6mm 0mm 31mm 0mm}, clip]{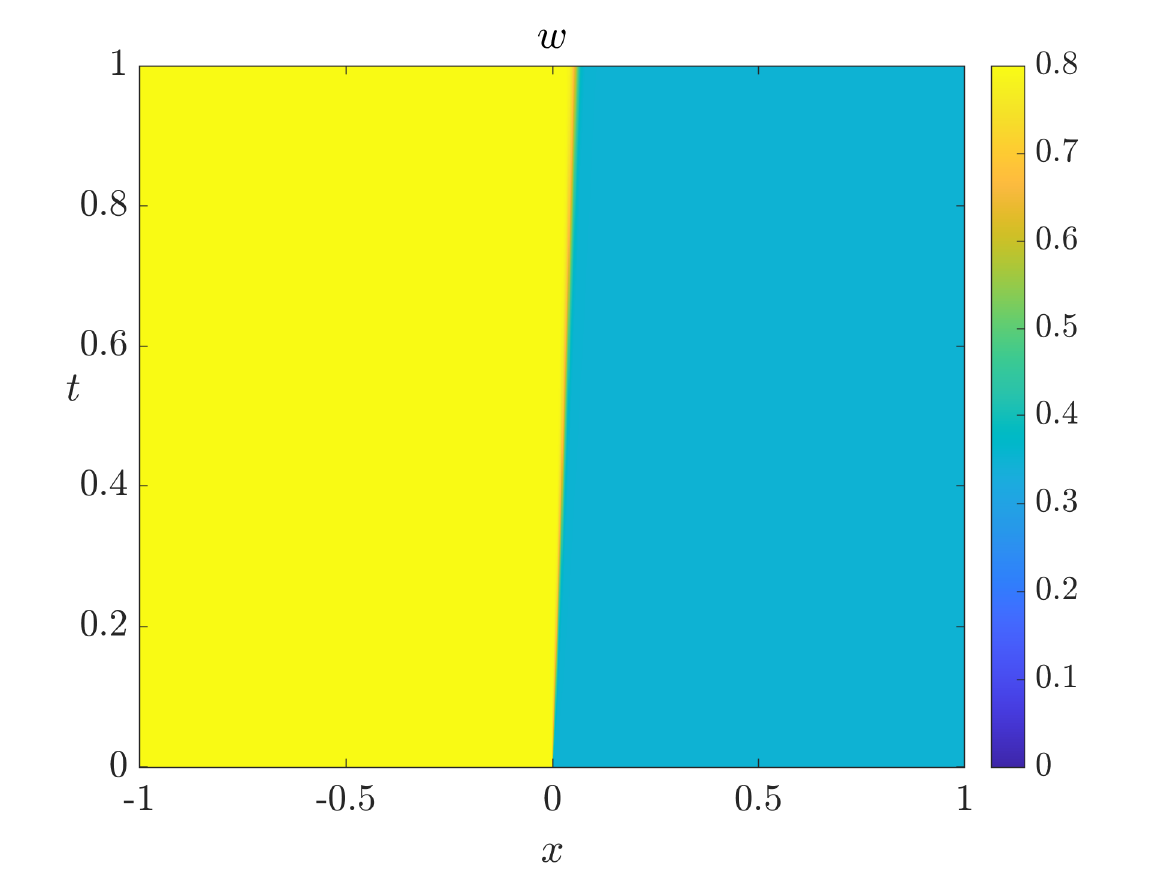}
\includegraphics[scale=0.25, trim={26mm 0mm 31mm 0mm}, clip]{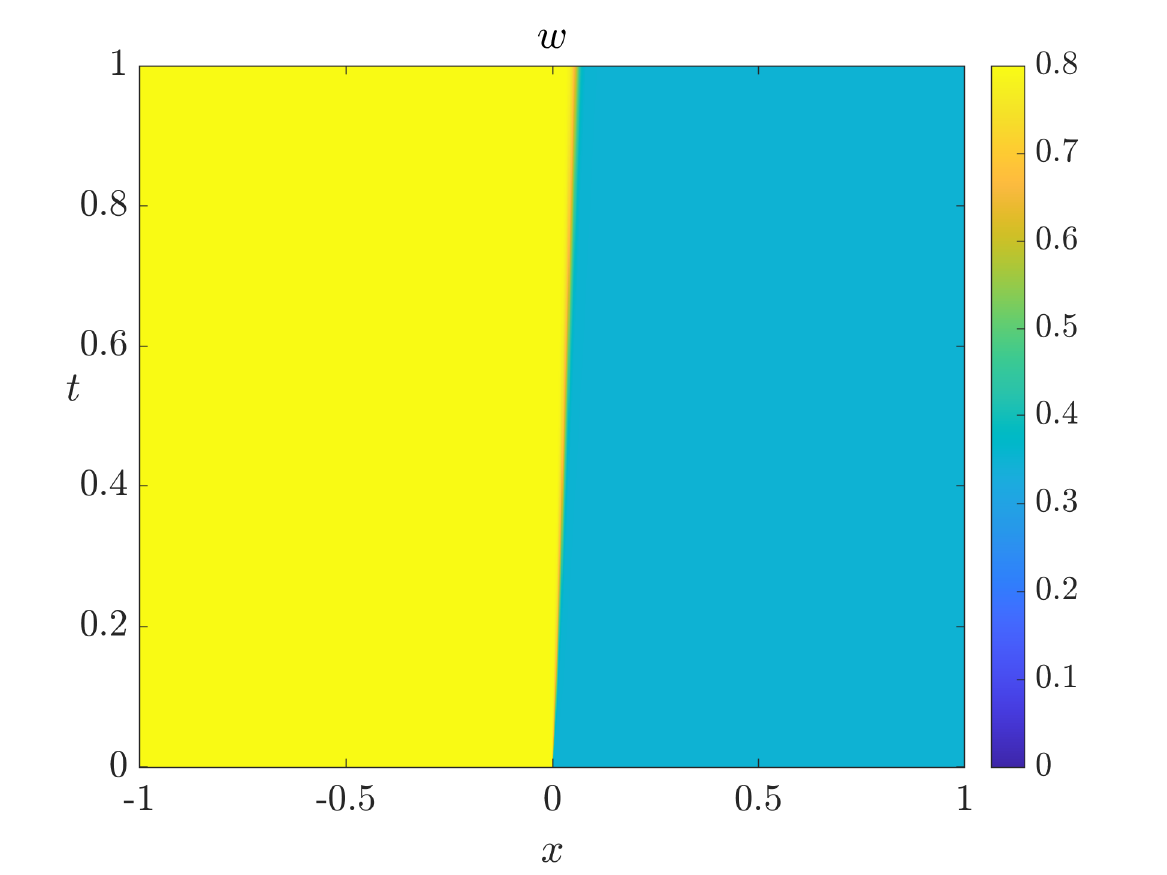}
\includegraphics[scale=0.25, trim={26mm 0mm 31mm 0mm}, clip]{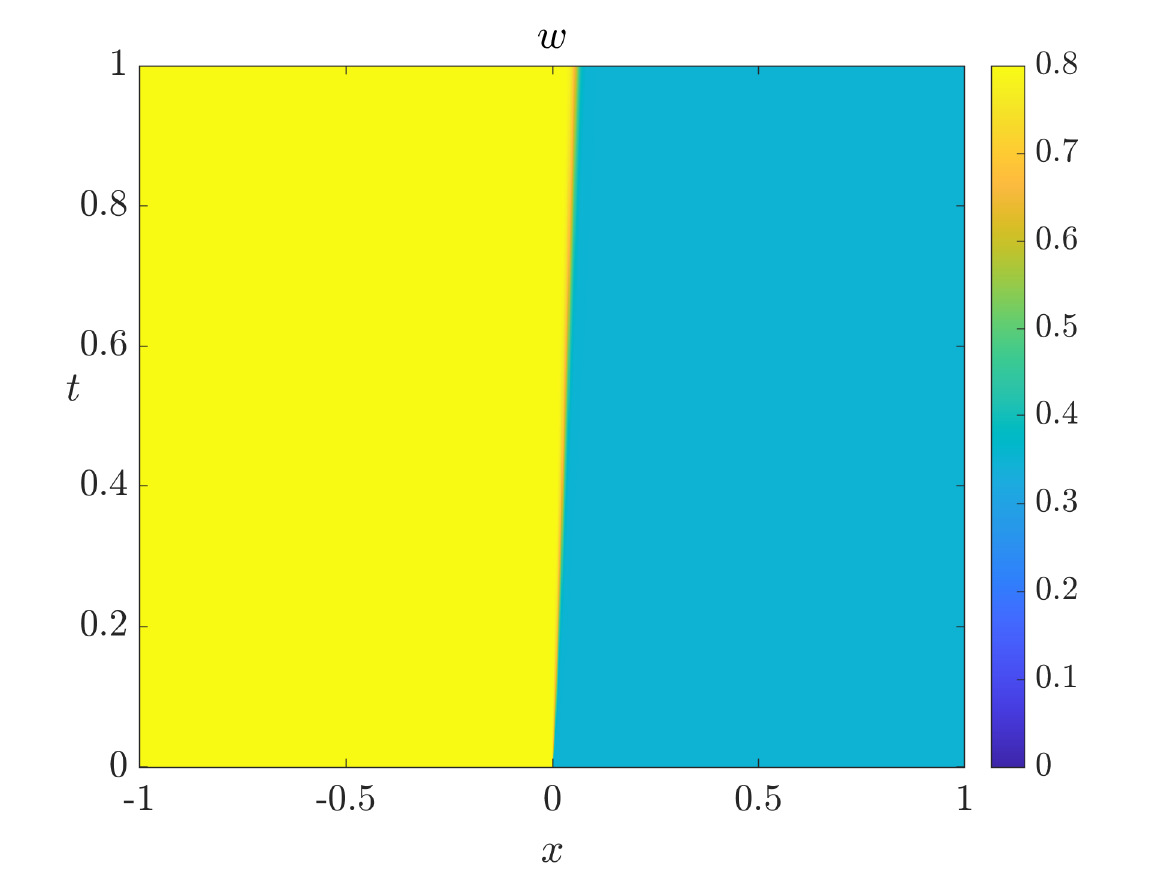}
\includegraphics[scale=0.25, trim={26mm 0mm 6mm 0mm}, clip]{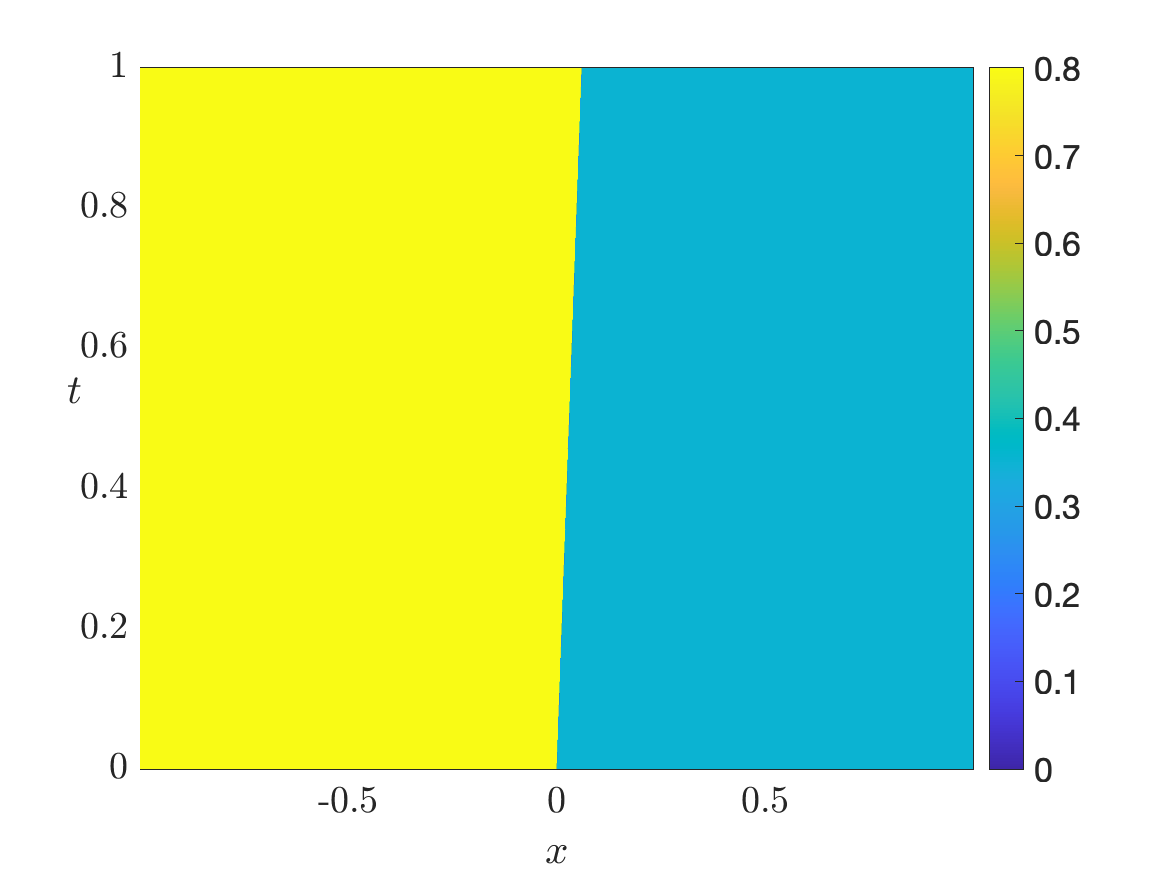}
\caption{$(t,x)$-plots of the Lagrangian marker $\omega$ for different values of $\eta.$ 
From left to right, $\eta= 0.1$,\ $\eta=0.01$,\ $\eta=0.001$
followed by the exact solution. 
}
\label{fig:ex7}
\end{figure}

For the last simulation (see \cref{fig:ex3bis,fig:ex8,fig:ex9}), 
we consider 
\begin{equation}\label{sim:velocity_2}
    V(\rho,\omega)=   \tfrac{\omega}{1+\rho},
\end{equation}
which satisfies the assumptions \cref{S5-V1} and \cref{S5-V3} and the first condition in \cref{S5-V2}, although $V$ does not vanish at any value of $\rho$.
Let us set the initial data \cref{e:initialdatum_1} as follows:
\begin{equation*}
\rho_0(x)=\bar \rho\coloneqq 0.05\quad \forall\, x\in\R\,,\qquad 
\omega_0(x)=\begin{cases}
   \omega_\ell \coloneqq 0.35&\hbox{if } x<0,\\
   \omega_r \coloneqq 0.8&\hbox{if } x\geq0\,.\qquad 
\end{cases}
\end{equation*}
Let ${v}_\ell$ and ${v}_r$ be the corresponding values at $t=0$ for $x<0$ and $x>0$ respectively.
The solution consists of a rarefaction up to $\rho=0$ followed by a contact discontinuity with speed ${v}_r$. Here, $\lambda_1(\rho,\omega) = V+\rho\partial_\rho V = \frac{\omega}{(1+\rho)^2}$.
With the notation
\[
s_1 = \lambda_1(\bar \rho,\omega_\ell) = \tfrac{\omega_\ell}{(1+\bar \rho)^2}\,,\qquad s_2 = \lambda_1(0,\omega_\ell)=\omega_\ell \,,\qquad s_3 = v_r > \omega_\ell\,,
\]
the solution (the $\rho$ component) is given by
\[
\rho(t,x) = \begin{cases}
    \bar \rho & x\le s_1 t\\
    \left(\omega_{\ell} \frac{t}{x}\right)^{-1/2} -1 & s_1t <x\le s_2 t\\
0 & s_2t <x < s_3 t\\
\bar \rho & x>s_3 t\,.
\end{cases}
\]
In this case, as in the first simulation, we can see that the density reaches the vacuum in finite time and is confined between zero and the maximum of the initial data. 
\begin{figure}[!ht]
\centering
\includegraphics[width=0.3\textwidth]{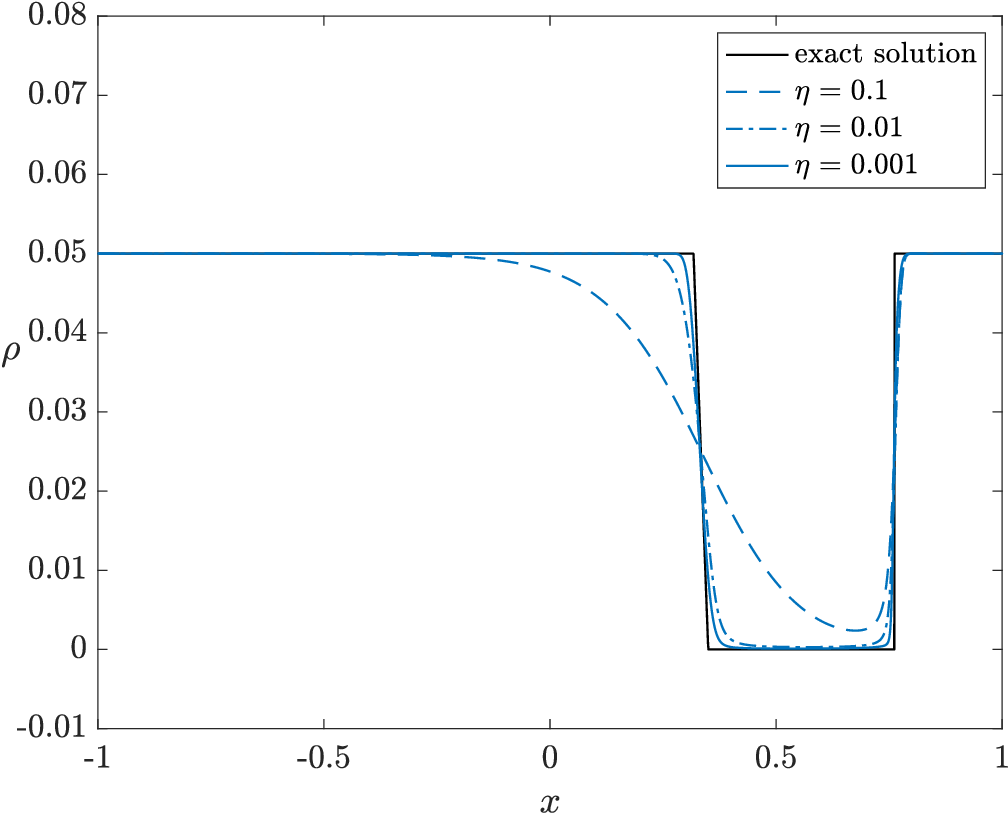}
\hfil
\includegraphics[width=0.3\textwidth]{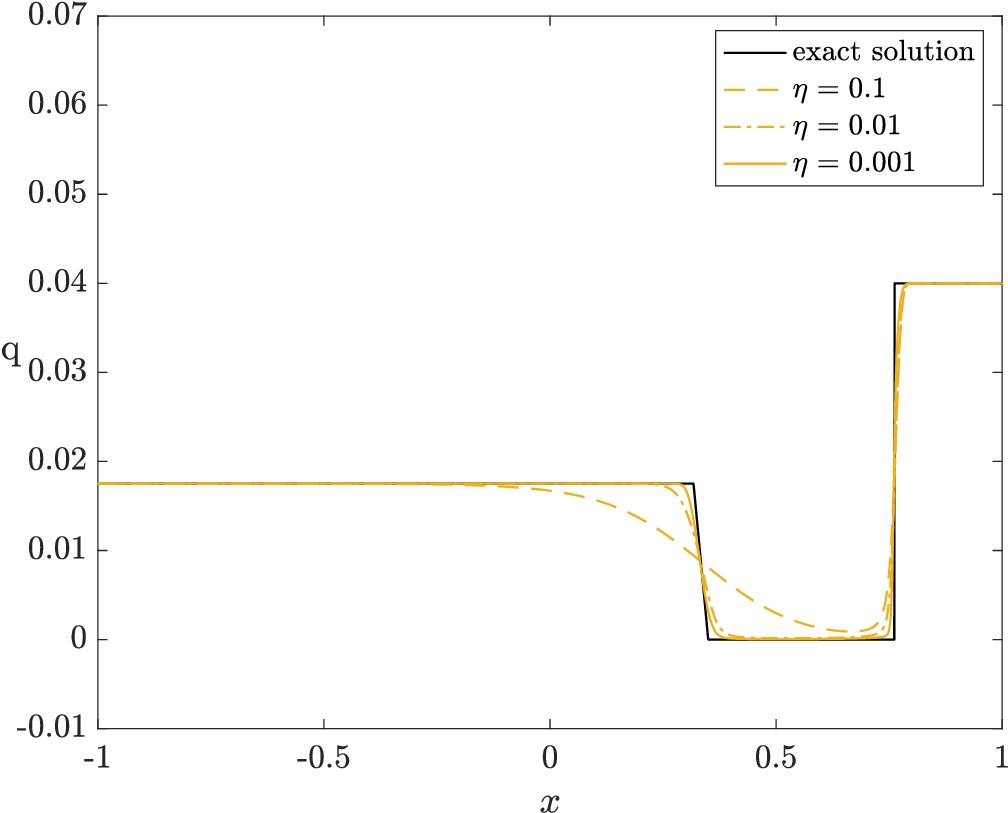}
\caption{Numerical simulations for the initial data in \cref{e:initialdatum_1} and velocity \cref{sim:velocity_2}. Left: density $\rho$ for different values of $\eta.$ Right: $q$ for different values of $\eta$. 
} 
\label{fig:ex3bis}
\end{figure}
\begin{figure}[!ht]
\centering
\includegraphics[scale=0.25, trim={6mm 0mm 31mm 0mm}, clip]{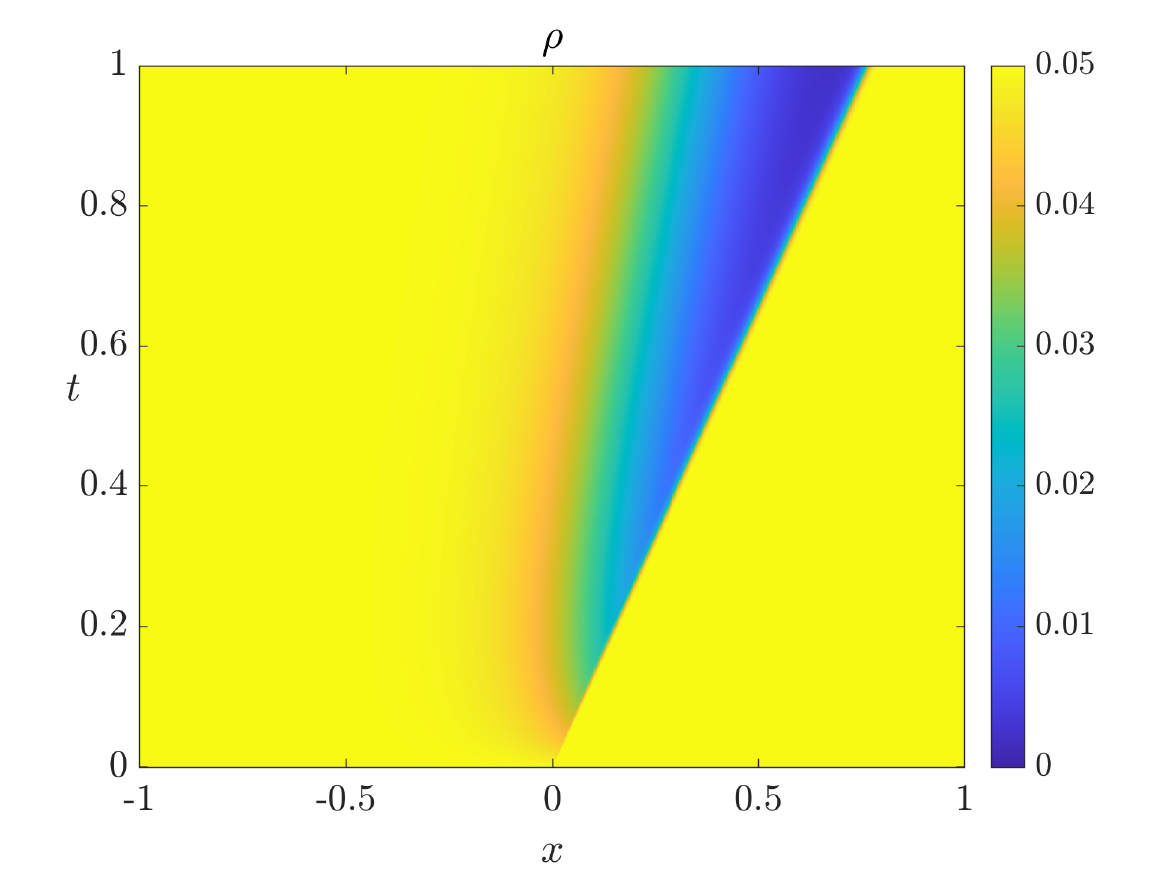}
\includegraphics[scale=0.25, trim={26mm 0mm 31mm 0mm}, clip]{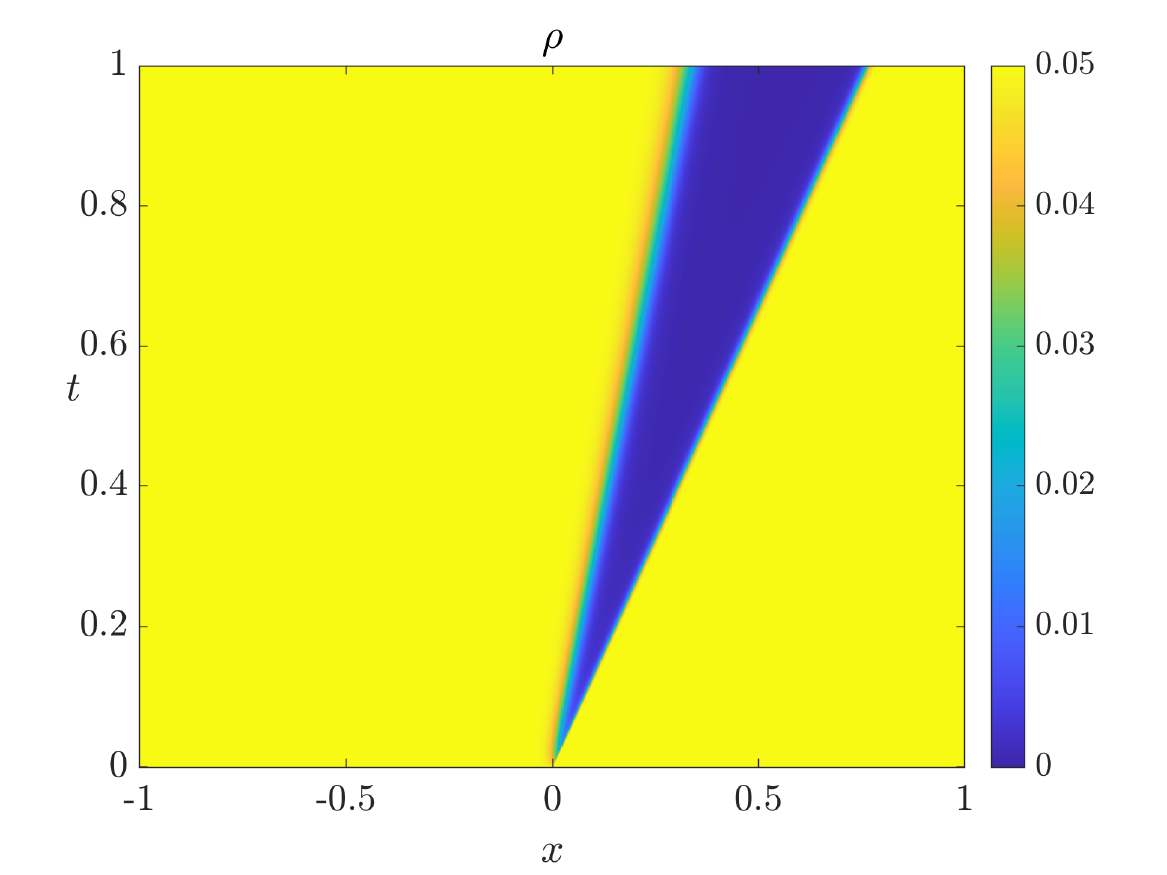}
\includegraphics[scale=0.25, trim={26mm 0mm 31mm 0mm}, clip]{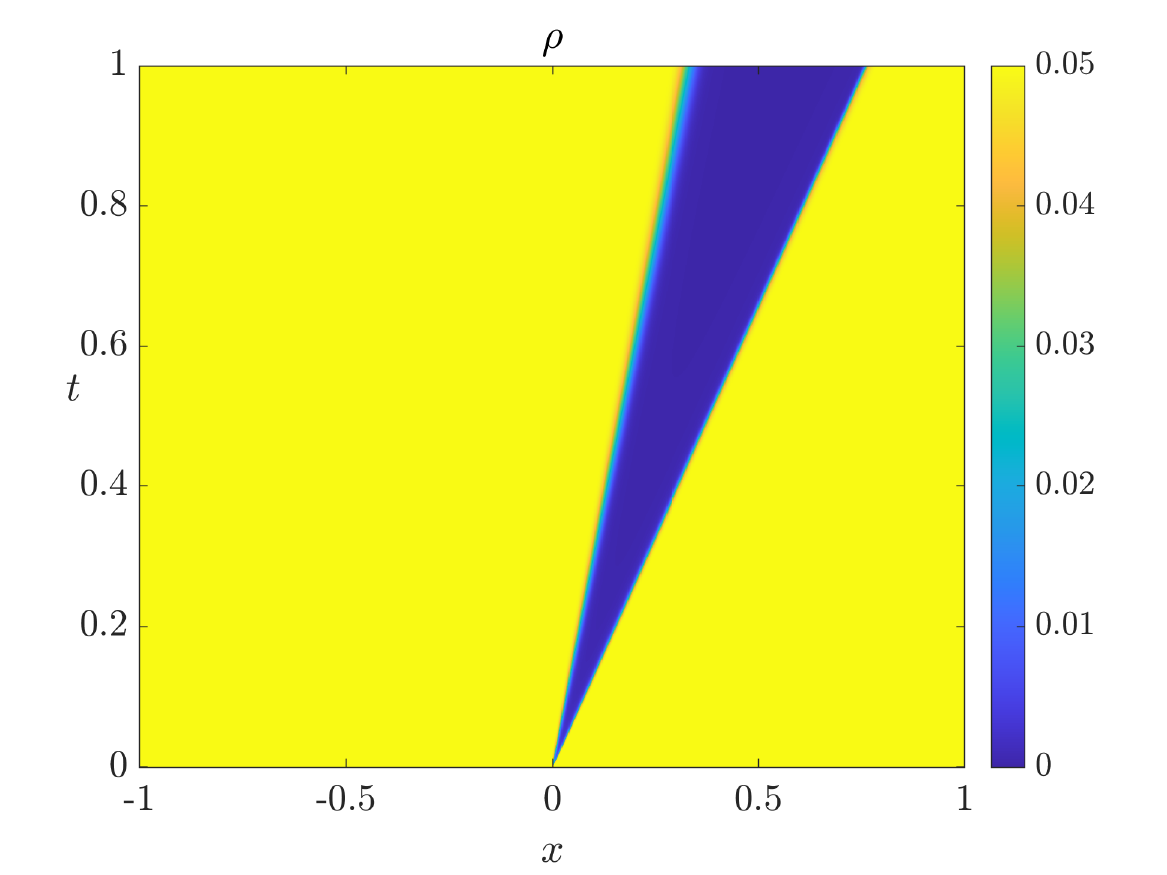}
\includegraphics[scale=0.25, trim={26mm 0mm 6mm 0mm}, clip]{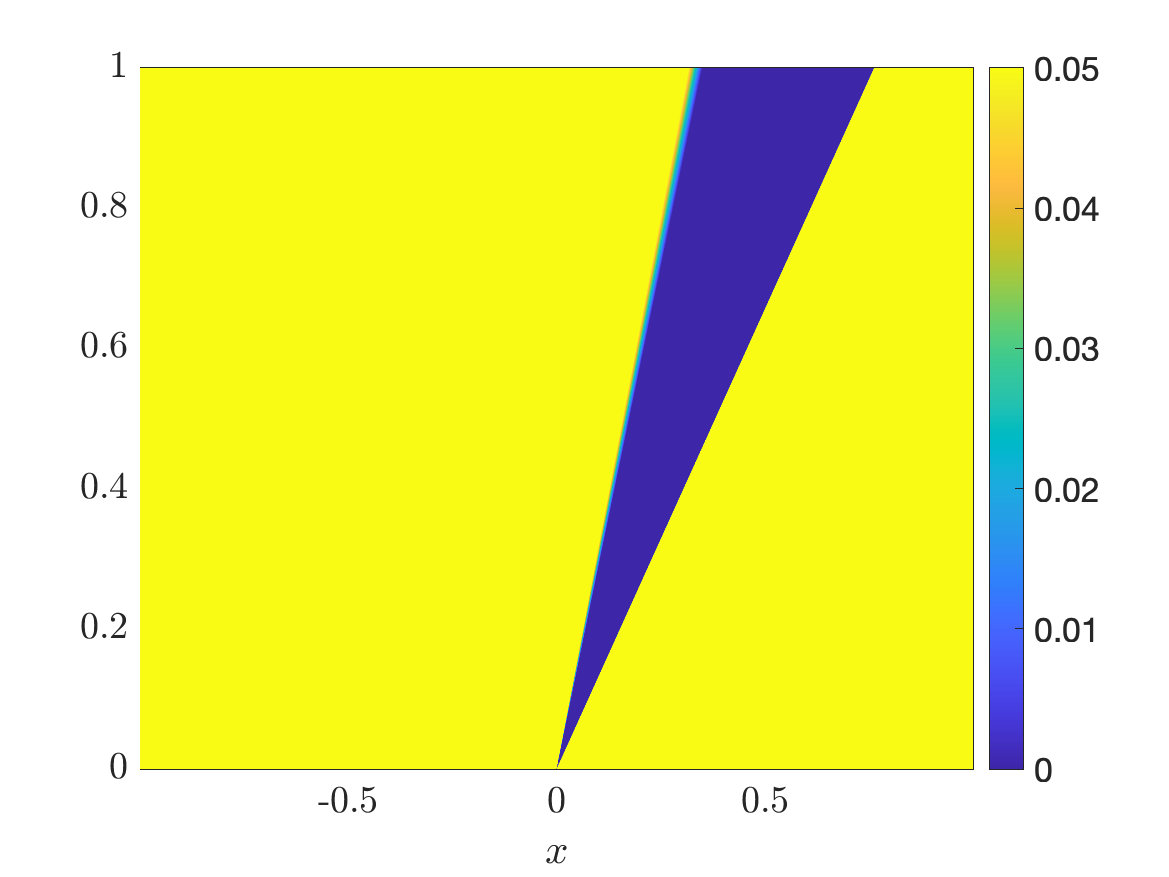}
\caption{$(t,x)$-plots of the density $\rho$ for different values of $\eta.$ 
From left to right, $\eta= 0.1$,\ $\eta=0.01$,\ $\eta=0.001$
and the exact solution. 
\label{fig:ex8}}
\end{figure}
\begin{figure}[!ht]
\centering
\includegraphics[scale=0.25, trim={6mm 0mm 31mm 0mm}, clip]{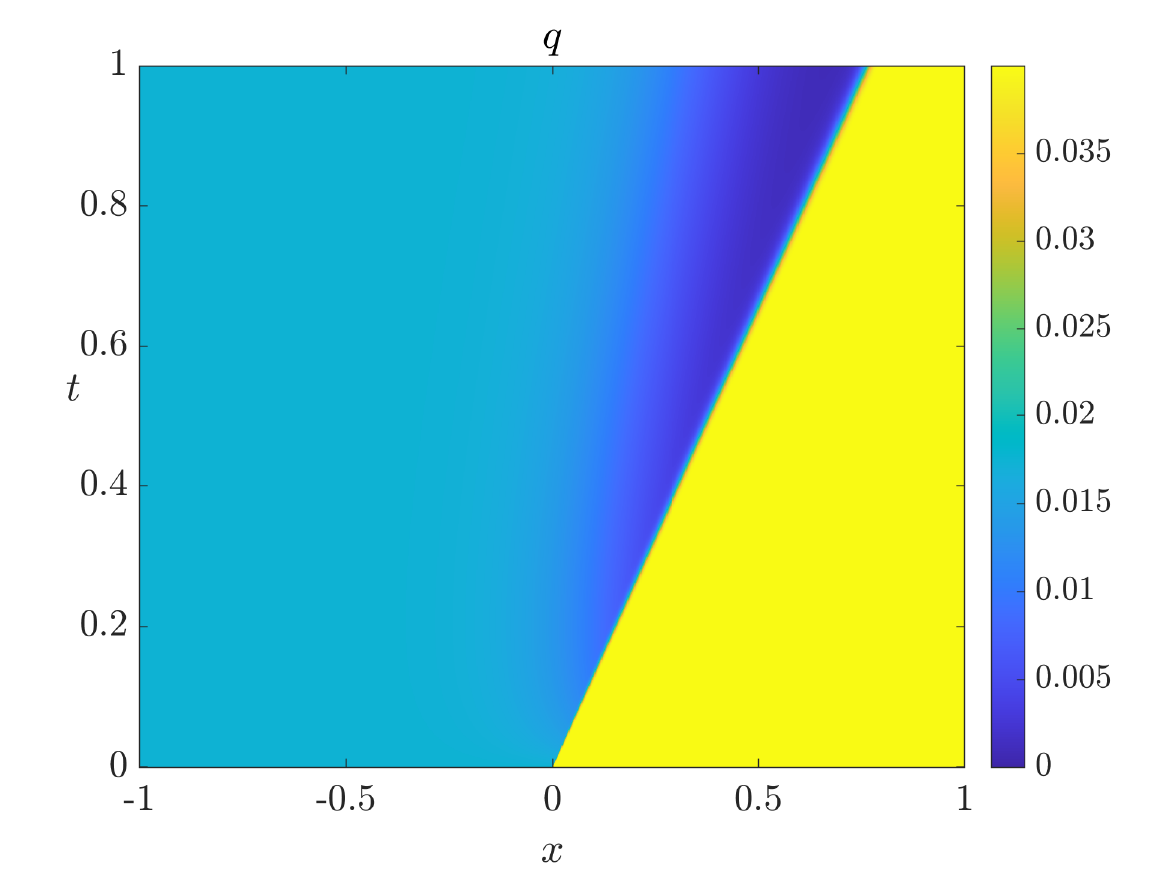}
\includegraphics[scale=0.25, trim={26mm 0mm 31mm 0mm}, clip]{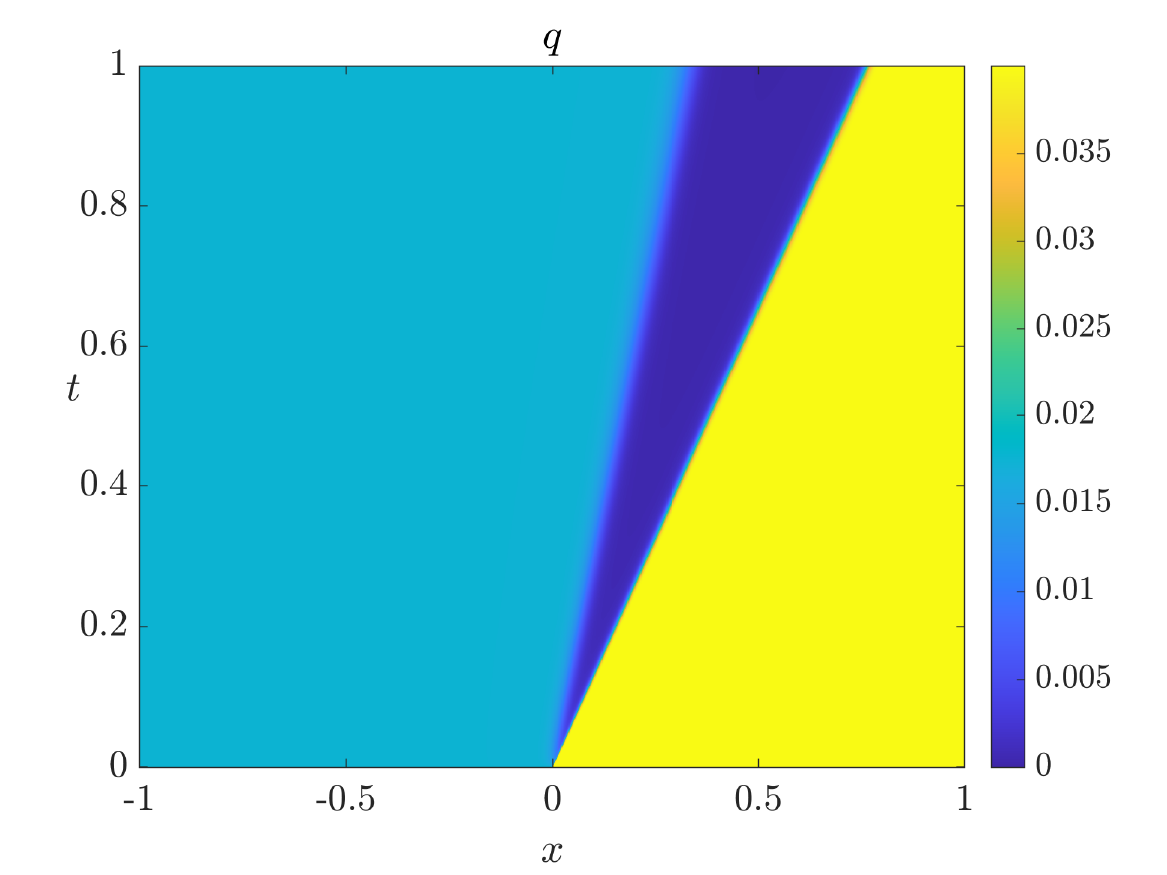}
\includegraphics[scale=0.25, trim={26mm 0mm 31mm 0mm}, clip]{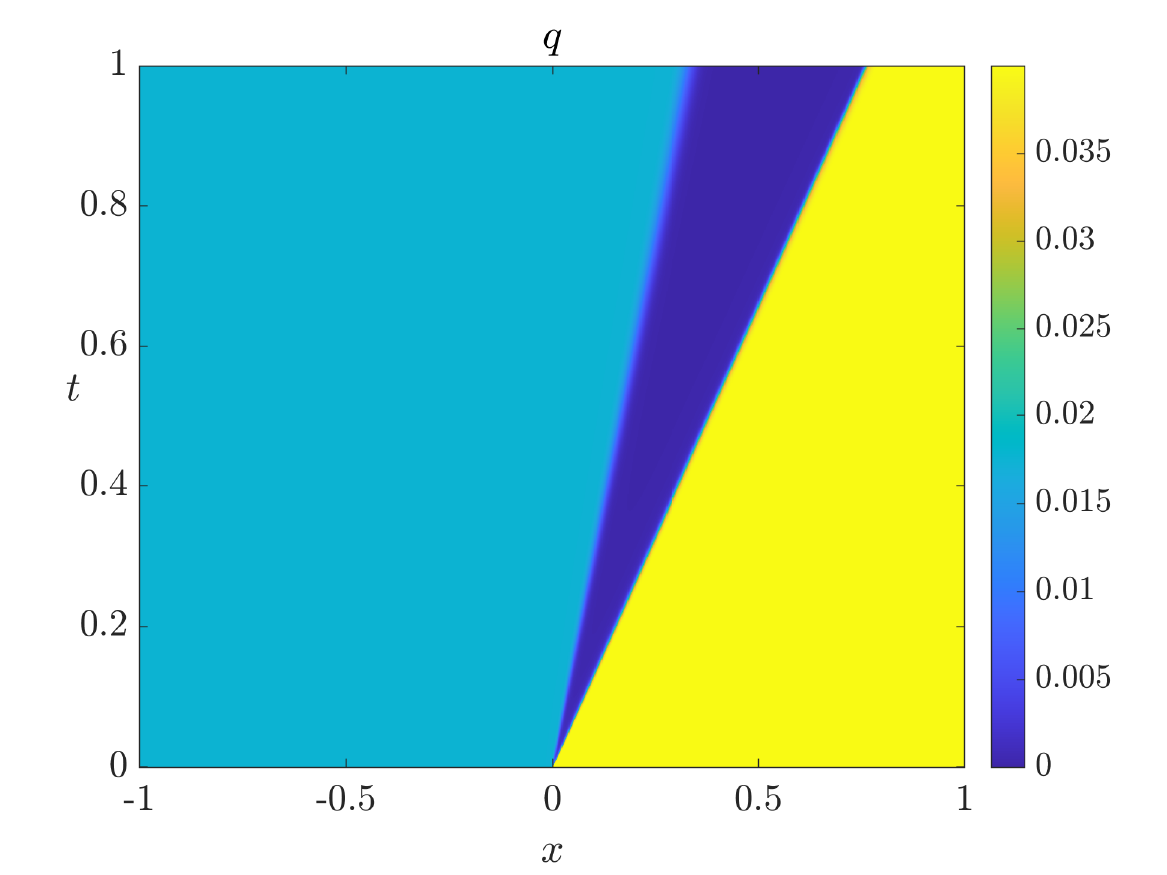}
\includegraphics[scale=0.25, trim={26mm 0mm 6mm 0mm}, clip]{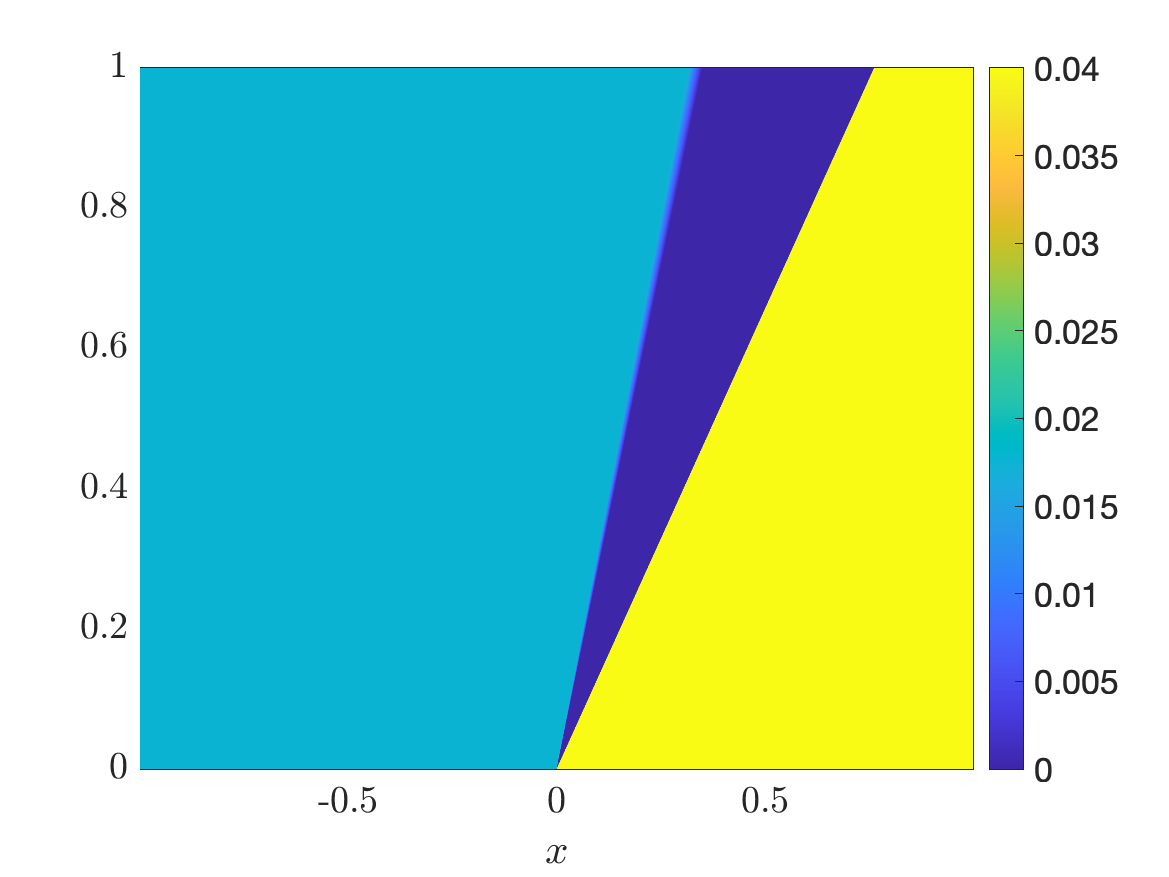}
\caption{$(t,x)$-plots of the momentum $q.$ 
From left to right, $\eta= 0.1$,\ $\eta=0.01$,\ $\eta=0.001$
and the exact solution. 
}
\label{fig:ex9}
\end{figure}

\section{Open problems}\label{sec:conclusion}
The following topics would be interesting to consider in future work:
\begin{itemize}
    \item The singular limit convergence result requires \cref{eq:requirement_TV_diminishing}, which restricts the choice of initial data and velocity function. Thus, a natural question is whether this condition can be circumvented.
   
    \item The singular limit result has only been proven for the exponential kernel, but particularly in traffic applications, a kernel with finite support might be more reasonable.

    \item We obtain the convergence to a weak solution also for the vacuum case, but at present, no results seem to be available concerning the uniqueness of corresponding entropy solutions in the local case.

   \item Recently, the convergence of the singular limit was shown for scalar conservation laws with sign-restricted derivative of the velocity without a priori established \(TV\) bounds on the nonlocal terms by means of compensated compactness \cite{coclite2025singular}. A future attempt to prove the singular limit convergence might follow the same steps, potentially avoiding \cref{eq:requirement_TV_diminishing}. 
   \item Similarly, taking advantage of the corresponding Hamilton--Jacobi equation, it was demonstrated in \cite{keimer2026nonlocal} that solutions of general scalar nonlocal conservation laws converge to the corresponding local entropy solution, again without taking advantage of \(TV\) estimates. This is not directly applicable to the system case, but as the same nonlocal velocity in this contribution appears in both equations, it might be possible to apply these arguments to the nonlocal velocity directly.
\end{itemize}

\bibliographystyle{elsarticle-harv}

\end{document}

%% file: definitions.tex
\newcommand{\x}{\boldsymbol x}

\newcommand{\e}{\mathrm{e}}

\newcommand{\R}{\mathbb R}
\newcommand{\N}{\mathbb N}

\newcommand{\eps}{\epsilon}

\renewcommand{\phi}{\varphi}

\newcommand{\cV}{\mathcal{V}}

\newcommand{\cF}{{\mathcal F}}

\newcommand{\bF}{\mathbf{F}}
\renewcommand{\:}{\mathrel{\coloneqq}}

\newcommand{\OT}{{\Omega_{T}}}

\newcommand{\br}{\boldsymbol{r}}

\newcommand{\loc}{\textnormal{loc}}

\newcommand{\bH}{\boldsymbol{H}}

\newcommand{\dd}{\ensuremath{\,\mathrm{d}}}

\DeclareMathOperator{\sgn}{sgn}
\DeclareMathOperator*{\esssup}{ess-sup}
\DeclareMathOperator{\supp}{supp}
\DeclareMathOperator*{\essinf}{ess-inf}

\renewcommand{\epsilon}{\varepsilon}

\crefname{ass}{Asm.}{Assumptions}
\crefname{defi}{Def.}{Definitions}
\crefname{theo}{Thm.}{Theorems}
\crefname{lem}{Lem.}{Lemmata}
\crefname{cor}{Cor.}{Corollaries}
\crefname{rem}{Rmk.}{Remarks}

%% file: main_final.bbl
\begin{thebibliography}{48}
\expandafter\ifx\csname natexlab\endcsname\relax\def\natexlab#1{#1}\fi
\providecommand{\url}[1]{\texttt{#1}}
\providecommand{\href}[2]{#2}
\providecommand{\path}[1]{#1}
\providecommand{\DOIprefix}{doi:}
\providecommand{\ArXivprefix}{arXiv:}
\providecommand{\URLprefix}{URL: }
\providecommand{\Pubmedprefix}{pmid:}
\providecommand{\doi}[1]{\href{http://dx.doi.org/#1}{\path{#1}}}
\providecommand{\Pubmed}[1]{\href{pmid:#1}{\path{#1}}}
\providecommand{\bibinfo}[2]{#2}
\ifx\xfnm\relax \def\xfnm[#1]{\unskip,\space#1}\fi
\bibitem[{Amadori et~al.(2004)Amadori, Gosse and Guerra}]{Amadori-Gosse-Guerra_2004}
\bibinfo{author}{Amadori, D.}, \bibinfo{author}{Gosse, L.}, \bibinfo{author}{Guerra, G.}, \bibinfo{year}{2004}.
\newblock \bibinfo{title}{Godunov-type approximation for a general resonant balance law with large data}.
\newblock \bibinfo{journal}{J. Differ. Equ.} \bibinfo{volume}{198}, \bibinfo{pages}{233--274}.
\newblock \DOIprefix\doi{10.1016/j.jde.2003.10.004}.
\bibitem[{Aw and Rascle(2000)}]{aw}
\bibinfo{author}{Aw, A.}, \bibinfo{author}{Rascle, M.}, \bibinfo{year}{2000}.
\newblock \bibinfo{title}{Resurrection of ``second order'' models of traffic flow}.
\newblock \bibinfo{journal}{SIAM J. Appl. Math.} \bibinfo{volume}{60}, \bibinfo{pages}{916--938}.
\newblock \DOIprefix\doi{10.1137/S0036139997332099}.
\bibitem[{Baiti and Bressan(1997)}]{Baiti1997}
\bibinfo{author}{Baiti, P.}, \bibinfo{author}{Bressan, A.}, \bibinfo{year}{1997}.
\newblock \bibinfo{title}{The semigroup generated by a {T}emple class system with large data}.
\newblock \bibinfo{journal}{Differ. Integral Equ.} \bibinfo{volume}{10}, \bibinfo{pages}{401--418}.
\newblock \DOIprefix\doi{10.57262/die/1367525659}.
\bibitem[{Bayen et~al.(2022)Bayen, Friedrich, Keimer, Pflug and Veeravalli}]{KeimerMultilane2022}
\bibinfo{author}{Bayen, A.}, \bibinfo{author}{Friedrich, J.}, \bibinfo{author}{Keimer, A.}, \bibinfo{author}{Pflug, L.}, \bibinfo{author}{Veeravalli, T.}, \bibinfo{year}{2022}.
\newblock \bibinfo{title}{Modeling multilane traffic with moving obstacles by nonlocal balance laws}.
\newblock \bibinfo{journal}{SIAM J. Appl. Dyn. Syst.} \bibinfo{volume}{21}, \bibinfo{pages}{1495--1538}.
\newblock \DOIprefix\doi{10.1137/20M1366654}.
\bibitem[{Blandin and Goatin(2015)}]{blandin2016well}
\bibinfo{author}{Blandin, S.}, \bibinfo{author}{Goatin, P.}, \bibinfo{year}{2015}.
\newblock \bibinfo{title}{Well-posedness of a conservation law with non-local flux arising in traffic flow modeling}.
\newblock \bibinfo{journal}{Numer. Math.} \bibinfo{volume}{132}, \bibinfo{pages}{217–241}.
\newblock \DOIprefix\doi{10.1007/s00211-015-0717-6}.
\bibitem[{Bressan(2000)}]{Bressan2000}
\bibinfo{author}{Bressan, A.}, \bibinfo{year}{2000}.
\newblock \bibinfo{title}{Hyperbolic systems of conservation laws. The one-dimensional Cauchy problem}. volume~\bibinfo{volume}{20} of \textit{\bibinfo{series}{Oxford Lect. Ser. Math. Appl.}}
\newblock \bibinfo{publisher}{Oxford University Press, Oxford}.
\bibitem[{Bressan and Guerra(2024)}]{Bressan-Guerra_2024}
\bibinfo{author}{Bressan, A.}, \bibinfo{author}{Guerra, G.}, \bibinfo{year}{2024}.
\newblock \bibinfo{title}{Unique solutions to hyperbolic conservation laws with a strictly convex entropy}.
\newblock \bibinfo{journal}{J. Differ. Equ.} \bibinfo{volume}{387}, \bibinfo{pages}{432--447}.
\newblock \DOIprefix\doi{10.1016/j.jde.2024.01.005}.
\bibitem[{Bressan and Shen(2020)}]{bressan-shen_2019traffic}
\bibinfo{author}{Bressan, A.}, \bibinfo{author}{Shen, W.}, \bibinfo{year}{2020}.
\newblock \bibinfo{title}{On traffic flow with nonlocal flux: A relaxation representation}.
\newblock \bibinfo{journal}{Arch. Ration. Mech. Anal.} \bibinfo{volume}{237}, \bibinfo{pages}{1213--1236}.
\newblock \DOIprefix\doi{10.1007/s00205-020-01529-z}.
\bibitem[{Bressan and Shen(2021)}]{bressan-shen2021entropy}
\bibinfo{author}{Bressan, A.}, \bibinfo{author}{Shen, W.}, \bibinfo{year}{2021}.
\newblock \bibinfo{title}{Entropy admissibility of the limit solution for a nonlocal model of traffic flow}.
\newblock \bibinfo{journal}{Commun. Math. Sci.} \bibinfo{volume}{19}, \bibinfo{pages}{1447--1450}.
\newblock \DOIprefix\doi{10.4310/CMS.2021.v19.n5.a12}.
\bibitem[{Cheng(2025)}]{Cheng2025}
\bibinfo{author}{Cheng, J.}, \bibinfo{year}{2025}.
\newblock \bibinfo{title}{Uniqueness \& weak-{BV} stability in the large for isothermal gas dynamics}.
\newblock \bibinfo{journal}{J. Differ. Equ.} \bibinfo{volume}{446}, \bibinfo{pages}{Paper No. 113599, 31}.
\newblock \DOIprefix\doi{10.1016/j.jde.2025.113599}.
\bibitem[{Chiarello et~al.(2020)Chiarello, Friedrich, Goatin and G\"{o}ttlich}]{chiarello2020micro}
\bibinfo{author}{Chiarello, F.A.}, \bibinfo{author}{Friedrich, J.}, \bibinfo{author}{Goatin, P.}, \bibinfo{author}{G\"{o}ttlich, S.}, \bibinfo{year}{2020}.
\newblock \bibinfo{title}{Micro-macro limit of a nonlocal generalized {Aw-Rascle} type model}.
\newblock \bibinfo{journal}{SIAM J. Appl. Math.} \bibinfo{volume}{80}, \bibinfo{pages}{1841–1861}.
\newblock \DOIprefix\doi{10.1137/20m1313337}.
\bibitem[{Chiarello et~al.(2019)Chiarello, Friedrich, Goatin, G\"ottlich and Kolb}]{chiarello2019non}
\bibinfo{author}{Chiarello, F.A.}, \bibinfo{author}{Friedrich, J.}, \bibinfo{author}{Goatin, P.}, \bibinfo{author}{G\"ottlich, S.}, \bibinfo{author}{Kolb, O.}, \bibinfo{year}{2019}.
\newblock \bibinfo{title}{A non-local traffic flow model for 1-to-1 junctions}.
\newblock \bibinfo{journal}{Eur. J. Appl. Math.} \bibinfo{volume}{31}, \bibinfo{pages}{1029–1049}.
\newblock \DOIprefix\doi{10.1017/s095679251900038x}.
\bibitem[{Chiarello and Goatin(2018)}]{chiarello}
\bibinfo{author}{Chiarello, F.A.}, \bibinfo{author}{Goatin, P.}, \bibinfo{year}{2018}.
\newblock \bibinfo{title}{Global entropy weak solutions for general non-local traffic flow models with anisotropic kernel}.
\newblock \bibinfo{journal}{ESAIM: Math. Model. Numer. Anal.} \bibinfo{volume}{52}, \bibinfo{pages}{163–180}.
\newblock \DOIprefix\doi{10.1051/m2an/2017066}.
\bibitem[{Chiarello and Keimer(2024)}]{Chiarello2024}
\bibinfo{author}{Chiarello, F.A.}, \bibinfo{author}{Keimer, A.}, \bibinfo{year}{2024}.
\newblock \bibinfo{title}{On the singular limit problem in nonlocal balance laws: Applications to nonlocal lane-changing traffic flow models}.
\newblock \bibinfo{journal}{J. Math. Anal. Appl.} \bibinfo{volume}{537}, \bibinfo{pages}{128358}.
\newblock \DOIprefix\doi{10.1016/j.jmaa.2024.128358}.
\bibitem[{Chiarello et~al.(2025)Chiarello, Keimer and Pflug}]{chiarello2025pnorm}
\bibinfo{author}{Chiarello, F.A.}, \bibinfo{author}{Keimer, A.}, \bibinfo{author}{Pflug, L.}, \bibinfo{year}{2025}.
\newblock \bibinfo{title}{Nonlocal conservation laws with p-norm, the singular limit problem and applications to traffic flow}.
\newblock \URLprefix \url{https://arxiv.org/abs/2512.18701}, \href{http://arxiv.org/abs/2512.18701}{{\tt arXiv:2512.18701}}.
\bibitem[{Coclite et~al.(2024)Coclite, Colombo, Crippa, {De Nitti}, Keimer, Marconi, Pflug and Spinolo}]{coclite2023oleinik}
\bibinfo{author}{Coclite, G.M.}, \bibinfo{author}{Colombo, M.}, \bibinfo{author}{Crippa, G.}, \bibinfo{author}{{De Nitti}, N.}, \bibinfo{author}{Keimer, A.}, \bibinfo{author}{Marconi, E.}, \bibinfo{author}{Pflug, L.}, \bibinfo{author}{Spinolo, L.}, \bibinfo{year}{2024}.
\newblock \bibinfo{title}{Oleinik-type estimates for nonlocal conservation laws and applications to the nonlocal-to-local limit}.
\newblock \bibinfo{journal}{J. Hyperbolic Differ. Equ.} \bibinfo{volume}{21}, \bibinfo{pages}{681--705}.
\newblock \DOIprefix\doi{10.1142/S021989162440006X}.
\bibitem[{Coclite et~al.(2022)Coclite, Coron, {De Nitti}, Keimer and Pflug}]{coclite2022general}
\bibinfo{author}{Coclite, G.M.}, \bibinfo{author}{Coron, J.M.}, \bibinfo{author}{{De Nitti}, N.}, \bibinfo{author}{Keimer, A.}, \bibinfo{author}{Pflug, L.}, \bibinfo{year}{2022}.
\newblock \bibinfo{title}{A general result on the approximation of local conservation laws by nonlocal conservation laws: The singular limit problem for exponential kernels}.
\newblock \bibinfo{journal}{Ann. Inst. H. Poincar\'{e} C Anal. Non Lin\'{e}aire} \bibinfo{volume}{40}, \bibinfo{pages}{1205–1223}.
\newblock \DOIprefix\doi{10.4171/aihpc/58}.
\bibitem[{Coclite and {De Nitti}(2026)}]{Coclite2026nonlocal}
\bibinfo{author}{Coclite, G.M.}, \bibinfo{author}{{De Nitti}, N.}, \bibinfo{year}{2026}.
\newblock \bibinfo{title}{On a nonlocal regularization of a non-strictly hyperbolic system of conservation laws}.
\newblock \bibinfo{journal}{Nonlinear Anal. Real World Appl.} \bibinfo{volume}{92}, \bibinfo{pages}{104447}.
\newblock \DOIprefix\doi{10.1016/j.nonrwa.2025.104447}.
\bibitem[{Coclite et~al.(2025)Coclite, {De Nitti} and Huang}]{coclite2025singular}
\bibinfo{author}{Coclite, G.M.}, \bibinfo{author}{{De Nitti}, N.}, \bibinfo{author}{Huang, K.}, \bibinfo{year}{2025}.
\newblock \bibinfo{title}{Singular limit for a class of nonlocal conservation laws via compensated compactness}.
\newblock \bibinfo{type}{Technical Report}. https://arxiv.org/abs/2511.15631.
\newblock \href{http://arxiv.org/abs/2511.15631}{{\tt arXiv:2511.15631}}.
\bibitem[{Colombo et~al.(2021)Colombo, Crippa, Marconi and Spinolo}]{ColomboCrippaMarconiSpinolo2021}
\bibinfo{author}{Colombo, M.}, \bibinfo{author}{Crippa, G.}, \bibinfo{author}{Marconi, E.}, \bibinfo{author}{Spinolo, L.V.}, \bibinfo{year}{2021}.
\newblock \bibinfo{title}{Local limit of nonlocal traffic models: Convergence results and total variation blow-up}.
\newblock \bibinfo{journal}{Ann. Inst. H. Poincaré C Anal. Non Linéaire} \bibinfo{volume}{38}, \bibinfo{pages}{1653--1666}.
\newblock \DOIprefix\doi{https://doi.org/10.1016/j.anihpc.2020.12.002}.
\bibitem[{Colombo et~al.(2023)Colombo, Crippa, Marconi and Spinolo}]{Marconi2023}
\bibinfo{author}{Colombo, M.}, \bibinfo{author}{Crippa, G.}, \bibinfo{author}{Marconi, E.}, \bibinfo{author}{Spinolo, L.V.}, \bibinfo{year}{2023}.
\newblock \bibinfo{title}{Nonlocal traffic models with general kernels: Singular limit, entropy admissibility, and convergence rate}.
\newblock \bibinfo{journal}{Arch. Ration. Mech. Anal.} \bibinfo{volume}{247}.
\newblock \DOIprefix\doi{10.1007/s00205-023-01845-0}.
\bibitem[{Coron et~al.(2010)Coron, Kawski and Wang}]{wang}
\bibinfo{author}{Coron, J.M.}, \bibinfo{author}{Kawski, M.}, \bibinfo{author}{Wang, Z.}, \bibinfo{year}{2010}.
\newblock \bibinfo{title}{Analysis of a conservation law modeling a highly re-entrant manufacturing system}.
\newblock \bibinfo{journal}{Discrete Contin. Dyn. Syst. Ser. B} \bibinfo{volume}{14}, \bibinfo{pages}{1337--1359}.
\newblock \DOIprefix\doi{10.3934/dcdsb.2010.14.1337}.
\bibitem[{Dafermos(2016)}]{Dafermos2016}
\bibinfo{author}{Dafermos, C.}, \bibinfo{year}{2016}.
\newblock \bibinfo{title}{Hyperbolic conservation laws in continuum physics}. volume \bibinfo{volume}{325} of \textit{\bibinfo{series}{Grundlehren der mathematischen Wissenschaften}}.
\newblock \bibinfo{edition}{Fourth} ed., \bibinfo{publisher}{Springer-Verlag, Berlin}.
\newblock \DOIprefix\doi{10.1007/978-3-662-49451-6}.
\bibitem[{Dafermos and Geng(1991)}]{DG91}
\bibinfo{author}{Dafermos, C.M.}, \bibinfo{author}{Geng, X.}, \bibinfo{year}{1991}.
\newblock \bibinfo{title}{Generalized characteristics uniqueness and regularity of solutions in a hyperbolic system of conservation laws}.
\newblock \bibinfo{journal}{Ann. Inst. H. Poincar\'e{} C Anal. Non Lin\'eaire} \bibinfo{volume}{8}, \bibinfo{pages}{231--269}.
\newblock \DOIprefix\doi{10.1016/S0294-1449(16)30263-3}.
\bibitem[{Danskin(1967)}]{Danskin1967}
\bibinfo{author}{Danskin, J.}, \bibinfo{year}{1967}.
\newblock \bibinfo{title}{The Theory of Max-Min and its Application to Weapons Allocation Problems}.
\newblock \bibinfo{publisher}{Springer Berlin Heidelberg}.
\newblock \DOIprefix\doi{10.1007/978-3-642-46092-0}.
\bibitem[{Fan et~al.(2014)Fan, Herty and Seibold}]{FanHertySeibold2014}
\bibinfo{author}{Fan, S.}, \bibinfo{author}{Herty, M.}, \bibinfo{author}{Seibold, B.}, \bibinfo{year}{2014}.
\newblock \bibinfo{title}{Comparative model accuracy of a data-fitted generalized {A}w-{R}ascle-{Z}hang model}.
\newblock \bibinfo{journal}{Netw. Heterog. Media} \bibinfo{volume}{9}, \bibinfo{pages}{239--268}.
\newblock \DOIprefix\doi{10.3934/nhm.2014.9.239}.
\bibitem[{Friedrich et~al.(2024)Friedrich, G\"ottlich, Keimer and Pflug}]{friedrich2022conservation}
\bibinfo{author}{Friedrich, J.}, \bibinfo{author}{G\"ottlich, S.}, \bibinfo{author}{Keimer, A.}, \bibinfo{author}{Pflug, L.}, \bibinfo{year}{2024}.
\newblock \bibinfo{title}{Conservation laws with nonlocal velocity: The singular limit problem}.
\newblock \bibinfo{journal}{SIAM J. Appl. Math.} \bibinfo{volume}{84}, \bibinfo{pages}{497--522}.
\newblock \DOIprefix\doi{10.1137/22M1530471}.
\bibitem[{Friedrich et~al.(2018)Friedrich, Kolb and G\"{o}ttlich}]{friedrich2018godunov}
\bibinfo{author}{Friedrich, J.}, \bibinfo{author}{Kolb, O.}, \bibinfo{author}{G\"{o}ttlich, S.}, \bibinfo{year}{2018}.
\newblock \bibinfo{title}{A {G}odunov type scheme for a class of {LWR} traffic flow models with non-local flux}.
\newblock \bibinfo{journal}{Netw. Heterog. Media} \bibinfo{volume}{13}, \bibinfo{pages}{531–547}.
\newblock \DOIprefix\doi{10.3934/nhm.2018024}.
\bibitem[{Goatin and Scialanga(2016)}]{scialanga}
\bibinfo{author}{Goatin, P.}, \bibinfo{author}{Scialanga, S.}, \bibinfo{year}{2016}.
\newblock \bibinfo{title}{Well-posedness and finite volume approximations of the {LWR} traffic flow model with non-local velocity}.
\newblock \bibinfo{journal}{Netw. Heterog. Media} \bibinfo{volume}{11}, \bibinfo{pages}{107--121}.
\newblock \DOIprefix\doi{10.3934/nhm.2016.11.107}.
\bibitem[{Hamori and Tan(2026)}]{Hamori-Tan_2026}
\bibinfo{author}{Hamori, T.}, \bibinfo{author}{Tan, C.}, \bibinfo{year}{2026}.
\newblock \bibinfo{title}{On the {A}w-{R}ascle-{Z}hang traffic models with nonlocal look-ahead interactions}.
\newblock \bibinfo{journal}{Nonlinear Anal.} \bibinfo{volume}{262}, \bibinfo{pages}{Paper No. 113930, 20}.
\newblock \DOIprefix\doi{10.1016/j.na.2025.113930}.
\bibitem[{Heibig(1994)}]{H94}
\bibinfo{author}{Heibig, A.}, \bibinfo{year}{1994}.
\newblock \bibinfo{title}{Existence and uniqueness of solutions for some hyperbolic systems of conservation laws}.
\newblock \bibinfo{journal}{Arch. Ration. Mech. Anal.} \bibinfo{volume}{126}, \bibinfo{pages}{79--101}.
\newblock \DOIprefix\doi{10.1007/BF00375697}.
\bibitem[{Josephy(1981)}]{Josephy1981}
\bibinfo{author}{Josephy, M.}, \bibinfo{year}{1981}.
\newblock \bibinfo{title}{Composing functions of bounded variation}.
\newblock \bibinfo{journal}{Proc. Amer. Math. Soc.} \bibinfo{volume}{83}, \bibinfo{pages}{354--356}.
\newblock \DOIprefix\doi{10.2307/2043527}.
\bibitem[{Keimer and Pflug(2017)}]{pflug}
\bibinfo{author}{Keimer, A.}, \bibinfo{author}{Pflug, L.}, \bibinfo{year}{2017}.
\newblock \bibinfo{title}{Existence, uniqueness and regularity results on nonlocal balance laws}.
\newblock \bibinfo{journal}{J. Differ. Equ.} \bibinfo{volume}{263}, \bibinfo{pages}{4023–4069}.
\newblock \DOIprefix\doi{10.1016/j.jde.2017.05.015}.
\bibitem[{Keimer and Pflug(2019)}]{pflug4}
\bibinfo{author}{Keimer, A.}, \bibinfo{author}{Pflug, L.}, \bibinfo{year}{2019}.
\newblock \bibinfo{title}{On approximation of local conservation laws by nonlocal conservation laws}.
\newblock \bibinfo{journal}{J. Math. Anal. Appl.} \bibinfo{volume}{475}, \bibinfo{pages}{1927–1955}.
\newblock \DOIprefix\doi{10.1016/j.jmaa.2019.03.063}.
\bibitem[{Keimer and Pflug(2023)}]{Keimer2023}
\bibinfo{author}{Keimer, A.}, \bibinfo{author}{Pflug, L.}, \bibinfo{year}{2023}.
\newblock \bibinfo{title}{Discontinuous nonlocal conservation laws and related discontinuous {ODEs} – {Existence}, uniqueness, stability and regularity}.
\newblock \bibinfo{journal}{C. R. Math.} \bibinfo{volume}{361}, \bibinfo{pages}{1723–1760}.
\newblock \DOIprefix\doi{10.5802/crmath.490}.
\bibitem[{Keimer and Pflug(2025)}]{keimer42}
\bibinfo{author}{Keimer, A.}, \bibinfo{author}{Pflug, L.}, \bibinfo{year}{2025}.
\newblock \bibinfo{title}{On the singular limit problem for nonlocal conservation laws: A general approximation result for kernels with fixed support}.
\newblock \bibinfo{journal}{J. Math. Anal. Appl.} \bibinfo{volume}{547}, \bibinfo{pages}{129307}.
\newblock \DOIprefix\doi{10.1016/j.jmaa.2025.129307}.
\bibitem[{Keimer and Pflug(2026)}]{keimer2026nonlocal}
\bibinfo{author}{Keimer, A.}, \bibinfo{author}{Pflug, L.}, \bibinfo{year}{2026}.
\newblock \bibinfo{title}{Nonlocal approximation principle for entropy solutions of scalar conservation laws}.
\newblock \URLprefix \url{https://arxiv.org/abs/2605.00635}, \href{http://arxiv.org/abs/2605.00635}{{\tt arXiv:2605.00635}}.
\bibitem[{Leoni(2017)}]{leoni}
\bibinfo{author}{Leoni, G.}, \bibinfo{year}{2017}.
\newblock \bibinfo{title}{A first course in {S}obolev spaces}. volume \bibinfo{volume}{181} of \textit{\bibinfo{series}{Graduate Studies in Mathematics}}.
\newblock \bibinfo{edition}{Second} ed., \bibinfo{publisher}{American Mathematical Society, Providence, RI}.
\newblock \DOIprefix\doi{10.1090/gsm/181}.
\bibitem[{Lighthill and Whitham(1955)}]{lwr_1}
\bibinfo{author}{Lighthill, M.}, \bibinfo{author}{Whitham, G.}, \bibinfo{year}{1955}.
\newblock \bibinfo{title}{On kinematic waves. {I}. {Flood} movement in long rivers}.
\newblock \bibinfo{journal}{Proc. R. Soc. Lond. A} \bibinfo{volume}{229}, \bibinfo{pages}{281--316}.
\newblock \DOIprefix\doi{10.1098/rspa.1955.0088}.
\bibitem[{Marconi and Spinolo(2025)}]{MS2025-1}
\bibinfo{author}{Marconi, E.}, \bibinfo{author}{Spinolo, L.V.}, \bibinfo{year}{2025}.
\newblock \bibinfo{title}{Nonlocal generalized {A}w-{R}ascle-{Z}hang model: Well-posedness and singular limit}.
\newblock \URLprefix \url{https://arxiv.org/abs/2505.10102}, \DOIprefix\doi{10.48550/ARXIV.2505.10102}.
\bibitem[{Marconi and Spinolo(2026)}]{MS2025-2}
\bibinfo{author}{Marconi, E.}, \bibinfo{author}{Spinolo, L.V.}, \bibinfo{year}{2026}.
\newblock \bibinfo{title}{Well-posedness results for the generalized {A}w-{R}ascle-{Z}hang model}.
\newblock \bibinfo{journal}{Adv. Contin. Discrete Models} \bibinfo{volume}{2026}.
\newblock \DOIprefix\doi{10.1186/s13662-026-04079-y}.
\bibitem[{Ramadan et~al.(2021)Ramadan, Rosales and Seibold}]{Ramadan2020}
\bibinfo{author}{Ramadan, R.}, \bibinfo{author}{Rosales, R.R.}, \bibinfo{author}{Seibold, B.}, \bibinfo{year}{2021}.
\newblock \bibinfo{title}{Structural properties of the stability of jamitons}, in: \bibinfo{booktitle}{Mathematical descriptions of traffic flow: micro, macro and kinetic models}. \bibinfo{publisher}{Springer, Cham}. volume~\bibinfo{volume}{12} of \textit{\bibinfo{series}{ICIAM 2019 SEMA SIMAI Springer Ser.}}, pp. \bibinfo{pages}{35--62}.
\newblock \URLprefix \url{https://doi.org/10.1007/978-3-030-66560-9_3}, \DOIprefix\doi{10.1007/978-3-030-66560-9\_3}.
\bibitem[{Richards(1956)}]{lwr_2}
\bibinfo{author}{Richards, P.}, \bibinfo{year}{1956}.
\newblock \bibinfo{title}{Shock waves on the highway}.
\newblock \bibinfo{journal}{Oper. Res.} \bibinfo{volume}{4}, \bibinfo{pages}{42--51}.
\newblock \URLprefix \url{https://www.jstor.org/stable/167515}.
\bibitem[{Rosini(2019)}]{RosiniAnnales}
\bibinfo{author}{Rosini, M.D.}, \bibinfo{year}{2019}.
\newblock \bibinfo{title}{Systems of conservation laws with discontinuous fluxes and applications to traffic}.
\newblock \bibinfo{journal}{Ann. Univ. Mariae Curie-Sklodowska Sect. A} \bibinfo{volume}{73}, \bibinfo{pages}{135--173}.
\bibitem[{Simon(1987)}]{Simon1986}
\bibinfo{author}{Simon, J.}, \bibinfo{year}{1987}.
\newblock \bibinfo{title}{Compact sets in the space {$L^p(0,T;B)$}}.
\newblock \bibinfo{journal}{Ann. Mat. Pura Appl. (4)} \bibinfo{volume}{146}, \bibinfo{pages}{65--96}.
\newblock \DOIprefix\doi{10.1007/BF01762360}.
\bibitem[{Sopasakis and Katsoulakis(2006)}]{Sopasakis2006}
\bibinfo{author}{Sopasakis, A.}, \bibinfo{author}{Katsoulakis, M.A.}, \bibinfo{year}{2006}.
\newblock \bibinfo{title}{Stochastic modeling and simulation of traffic flow: Asymmetric single exclusion process with {A}rrhenius look-ahead dynamics}.
\newblock \bibinfo{journal}{SIAM J. Appl. Math.} \bibinfo{volume}{66}, \bibinfo{pages}{921–944}.
\newblock \DOIprefix\doi{10.1137/040617790}.
\bibitem[{Temple(1983)}]{Temple01}
\bibinfo{author}{Temple, B.}, \bibinfo{year}{1983}.
\newblock \bibinfo{title}{Systems of conservation laws with invariant submanifolds}.
\newblock \bibinfo{journal}{Trans. Amer. Math. Soc.} \bibinfo{volume}{280}, \bibinfo{pages}{781--795}.
\newblock \DOIprefix\doi{10.1090/s0002-9947-1983-0716850-2}.
\bibitem[{Zhang(2002)}]{zhang2002non}
\bibinfo{author}{Zhang, H.}, \bibinfo{year}{2002}.
\newblock \bibinfo{title}{A non-equilibrium traffic model devoid of gas-like behavior}.
\newblock \bibinfo{journal}{Transp. Res. Part B Methodol.} \bibinfo{volume}{36}, \bibinfo{pages}{275–290}.
\newblock \DOIprefix\doi{10.1016/s0191-2615(00)00050-3}.

\end{thebibliography}
